\date{} 
\title{Quantum Loewner Evolution}
\author{Jason Miller and Scott Sheffield}
\def\@rst #1 #2other{#1}
\newcommand\MR[1]{\relax\ifhmode\unskip\spacefactor3000 \space\fi
  \MRhref{\expandafter\@rst #1 other}{#1}}
\newcommand{\MRhref}[2]{\href{http://www.ams.org/mathscinet-getitem?mr=#1}{MR#2}}
\newif\ifhyper\IfFileExists{hyperref.sty}{\hypertrue}{\hyperfalse}
\ifhyper\usepackage{hyperref}\fi
\newif\ifdraft
\numberwithin{equation}{section}
\numberwithin{figure}{section}
\newtheorem{theorem}{Theorem}
\numberwithin{theorem}{section}
\newtheorem{corollary}[theorem]{Corollary}
\newtheorem{lemma}[theorem]{Lemma}
\newtheorem{proposition}[theorem]{Proposition}
\newtheorem{question}[theorem]{Question}
\theoremstyle{remark}\newtheorem{definition}[theorem]{Definition}
\theoremstyle{remark}\newtheorem{remark}[theorem]{Remark}
\newcommand{\R}{\mathbf{R}}
\newcommand{\C}{\mathbf{C}}
\newcommand{\D}{\mathbf{D}}
\newcommand{\Z}{\mathbf{Z}}
\newcommand{\N}{\mathbf{N}}
\newcommand{\Q}{\mathbf{Q}}
\newcommand{\CA}{\mathcal {A}}
\newcommand{\CC}{\mathcal {C}}
\newcommand{\CD}{\mathcal {D}}
\newcommand{\CF}{\mathcal {F}}
\newcommand{\CL}{\mathcal {L}}
\newcommand{\CN}{\mathcal {N}}
\newcommand{\CP}{\mathcal {P}}
\newcommand{\CG}{\mathcal {G}}
\newcommand{\CH}{\mathcal {H}}
\newcommand{\dirichlet}{{\mathrm D}}
\newcommand{\free}{{\mathrm F}}
\def\dist{\mathop{\mathrm{dist}}}
\newcommand{\CZ}{{\mathcal Z}}
\newcommand{\Fh}{{\mathfrak h}}
\newcommand{\var}{{\mathrm {var}}}
\newcommand{\re}{{\mathrm Re}}
\newcommand{\SLE}{{\rm SLE}}
\newcommand{\CLE}{{\rm CLE}}
\newcommand{\QLE}{{\rm QLE}}
\newcommand{\ol}{\overline}
\newcommand{\wh}{\widehat}
\newcommand{\wt}{\widetilde}
\newcommand{\one}{{\mathbf 1}}
\newcommand{\im}{{\rm Im}}
\newcommand{\cov}{\mathop{\mathrm {cov}}}
\newcommand{\confrad}{{\mathrm CR}}
\definecolor{purple}{rgb}{0.7,0,0.7}
\definecolor{gray}{rgb}{0.6,0.6,0.6}
\definecolor{dgreen}{rgb}{0.0,0.4,0.0}
\definecolor{dblue}{rgb}{0.0,0.0,0.5}
\newcommand{\pHarm}{P_{\rm harm}}
\newcommand{\pSupp}{P_{\rm supp}}
\def\Ito/{It\^o}
\def \p {{\mathbf P}}
\def \E {{\bf E}}
\begin{document}
\maketitle

\begin{abstract}
{\small
What is the scaling limit of diffusion limited aggregation (DLA) in the plane?  This is an old and famously difficult question.  One can generalize the question in two ways: first, one may consider the {\em dielectric breakdown model} $\eta$-DBM, a generalization of DLA in which particle locations are sampled from the $\eta$-th power of harmonic measure, instead of harmonic measure itself.  Second, instead of restricting attention to deterministic lattices, one may consider $\eta$-DBM on random graphs known or believed to converge in law to a Liouville quantum gravity (LQG) surface with parameter $\gamma \in [0,2]$.

In this generality, we propose a scaling limit candidate called {\bf quantum Loewner evolution}, $\QLE(\gamma^2, \eta)$.  $\QLE$ is defined in terms of the radial Loewner equation like radial $\SLE$, except that it is driven by a measure valued diffusion $\nu_t$ derived from LQG rather than a multiple of a standard Brownian motion.  We formalize the dynamics of $\nu_t$ using an SPDE.  For each $\gamma \in (0,2]$, there are two or three special values of $\eta$ for which we establish the existence of a solution to these dynamics
and explicitly describe the stationary law of $\nu_t$.

We also explain discrete versions of our construction that relate DLA to loop-erased random walk and the Eden model to percolation.  A certain ``reshuffling'' trick (in which concentric annular regions are rotated randomly, like slot machine reels) facilitates explicit calculation.

We propose $\QLE(2,1)$ as a scaling limit for DLA on a random spanning-tree-decorated planar map, and $\QLE(8/3, 0)$ as a scaling limit for the Eden model on a random triangulation. We propose using $\QLE(8/3,0)$ to endow pure LQG with a distance function, by interpreting the region explored by a branching variant of $\QLE(8/3, 0)$, up to a fixed time, as a metric ball in a random metric space.}

\end{abstract}

\newpage
{\setlength{\parskip}{0.0cm plus1mm minus1mm}
\tableofcontents}
\newpage

\medbreak {\noindent\bf Acknowledgments.}  We thank Hugo Duminil-Copin,  Ivan Corwin, Bertrand Duplantier, Steffen Rohde, Oded Schramm, Stanislav Smirnov, Wendelin Werner, David Wilson, and Hao Wu for helpful discussions.  We also thank Ewain Gwynne and Xin Sun for helpful comments on earlier versions of this article.
\vspace{-.2in}

\section{Introduction}
\label{sec::introduction}

\subsection{Overview}

The mathematical physics literature contains several simple ``growth models'' that can be understood as random increasing sequences of clusters on a fixed underlying graph $G$, which is often taken to be a lattice such as $\Z^2$.  These models are used to describe crystal formation, electrodeposition, lichen growth, lightning formation, coral reef formation, mineral deposition, cancer growth, forest fire progression, Hele-Shaw flow, water seepage, snowflake formation, oil dissipation, and many other natural processes.  Among the most famous and widely studied of these models are the Eden model (1961), first passage percolation (1965), diffusion limited aggregation (1981), the dielectric breakdown model (1984), and internal diffusion limited aggregation (1986) \cite{eden1961two, hammersley1965first, witten1981diffusion, witten1983diffusion,niemeyer1984fractal, meakin1986formation}, each of which was originally introduced with a different physical motivation in mind.

This paper mainly treats the dielectric breakdown model (DBM), which is a family of growth processes, indexed by a parameter $\eta$, in which new edges are added to a growing cluster according to the $\eta$-th power of harmonic measure, as we explain in more detail in Section~\ref{subsec::modelbackground}\footnote{In \cite{niemeyer1984fractal} growth is based on harmonic measure viewed from a specified boundary {\em set} within a regular lattice like $\Z^2$.  For convenience, one may identify the points in the boundary set and treat them as a single vertex $v$.  A cluster grows from a fixed interior vertex, and at each growth step, one considers the function $\phi$ that is equal to $1$ at $v$ and $0$ on the vertices of the growing cluster --- and is discrete harmonic elsewhere.  The harmonic measure (viewed from $v$) of an edge $e=(v_1,v_2)$, with $v_1$ in the cluster and $v_2$ not in the cluster, is defined to be proportional to $\phi(v_2)-\phi(v_1) = \phi(v_2)$.  We claim this is in turn proportional to the probability that a random walk started at $v$ first reaches the cluster via $e$ (which is the definition of harmonic measure we use for general graphs in this paper).  We sketch the proof of this standard observation here in this footnote.  On $\Z^2$, $\phi(v_2)$ is the probability that a random walk from $v_2$ reaches $v$ before the cluster boundary, i.e., $\phi(v_2) = \sum_P 4^{-|P|}$ where $P$ ranges over paths from $v_2$ to $v$ that do not hit the cluster or $v$ (until the end), and $|P|$ denotes path length.  Also, for each $P$, the probability that a walk from $v$ traces $P$ in the reverse direction and then immediately follows $e$ to hit the cluster is given by $4^{-|P|}/\mathrm{deg}(v)$.  Summing over $P$ proves that $\phi(v_2)$ is proportional to the probability that a walk starting from $v$ exits at $e$ without hitting $v$ a second time; this is in turn proportional to the overall probability that a walk from $v$ exits at $e$, which proves the claim. {\bf Variants:} One common variant is to consider the first time a walk from $v$ hits a cluster-adjacent vertex (instead of the first time it crosses a cluster-adjacent edge); this induces a harmonic measure on cluster-adjacent vertices and one may add new vertices via the $\eta$-th power of this measure.  The difference is analogous to the difference between site percolation and bond percolation.  Also, it is often natural to consider harmonic measure viewed from $\infty$ instead of from a fixed vertex $v$.}.
DBM includes some of the other models mentioned above as special cases: when $\eta = 0$, DBM is equivalent to the Eden model, and when $\eta = 1$, DBM is equivalent to diffusion limited aggregation (DLA), as noted in \cite{niemeyer1984fractal}.  Moreover, first passage percolation (FPP) is a growing family of metric balls in a metric space obtained by assigning i.i.d.\ positive weights to the edges of $G$ --- and when the law of the weights is exponential, FPP is equivalent (up to a time change) to the Eden model (see Section~\ref{subsubsection::eden}).

We would like to consider what happens when $G$ is taken to be a {\em random} graph embedded in the plane.  Specifically, instead of using $\Z^2$ or another deterministic lattice (which in some sense approximates the Euclidean structure of space) we will define the DBM on random graphs that in some sense approximate the random measures that arise in Liouville quantum gravity.

Liouville quantum gravity (LQG) was proposed in the physics literature by Polyakov in 1981, in the context of string theory, as a canonical model of a random two-dimensional Riemannian manifold \cite{MR623209, MR623210}, although it is too rough to be defined as a manifold in the usual sense.  By Riemann uniformization, any two-dimensional simply connected Riemannian manifold $\mathcal M$ can be conformally mapped to a planar domain~$D$.  If $\mu$ is the pullback to~$D$ of the area measure on $\mathcal M$, then the pair consisting of $D$ and $\mu$ completely characterizes the manifold $\mathcal M$.  One way to define an LQG surface is as the pair $D$ and $\mu$ with $\mu = e^{\gamma h(z)}dz$, where $dz$ is Lebesgue measure on $D$, $h$ is an instance of some form of the Gaussian free field (GFF) on $D$, and $\gamma \in [0,2)$ is a fixed parameter.  Since $h$ is a distribution, not a function, a regularization procedure is needed to make this precise \cite{ds2011kpz}.  It turns out that one can define the mean value of $h$ on a circle of radius $\epsilon$, call this $h_\epsilon(z)$, and then write $\mu = \lim_{\epsilon \to 0} \epsilon^{\gamma^2/2} e^{\gamma h_\epsilon(z)} dz$ \cite{ds2011kpz} (and a slightly different construction works when $\gamma=2$ \cite{duplantier2012critical,duplantier2012renormalization}).

Figure~\ref{fig::squaredecomposition} illustrates one way to tile $D$ with squares all of which have size of order $\delta$ (for some fixed $\delta>0$) in the random measure $\mu$.  Given such a tiling, one can consider a growth model on the graph whose vertices are the squares of this grid.  Another more isotropic approach to obtaining a graph from $\mu$ is to sample a Poisson point process with intensity given by some large multiple of $\mu$, and then consider the Voronoi tesselation corresponding to that point process.  A third approach, which we explain in more detail below, is to consider one of the {\em random planar maps} believed to converge to LQG in the scaling limit.

\begin{figure}
\vspace{-1.5cm}
\begin{center}
\subfloat[$\gamma=1/2$]{\includegraphics[width=0.49\textwidth]{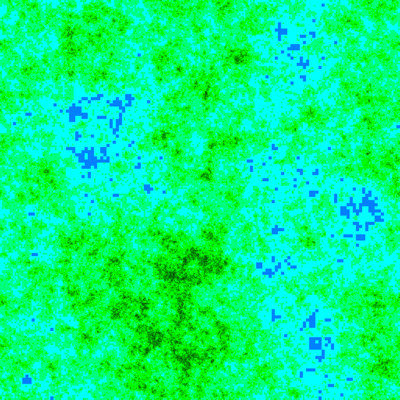}}
\hspace{0.01\textwidth}
\subfloat[$\gamma=1$]{\includegraphics[width=0.49\textwidth]{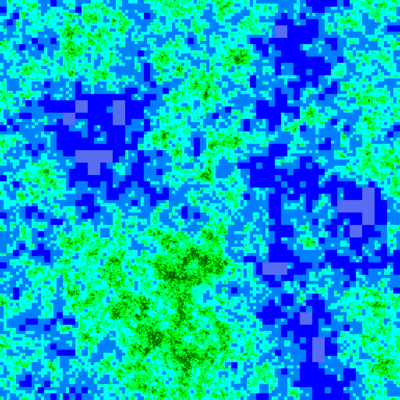}}

\subfloat[$\gamma=3/2$]{\includegraphics[width=0.49\textwidth]{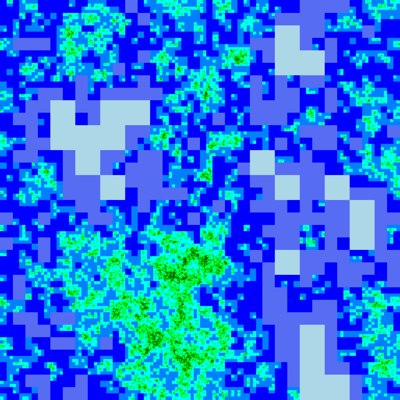}}
\hspace{0.01\textwidth}
\subfloat[$\gamma=2$]{\includegraphics[width=0.49\textwidth]{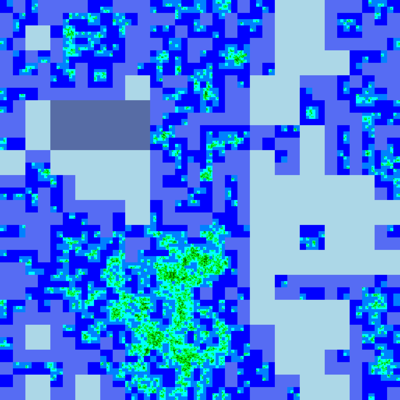}}

\subfloat[Number of subdivisions performed ranging from $0$ (left) to $12$ (right).]{\includegraphics[width=0.75\textwidth]{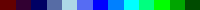}}
\end{center}
\vspace{-0.50cm}
\caption{\label{fig::squaredecomposition}{\small To construct the figures above, first an approximate $\gamma$-LQG measure $\mu$ was chosen by taking a GFF $h$ on a fine ($4096 \times 4096 = 2^{12} \times 2^{12}$) lattice and constructing the measure $e^{\gamma h(z)}dz$ where $dz$ is counting measure on the lattice (normalized so $\mu$ has total mass $1$).  Then a small constant $\delta$ is fixed (here $\delta = 2^{-16}$) and one divides the large square into four squares of equal Euclidean size, divides each of these into four squares of equal Euclidean size, etc., except that if any square's $\mu$-area is less than $\delta$, that square is not further divided.  Each square remaining in the end has $\mu$-area less than $\delta$, but the $\mu$-area of its diadic parent is greater than $\delta$.  Squares are colored by Euclidean size.}}
  \end{figure}

We are interested in all three approaches, but ultimately, the main purpose of this paper is to produce a {\em candidate} for the scaling limit of an $\eta$-DBM process on a $\gamma$-LQG surface (in the fine mesh, or $\delta \to 0$ scaling limit).  We expect that there is a universal scaling limit that does not depend on which approach we take (at least if the discrete setup is sufficiently isotropic; see the discussion in Section~\ref{subsubsec::DLAintro}).  Our goal is to show that (at least for some choices of $\gamma$ and $\eta$) there exists a process, which we call quantum Loewner evolution $\QLE(\gamma^2, \eta)$, that has the dynamic properties that we would expect a scaling limit to have.

For certain values of the parameters $\gamma^2$ and $\eta$, which are illustrated in Figure~\ref{fig::etavsgamma}, we will be able to explicitly describe a stationary law of the growth process in terms of quantum gravity.  We will see that this growth process is similar to SLE except that the point-valued ``driving function'' that one feeds into the Loewner differential equation to obtain SLE$_\kappa$ (namely $\sqrt \kappa$ times Brownian motion on a circle) is replaced by a {\em measure}-valued driving function $\nu_t$ whose stationary law corresponds to a certain boundary measure that appears in Liouville quantum gravity.  The time evolution of this measure is not nearly as easy to describe as the time evolution of Brownian motion, and making sense of this evolution is one of the main goals of this paper.

Let us explain this point a bit further.  We will fix $\gamma$ and an instance $h$ of a free boundary GFF (plus a deterministic multiple of the log function) on the unit disk $\D$.  We will interpret the pair $(\D, h)$ as a $\gamma$-LQG quantum surface and seek to define an increasing collection $(K_t)$ of closed sets, indexed by $t \in [0,T]$ for some $T$, starting with $K_0 = \partial \D$ and growing inward within $\D$ toward the origin.  We assume that each $K_t$ is a {\bf hull}, i.e., a subset of $\overline
\D$ whose complement is an open set containing the origin.  (Note that if a growth model grows outward toward infinity, one can always apply a conformal inversion so that the growth target becomes the origin.)  We will require that for each $t$ the set $K_t$ is a so-called {\em local set} of the GFF instance $h$.  This is a natural technical condition (see Section~\ref{sec::preliminaries}, or the more detailed treatment in \cite{ss2010contour}) that essentially states that altering $h$ on an open set $S \subseteq \D$ does not affect the way that $K_t$ grows {\em before} the first time that $K_t$ reaches $S$.  In order to describe these growing sets $K_t$, we will construct a solution to a type of differential equation imposed on a triple of processes, each of which is indexed by a time parameter $t \in [0,T]$, for some fixed $T>0$:
\begin{enumerate}
\item A measure $\nu$ on $[0,T] \times \partial \D$ whose first coordinate marginal is Lebesgue measure.  We write $\nu_t$ for the conditional probability measure (defined for almost all $t$) obtained by restricting $\nu$ to $\{ t \} \times \partial \D$.  Let $\CN_T$ be the space of measures $\nu$ of this type.
 \item A family $(g_t)$ of conformal maps $g_t \colon \D \setminus K_t \to \D$, where for each $t$ the set $K_t$ is a closed subset of $\D$ whose complement is a simply connected set containing the origin.  We require further that the sets $K_t$ are increasing, i.e.\ $K_s \subseteq K_t$ whenever $s \leq t$, and that for all $t \in [0,T]$ we have $g_t(0) = 0$ and $g_t'(0) = e^t$.  That is, the sets $(K_t)$ are parameterized by the negative $\log$ conformal radius of $\D \setminus K_t$ viewed from the origin.\footnote{$(-1)$ times the log of the conformal radius of $\D \setminus K_t$, viewed from the origin, is also called the {\em capacity} of the $K_t$ (though we caution that the term ``capacity'' has several other meanings in other contexts).}  Let $\CG_T$ be the space of families of maps $(g_t)$ of this type.
 \item A family $(\Fh_t)$ of harmonic functions on $\D$ with the property that $\Fh_t(0) = 0$ for all $t \in [0,T]$ and the map $[0,T] \times \D \to \R$ given by $(t,z) \to \Fh_t(z)$ is jointly continuous in $t$ and $z$.  Let $\CH_T$ be the space of harmonic function families of this type.
 \end{enumerate}
The differential equation on the triple $(\nu_t, g_t, \Fh_t)$ is a triangle of maps between the sets $\CN_T$, $\CG_T$, and $\CH_T$ that describes how the processes in the triple $(\nu_t, g_t, \Fh_t)$ are required to be related to each other, as illustrated in Figure~\ref{fig::QLEtriangle} and further explained below.  The triple involves a constant $\alpha$ that for now is unspecified.  The constant $\eta$ will actually emerge {\em a posteriori} as a scaling symmetry of the map from $\CH_T$ to $\CN_T$ that applies almost surely to the triples we construct.  We will see that when an LQG coordinate change is applied to $(\nu_t, \D)$ (a change that preserves quantum boundary length but changes harmonic measure viewed from zero) $\nu_t$ is locally rescaled by the derivative of the map to the $2+\eta$ power; Figure~\ref{fig::etascaling} explains heuristically why the scaling limit of $\eta$-DBM on a $\gamma$-LQG should have such a symmetry.  The definition of $\eta$ and its relationship to $\alpha$ will be explained in more detail in Section~\ref{subsec::mainresults} and Section~\ref{sec::continuum}.

\begin{figure}[ht!]
\begin{center}
\includegraphics[scale=0.85]{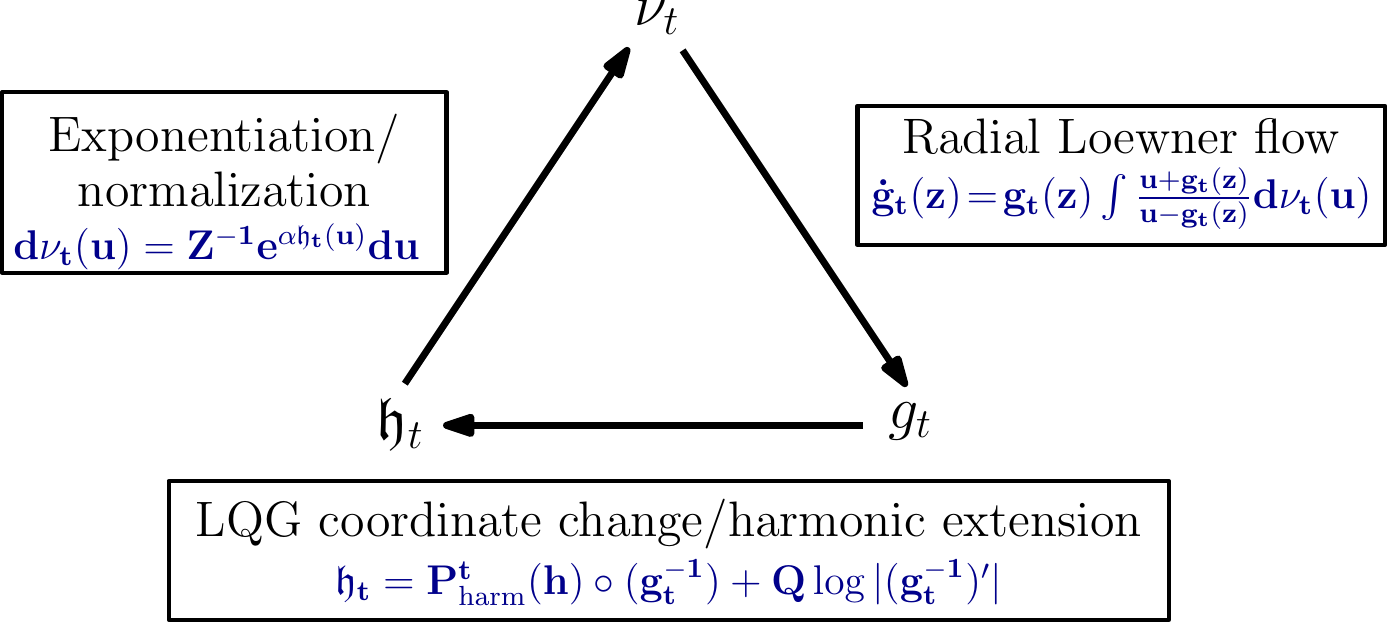}
\caption{\label{fig::QLEtriangle} Visual sketch of the differential equation for the $\QLE$ dynamics.  The map that takes the process $\nu_t$ to the process $g_t$ is the most straightforward to describe.  It is simply Loewner evolution, and works for any $\nu_t$ we would want to consider, see Theorem~\ref{thm::measurehullprocesscorrespondence}.  This is the ``differential'' part of the equation, since $\nu_t$ determines the time derivative of $g_t$. The map from $g_t$ to $\Fh_t$ is also fairly straightforward, assuming $h$ has been fixed in advance.  Here $\pHarm^t(h)$ is the harmonic extension of the values of $h$ from $\partial (\D \setminus K_t)$ to $\D \setminus K_t$.  (This notion is defined precisely in the case that $K_t$ is a local set in Section~\ref{sec::preliminaries}.)  We usually choose an additive constant for $\Fh_t$ so that $\Fh_t(0)=0$. Since the $\Fh_t$ of interest tend to blow up to $\pm \infty$ as one approaches $\partial \D$, a limiting procedure is required to make sense of the map from $\Fh_t$ to $\nu_t$.  One approach is to define a continuous approximation $\Fh_t^n$ to $\Fh_t$ using the first $n$ terms in the power series expansion of the analytic function with real part $\Fh_t$.  One can then let $\nu_t$ be the weak $n \to \infty$ limit of the measures $e^{\alpha \Fh^n_t(u)} du$ on $\partial \D$, normalized to be probability measures.  Such an approach makes sense provided that the process $\Fh_t$ almost surely spends almost all time on functions for which this limit exists.}
\end{center}
\end{figure}

\begin{enumerate}
\item $\CN_T \to \CG_T$:  the process $(g_t)$ is obtained as the radial Loewner flow driven by $(\nu_t)$,  as further explained in Section~\ref{subsec::measureloewner}.  It turns out (see Theorem~\ref{thm::measurehullprocesscorrespondence}) that Loewner evolution describes a one-to-one map from $\CN_T$ to $\CG_T$.
\item $\CG_T \to \CH_T$: for each $t$, the function $\Fh_t$ is obtained from $h$ by first letting $\pHarm^t(h)$ be the harmonic extension of the values of $h$ from $\partial (\D \setminus K_t)$ to $\D \setminus K_t$, and then letting $\Fh_t$ be the harmonic function on $\D$ defined by $\Fh_t = \pHarm^t(h) \circ (g_t^{-1}) + Q \log |(g_t^{-1})'|$, with the additive constant chosen to make $\Fh_t(0)=0$. (Here $Q = 2/\gamma + \gamma/2$ and the addition of $Q\log|(g_t^{-1})'|$ comes from the LQG coordinate change rule described in Section~\ref{subsubsec::LQGintro} below.)  Once $h$ is fixed, this step essentially describes a map from $\CG_T$ to $\CH_T$.  We say ``essentially'' because the harmonic extension $\pHarm^t(h)$ is not necessarily well-defined for {\em all} $h$ and $g_t$ pairs, but it {\em is} almost surely defined under the above-mentioned assumption that $K_t$ is local; see Section~\ref{sec::preliminaries} or \cite{ss2010contour}.
\item  $\CH_T \to \CN_T$: $\nu_t$ is obtained by exponentiating $\alpha \Fh_t$ on $\partial \D$, for a given value $\alpha$ (which depends on $\eta$ and $\gamma$).  Since the $\Fh_t$ we will be interested in are almost surely harmonic functions that blow up to $\pm \infty$ as one approaches $\partial \D$, we will have to use a limiting procedure: $d\nu_t = \lim_{n \to \infty} \CZ_{n,t}^{-1} e^{\alpha \Fh^n_t(u)}du$ where $du$ is Lebesgue measure on $\partial \D$ and $\Fh^n_t$ is (the real part of) the sum of the first $n$ terms in the power series expansion of the analytic function (with real part) $\Fh_t$, and $\CZ_{n,t} = \int_{\partial \D} e^{\alpha \Fh^n_t(u)}du$.  We would like to say that this step provides a map from $\CH_T$ to $\CN_T$, but in fact the map is only defined on the subset of $\CH_T$ for which these limits exist for almost all time.\footnote{Alternatively, one could define $\nu \in \CN_T$ as the weak $n \to \infty$ limit of the measures $\CZ_{n,t}^{-1} e^{\alpha \Fh^n_t(u)}dtdu$ on $[0,T] \times \partial \D$.   This limit could conceivably exist even in settings for which the $\nu_t$ did not exist for almost all $t$.  To avoid making any assumptions at all about limit existence, one could alternatively define a one-to-(possibly)-many map from each $\Fh_t$ process in $\CH_T$ to the set of {\em all} $\nu \in \CN_T$ obtained as weak $n \to \infty$ limit points of the sequence $\CZ_{n,t}^{-1} e^{\alpha \Fh^n_t(u)}dtdu$ of measures on $[0,T] \times \partial \D$.  With that approach, one might require only that $\nu$ be {\em one} of these limit points.  Although we do not address this point in this paper, we believe that it might be possible, using these alternatives, to extend the solution existence results of this paper to additional values of $\eta$ and $\gamma$.}
\end{enumerate}

\begin{figure}[ht!]
\begin{center}
\includegraphics[scale=0.85]{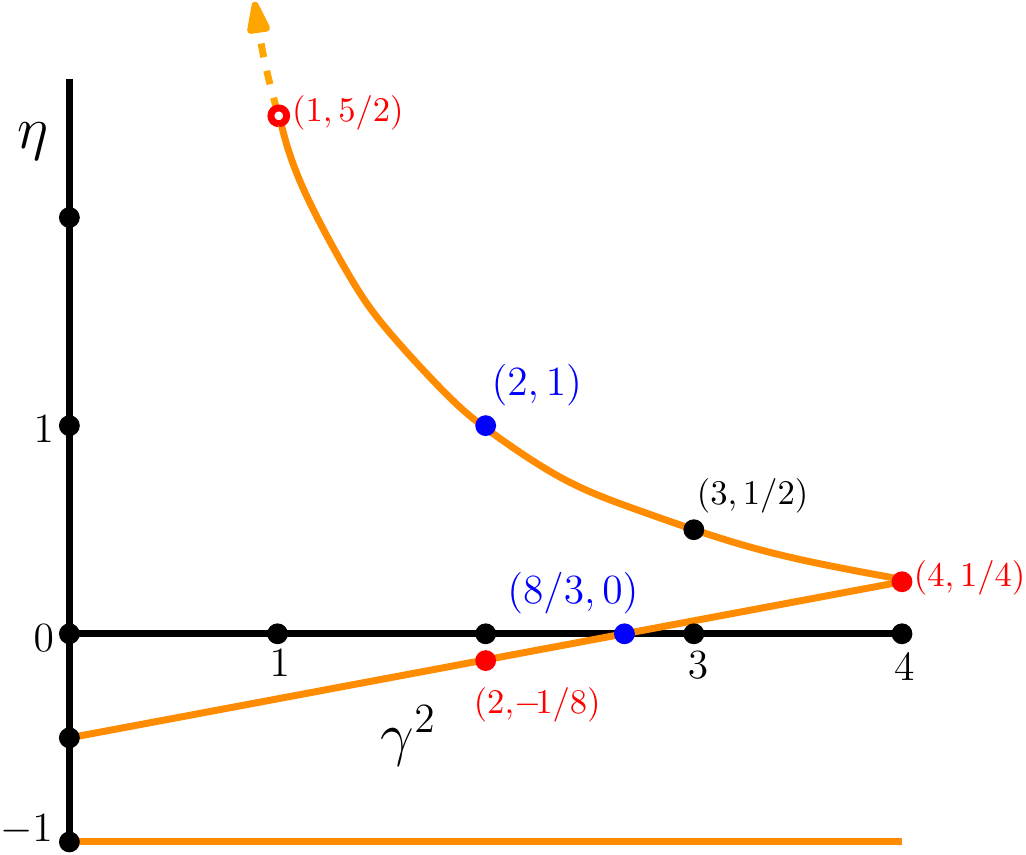}
\end{center}
\caption{\label{fig::etavsgamma}
The solid orange curves illustrate the $(\gamma^2,\eta)$ pairs for which we are able to construct and understand a $\QLE(\gamma^2,\eta)$ process most explicitly. The curves correspond to
$\eta \in \{-1 \, ,\, \, \frac{3\gamma^2}{16} - \frac12 \, ,\,\, \frac{3}{\gamma^2} - \frac12 \},$ where $\gamma^2 \in (0,4]$.  When $(\gamma^2,\eta)$ is on the middle curve, our construction involves radial $\SLE_\kappa$ with $\kappa = 16/\gamma^2$.  When $(\gamma^2, \eta)$ is on the upper curve, it involves radial $\SLE_\kappa$ with $\kappa = \gamma^2$.
The solid red dots are phase transitions of the $\SLE_\kappa$ curves used to construct $\QLE$: $(2,-1/8)$ corresponds to $\kappa=8$ and
$(4,1/4)$ corresponds to $\kappa=4$.
The point $(1,5/2)$ corresponds to $\kappa = 1$ and is a phase transition beyond which the $\QLE$ construction of this paper becomes trivial --- i.e., when $\kappa \leq 1$, the construction (carried out naively) produces a simple radial $\SLE$ curve independent of $h$ (and the measures $\nu_t$ are point masses for all $t$).  The blue dots are points we are especially interested in.  The point $(2,1)$ is related to DLA on spanning-tree-decorated random planar maps.  The point $(8/3,0)$ is related to the Eden model on undecorated random planar maps, and to the problem of endowing pure LQG with a distance function.
}
\end{figure}

\begin{figure}[ht!]
\begin{center}
\includegraphics[scale=0.85]{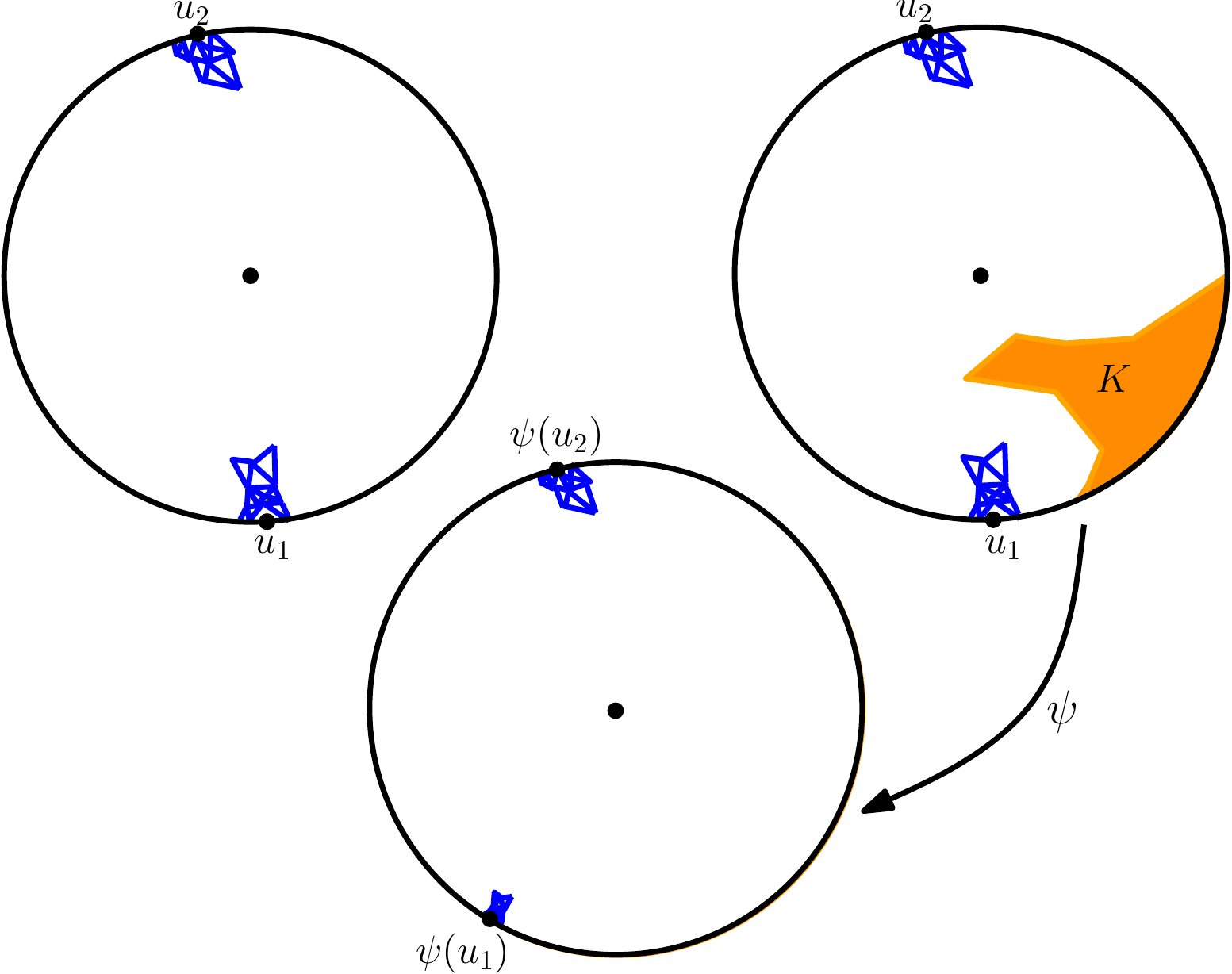}
\end{center}
\caption{\label{fig::etascaling}
{\bf Upper left:} Suppose a discrete random triangulation is conformally mapped to the disk, and the Eden model growing from the boundary inward takes about $N$ time units to absorb the cluster of triangles shown near $u_1$, and also about $N$ units to absorb the cluster near $u_2$.  (Other not-shown triangles scattered around the boundary are also added during that time.)
{\bf Upper right:} Now suppose that we modify the initial setup by designating the hull $K$ to be part of the boundary.  Intuitively, if the regions near $u_1$ and $u_2$ are small, this modification should not affect the {\em relative} rate at which growth happens near $u_1$ and $u_2$.  That is, there should be some $N'$ such that both clusters take about $N'$ steps to be absorbed.  {\bf Bottom:} A conformal map $\psi \colon \D \setminus K \to \D$ with $\psi(0)=0$ scales the region near $u_i$ by about $|\psi'(u_i)|$.  The capacity corresponding to the shown blue cluster near $\psi(u_i)$ is approximately $|\psi'(u_i)|^2$ times that of the original blue cluster near $u_i$.  This suggests that if $(\nu_t)$ is the driving measure in the bottom figure and $(\wt{\nu}_t)$ is the original driving measure in the upper left, and $I_i$ is a small interval about $u_i$, then $\nu_0(\psi(I_i))$ should be roughly proportional to $|\psi'(u_i)|^2 \wt{\nu}_0 (I_i)$.  In the $\eta$-DBM model, one replaces $|\psi'(u_i)|^2$ by $|\psi'(u_i)|^{2+\eta}$ because the rate at which particles reach $u_i$ should also change by a factor roughly proportional to $|\psi'(u_i)|^\eta$. }
\end{figure}

We remark that if we had $h=0$, then the triangle in Figure~\ref{fig::QLEtriangle} would say that $\Fh_t = Q \log|(g_t^{-1})'|$ and that $\nu_t$ is given (up to multiplicative constant) by $|(g_t^{-1}(u))'|^{\alpha Q} du$.  This is precisely the deterministic evolution associated with the DBM that is discussed, for example, in \cite{rohde2005some} (except that the exponent $\alpha Q$ given here is replaced by a single parameter $-\alpha$).  This deterministic evolution has some smooth trivial solutions (for example the constant circular growth given by letting $\nu$ be the uniform measure on $[0,T] \times \partial \D$, and taking $g_t(z) = e^t z$ and $\Fh_t(z) = 0$).  For these solutions, we would not need to use limits to construct $\nu_t$ from $\Fh_t$, since the measures $e^{\alpha \Fh_t(u)}du$ would be well defined.  However, if one starts with a generic harmonic function for $h$ that extends smoothly to $\partial \D$ (instead of simply $h = 0$) then the evolution can develop singularities in finite time, and once one encounters the singularities it is unclear how to continue the evolution; this issue and various regularization/approximation schemes to prevent singularity-formation are discussed in \cite{carleson2001aggregation, rohde2005some}.  Even in the $h=0$ case, Figure~\ref{fig::QLEtriangle} suggests an interesting alternative to the regularization approaches of \cite{carleson2001aggregation, rohde2005some}.  It suggests an {\em exact} (non-approximate) notion of what it means to be a solution to the dynamics that makes sense even when singularities are present; the approximation is only involved in making sense of the map from $\CH_T$ to $\CN_T$.  Since the real aim in the $h=0$ case is to define a natural probability measure on the space of fractal solutions to the dynamics (which should describe the scaling limit of DBM, at least in suitably isotropic formulations), one might hope that these solutions would have some nice properties (perhaps a sort of almost sure fractal self similarity, or long range approximate independence of $\Fh_t$ boundary values) that would allow the map from $\CH_T$ to $\CN_T$ to be almost surely well defined.

In this paper, we will take $h$ to be the GFF (plus a deterministic multiple of $\log| \cdot |$) and we will construct solutions to the dynamics of Figure~\ref{fig::QLEtriangle} for $\alpha$ and $Q$ values that correspond (in a way we explain later) to the $(\gamma^2, \eta)$ values that lie on the upper two curves in Figure~\ref{fig::etavsgamma}.  We will also argue that $\eta = -1$ corresponds to $\alpha = 0$, which yields a trivial solution corresponding to the bottom curve in Figure~\ref{fig::etavsgamma}.  We remark that although this solution is ``trivial'' in the continuum, the analogous statement about discrete graphs (namely that if a random planar map model, conformally mapped to the disk in some appropriate way, scales to LQG on the disk, then the $(-1)$-DBM on the random planar map has the dilating circle process as a scaling limit) is still an open problem.

We will produce non-trivial continuum constructions for (the solid portions of) the upper two curves in Figure~\ref{fig::etavsgamma} by taking subsequential limits of certain discrete-time approximate processes defined using a radial version of the quantum gravity zipper defined in \cite{sheffield2010weld}.  These approximate processes can themselves be interpreted as non-lattice-based variants of $\eta$-DBM on a $\gamma$-LQG surface that are designed to be isotropic and to have some extra conformal invariance symmetries (here one grows small portions of SLE curves instead of adding small particles of fixed Euclidean shape).  The similarity between our approximations and DBM seems to support the idea that (at least for some $(\gamma^2, \eta)$ pairs) the processes we construct are the ``correct'' continuum analogs of $\eta$-DBM on a $\gamma$-LQG surface.  The portion of the upper curve corresponding to $\gamma^2 \leq 1$ is degenerate in that the approximation procedure used to construct the process $\nu_t$, as described in Section~\ref{sec::existence}, would yield a point mass for almost all $t$ (although we will discuss this case in detail in this paper).

To each of these processes, we associate a discrete-time approximation of the triple $(\nu_t, g_t, \Fh_t)$, in which the time parameter takes values $0, \delta, 2\delta, \cdots$ for a constant $\delta$.  The most important property that these discrete-time processes have (which distinguishes them from, e.g., the Hastings-Levitov approximations described in \cite{hastings1998laplacian}) is that the stationary law of the $\nu_t$ and the $\Fh_t$ turn out to be {\em exactly} the same for each discrete-time approximation (even as the time step size varies).   This rather surprising property is what allows us to understand the stationary law of the $\delta \to 0$ limit (something that has never been possible, in the Euclidean $\gamma = 0 $ case, for DBM approximation schemes like Hastings-Levitov).  We find that the limiting stationary law is exactly the same as the common stationary law of the approximations, and this allows us to prove that the limit satisfies the dynamics of Figure~\ref{fig::QLEtriangle}, and to prove explicit results about this limit, which we state formally in Section~\ref{subsec::mainresults}.

We will see in Section~\ref{sec::discrete} that the procedure we use to generate the continuum process has discrete analogs, which give interesting relationships between percolation and the Eden model, and also between loop-erased random walk and DLA.  The reader who wishes to understand the key idea behind our construction (without delving into the analytical machinery behind the quantum zipper) might begin with Section~\ref{sec::discrete}.

Before we state our results more precisely, we present in Section~\ref{subsec::modelbackground} an overview of several of the models and mathematical objects that will be treated in this work.  We also present, in Figures~\ref{fig::triplegamma0.00} through~\ref{fig::triplegamma2.00}, computer simulations of the Eden model and DLA on $\gamma$-LQG square tilings such as those represented in Figure~\ref{fig::squaredecomposition}.  In each of these figures we have $\delta = 2^{-16}$ (as explained in the caption to Figure~\ref{fig::squaredecomposition}) which results in many squares of a larger Euclidean size (and hence a more pixelated appearance) for the larger $\gamma$ values.  Figures~\ref{fig::growingmetricball},~\ref{fig::large_metric2_squares}, and~\ref{fig::large_qdla2} show instances with larger $\gamma$ but smaller $\delta$ values.  Generally, the DLA simulations for larger $\gamma$ values appear to have characteristics in common with the $\gamma=0$ case, but there is more variability to the shapes when $\gamma$ is larger.  The large-$\gamma$, small-$\delta$ DLA simulations such as Figure~\ref{fig::large_qdla2} sometimes look a bit like Chinese dragons, with a fairly long and windy backbone punctuated by shorter heavily decorated limbs.

Figures~\ref{fig::large_metric2_seeds} and~\ref{fig::dlaseeds} show what happens when different instances of the Eden model or DLA are performed on top of the same instance of a LQG square decomposition.  These figures address an interesting question: how much of the shape variability comes from the randomness of the underlying graph, and how much from the additional randomness associated with the growth process?  We believe but cannot prove that in the Eden model case shown in Figure~\ref{fig::large_metric2_seeds}, the shape of the cluster is indeed determined, to first order (as $\delta$ tends to zero), by the GFF instance used to define the LQG measure.  The deterministic (given $h$) shape should be the metric ball in a canonical continuum metric space determined by the GFF.

On the continuum level, the authors are in the process of carrying out a program for using $\QLE(8/3,0)$ to endow a $\gamma = \sqrt{8/3}$ Liouville quantum gravity surface with metric space structure, and to show that the resulting metric space is equivalent in law to a particular random metric space called the Brownian map.  But this is not something we will achieve in this paper.  (We describe forthcoming works in more detail at the end of Section~\ref{sec::questions}.)

\begin{figure}[ht!]
\subfloat[Squares]{\includegraphics[width=0.32\textwidth]{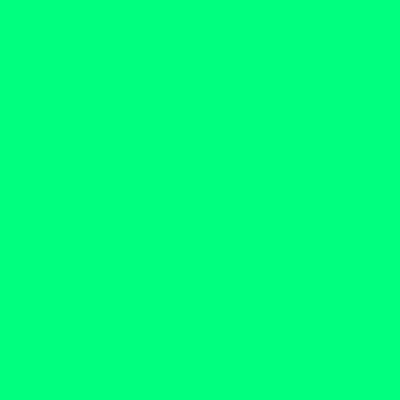}}
\hspace{0.01\textwidth}
\subfloat[Eden model]{\includegraphics[width=0.32\textwidth]{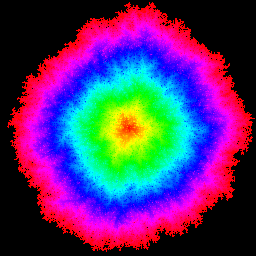}}
\hspace{0.01\textwidth}
\subfloat[DLA]{\includegraphics[width=0.32\textwidth]{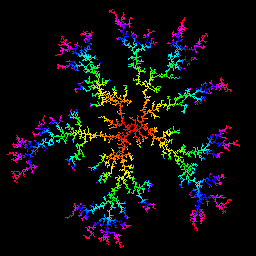}}
\caption{\label{fig::triplegamma0.00}  $\gamma = 0$}
\end{figure}

\begin{figure}[ht!]
\subfloat[Squares]{\includegraphics[width=0.32\textwidth]{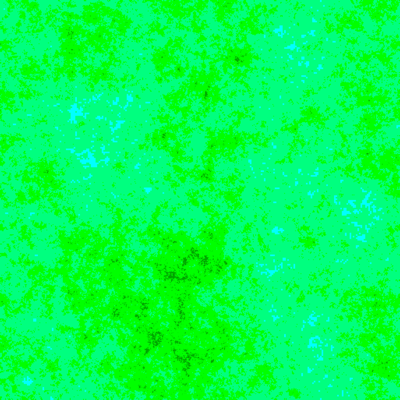}}
\hspace{0.01\textwidth}
\subfloat[Eden model]{\includegraphics[width=0.32\textwidth]{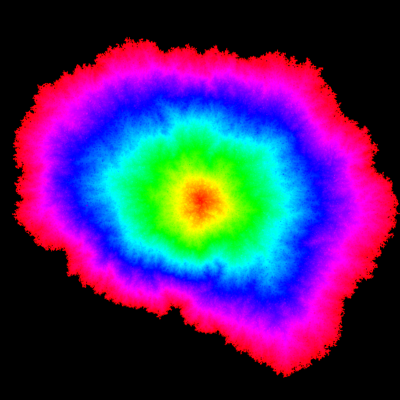}}
\hspace{0.01\textwidth}
\subfloat[DLA]{\includegraphics[width=0.32\textwidth]{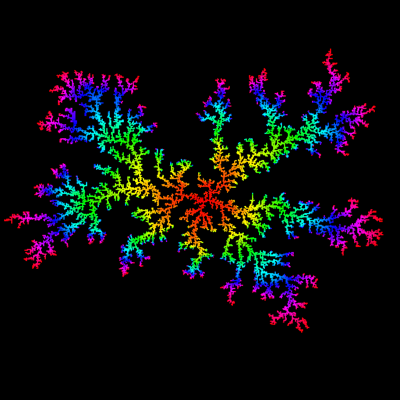}}
\caption{\label{fig::triplegamma0.25}$\gamma = 1/4$}
\end{figure}

\begin{figure}[ht!]
\subfloat[Squares]{\includegraphics[width=0.32\textwidth]{pictures/10-11-2013/small/resized/small_squares0_500000_0_062500_0_000000_0000_04096_0000.png}}
\hspace{0.01\textwidth}
\subfloat[Eden model]{\includegraphics[width=0.32\textwidth]{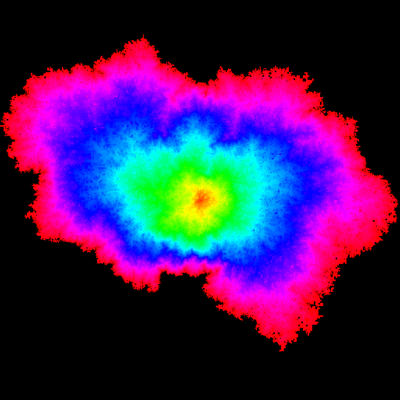}}
\hspace{0.01\textwidth}
\subfloat[DLA]{\includegraphics[width=0.32\textwidth]{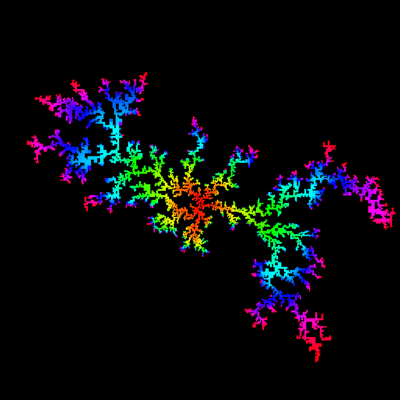}}
\caption{\label{fig::triplegamma0.50}$\gamma = 1/2$}
\end{figure}

\begin{figure}[ht!]
\subfloat[Squares]{\includegraphics[width=0.32\textwidth]{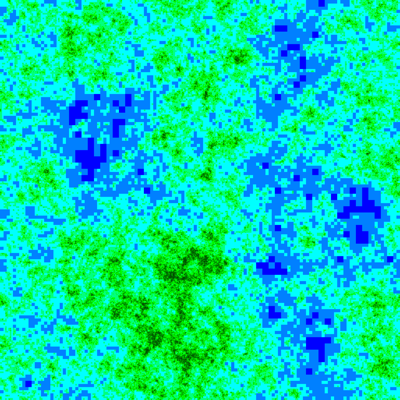}}
\hspace{0.01\textwidth}
\subfloat[Eden model]{\includegraphics[width=0.32\textwidth]{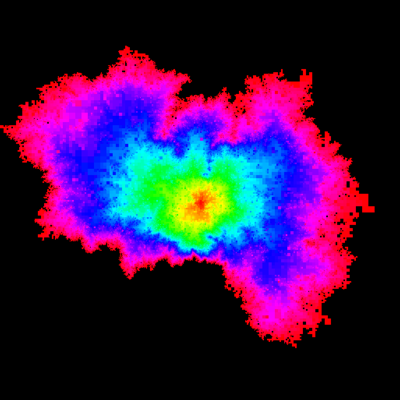}}
\hspace{0.01\textwidth}
\subfloat[DLA]{\includegraphics[width=0.32\textwidth]{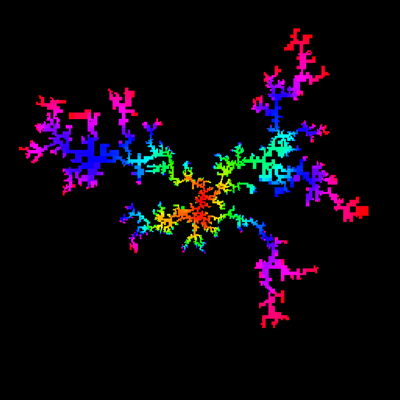}}
\caption{\label{fig::triplegamma0.75}$\gamma = 3/4$}
\end{figure}

\begin{figure}[ht!]
\subfloat[Squares]{\includegraphics[width=0.32\textwidth]{pictures/10-11-2013/small/resized/small_squares1_000000_0_062500_0_000000_0000_04096_0000.png}}
\hspace{0.01\textwidth}
\subfloat[Eden model]{\includegraphics[width=0.32\textwidth]{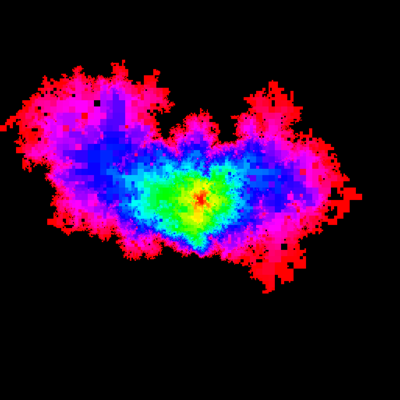}}
\hspace{0.01\textwidth}
\subfloat[DLA]{\includegraphics[width=0.32\textwidth]{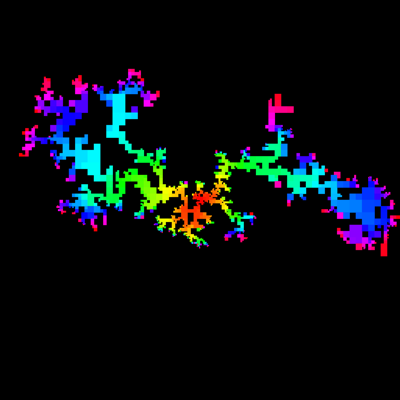}}
\caption{\label{fig::triplegamma1.00}$\gamma = 1$}
\end{figure}

\begin{figure}[ht!]
\subfloat[Squares]{\includegraphics[width=0.32\textwidth]{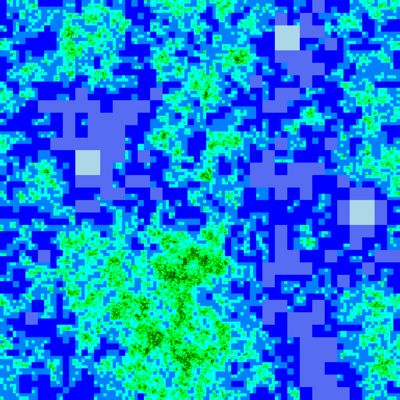}}
\hspace{0.01\textwidth}
\subfloat[Eden model]{\includegraphics[width=0.32\textwidth]{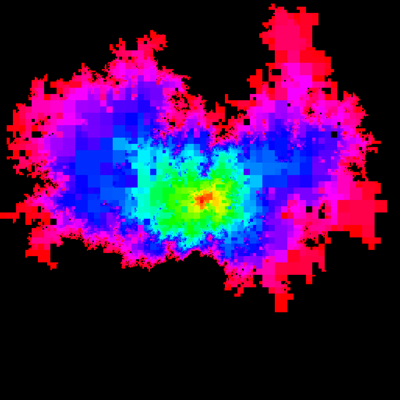}}
\hspace{0.01\textwidth}
\subfloat[DLA]{\includegraphics[width=0.32\textwidth]{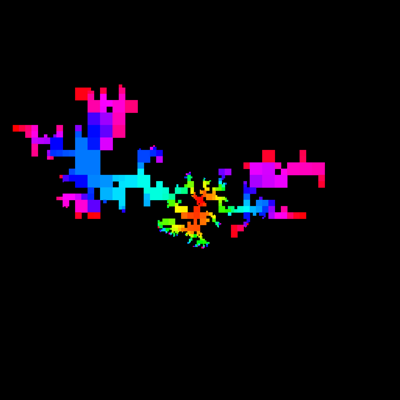}}
\caption{\label{fig::triplegamma1.25}$\gamma = 5/4$}
\end{figure}

\begin{figure}[ht!]
\subfloat[Squares]{\includegraphics[width=0.32\textwidth]{pictures/10-11-2013/small/resized/small_squares1_500000_0_062500_0_000000_0000_04096_0000.png}}
\hspace{0.01\textwidth}
\subfloat[Eden model]{\includegraphics[width=0.32\textwidth]{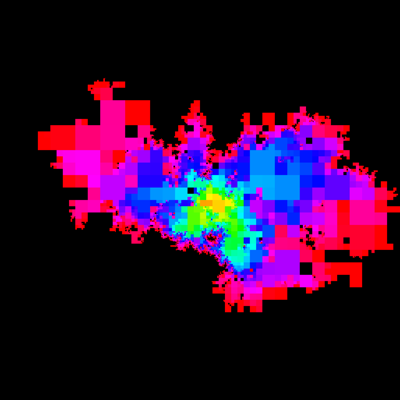}}
\hspace{0.01\textwidth}
\subfloat[DLA]{\includegraphics[width=0.32\textwidth]{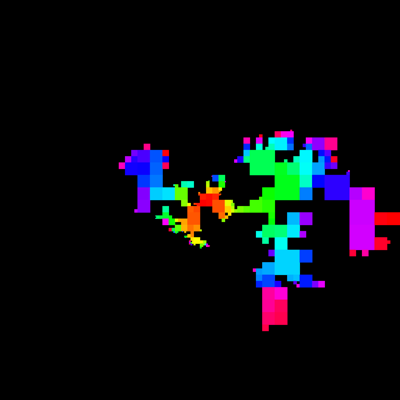}}
\caption{\label{fig::triplegamma1.50}$\gamma = 3/2$}
\end{figure}

\begin{figure}[ht!]
\subfloat[Squares]{\includegraphics[width=0.32\textwidth]{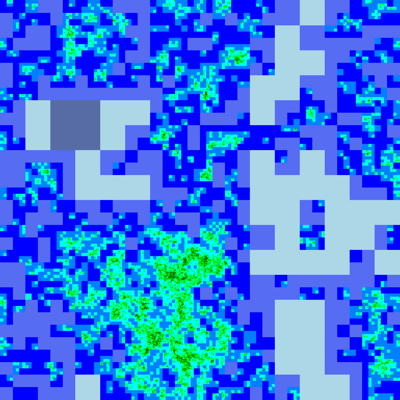}}
\hspace{0.01\textwidth}
\subfloat[Eden model]{\includegraphics[width=0.32\textwidth]{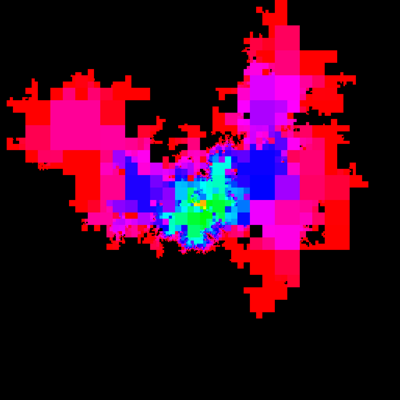}}
\hspace{0.01\textwidth}
\subfloat[DLA]{\includegraphics[width=0.32\textwidth]{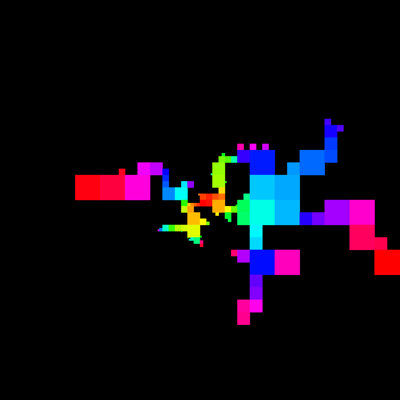}}
\caption{\label{fig::triplegamma1.75}$\gamma = 7/4$}
\end{figure}

\begin{figure}[ht!]
\subfloat[Squares]{\includegraphics[width=0.32\textwidth]{pictures/10-11-2013/small/resized/small_squares2_000000_0_062500_0_000000_0000_04096_0000.png}}
\hspace{0.01\textwidth}
\subfloat[Eden model]{\includegraphics[width=0.32\textwidth]{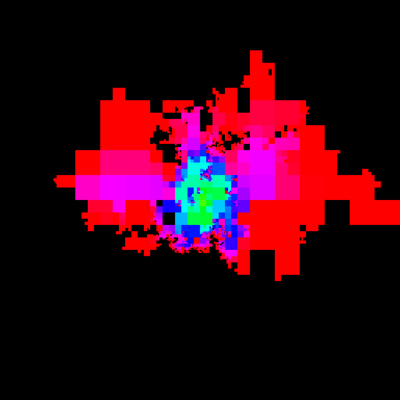}}
\hspace{0.01\textwidth}
\subfloat[DLA]{\includegraphics[width=0.32\textwidth]{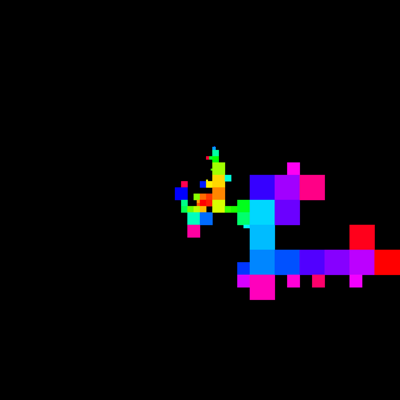}}
\caption{\label{fig::triplegamma2.00}$\gamma = 2$}
\end{figure}

\begin{figure}[ht!]
\vspace{-0.5cm}
\begin{center}
\subfloat[25\%]{\includegraphics[width=0.48\textwidth]{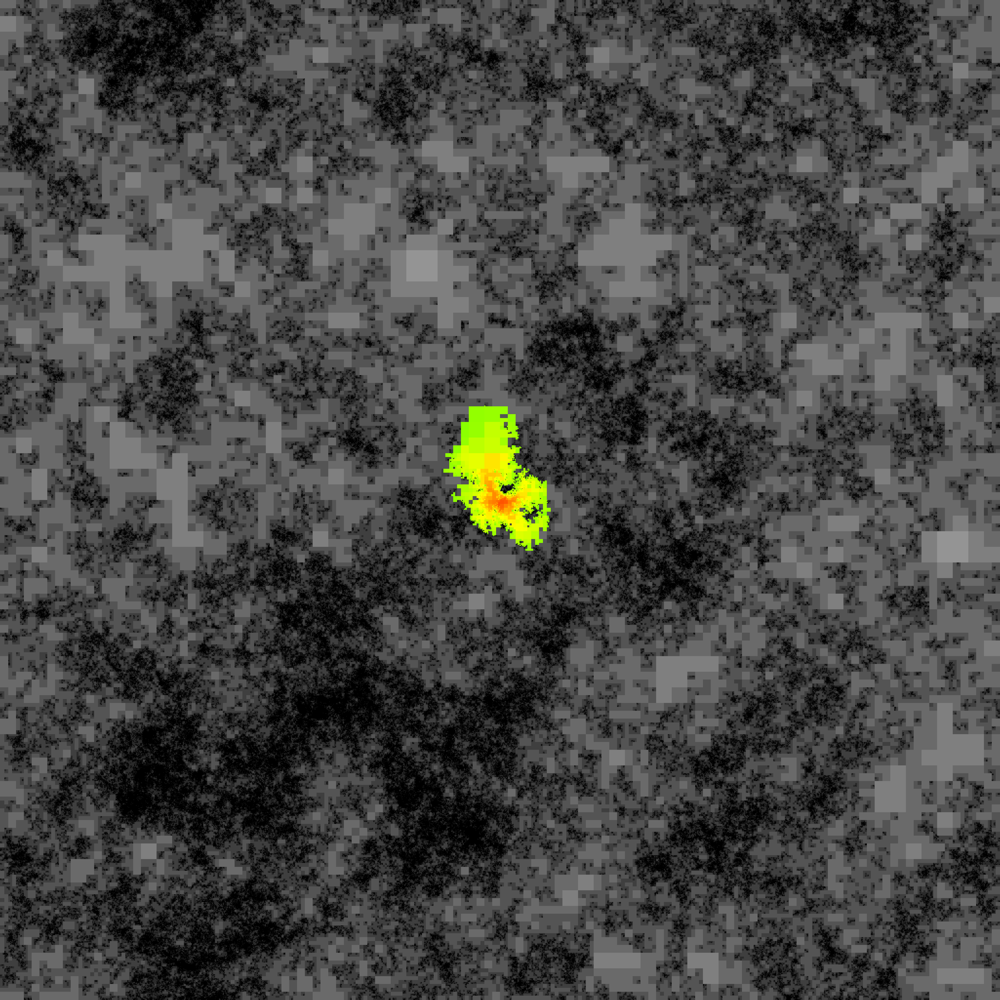}}
\hspace{0.017\textwidth}
\subfloat[50\%]{\includegraphics[width=0.48\textwidth]{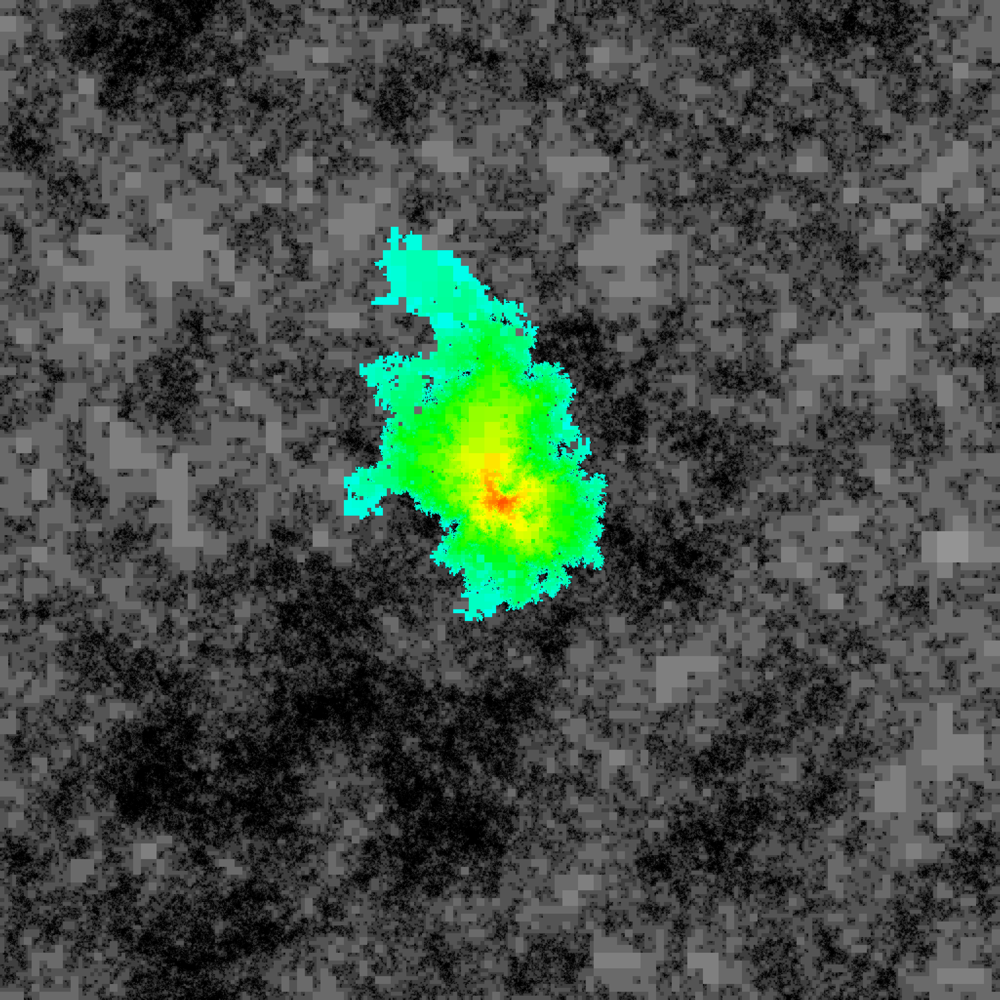}}

\subfloat[75\%]{\includegraphics[width=0.48\textwidth]{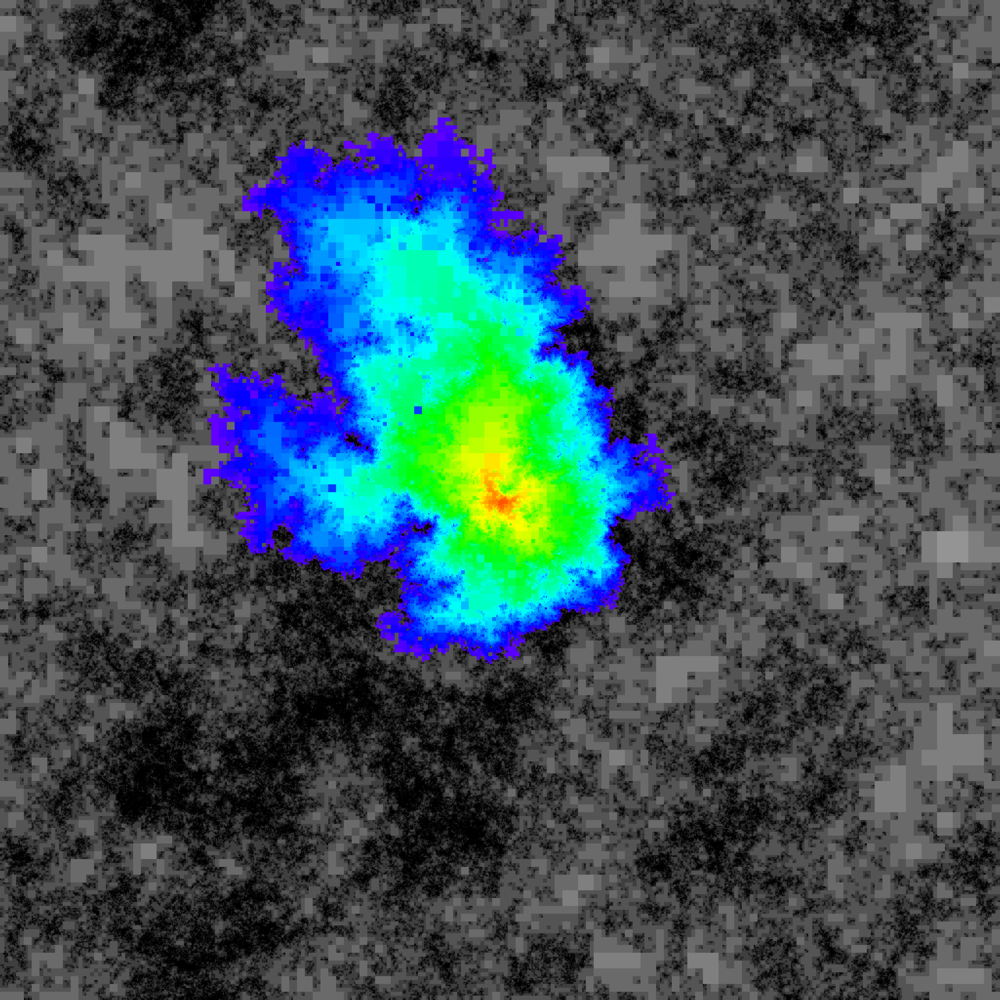}}
\hspace{0.017\textwidth}
\subfloat[100\%]{\includegraphics[width=0.48\textwidth]{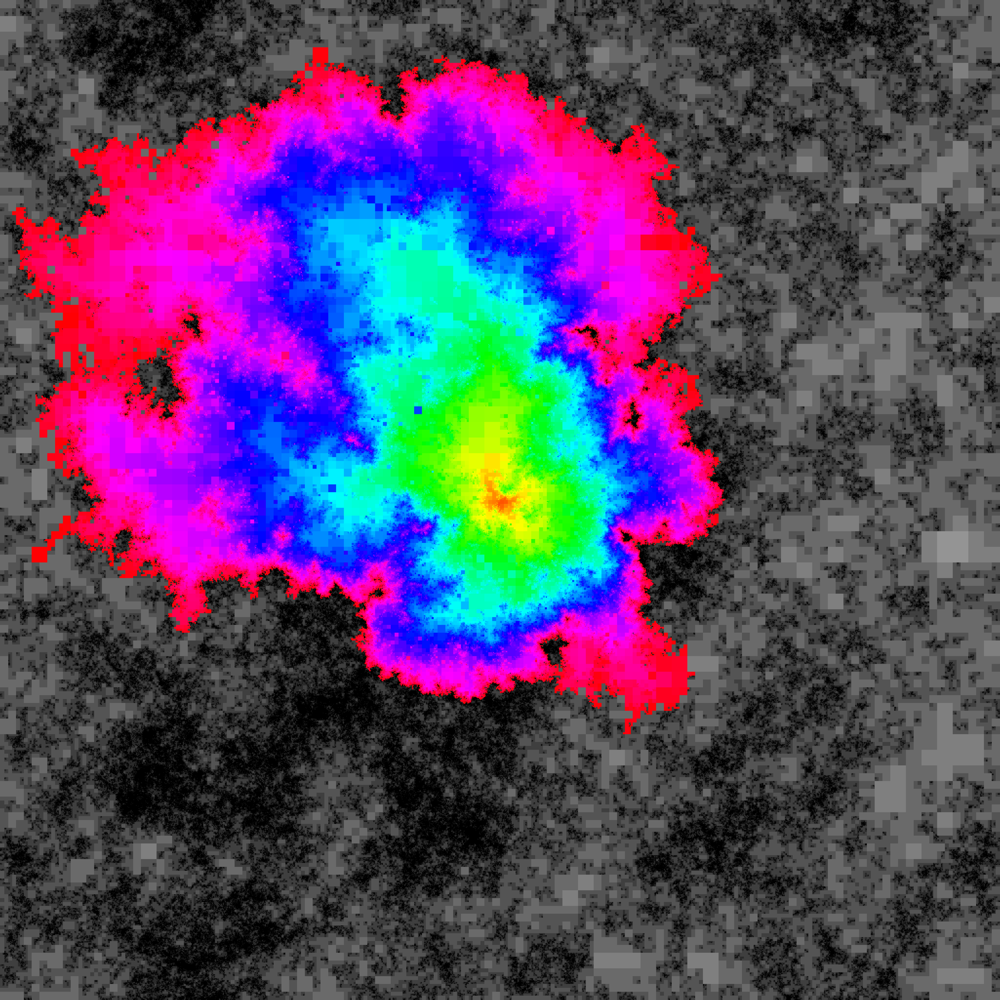}}

\hspace{0.025\textwidth}
\subfloat[Time-parameterization.]{\includegraphics[width=0.47\textwidth]{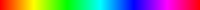}}
\end{center}
\vspace{-0.50cm}
\caption{\label{fig::growingmetricball} An Eden model instance on a $\sqrt{8/3}$-LQG generated with an $8192 \times 8192 = 2^{13} \times 2^{13}$ DGFF, where $\delta=2^{-24}$.  Shown in greyscale is the original square decomposition (squares of larger Euclidean size are colored lighter).  Using the scale shown above, the colors indicate the radius of the ball as it grows relative to the radius at which it first hits boundary of the square.  This simulation is a discrete analog of $\QLE(8/3,0)$. See also Figure~\ref{fig::large_metric2_squares} and Figure~\ref{fig::qletrace}.}
\end{figure}

\begin{figure}[ht!]
\begin{center}
\subfloat{\includegraphics[width=\textwidth]{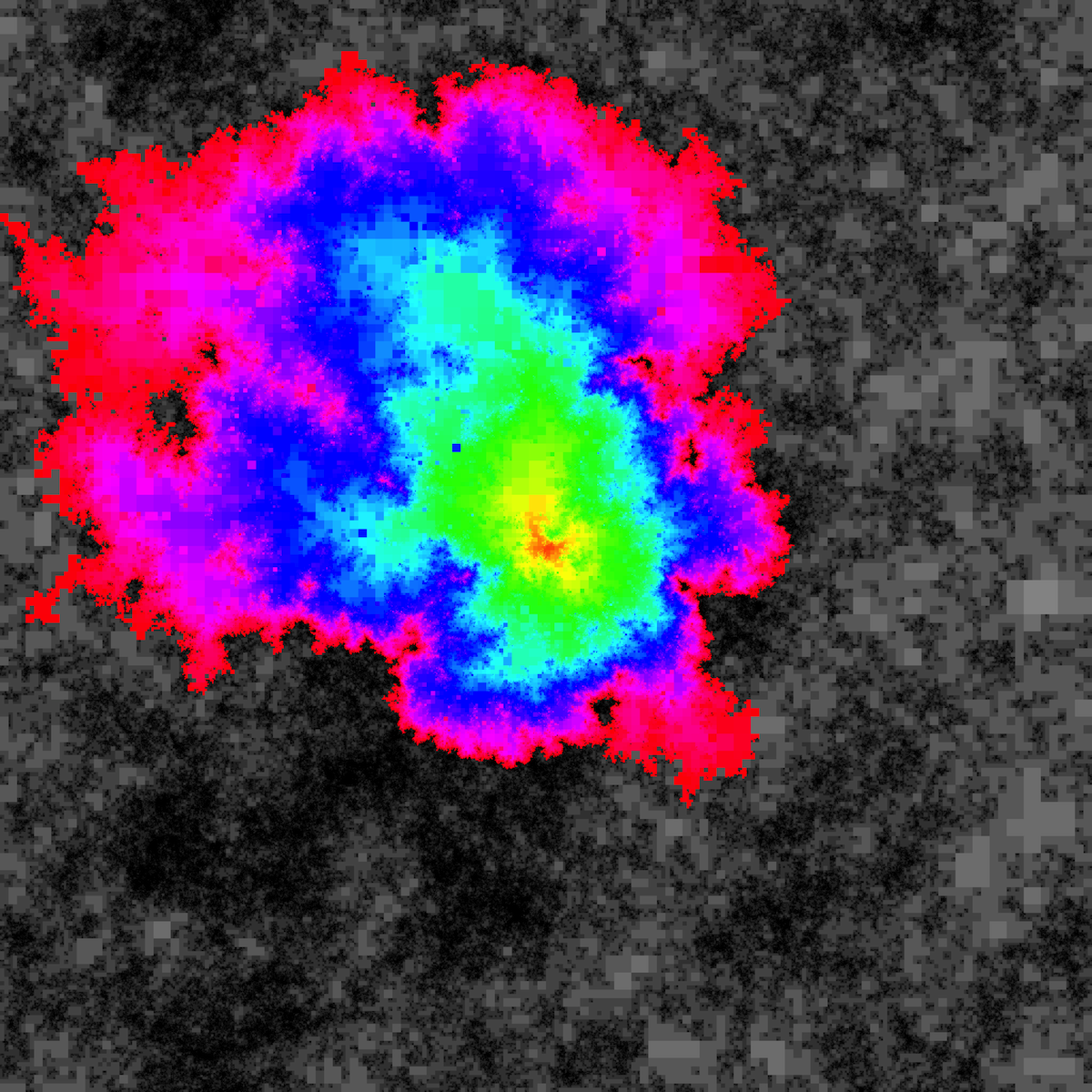}}

\subfloat[Time-parameterization.]{\includegraphics[width=0.47\textwidth]{figures/timescale.png}}
\end{center}
\vspace{-0.50cm}
\caption{\label{fig::large_metric2_squares} Enlargement of final box in Figure~\ref{fig::growingmetricball}.}
\end{figure}

\begin{figure}[ht!]
\vspace{-1cm}
\begin{center}
\subfloat{\includegraphics[width=\textwidth]{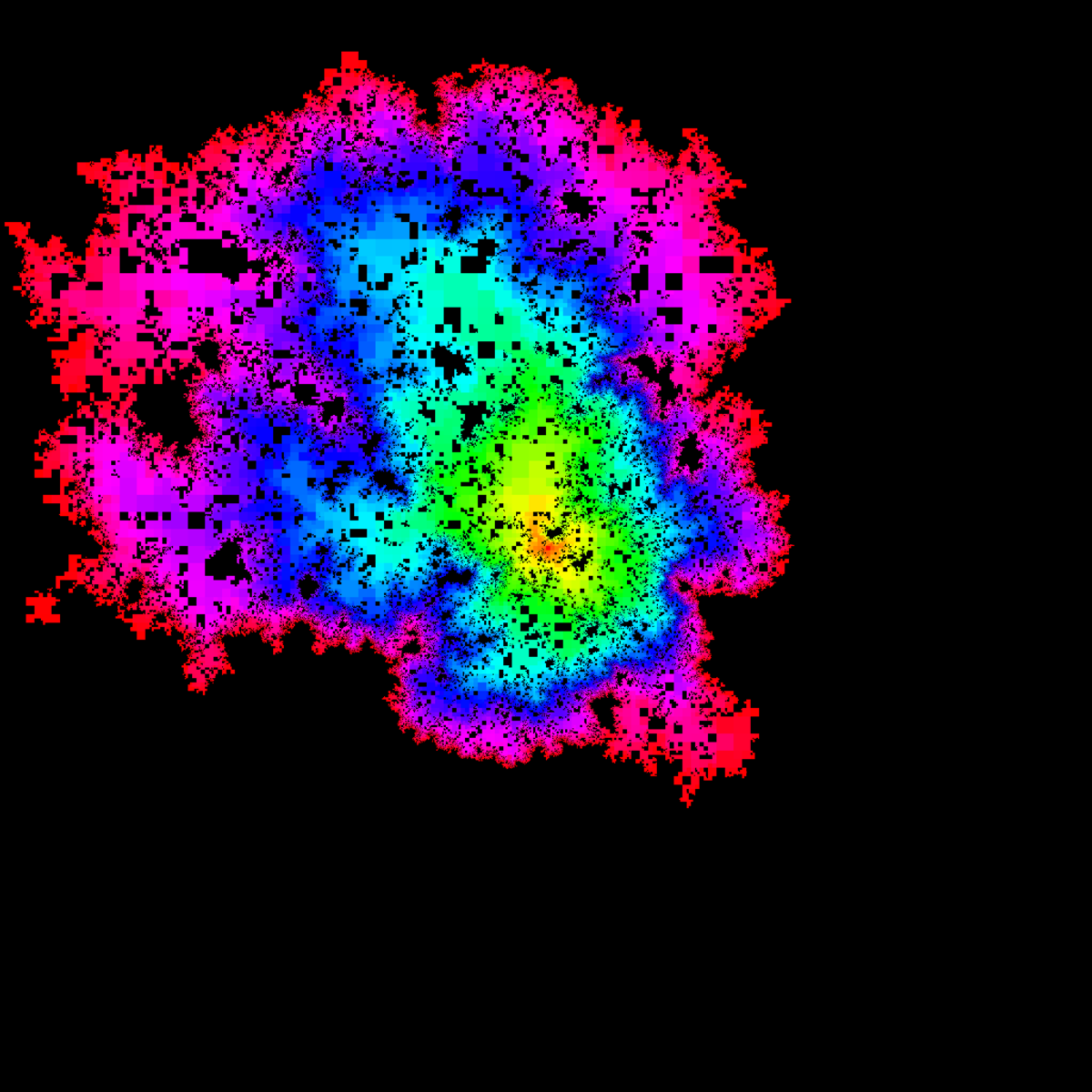}}

\subfloat[Time-parameterization.]{\includegraphics[width=0.47\textwidth]{figures/timescale.png}}
\end{center}
\vspace{-0.50cm}
\caption{\label{fig::qletrace} Eden model as in Figure~\ref{fig::large_metric2_squares} except that one only adds squares {\em on the outside} (i.e., reachable by paths from infinity that don't pass through the cluster).  The cluster appears to tend to hit regions with big squares but circumvent regions with tiny squares.  The number of colored squares is $213061 \approx 2^{17.7}$, and each has $\mu$ mass less than a $\delta = 2^{-24}$ fraction of the total, with one caveat: our simulation did not further subdivide the tiny $2^{-13} \times 2^{-13}$ squares, so these can have mass greater than a $2^{-24}$ fraction of the total.  There are $3008224 \approx 2^{21.5}$ squares (colored and non-colored) in this figure, and most of the $\mu$ mass lies in the tiny ones.}
\end{figure}

\begin{figure}[ht!]
\begin{center}
\subfloat{\includegraphics[width=\textwidth]{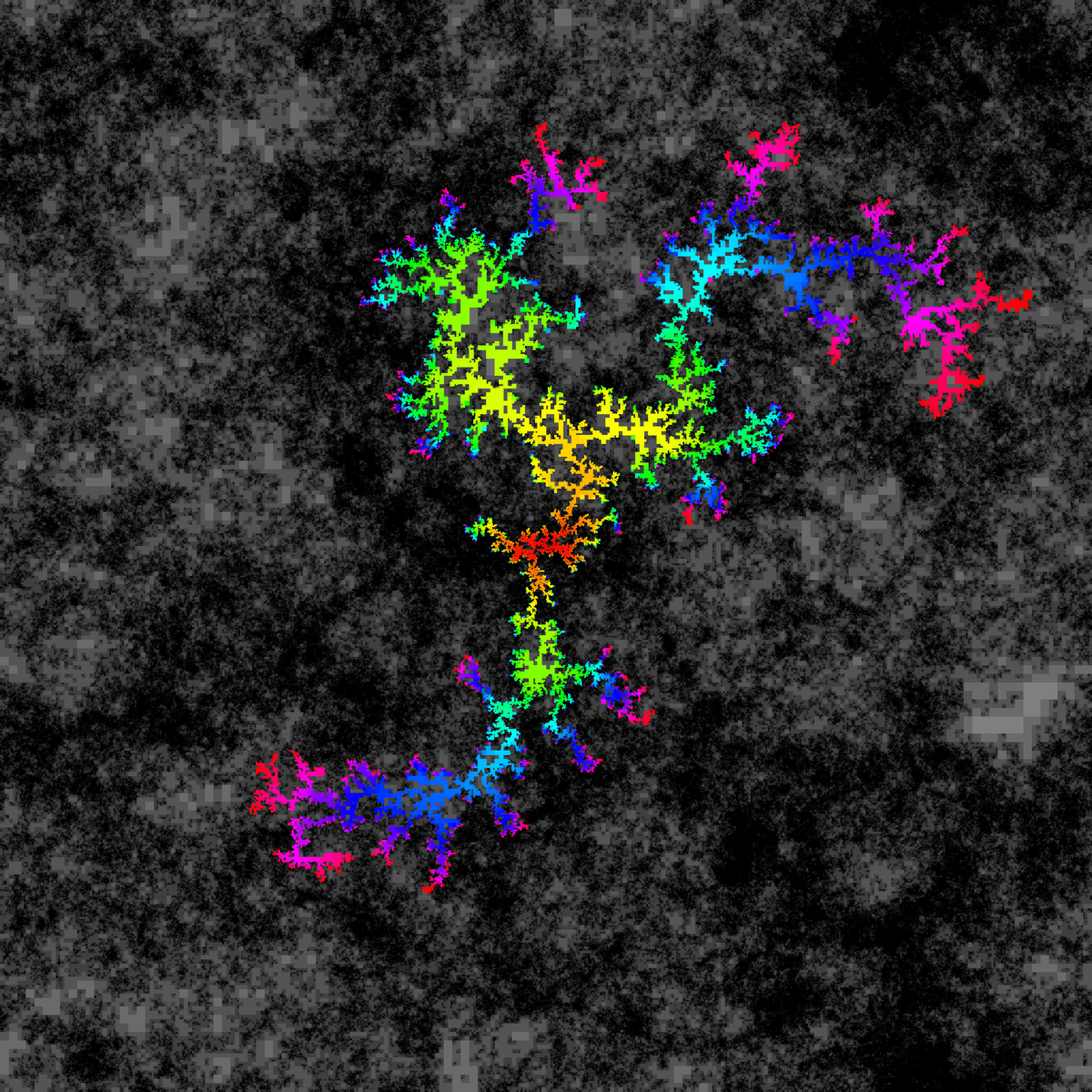}}

\subfloat[Time-parameterization.]{\includegraphics[width=0.47\textwidth]{figures/timescale.png}}
\end{center}
\vspace{-0.50cm}
\caption{\label{fig::large_qdla2}DLA on a $\sqrt{2}$-LQG generated with a $8192 \times 8192 = 2^{13} \times 2^{13}$ DGFF, with $\delta=2^{-26}$.  Time is parameterized by the ratio of the number of particles in the cluster over the number required for it to reach the concentric circle inside of the square and is indicated using the color scale shown above.  This simulation is a discrete analog of $\QLE(2,1)$.}
\end{figure}

\subsection{Background on several relevant models} \label{subsec::modelbackground}
\subsubsection{First passage percolation and Eden model} \label{subsubsection::eden}
The Eden growth model was introduced by Eden in 1961 \cite{eden1961two}.  One constructs a randomly growing sequence of clusters $C_n$ within a fixed graph $G = (V,E)$ as follows: $C_0$ consists of a single deterministic initial vertex $v_0$, and for each $n \in \N$, the cluster $C_n$ is obtained by adding one additional vertex to $C_{n-1}$.  To obtain this vertex, one samples uniformly from the set of edges that have exactly one endpoint in $C_{n-1}$, and adds the endpoint of this edge that does not lie in $C_{n-1}$.

First passage percolation (FPP) in turn was introduced by Hammersley and Welsh in 1965 \cite{hammersley1965first}.  We can construct a random metric on the vertices of the graph $G$ obtained by weighting all edges of $G$ with i.i.d.\ positive weights; the distance between any two vertices is defined to be the infimum, over all paths between them, of the weight sum along that path.  We can then let $C_t$ be the set of all vertices whose distance from an initial vertex $v_0$ is at most $t$.  If we think of the weight of an edge as representing the amount of time it takes a fluid to ``percolate across'' the edge, and we imagine that a fluid source is hooked up to a vertex $v_0$ at time $0$, then $C_t$ represents the set of vertices that ``get wet'' by time $t$.  It is instructive to think of $C_t$ as a growing sequence of balls in a random metric space obtained from the ordinary graph metric on $G$ via independent local perturbations.

For a discrete time parameterization of FPP, we can instead let $C_n$ be the set containing $v_0$ and the $n$ vertices that are closest to $v_0$ in this metric space.  When the law of the weight for each edge is that of an exponential random variable, it is not hard to see (using the ``memoryless'' property of exponential random variables) that the sequence of clusters $C_n$ obtained this way agrees in law with the sequence obtained in the Eden growth model.

An overall shape theorem was given by Cox and Durrett in \cite{MR624685} in 1981, which dealt with general first passage percolation on the square lattice and showed that under mild conditions on the weight distribution (which are satisfied in the case of exponential weights described above) the set $t^{-1} C_t$ converges to a deterministic shape (though not necessarily exactly a disk) in the limit.
Vahidi-Asl and Wierman proved an analogous result for first passage percolation on the Voronoi tesselation (and the related ``Delaunay triangulation'') later in the early $1990$'s \cite{MR1094141, MR1166620} and showed that in this case the limiting shape is actually a ball.

With a very quick glance at Figure~\ref{fig::triplegamma0.00}, one might guess that the limiting shape of the Eden model (whose existence is guaranteed by the Cox and Durrett theorem mentioned above) is circular; but early and subsequent computer experiments suggest that though the deterministic limit shape is ``roundish'' it is not exactly circular (e.g., \cite{freche1985surface, batchelor1991limits}).

The fluctuations of $t^{-1} C_t$ away from the boundary of the deterministic limit are of smaller order; with an appropriate rescaling, they are believed to have a scaling limit closely related to the KPZ equation introduced by Kardar, Parisi, and Zhang in 1986 \cite{kardar1986dynamic}.  Indeed, understanding growth models of this form was the original motivation for the KPZ equation \cite{kardar1986dynamic}, see Section~\ref{subsubsec::kardarparisizhang}.

\begin{figure}[ht!]
\begin{center}
\subfloat{\includegraphics[width=0.32\textwidth]{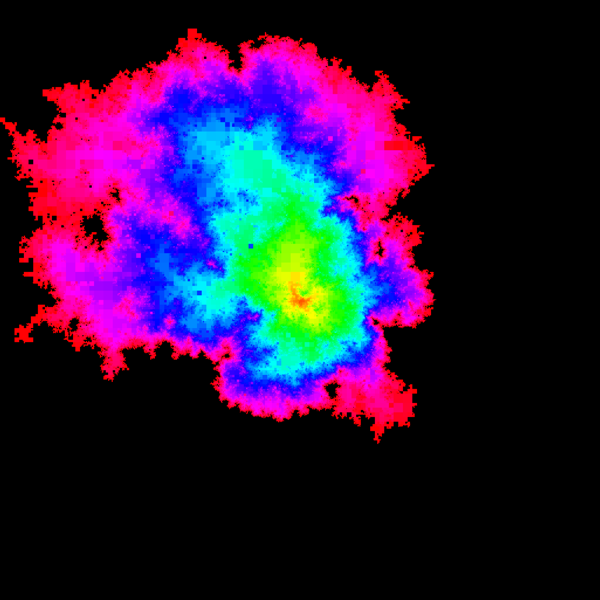}}
\hspace{0.01\textwidth}
\subfloat{\includegraphics[width=0.32\textwidth]{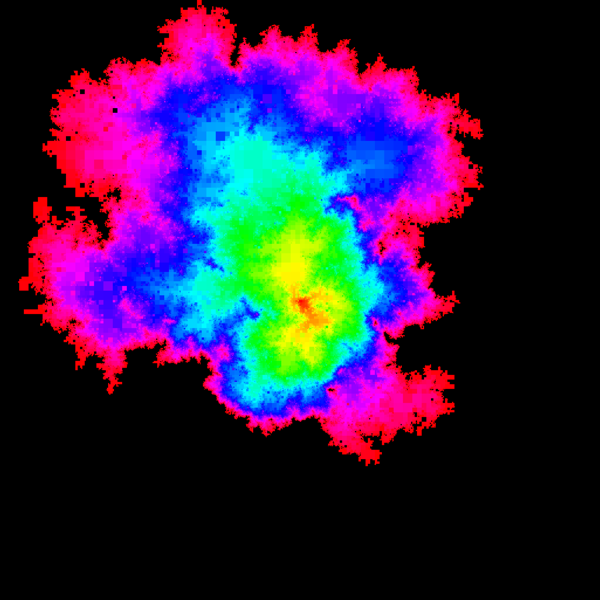}}
\hspace{0.01\textwidth}
\subfloat{\includegraphics[width=0.32\textwidth]{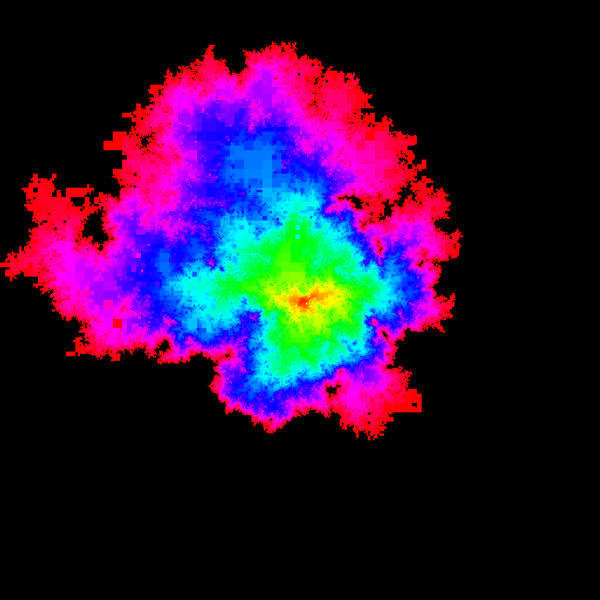}}

\subfloat{\includegraphics[width=0.32\textwidth]{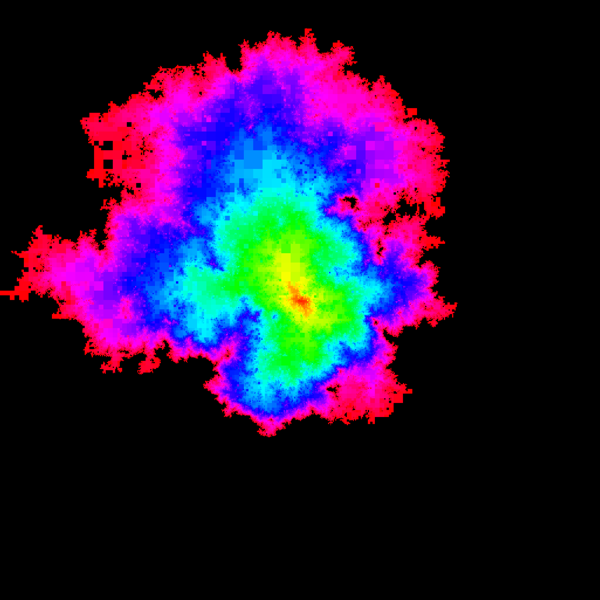}}
\hspace{0.01\textwidth}
\subfloat{\includegraphics[width=0.32\textwidth]{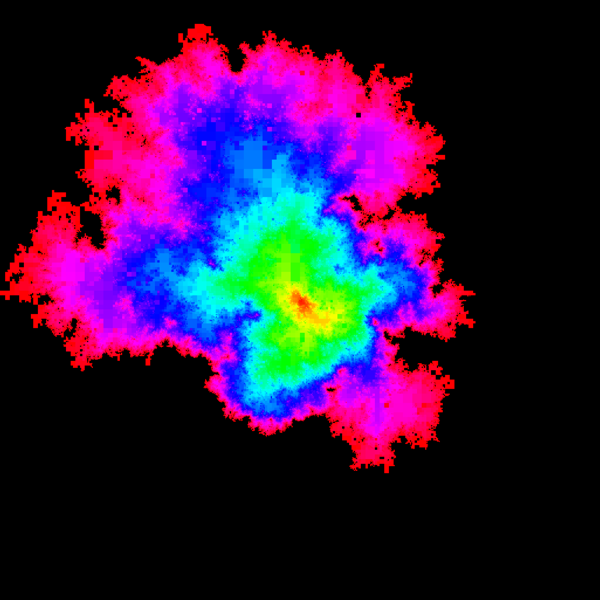}}
\hspace{0.01\textwidth}
\subfloat{\includegraphics[width=0.32\textwidth]{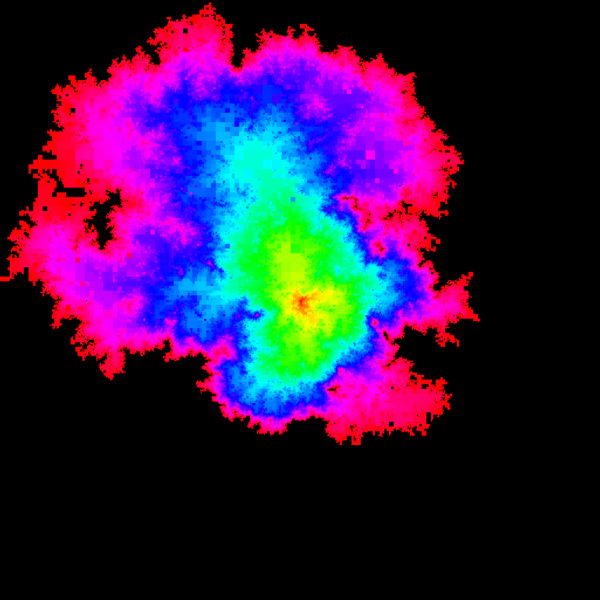}}

\subfloat{\includegraphics[width=0.32\textwidth]{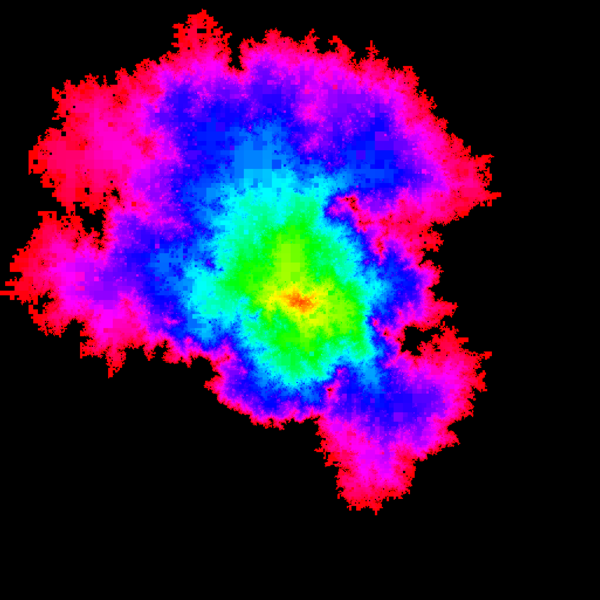}}
\hspace{0.01\textwidth}
\subfloat{\includegraphics[width=0.32\textwidth]{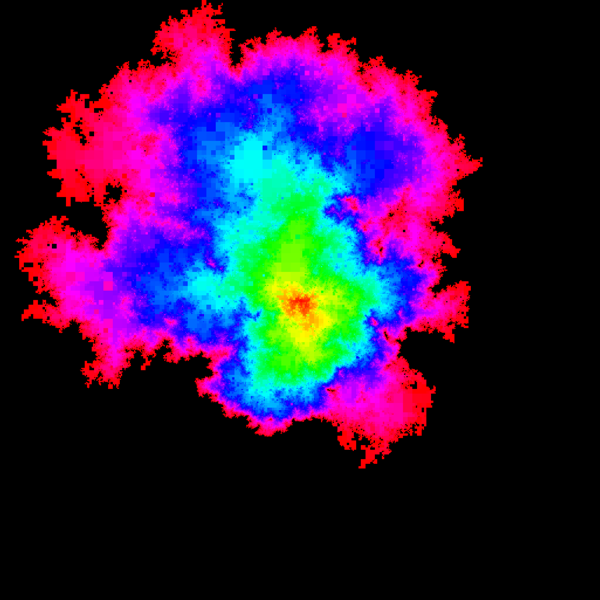}}
\hspace{0.01\textwidth}
\subfloat{\includegraphics[width=0.32\textwidth]{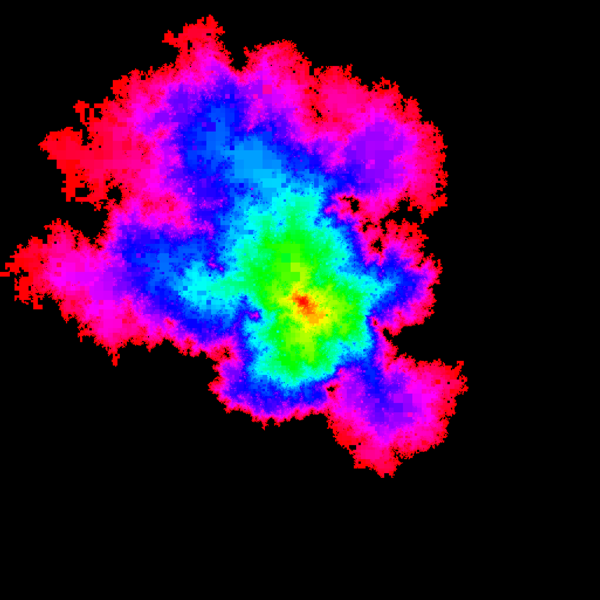}}
\end{center}
\caption{\label{fig::large_metric2_seeds} Different instances of the Eden model drawn on the square tiling shown in Figure~\ref{fig::large_metric2_squares}.  We expect that given an instance $h$ of the GFF (which determines the square decomposition for all $\delta$), it is almost surely the case that the shapes converge in probability to a limiting shape (depending only on $h$) as $\delta \to 0$.  The KPZ dynamics are conjecturally related to the fluctuations from the limit shape when $\gamma=0$ and $\delta$ tends to zero.  We do not have an analog of this conjecture for general $\gamma$.}
\end{figure}

\subsubsection{Diffusion limited aggregation (DLA)}
\label{subsubsec::DLAintro}

Diffusion limited aggregation (DLA) was introduced by Witten and Sander in 1981 and has been used to explain the especially irregular ``dendritic'' growth patterns found in coral, lichen, mineral deposits, and crystals \cite{witten1981diffusion, witten1983diffusion}.\footnote{Note that here (and throughout the remainder of this paper) we use the term DLA alone to refer to {\em external DLA}.  The so-called {\em internal DLA} is a growth process introduced by Meakin and Deutch in 1986 \cite{meakin1986formation} to explain the especially smooth growth/decay patterns associated with electropolishing, etching, and corrosion.  Internal DLA clusters grow spherically with very small (log order) fluctuations, much smaller than the fluctuations observed for the Eden model on a grid.  Although external DLA has had more attention in the physics literature, there has been much more mathematical progress on internal DLA, beginning with works by Diaconis and Fulton and by Lawler, Bramson, and Griffeath from the early 1990's \cite{MR1218674, lawler1992internal}.   More recently, the second author was part of an IDLA paper series with Levine and Jerison that describes the size and nature of internal DLA fluctuations in great detail and relates these fluctuations to a variant of the GFF \cite{MR2833484, 2010arXiv1012.3453J, 2011arXiv1101.0596J}, see also \cite{asselah2013logarithmic,asselah2013sublogarithmic}.}  Sander himself wrote a general overview of the subject in 2000 \cite{sander2000diffusion}; see also the review \cite{halsey2000diffusion}.

The most famous and substantial result about planar external DLA to date is Kesten's theorem \cite{kesten1987hitting}, which states that the diameter of the DLA cluster obtained after $n$ particles have been added almost surely grows asymptotically at most as fast as $n^{2/3}$.  Another way to say this is that by the time DLA reaches radius $n$ (for all $n$ sufficiently large), there are least $n^{3/2}$ particles in the cluster.  This seems to suggest (though it does not imply) that any scaling limit of DLA should have dimension at least $3/2$.\footnote{In his 2006 ICM paper, Schramm discussed the problem of understanding DLA on $\Z^2$ and wrote that Kesten's theorem ``appears to be essentially the only theorem concerning two-dimensional DLA,
though several very simplified variants of DLA have been successfully analysed'' \cite{MR2334202}.}  Although there is an enormous body of research on the behavior of DLA simulations, even the most basic questions about the scaling limit of DLA (such as whether the scaling limit is space-filling, or whether the scaling limit has dimension greater than $1$) remain unanswered mathematically.

The effects of lattice anisotropy on DLA growth also remain mysterious.  We mentioned above that limit shapes for FPP and Eden clusters need not be exactly round --- the anisotropy of the lattice can persist in the limit.  Intuitively, this makes sense: there is no particular reason, on a grid, to expect the rate of growth in the vertical direction to be exactly the same as the rate of growth in a diagonal direction.  In the case of DLA, effects of anisotropy can be rather subtle, and it is hard to detect anything anisotropic from a glance at a DLA cluster like the one in Figure~\ref{fig::triplegamma0.00}.  Nonetheless, simulations suggest that anisotropy may also affect scaling limits for DLA (perhaps by decreasing the overall scaling limit dimension from about 1.7 to about 1.6).  One recent overview of the scaling question (with many additional references) appears in \cite{menshutin2012scaling}, and effects of anisotropy are studied in \cite{menshutin2011morphological}.  There is some simulation-based evidence for universality among different isotropic ``off-lattice'' formulations of DLA (which involve differently-shaped dust particles performing Brownian motion until they attach themselves to a growing cluster) \cite{li2012diffusion}.  There is also some evidence that different types of isotropic models (such as DLA and the so-called viscous fingering) have common scaling limits \cite{mathiesen2006universality}.  Meakin proposed already in 1986 that off-lattice DLA and DLA on systems with five-fold or higher symmetry belong to one universality class, while DLA on systems with lower symmetry belong to one or more different universality classes \cite{meakin1986universality}.

In the DLA simulations generated in this paper, the square to add to a cluster is essentially chosen by running a Brownian motion from far away and choosing the first cluster-adjacent square the Brownian motion hits.  This is a little different from doing a simple random walk on the graph of squares started at a far away target vertex (and it was actually a little easier to code efficiently).  It is possible that our approach is somehow ``isotropic enough'' to ensure that the growth models in the simulations converge to a universal isotropic scaling limit as $\delta$ tends to zero, but we do not know how to prove this.  We stress that the QLE evolutions that we construct in this paper are rotationally invariant, and can thus only be scaling limits of growth models that have isotropic scaling limits.

\begin{figure}[ht!]
\begin{center}
\subfloat{\includegraphics[width=0.32\textwidth]{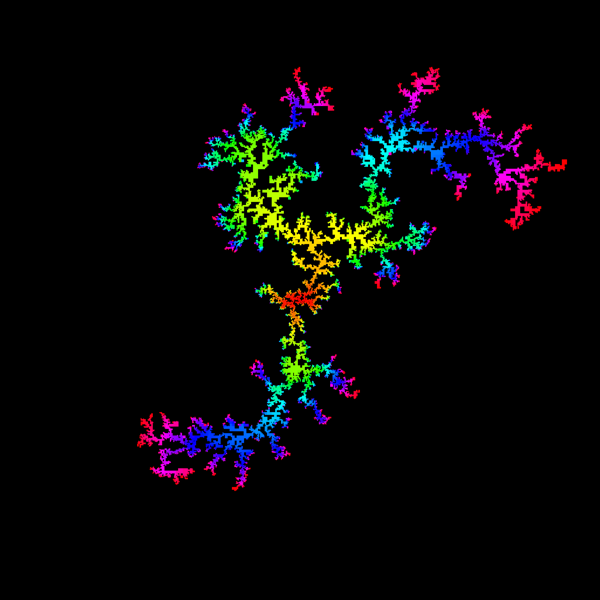}}
\hspace{0.01\textwidth}
\subfloat{\includegraphics[width=0.32\textwidth]{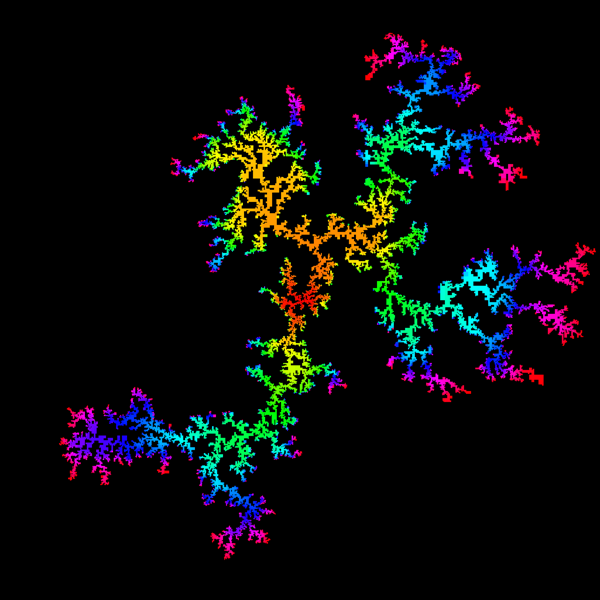}}
\hspace{0.01\textwidth}
\subfloat{\includegraphics[width=0.32\textwidth]{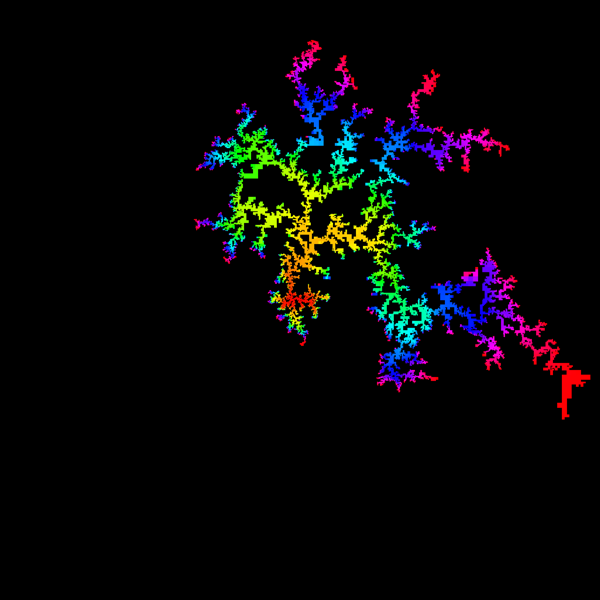}}

\subfloat{\includegraphics[width=0.32\textwidth]{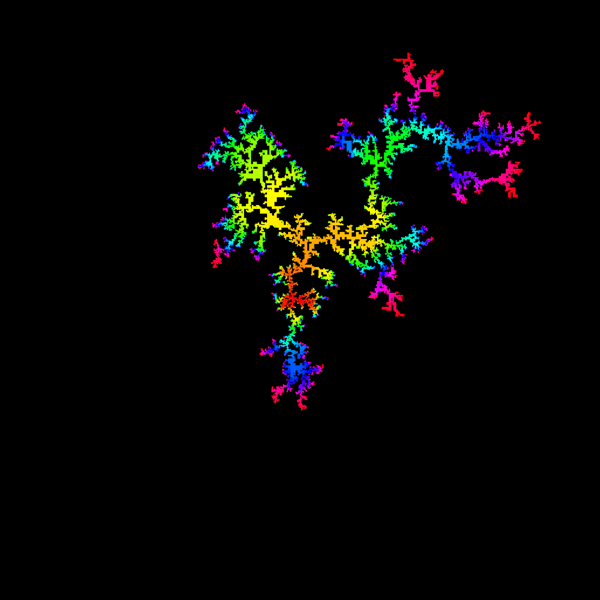}}
\hspace{0.01\textwidth}
\subfloat{\includegraphics[width=0.32\textwidth]{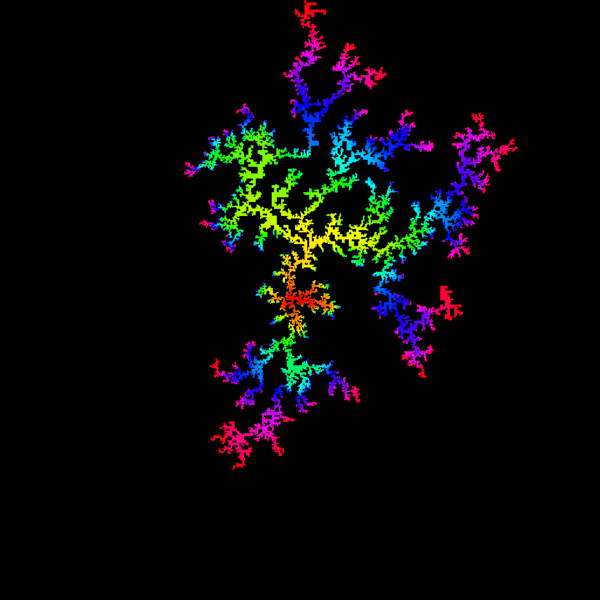}}
\hspace{0.01\textwidth}
\subfloat{\includegraphics[width=0.32\textwidth]{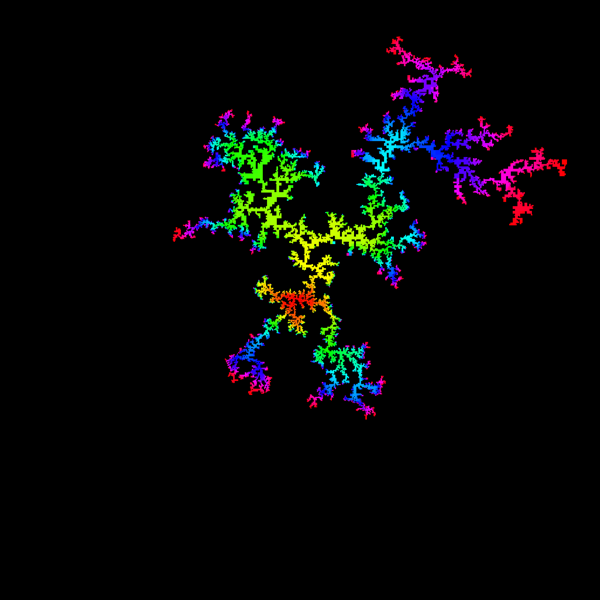}}

\subfloat{\includegraphics[width=0.32\textwidth]{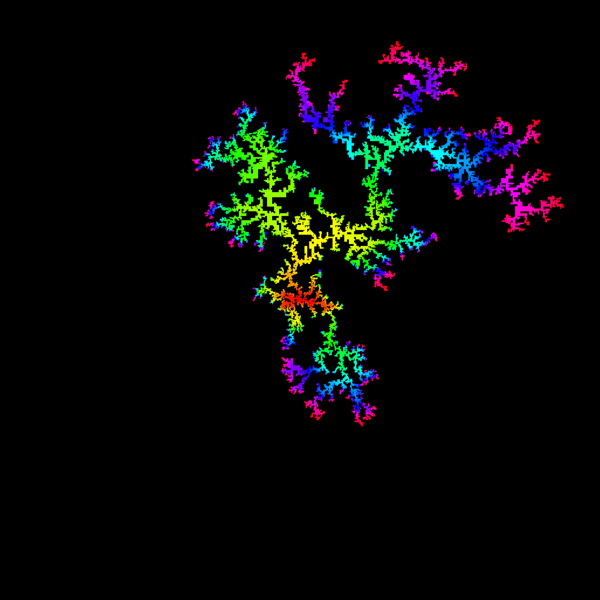}}
\hspace{0.01\textwidth}
\subfloat{\includegraphics[width=0.32\textwidth]{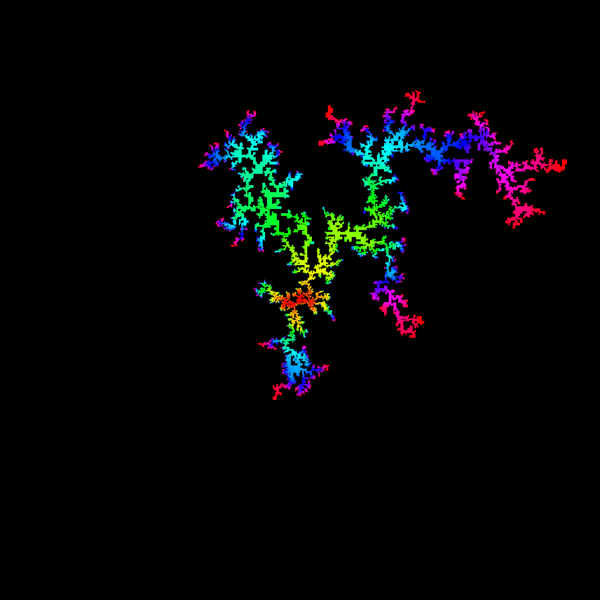}}
\hspace{0.01\textwidth}
\subfloat{\includegraphics[width=0.32\textwidth]{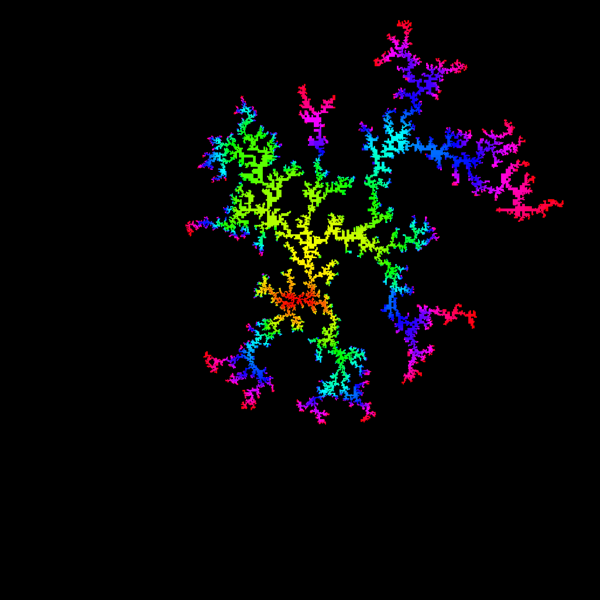}}
\end{center}
\caption{\label{fig::dlaseeds} Different instances of DLA on a common $\gamma = \sqrt{2}$ LQG tiling. Same $8192 \times 8192$ DGFF as in Figure~\ref{fig::large_qdla2} with the same value of $\delta$.  There are some macroscopic differences between the instances, but we do not know whether these differences will remain macroscopic in the limit as $\delta\to 0$.  Similarly, in our continuum formulation, we do not know whether $\QLE(2,1)$ is determined by the quantum surfaces on which it is drawn.}
\end{figure}

\subsubsection{Dielectric breakdown model and the Hastings-Levitov model}

As mentioned above, when FPP weights are exponential, the growth process selects new edges from counting measure on cluster-adjacent edges, i.e., according to the Eden model.
DLA is the same but with counting measure replaced by harmonic measure viewed from a special point (or from infinity).

Niemeyer, Pietronero, and Wiesmann introduced the dielectric breakdown model (DBM) in 1984 \cite{niemeyer1984fractal}.  Like SLE and LQG, it is a family of models indexed by a single real parameter, which in \cite{niemeyer1984fractal} is called $\eta$.
As noted in \cite{niemeyer1984fractal}, $\eta$-DBM can be understood as a hybrid between DLA and the Eden model.  If $\mu$ is counting measure on the harmonically exposed edges, and $\nu$ is harmonic measure, then the DBM model involves choosing a new edge from the measure $\mu$ weighted by $(\partial \nu/\partial \mu)^\eta$ (multiplied by a constant to produce a probability measure).  Equivalently, we can consider $\nu$ weighted by $(\partial \mu/\partial \nu)^{1-\eta}$, also multiplied by a normalizing constant to produce a probability measure.  Observe that $0$-DBM is then the Eden growth model, while $1$-DBM is external DLA.

The DBM models are believed to be related to the so-called $\alpha$-Hastings-Levitov model when $\alpha = \eta+1$\cite{hastings1998laplacian}.  (The $\alpha$ used in Hastings-Levitov is not the same as the $\alpha$ used in this paper describe QLE dynamics.)  The Hastings-Levitov model is constructed in the continuum using Loewner chains (rather than on a lattice).  It was introduced by Hastings and Levitov in 1998 as a plausible and simpler alternative to DLA and DBM, with the expectation that it would agree with these other models in the scaling limit but that it might be simpler to analyze \cite{hastings1998laplacian}.  In the Hastings-Levitov model one always samples the location of a new particle from harmonic measure, but the size of the new particle varies as the $\alpha$ power of the derivative of the normalizing conformal map at the location where the point is added.  This model itself is now the subject of a sizable literature that we will not attempt to properly survey here.  See for example works of Carleson and Makarov \cite{carleson2001aggregation} (obtaining growth bounds analogous to Kesten's bound for DLA), Rohde and Zinsmeister \cite{rohde2005some} (analyzing scaling limit dimension and other properties for various $\alpha \in [0,2]$, discussing the possibility of an $\alpha=1$ phase transition from smooth to turbulent growth), Norris and Turner \cite{norris2012hl} (proof of convergence in the $\alpha=0$ case to a growing disk and a connection to the Brownian web), and the reference text \cite{gustafsson2006conformal}.  In our terminology, the scaling limit of the $\alpha$-Hastings-Levitov model should correspond to $\QLE(0, \alpha-1)$, and the $\alpha \in [0,2]$ family studied in \cite{rohde2005some} should correspond to the points in Figure~\ref{fig::etavsgamma} along the vertical axis with $\eta \in [-1,1]$.

\subsubsection{Gaussian free field}

The Gaussian free field (GFF) is a Gaussian random distribution on a planar domain $D$, which can be interpreted as a standard Gaussian in the Hilbert space described by the so-called Dirichlet inner product.  It has free and fixed boundary analogs, as well as discrete variants defined on a grid; see the GFF survey \cite{sheffield2007gff}.  We defer a more detailed discussion of the GFF until Section~\ref{sec::preliminaries}

\subsubsection{Liouville quantum gravity} \label{subsubsec::LQGintro}

Liouville quantum gravity, introduced in the physics literature by Polyakov in 1981 in the context of string theory, is a canonical model of a random two-dimensional Riemannian manifold \cite{MR623209, MR623210}.  One version of this construction involves replacing the usual Lebesgue measure $dz$ on a smooth domain $D$ with a random measure $\mu_h = e^{\gamma h(z)}dz$, where $\gamma \in [0,2]$ is a fixed constant and $h$ is an instance of (for now) the free boundary GFF on $D$ (with an additive constant somehow fixed).  Since $h$ is not defined as a function on $D$, one has to use a regularization procedure to be precise.  Namely, one defines $h_\epsilon(z)$ to be the mean value of $h$ on the circle $\partial B(z,\epsilon)$, and takes the measure $\mu$ to be the weak limit of the measures
\[ \epsilon^{\gamma^2/2} e^{\gamma h_\epsilon(z)} dz\]
as $\epsilon$ tends to zero  \cite{ds2011kpz}. On a linear segment of $\partial D$, a boundary measure $\nu_h$ on $\partial D$ can be similarly defined as
\[ \lim_{\epsilon \to 0} \epsilon^{\gamma^2/4} e^{(\gamma/2) h_\epsilon(u)} du,\]
where in this case $h_\epsilon$ is the mean of $h$ on the semicircle $D \cap \partial B(u,\epsilon)$ \cite{ds2011kpz}.  (A slightly different procedure is needed to construct the measure in the critical case $\gamma = 2$ \cite{duplantier2012critical,duplantier2012renormalization}.)

We could also parameterize the same surface with a different domain $\wt D$.
Suppose $\psi \colon \wt D \to D$ is a conformal map.  Write $\wt h$ for the distribution on $\wt D$ given by $h \circ \psi + Q \log |\psi'|$
where $Q := \frac{2}{\gamma} + \frac{\gamma}{2}.$
Then it is shown in \cite{ds2011kpz} that $\mu_h$ is almost surely the image under $\psi$ of the measure $\mu_{\wt h}$. That is, $\mu_{\wt h}(A) = \mu_h(\psi(A))$ for $A \subseteq \wt{D}$.\footnote{The reader can also verify this fact directly; the first term in $Q$ is related to the ordinary change of measure formula, since the term $\frac{2}{\gamma} \log |\psi'|$ in $\wt h$ corresponds to a factor of $|\psi'|^2$ in the $\mu_{\wt h}$ definition.  The term $\frac{\gamma}{2} \log |\psi'|$ compensates for the rescaling of the $\epsilon$ that appears in the definition of $\mu_{\wt h}$.}
A similar argument to the one in \cite{ds2011kpz} mentioned above shows that the boundary length $\nu_h$ is almost surely the image under $\psi$ of the measure $\nu_{\wt h}$.  (This also allows us to make sense of $\nu_h$ on domains with non-linear boundary.)mc

We define a {\bf quantum surface} to be an equivalence class of pairs $(D,h)$ under the equivalence transformations
\begin{equation} \label{eqn::LQGcoordinatetransformation}(D,h) \to (\psi^{-1}(D), h \circ \psi + Q \log |\psi'|) = (\wt D, \wt h).\end{equation}
The measures $\mu_h$ and $\nu_h$ are almost surely highly singular objects with fractal structure, and thus we cannot understand LQG random surfaces as smooth manifolds.  Nonetheless quantum surfaces come equipped with well-defined notions of conformal structure, area, and boundary length.

\begin{figure}[ht!]
\begin{center}
\subfloat{\includegraphics[width=0.32\textwidth]{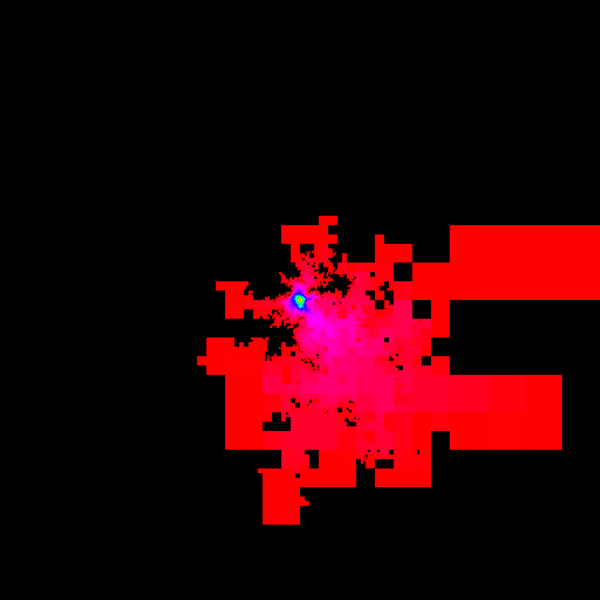}}
\hspace{0.01\textwidth}
\subfloat{\includegraphics[width=0.32\textwidth]{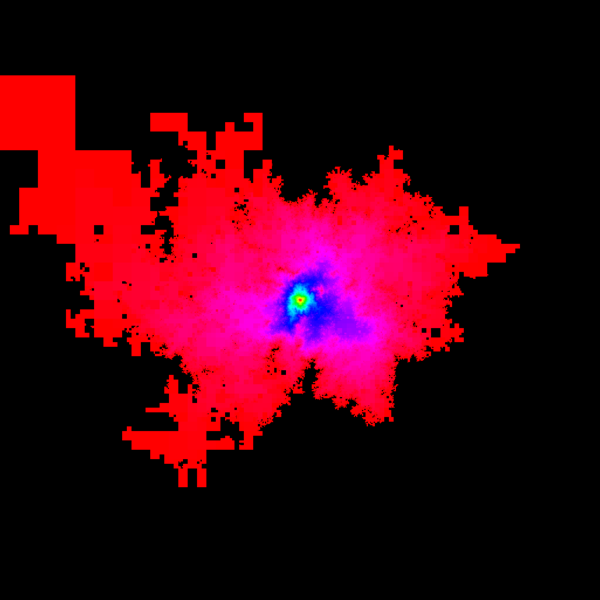}}
\hspace{0.01\textwidth}
\subfloat{\includegraphics[width=0.32\textwidth]{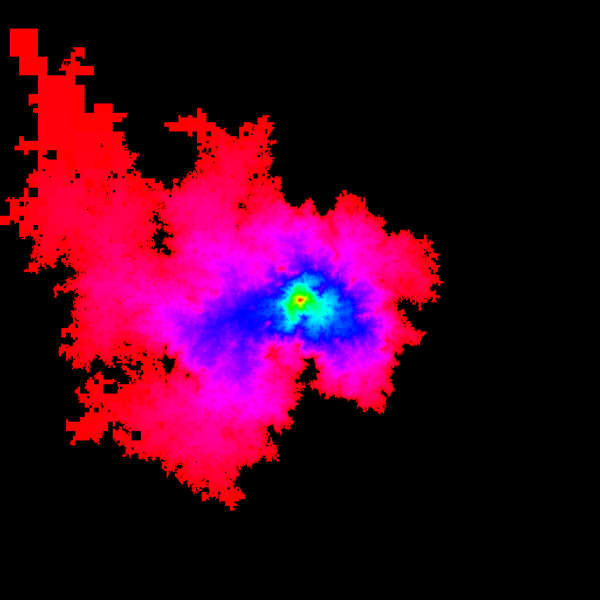}}

\subfloat{\includegraphics[width=0.32\textwidth]{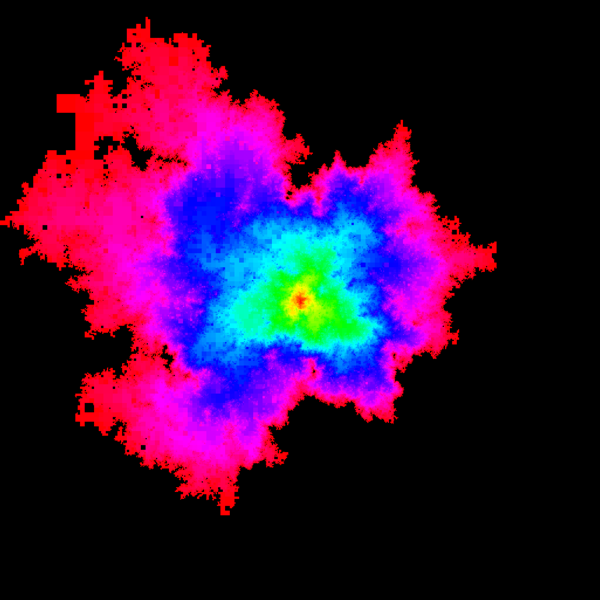}}
\hspace{0.01\textwidth}
\subfloat{\includegraphics[width=0.32\textwidth]{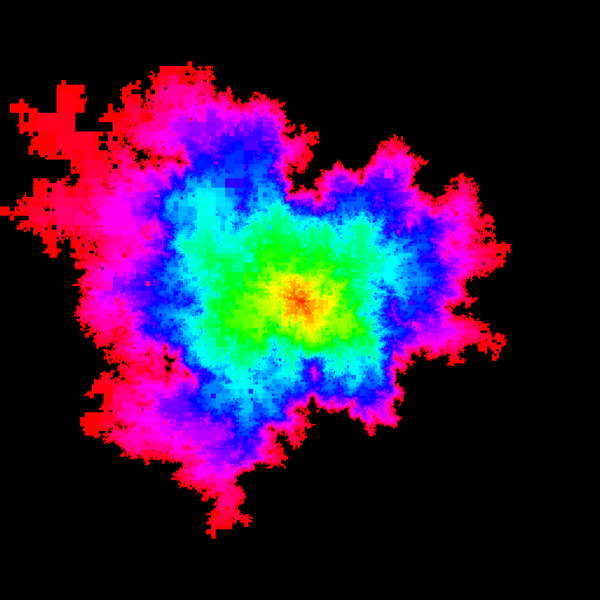}}
\hspace{0.01\textwidth}
\subfloat{\includegraphics[width=0.32\textwidth]{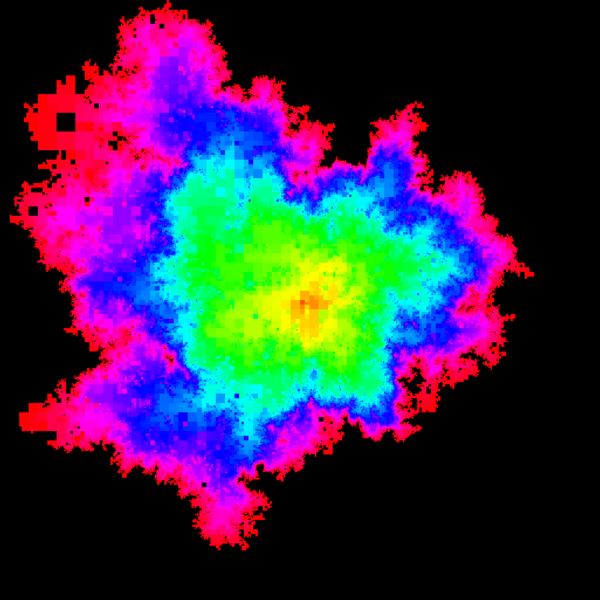}}

\subfloat{\includegraphics[width=0.32\textwidth]{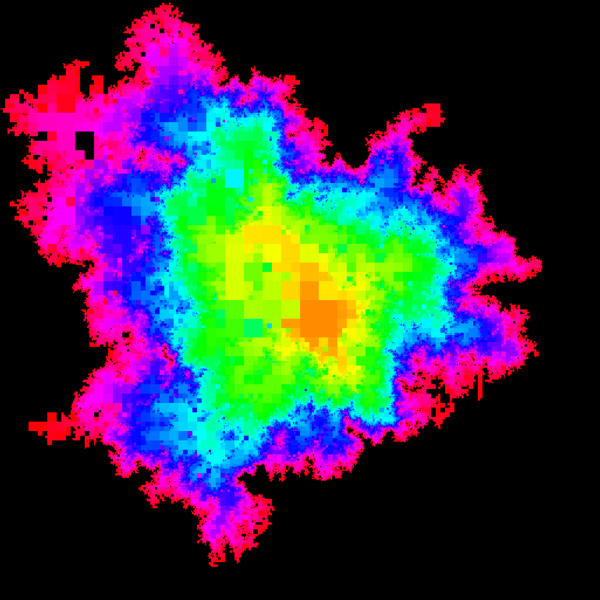}}
\hspace{0.01\textwidth}
\subfloat{\includegraphics[width=0.32\textwidth]{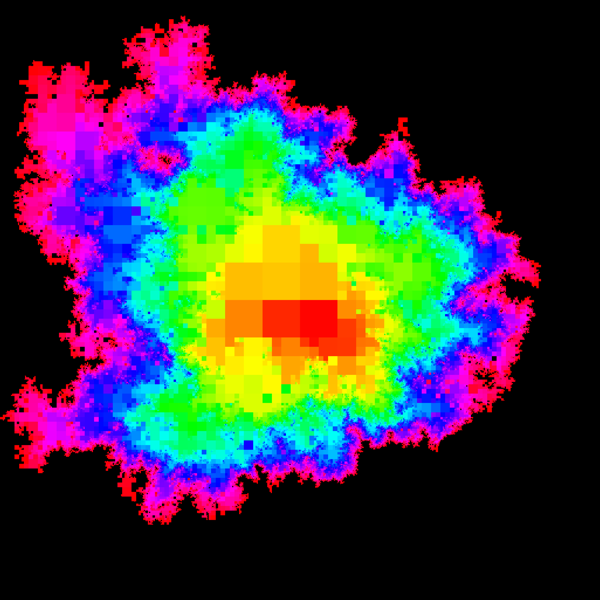}}
\hspace{0.01\textwidth}
\subfloat{\includegraphics[width=0.32\textwidth]{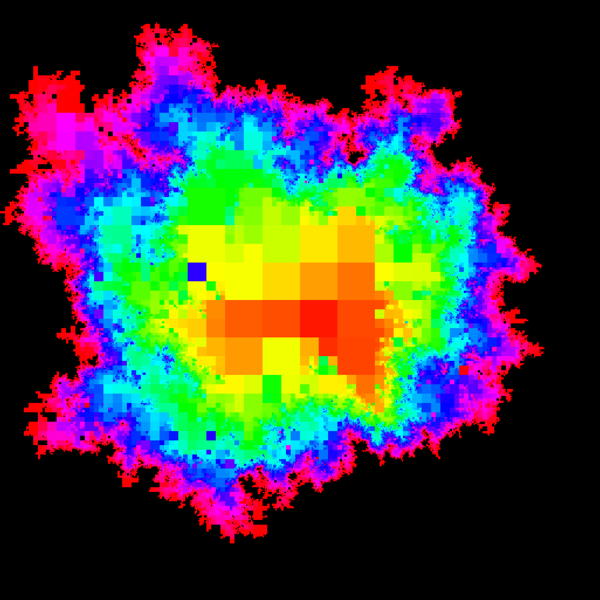}}
\end{center}
\caption{ \label{fig::edenwithlogs} $\gamma = \sqrt{8/3}$ Eden model on graph obtained when $h$ is the GFF plus $ j \log |\cdot|$, where $j \in \{-4,-3,-2, \ldots, 2, 3, 4 \}$ (read left to right, top to bottom).  Upper left figure has smaller squares in center, bigger squares on outside. Bottom right has bigger boxes in center, smaller boxes outside.}
\end{figure}

\begin{figure}[ht!]
\begin{center}
\subfloat{\includegraphics[width=0.32\textwidth]{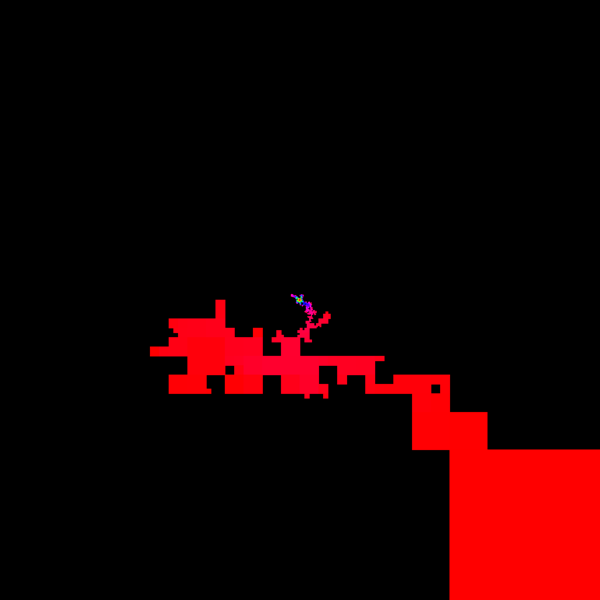}}
\hspace{0.01\textwidth}
\subfloat{\includegraphics[width=0.32\textwidth]{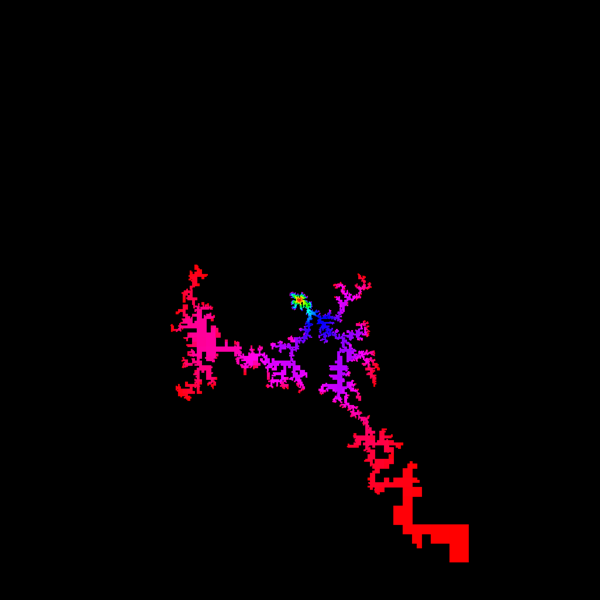}}
\hspace{0.01\textwidth}
\subfloat{\includegraphics[width=0.32\textwidth]{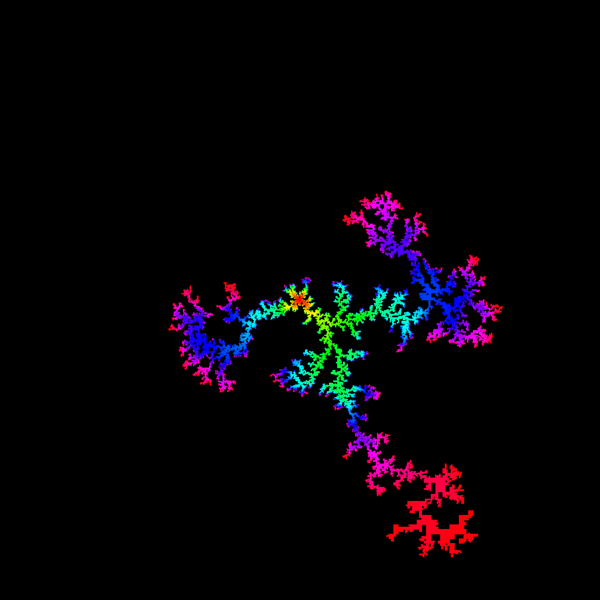}}

\subfloat{\includegraphics[width=0.32\textwidth]{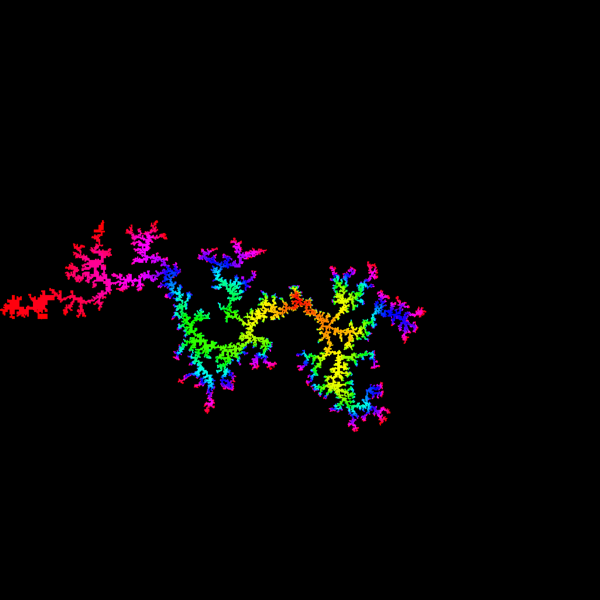}}
\hspace{0.01\textwidth}
\subfloat{\includegraphics[width=0.32\textwidth]{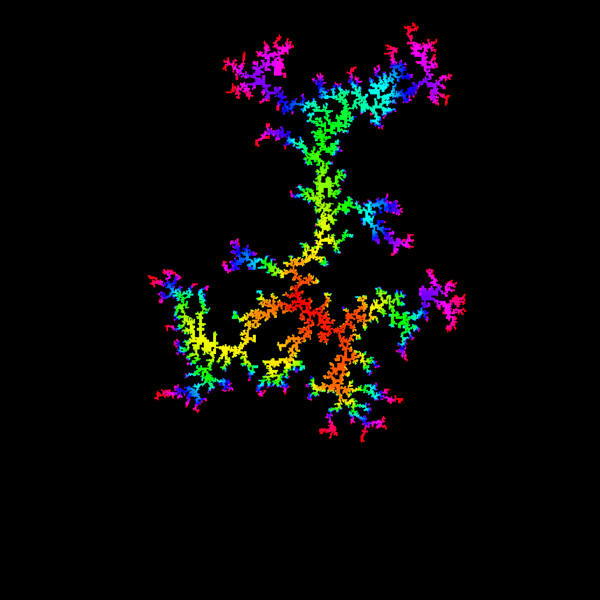}}
\hspace{0.01\textwidth}
\subfloat{\includegraphics[width=0.32\textwidth]{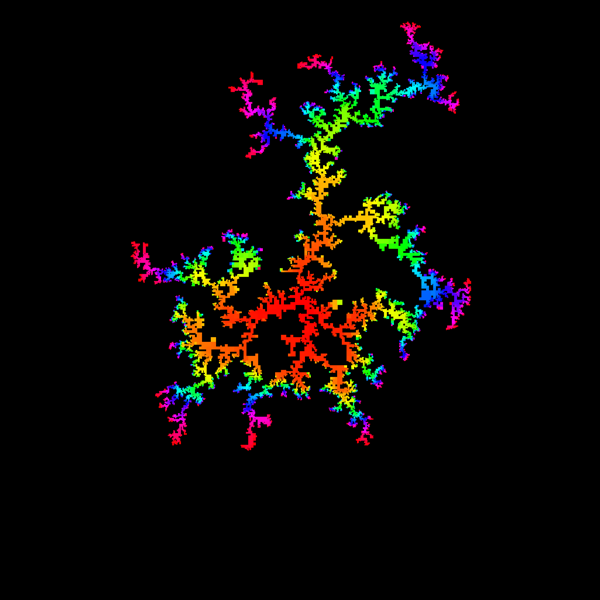}}

\subfloat{\includegraphics[width=0.32\textwidth]{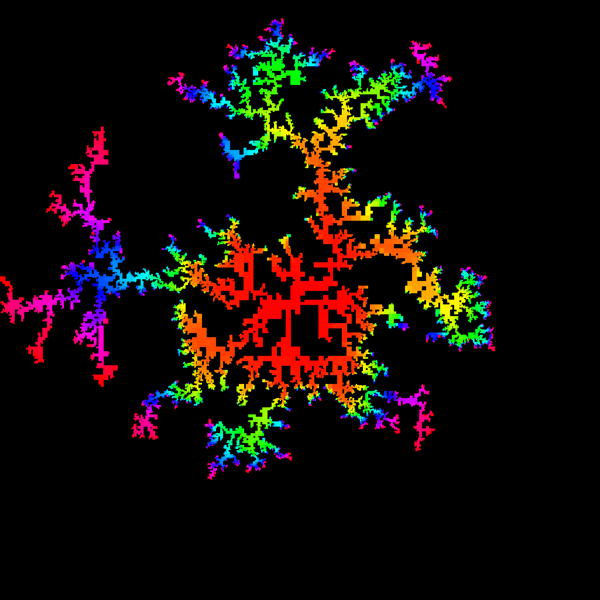}}
\hspace{0.01\textwidth}
\subfloat{\includegraphics[width=0.32\textwidth]{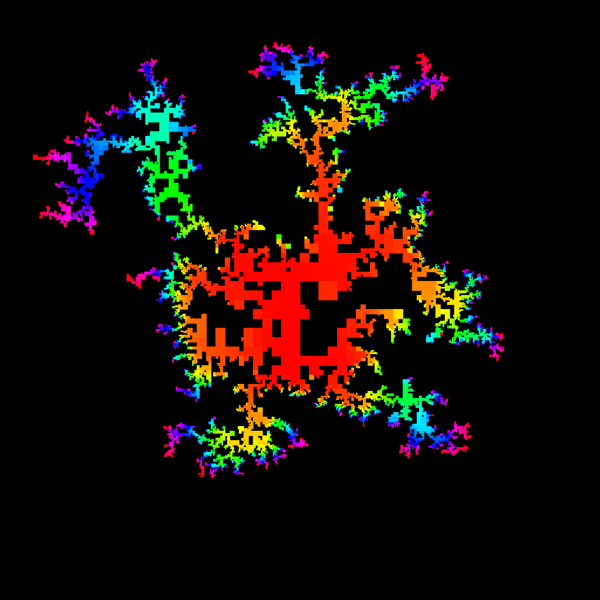}}
\hspace{0.01\textwidth}
\subfloat{\includegraphics[width=0.32\textwidth]{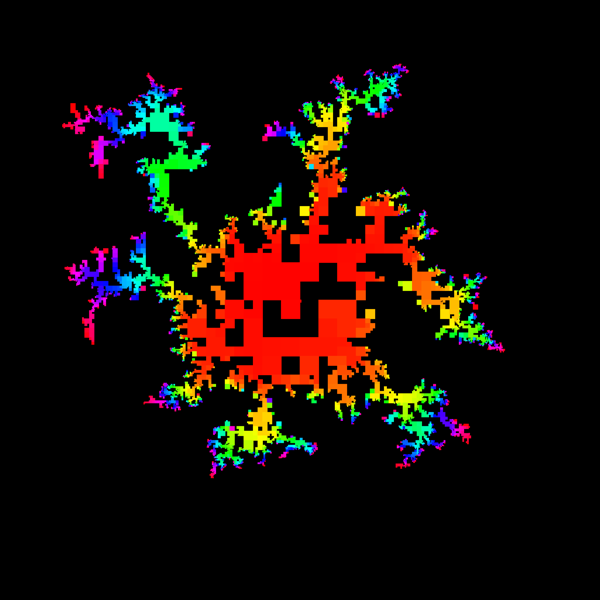}}
\end{center}
\caption{\label{fig::dlawithlogs} $\gamma = \sqrt{2}$ DLA drawn on graph obtained when $h$ is the GFF plus $ j \log |\cdot|$, where $j \in \{-4,-3,-2, \ldots, 2, 3, 4 \}$ (read left to right, top to bottom).  Upper left figure has smaller squares in center, bigger squares on outside. Bottom right has bigger boxes in center, smaller boxes outside.}
\end{figure}

\subsubsection{Random planar maps} \label{subsubsec::randomplanarmaps}

The number of planar maps with a fixed number of vertices and edges is finite, and there is an extensive literature on the enumeration of planar maps, beginning with the works of Mullin and Tutte in the 1960's \cite{tutte1962census,MR0205882,MR0218276}.  On the physics side, various types of random planar maps were studied in great detail throughout the 1980's and 1990's, in part because of their interpretation as ``discretized random surfaces.''  (See \cite{ds2011kpz} for a more extensive bibliography on planar maps and Liouville quantum gravity.)  The {\em metric space} theory of random quadrangulations begins with an influential bijection discovered by Schaeffer \cite{MR1465581}, and earlier by Cori and Vauquelin \cite{cori1981planar}.
Closely related bijections of Bouttier, Di Franceso, and Guitter \cite{bouttier2004planar} deal with planar maps with face size restrictions, including triangulations.  Subsequent works by Angel \cite{MR2024412} and by Angel and Schramm \cite{MR2013797} have explained the {\em uniform infinite planar triangulation} (UITP) as a subsequential limit of planar triangulations.

Although microscopic combinatorial details differ, there is one really key idea that underlies much of the combinatorial work in this subject: namely, that instead of considering a planar map alone, one can consider a planar map together with a spanning tree.  Given the spanning tree, one often has a notion of a dual spanning tree, and a path that somehow goes between the spanning tree and the dual spanning tree.  It is natural to fix a root vertex for the dual tree and an adjacent root vertex for the tree.  Then as one traverses the path, one can keep track of a pair of parameters in $\Z_+^2$: one's distance from a root vertex within the tree, and one's distance from the dual root within the dual tree.  Mullin in 1967 used essentially this construction to give a way of enumerating the pairs $(M,T)$ where $M$ is a rooted planar map on the sphere with $n$ edges and $T$ is distinguished spanning tree \cite{MR0205882}.  These pairs correspond precisely to walks of length $2n$ in $\Z_+^2$ that start and end at the origin.  (The bijection between tree-decorated maps and walks in $\Z_+^2$ was more explicitly explained by Bernardi in \cite{MR2285813}; see also the presentation and discussion in \cite{sheffield2011quantum}, as well as the brief overview in Section~\ref{subsubsec::DLALERW}.)  As $n$ tends to infinity and one rescales appropriately, one gets a Brownian excursion on $\R^2$ starting and ending at $0$.

The Mullin bijection gives a way of choosing a uniformly random $(M,T)$ pair, and if we ignore $T$, then it gives us a way to choose a random $M$ where the probability of a given $M$ is proportional to the number of spanning trees that $M$ admits.  If instead we had a way to choose randomly from a {\em subset} $\mathcal S$ of the set of pairs $(M,T)$, with the property that each $M$ belonged to at most {\em one} pair $(M,T) \in \mathcal S$, then this would give us a way to sample {\em uniformly} from some collection of maps $M$.  The Cori-Vauquelin-Schaeffer construction \cite{cori1981planar, MR1465581} suggests a way to do this: in this construction, $M$ is required to be a quadrangulation, and a ``tree and dual tree'' pair on $M$ are produced from $M$ in a deterministic way.  (The precise construction is simple but a bit more complicated than the Mullin bijection.  One of the trees is a breadth first search tree of $M$ consisting of geodesics, and the other is something like a dual tree defined on the same vertices, but with some edges that cross the quadrilaterals diagonally and some edges that overlap the tree edges.)  As one traces the boundary of the dual tree, the distance from the root in the dual tree changes by $\pm 1$ at each step, while the distance in the geodesic tree changes by either $0$ or $\pm 1$.  Schaeffer and Chassaing showed that this distance function should scale to a two-dimensional continuum random path called the Brownian snake, in which the first coordinate is a Brownian motion (and the second coordinate comes from a Brownian motion indexed by the continuum random tree defined by the first Brownian motion) \cite{MR2031225}.

Another variant due to the second author appears in \cite{sheffield2011quantum}, where the trees are taken to be the exploration trees associated with a random planar map together with a random FK random cluster on top of it.  In fact, the construction in \cite{sheffield2011quantum}, described in terms of a ``hamburgers and cheeseburgers'' inventory management process, is a generalization of the work of Mullin \cite{MR0205882}.  We stress that the walks on $\Z_+^2$ that one finds in both \cite{MR0205882} and \cite{sheffield2011quantum} have as scaling limits forms of two-dimensional Brownian motion (in \cite{sheffield2011quantum} the diffusion rate of the Brownian motion varies depending on the FK parameter), unlike the walks on $\Z_+^2$ given in \cite{MR1465581, MR2031225} (which scale to the Brownian snake described above).

\subsubsection{The Brownian map}

The Brownian map is a random metric space equipped with an area measure.  It can be constructed from the Brownian snake, and is believed to be in some sense equivalent to a form of Liouville quantum gravity when $\gamma = \sqrt{8/3}$.  The idea of the Brownian map construction has its roots in the combinatorial works of Schaeffer and of Chassaing and Schaeffer \cite{MR1465581,MR2031225}, as discussed just above in Section~\ref{subsubsec::randomplanarmaps}, where it was shown that certain types of random planar maps could be described by a random tree together with a random labeling that determines a dual tree, and that this construction is closely related to the Brownian snake.

The Brownian map was introduced in works by Marckert and Mokkadem and by Le Gall and Paulin \cite{marckert2006limit,le2008scaling}.  For a few years, the term ``Brownian map'' was often used to refer to any one of the subsequential Gromov-Hausdorff scaling limits of random planar maps.  Because it was not known whether the limit was unique, the phrase ``a Brownian map'' was sometimes used in place of ``the Brownian map''.
Works by Le Gall and by Miermont established the uniqueness of this limit and showed that it is equivalent to a natural metric space constructed directly from the Brownian snake \cite{le2010geodesics,miermont2011brownian,gall2011uniqueness}.  Infinite planar quadrangulations on the half plane or the plane and the associated infinite volume Brownian maps are discussed in \cite{curien2012uniform,curien2012brownian}.

\subsubsection{KPZ: Kardar-Parisi-Zhang} \label{subsubsec::kardarparisizhang}

As mentioned briefly in Section~\ref{subsubsection::eden}, Kardar, Parisi, and Zhang introduced a formal stochastic partial differential equation in 1986 in order to describe the fluctuations from the deterministic limit shape that one finds in the Eden model on a grid (as in Figure~\ref{fig::triplegamma0.00}) or in related models such as first passage percolation \cite{kardar1986dynamic}.  As described in \cite{kardar1986dynamic}, the equation is a type of ill-posed stochastic partial differential equation,  but one can interpret the log of the stochastic heat equation with multiplicative noise as in some sense solving this equation (this is called the Hopf-Cole solution). The Eden model fluctuations are believed to scale to a ``fixed point'' of the dynamics defined this way; see the discussion by Corwin and Quastel in \cite{corwin2011renormalization}, as well as the survey article \cite{MR2930377}.  Other recent discussions of this point include e.g.\ \cite{cieplak1996invasion, alves2011universal}.

One interesting question for us is what the analog of the KPZ growth equation should be for the random graphs described in this paper.  Figure~\ref{fig::large_metric2_squares} shows different instances of the Eden model drawn on the square tiling shown in Figure~\ref{fig::large_metric2_squares}.  Although they appear to be roughly the same shape, there are clearly random fluctuations and at present we do not have a way to predict the behavior or even the magnitude of these fluctuations (though we would guess that the magnitude decays like some power of $\delta$).

\subsubsection{KPZ: Knizhnik-Polyakov-Zamolodchikov}

A natural question is whether discrete models for random surfaces (built combinatorially by randomly gluing together small squares or triangles) have Liouville quantum gravity as a scaling limit.  Polyakov became convinced in the affirmative in the 1980's after jointly deriving, with Knizhnik and Zamolodchikov, the so-called {\em KPZ formula} for certain Liouville quantum gravity scaling dimensions and comparing them with known combinatorial results for the discrete models \cite{knizhnik1988fractal,polyakov2008quarks}.  Several precise conjectures along these lines appear in \cite{ds2011kpz, sheffield2010weld} and the KPZ formula was recently formulated and proved mathematically in \cite{ds2011kpz}; see also \cite{sheffdup2009prl}.

To describe what the KPZ formula says, suppose that a constant $\gamma \in [0,2]$, a fractal planar set $X$, and an instance $h$ of the GFF are all fixed.  The set $X$ can be either deterministic or random, as long as it is chosen independently from $h$.  Then for any $\delta$ one can generate a square decomposition of the type shown in Figure~\ref{fig::squaredecomposition} and ask whether the expected number of squares intersecting $X$ scales like a power of $\delta$.  One form of the KPZ statement proved in \cite{ds2011kpz} is that if the answer is yes when $\gamma = 0$ (the Euclidean case) then the answer is also yes for any fixed (positive) $\gamma \in (0,2)$, and the Euclidean and quantum exponents satisfy a particular quadratic relationship (depending on $\gamma$).  Formulations of this statement in terms of Hausdorff dimension (and a quantum-surface analog of Hausdorff dimension) in one and higher dimensions appear respectively in \cite{benjamini2009kpz, rhodes2011kpz}; see also \cite{duplantier2012critical,duplantier2012renormalization} for the case $\gamma=2$.

One important thing to recognize for this paper is that the KPZ formula only applies when $X$ and $h$ are chosen independently of one another.  This independence assumption is natural in many contexts---for example, one sometimes expects the scaling limit of a random planar map decorated with a path (associated to some statistical physics model) to be an LQG surface decorated with an SLE-curve that is in fact independent of the field $h$ describing the LQG surface \cite{sheffield2010weld,duplantier2011schramm}.
However, we do not expect the Euclidean and quantum dimensions of the QLE traces constructed in this paper to be related by the KPZ formula, because these random sets are not independent of the GFF instance $h$.

\subsubsection{Schramm Loewner evolution}

$\SLE_\kappa$ ($\kappa > 0$) is a one-parameter family of conformally invariant random curves, introduced by Oded Schramm in \cite{schramm2000sle} as a candidate for (and later proved to be) the scaling limit of loop erased random walk \cite{lsw2004lerw} and the interfaces in critical percolation \cite{smirnov2001percolation, cn2006percolation}.  Schramm's curves have been shown so far also to arise as the scaling limit of the macroscopic interfaces in several other models from statistical physics: \cite{smirnov2010ising,cs2012ising,ss2005harmonicexplorer,ss2009dgffcontour,miller2010sle}.  More detailed introductions to $\SLE$ can be found in many excellent survey articles of the subject, e.g., \cite{werner2004stflour, lawler2005conformally}.

Given a simply connected planar domain $D$ with boundary points $a$ and $b$ and a parameter $\kappa \in [0,\infty)$, the {\em chordal} {\bf Schramm-Loewner evolution} SLE$_\kappa$ is
a random non-self-crossing path in $\overline D$ from $a$ to $b$.  In this work, we will be particularly concerned with the so-called {\em radial} $\SLE_\kappa$, which is a random non-self-crossing path from a fixed point on $\partial D$ to a fixed interior point in $D$.  Like chordal SLE, it is completely determined by certain conformal symmetries \cite{schramm2000sle}.

The construction of SLE is rather interesting.  When $D = \D$ is the unit disk, the radial SLE curve can be parameterized by a function $U \colon [0, \infty) \to \partial \D$.  However, instead of constructing the curve directly, one constructs for each $t$ the conformal map $g_t \colon \D_t \to \D$, where $\D_t$ is the complementary component\footnote{Here ``complementary component of'' means ``component of the complement of''.} of the curve drawn up to time $t$ which contains $0$, with $g_t(0) = 0$ and $g_t'(0) > 0$.  For $u \in \partial \D$ and $z \in \D$, let
\begin{equation}
\label{eqn::radial_functions_intro}
\Psi(u,z) = \frac{u+z}{u-z} \quad \text{and} \quad\Phi(u,z)  = z \Psi(u,z).
\end{equation}
For each fixed $z$, the value $g_t(z)$ is defined as the solution to the ODE
\begin{equation}
\label{eqn::radialloewner}
\dot{g}_t(z) = \Phi(U_t,g_t(z)),
\end{equation}
where $U_t = e^{i\sqrt{\kappa} B_t}$ and $B_t$ is a standard Brownian motion.  More introductory material about $\SLE$ appears in Section~\ref{subsec::sle}.

$\SLE$ is relevant to this paper primarily because of its relevance to Liouville quantum gravity and the so-called {\em quantum gravity zipper} described by the second author in \cite{sheffield2010weld}.  Roughly speaking, the constructions there allow one to form one LQG surface by ``cutting'' another LQG surface along an $\SLE$ path.  In fact, one can do this in such a way that the new (cut) surface has the same law as the original (uncut) surface.  This will turn out to be extremely convenient as we construct and study the quantum Loewner evolution.

\subsection{Measure-driven Loewner evolution}\label{subsec::measureloewner}

We consider an analog of Loewner evolution, also called the {\em Loewner-Kufarev evolution}, in which the point-valued driving function is replaced by a measure-valued driving function:
\begin{equation}
\label{eqn::lke}
\dot{g}_t(z) =  \int_{\partial \D} \Phi(u,g_t(z))d\nu_t(u),
\end{equation}
(recall \eqref{eqn::radial_functions_intro}) where, for each time $t$, the measure $\nu_t$ is a probability measure on $\partial \D$.  For each time $t$, the map $g_t$ is the unique conformal map from $\D \setminus K_t$ to $\D$ with $g_t(0) = 0$ and $g_t'(0) > 0$, for some hull $K_t$.  Time is parameterized so that $g_t'(0) = e^t$ (this is the reason that $\nu_t$ is normalized to be a probability measure).  That is, the log conformal radius of $\D \setminus K_t$, viewed from the origin, is given by $-t$.  Given any measure $\nu$ on $[0,T] \times \partial \D$ whose first coordinate is given by Lebesgue measure, we can define $\nu_t$ to be the conditional measure obtained on $\partial \D$ by restricting the first coordinate to $t$.

Unlike the space of point-valued driving functions indexed by $[0,T]$, the space of measure-valued driving functions indexed by $[0,T]$ has a natural topology with respect to which it is compact: namely the topology of weak convergence of measures on $[0,T] \times \partial \D$.

We now recall a standard result, which can be found, for example, in \cite{johansson2012scaling}.  (A slightly more restrictive statement is found in \cite{lawler2005conformally}.)  Essentially it says that $\CN_T$ together with with the notion of weak convergence, corresponds to the space of capacity-parameterized growing hull processes in $\D$ (indexed by $t \in [0,T]$), with the notion of Carath\'eodory convergence for all~$t$.  (Recall that a sequence of hulls $K^1, K^2, \ldots$ converges to a hull $K$ in the Carath\'eodory sense if the conformal normalizing maps from $\D \setminus K^j$ to $\D$ converge uniformly on compact subsets of $\D \setminus K$ to the conformal normalizing map from $\D \setminus K$ to $\D$.)

\begin{theorem}
\label{thm::measurehullprocesscorrespondence}
Consider the following:
\begin{enumerate}[(i)]
 \item\label{it::measures} A measure $\nu \in \CN_T$.
 \item\label{it::hulls} An increasing family $(K_t)$ of hulls in $\D$, indexed by $t \in [0,T]$, such that $\D \setminus K_t$ is simply connected and includes the origin and has conformal radius $e^{-t}$, viewed from the origin.  (In other words, for each $t$, there is a unique conformal map $g_t \colon \D \setminus K_t \to \D$ with $g_t(0) = 0$ and $g_t'(0) = e^t$.)
\end{enumerate}
There is a one-to-one correspondence between objects of type \eqref{it::measures} and \eqref{it::hulls}.  In this correspondence, the maps $g_t$ are obtained from $\nu$ via \eqref{eqn::lke}, where $\nu_t$ is taken to be the conditional law of the second coordinate of $\nu$ given that the first coordinate is equal to $t$.
Moreover, a sequence of measures $\nu^1, \nu^2, \ldots$ in $\CN_T$ converges weakly to a limit $\nu$ if and only if for each $t$ the functions $g^1_t, g^2_t, \ldots$ corresponding to $\nu_i$ converge uniformly to the function $g_t$ corresponding to $\nu$ on any compact set in the interior of $\D \setminus K_t$.
\end{theorem}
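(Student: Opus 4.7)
The statement is a classical fact about the radial Loewner-Kufarev evolution; my plan is to verify both directions of the correspondence and then the continuity claim.

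\emph{From measures to hulls.} Given $\nu \in \CN_T$, I would define the time-dependent holomorphic vector field $V_t(z) = \int_{\partial \D} \Phi(u,z)\, d\nu_t(u)$ on $\D$. Since $\nu_t$ is a probability measure and $\Phi(u,0)=0$, $\partial_z\Phi(u,0)=1$, one has $V_t(0)=0$ and $V_t'(0)=1$. I would solve $\dot g_t(z) = V_t(g_t(z))$ with $g_0(z)=z$ via Carath\'eodory ODE theory ($V_t$ is measurable in $t$ and locally Lipschitz in $z$ on compact subsets of $\D$) up to the hitting time $\tau(z):=\inf\{t:g_t(z)\in\partial\D\}$, then set $K_t := \{z:\tau(z)\le t\}$. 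Differentiating the ODE at $0$ gives $\tfrac{d}{dt}\log g_t'(0)=V_t'(0)=1$, hence $g_t'(0)=e^t$; standard radial Loewner arguments then show $g_t$ is the required conformal isomorphism $\D\setminus K_t\to\D$ and the hulls grow monotonically.

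\emph{From hulls to measures.} Conversely, given $(K_t)$, I would define $V_t(w):=\dot g_t(g_t^{-1}(w))$, which exists for a.e.\ $t$ by absolute continuity of $t\mapsto g_t(z)$ under capacity parameterization. Monotonicity of the hulls forces $\Re(V_t(w)/w)\ge 0$ on $\D$, and the normalizations give $V_t(0)=0$ and $V_t'(0)=1$. Thus $V_t(z)/z$ is a Herglotz function on $\D$ with value $1$ at the origin, so the Herglotz representation theorem produces a unique probability measure $\nu_t$ on $\partial \D$ with
\[
V_t(z)/z \;=\; \int_{\partial \D} \frac{u+z}{u-z}\, d\nu_t(u),
\]
equivalently $V_t(z)=\int \Phi(u,z)\,d\nu_t(u)$. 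Joint measurability of $(t,z)\mapsto g_t(z)$ upgrades to measurability of $t\mapsto \nu_t$, so the $\nu_t$ glue against Lebesgue measure on $[0,T]$ to a unique $\nu\in \CN_T$. The bijectivity then reduces to uniqueness in the ODE and uniqueness in the Herglotz representation.

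\emph{Continuity.} If $\nu^j\to\nu$ weakly on $[0,T]\times\partial\D$, then for each fixed $z\in\D$ the function $(t,u)\mapsto \varphi(t)\Phi(u,z)$ is continuous and bounded for every continuous $\varphi$ on $[0,T]$, with bound locally uniform in $z$ on compacts of $\D$, so $\int_0^T \varphi(t) V_t^j(z)\,dt\to \int_0^T \varphi(t) V_t(z)\,dt$. Since the $g_t^j$ are uniformly bounded conformal self-maps of $\D$, Arzel\`a--Ascoli extracts Carath\'eodory subsequential limits, and identifying each limit via the integrated form of \eqref{eqn::lke} pins it to the $g_t$ driven by $\nu$, forcing full-sequence convergence. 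Conversely, Carath\'eodory convergence yields $V_t^j(z)\to V_t(z)$ on $\D$ for a.e.\ $t$, and continuity of the inverse Herglotz map in the weak topology gives $\nu_t^j\to\nu_t$ weakly; pairing against a dense family of product test functions on $[0,T]\times\partial\D$ gives $\nu^j\to\nu$. The content is entirely classical; the main nuisance is the bookkeeping around joint measurability of the Herglotz inverse, the a.e.\ null set where $\dot g_t$ is ill-defined, and showing that the parametric ODE stability propagates Carath\'eodory convergence to every compact subset of $\D\setminus K_t$ simultaneously, all of which can be imported from \cite{johansson2012scaling}.
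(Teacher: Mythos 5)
Your overall plan is sound and the correspondence you describe is the correct classical statement, but your route differs from the paper's in the key ``hulls $\to$ measures'' direction. The paper never writes down the driving measure explicitly: it approximates a single hull by a simple curve (whose driving measure is a Dirac mass via the standard radial Loewner theorem for curves), extracts a weak subsequential limit in $\CN_T$, then handles a full family $(K_t)$ by discretizing time, concatenating, and passing to another weak limit; uniqueness and the converse continuity direction are then both obtained by compactness-plus-uniqueness arguments. You instead recover $\nu_t$ canonically via the Herglotz representation of $V_t(w)/w$ where $V_t(w)=\dot g_t(g_t^{-1}(w))$. This is a legitimate and arguably more transparent route (it makes uniqueness of $\nu$ immediate, since $\nu_t$ is determined by $g_t$ and $\dot g_t$ for a.e.\ $t$), but it requires as input the a.e.\ differentiability, i.e.\ local absolute continuity, of $t\mapsto g_t(z)$ for an \emph{arbitrary} hull family parameterized by conformal radius. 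That is a genuine (if standard, Pommerenke-type) regularity fact that you assert rather than prove; the paper's soft compactness argument is designed precisely to avoid needing it.

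One step in your continuity argument is stated too strongly: Carath\'eodory convergence $g_t^j\to g_t$ for each fixed $t$ does \emph{not} by itself yield $V_t^j(z)\to V_t(z)$ for a.e.\ $t$, since convergence of the maps at each time says nothing directly about convergence of their time derivatives. The correct repair is the one the paper uses (and which you already have the tools for): extract a weak subsequential limit $\wt\nu$ of $\nu^j$ from compactness of $\CN_T$, use the already-established forward direction to see that $\wt\nu$ drives the limiting hulls $(K_t)$, and conclude $\wt\nu=\nu$ by uniqueness, forcing full-sequence convergence. With that substitution your argument closes.
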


For completeness, we will provide a proof of Theorem~\ref{thm::measurehullprocesscorrespondence} in Section~\ref{sec::existence}.  The reader may observe that the notion of Carath\'eodory convergence for all $t \in [0,T]$ is equivalent to the notion of Carath\'eodory convergence for all $t$ in a fixed countable dense subset of $[0,T]$.  This can be used to give a direct proof of compactness of the set of hull families described Theorem~\ref{thm::measurehullprocesscorrespondence}, using the topology of Carath\'eodory convergence for all $t$.

\subsection{Main results} \label{subsec::mainresults}

\subsubsection{Subsequential limits and compactness}

The main purpose of this paper is to construct a candidate for what should be the scaling limit of $\eta$-DLA on a $\gamma$-LQG surface (at least in sufficiently isotropic formulations) for the $(\gamma^2,\eta)$ pairs which lie on the top two solid curves from Figure~\ref{fig::etavsgamma}.

Before presenting these results, let us explain one path that we will {\em not} pursue in this paper.  One natural approach would be to take a subsequential limit of $\eta$-DLA on $\delta$-approximations of $\gamma$-LQG (perhaps using an inherently isotropic setting, such as the one involving Voronoi cells of a Poisson point process associated with the LQG measure) and to simply {\em define} the limit to be a $\QLE(\gamma^2, \eta$).  Using Theorem~\ref{thm::measurehullprocesscorrespondence} and the weak compactness of $\CN_T$, it should not be hard to construct a triple $(\nu_t, g_t, \Fh_t)$ coupled with a free field instance $h$, as in the context of Figure~\ref{fig::QLEtriangle}, with the property that
\begin{enumerate}
\item The sets $K_t$ corresponding to $g_t$ are local.
\item The maps from $\nu_t$ to $g_t$, and from $g_t$ to $\Fh_t$ are as described in Figure~\ref{fig::QLEtriangle}.
\end{enumerate}
The natural next step would then be to show that $\Fh_t$ determines $\nu_t$ in the manner of Figure~\ref{fig::QLEtriangle}.  We consider this to be an interesting problem, and one that might potentially be solvable by understanding (using the discrete approximations) how $\nu_t$ restricted to a boundary arc would change when one added a constant to $\Fh_t$ on that boundary arc (see this list of open problems in Section~\ref{sec::questions}).

However, we stress that even if this problem were solved, it would not immediately give us an explicit description of the stationary law of $\nu_t$.  The main contribution of this article is to construct a solution to the dynamics of Figure~\ref{fig::QLEtriangle} for the $(\gamma^2, \eta)$ pairs illustrated in Figure~\ref{fig::etavsgamma} and to explicitly describe the stationary law of the corresponding $\nu_t$.  The construction is explicit enough to enable us to describe basic properties of the QLE growth.

\subsubsection{Theorem statements}

Before presenting our main results, we need to formalize the scaling symmetry illustrated in Figure~\ref{fig::etascaling}, which in the continuum should be a statement (which holds for any fixed $t$) about how the boundary measure $\nu_t$ changes when $\Fh_t$ is locally transformed via an LQG coordinate change.  It is a bit delicate to formulate this, since this should be an almost sure statement (i.e., it should hold almost surely for the $\Fh_t$ that one observes in a random solution, but not necessarily for {\em all} possible $\Fh_t$ choices) and one would not necessarily expect a coordinate change such as the one described in Figure~\ref{fig::etascaling} to preserve the probability measure on $\Fh_t$, or even that the law of the image would be absolutely continuous with respect to the law of the original.  However, we believe that it {\em would} be reasonable to expect the law of the restriction of $\Fh_t$ to the intervals $I_i$ in Figure~\ref{fig::etascaling} to change in an absolutely continuous way.  (This is certainly the case when $\Fh_t$ is a free boundary Gaussian free field plus a smooth deterministic function; see the many similar statements in \cite{ss2010contour}.)  In this case, one can couple two instances of the field in such a way that one looks like a quantum coordinate change of the other (via a map such as the one described in Figure~\ref{fig::etascaling}) with positive probability.  Given a coupling of this type one can formalize the $\eta$-DBM scaling symmetry, as we do in the following definition:

\begin{definition}
\label{def::eta_scaling}
We say that a triple $(\nu_t, g_t, \Fh_t)$ that forms a solution to the dynamics described in Figure~\ref{fig::QLEtriangle} satisfies {\bf $\eta$-DBM scaling} if the following is true.  Suppose that we are given any two instances $(\nu_t, g_t, \Fh_t)$ and  $(\wt \nu_t, \wt g_t, \wt \Fh_t)$ coupled in such a way that for a fixed value of $t_0 \geq 0$ and a fixed conformal map $\psi$ from a subset of $\D$ to $\D$, there is a positive probability of the event $\mathcal A$ that
$\wt \Fh_{t_0}(u) = \Fh_{t_0} \circ \psi(u) + Q \log|\psi'(u)|$ for all $u \in I$ where $I$ is an arc of $\partial \D$.  More precisely, this means that
\[ \lim_{\substack{u \to I \\ u \in \D}} \bigg( \wt \Fh_{t_0}(u)- \Fh_{t_0}\circ \psi(u) - Q \log|\psi'(u)| \bigg) = 0\]
and it says that $\Fh_{t_0}$ and $\wt \Fh_{t_0}$ are related by an LQG quantum coordinate change (as in \eqref{eqn::LQGcoordinatetransformation}).  Then we have almost surely on $\CA$ that
\begin{align} \label{eqn::etascaling}
A \mapsto \nu_{t_0}(\psi(A)) \quad\text{and}\quad
A \mapsto \int_A |\psi'(u)|^{2+\eta}  d\wt{\nu}_{t_0}(u)
\end{align}
agree as measures on $I$, up to a global multiplicative constant.
\end{definition}

Our first result is the existence of stationary solutions to the dynamics described in Figure~\ref{fig::etavsgamma} that satisfy $\eta$-DBM scaling for appropriate $\eta$ values.  (The existence of the trivial solution corresponding to $\alpha = 0$, $\nu_t$ given by uniform Lebesgue measure for all $t$, and $\eta = -1$, i.e.\ to the bottom line in Figure~\ref{fig::etavsgamma}, is obvious and hence omitted from the theorem statement, since in this case the measures $\nu_t$ do not depend on $h$ and \eqref{eqn::etascaling} is a straightforward change of coordinates.)

The particular law of $h$ described in the theorem statement below (a free boundary GFF with certain logarithmic singularity at the origin and another logarithmic singularity at a prescribed boundary point) may seem fairly specific.   Both singularities are necessary for our particular method of constructing a solution to the $\QLE$ dynamics (which uses ordinary radial SLE and the quantum gravity zipper).  However, we stress that once one obtains a solution for this particular law for $h$, one gets for free a solution corresponding to {\em any} random $h$ whose law is absolutely continuous with respect to that law, since one can always weight the law of the collection $\bigl( h, (\nu_t, g_t, \Fh_t) \bigl)$ by a Radon-Nikodym derivative depending only on $h$ without affecting any almost sure statements.

In particular, it turns out that adding the logarithmic singularity (which is not too large) centered at the uniformly chosen boundary point changes the overall law of $h$ in an absolutely continuous way (in fact the Radon-Nikodym derivative has an explicit interpretation in terms of the total mass of a certain LQG boundary measure; see the discussion in Section~\ref{sec::preliminaries} and Section~\ref{sec::couplings}, or in \cite{ds2011kpz}).  Also, adding any finite Dirichlet energy function to $h$ changes the law in an absolutely continuous way. In particular, one could add to $h$ a finite Dirichlet energy function that agrees with a multiple of $\log|\cdot|$ outside a neighborhood $U$ of the origin; a corresponding $\QLE$ would then be well defined up until the process first reaches $U$.  Since this can be done for any arbitrarily small $U$, one can obtain in this way a (not-necessarily-stationary) solution to the $\QLE$ dynamics that involves replacing the multiple of the logarithm in the definition of $h$ with another multiple of the logarithm (or removing this term altogether).

Figures~\ref{fig::edenwithlogs} and~\ref{fig::dlawithlogs} illustrate the changes that occur in the simulations when different multiples of $\log|\cdot|$ are added to $h$.  As explained in \cite[Section~1.6]{sheffield2010weld}, adding a $\log|\cdot|$ singularity to the GFF has the interpretation of first starting off with a cone and then conformally mapping to $\C$ with the conic singularity sent to the origin.  Adding a negative multiple of $\log|\cdot|$ corresponds to an opening angle smaller than $2\pi$ and a positive multiple corresponds to an opening angle larger than $2\pi$.  This is why the simulations of the Eden model (resp.\ DLA) in Figure~\ref{fig::edenwithlogs} (resp.\ Figure~\ref{fig::dlawithlogs}) appear more and more round (resp.\ have more arms) as one goes from left to right and then from top to bottom.

\begin{theorem}
\label{thm::existence}
For each $\gamma \in (0,2]$ and $\eta$ such that $(\gamma^2, \eta)$ lies on one of the upper two curves in Figure~\ref{fig::etavsgamma}, there is a $(\nu_t, g_t, \Fh_t)$ triple that forms a solution to the dynamics described in Figure~\ref{fig::QLEtriangle} and that satisfies $\eta$-DBM scaling. The triple can be constructed using an explicit limiting procedure that involves radial $\SLE_\kappa$, where $\kappa = \gamma^2$, when $(\gamma^2, \eta)$ lies on the upper curve in Figure~\ref{fig::etavsgamma}, and $\kappa = 16/\gamma^2$ when $(\gamma^2, \eta)$ lies on the middle curve.  In this solution, the $\alpha$ appearing in Figure~\ref{fig::etavsgamma} is equal to $-\tfrac{1}{\sqrt{\kappa}}$ and $h$ is a free boundary GFF on $\D$ minus $\frac{\kappa+6}{2\sqrt{\kappa}} \log|\cdot|$ (which will turn out to mean that there is an infinite amount of quantum mass in any neighborhood of the origin) plus $\tfrac{2}{\sqrt{\kappa}} \log|z - \cdot|$ where $z$ is a uniformly chosen random point on $\partial \D$ independent of the GFF.  The pair $(\nu_t, \Fh_t)$ is stationary with respect to capacity (i.e., minus log conformal radius) time.
\end{theorem}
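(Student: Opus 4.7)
The plan is to construct the QLE triple $(\nu_t,g_t,\Fh_t)$ as a subsequential weak limit of discrete-time processes built from short radial $\SLE_\kappa$ segments combined with a ``quantum zipper'' reshuffling step adapted from the construction in \cite{sheffield2010weld}. Fix $\kappa\in\{\gamma^2,16/\gamma^2\}$ (according to whether $(\gamma^2,\eta)$ is on the upper or middle curve of Figure~\ref{fig::etavsgamma}), set $\alpha=-1/\sqrt{\kappa}$, and let $h_0$ be a field distributed as in the theorem statement.

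For a time step $\delta>0$, I would define a discrete-time Markov chain on pairs (field, marked boundary point) as follows: given $(h_{n\delta}^\delta,z_n)$, draw an independent radial $\SLE_\kappa$ segment $\eta_n$ from $z_n$ targeting $0$ and run it up to capacity time $\delta$, then apply the radial quantum zipper to identify the slit surface $(\D\setminus\eta_n,h_{n\delta}^\delta)$ with a fresh copy $(\D,h_{(n+1)\delta}^\delta)$, recording the image of the tip of $\eta_n$ as the new marked point $z_{n+1}$. The central input from the zipper is that $(h_{(n+1)\delta}^\delta,z_{n+1})$ has the same joint law as $(h_0,z_0)$, so the chain is exactly stationary. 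This stationarity is precisely what forces the specific logarithmic singularities in $h$ (the origin-cone and boundary-starting-point singularities are exactly what radial welding preserves) and pins the coupling constant at $\alpha=-1/\sqrt{\kappa}$. Concatenating the $n$-th $\SLE$ segment as the driving curve on $[n\delta,(n+1)\delta]$, Theorem~\ref{thm::measurehullprocesscorrespondence} yields a triple $(\nu_t^\delta,g_t^\delta,\Fh_t^\delta)$ that is stationary in capacity time modulo the shift by $\delta$.

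Next I would pass to a weak subsequential limit as $\delta\to 0$. Weak compactness of $\CN_T$ combined with Theorem~\ref{thm::measurehullprocesscorrespondence} gives a limit $\nu\in\CN_T$ with $g_t^\delta\to g_t$ in the Carath\'eodory sense for every $t$. Each hull $K_t^\delta$ is a local set of $h$ (as an $\SLE$ trace), and since locality is preserved under monotone limits of closed sets, the limiting $K_t$ is local; hence $\pHarm^t(h)$ and $\Fh_t$ are well defined via the $\CG_T\to\CH_T$ arrow of Figure~\ref{fig::QLEtriangle}. Exact stationarity at every scale $\delta$ then propagates to stationarity of $(\nu_t,\Fh_t)$ in continuous capacity time. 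The $\eta$-DBM scaling of Definition~\ref{def::eta_scaling} follows from the LQG coordinate change rule~\eqref{eqn::LQGcoordinatetransformation}: carefully tracking the powers of $|\psi'|$ coming from the boundary Jacobian of $\psi$, the additive $Q\log|\psi'|$ shift in $\wt\Fh$, and the scaling of the regularization defining the quantum boundary measure produces the exponents $\tfrac{3}{\gamma^2}-\tfrac12$ and $\tfrac{3\gamma^2}{16}-\tfrac12$ recorded in Figure~\ref{fig::etavsgamma}.

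The main obstacle, which I expect to occupy most of the existence argument, is verifying the $\CH_T\to\CN_T$ arrow in the limit, i.e.\ that almost surely $d\nu_t=\lim_{n\to\infty}\CZ_{n,t}^{-1}e^{\alpha\Fh_t^n(u)}\,du$. At each discrete step this identity is exact: the driving measure $\nu_{n\delta}^\delta$ is, by construction of the zipper, the $g_{n\delta}^\delta$-pushforward of the quantum boundary length measure of $h_{n\delta}^\delta$ in a neighborhood of the SLE start, and this length measure can be read off as a regularized exponential of $\alpha\Fh_{n\delta}^\delta$ on $\partial\D$. The challenge is to show that this exact identification survives the $\delta\to 0$ limit, which calls for (i) tightness, uniformly in $\delta$, of the regularized measures defined from the power-series truncations $\Fh^n$, (ii) a continuity estimate showing that $\Fh_t^n$ depends continuously, in the relevant sense, on the Carath\'eodory limit of $g_t^\delta\to g_t$, and (iii) an interchange-of-limits argument in $n$ and $\delta$, handled by exploiting the exact stationarity of the law of $\Fh_t^\delta$ (which is independent of $\delta$) to reduce the matter to a single a priori estimate on the stationary law of $\Fh_0$.
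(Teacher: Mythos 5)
Your overall architecture matches the paper's: the radial quantum zipper (the paper's Theorem~\ref{thm::radial_coupling_existence} together with Proposition~\ref{prop::free_boundary_weighted}) gives exact stationarity of the $\delta$-approximation, tightness plus Theorem~\ref{thm::measurehullprocesscorrespondence} and locality of the limiting hulls give the $\CN_T\to\CG_T\to\CH_T$ arrows for a subsequential limit, and the $\eta$-DBM scaling is read off from the coordinate-change computation. Two corrections, one minor and one substantive. Minor: the new marked point is not ``the image of the tip'' carried forward --- that would just reproduce ordinary radial $\SLE_\kappa$; it is \emph{resampled} from the quantum boundary measure at each step, and Proposition~\ref{prop::free_boundary_weighted} is exactly what makes this resampling law-preserving.

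The substantive gap is in your treatment of the $\CH_T\to\CN_T$ arrow. You assert that ``at each discrete step this identity is exact: the driving measure $\nu_{n\delta}^\delta$ is \dots the quantum boundary length measure.'' It is not. At every $\delta>0$ the Loewner driving measure $\varsigma_t^\delta$ is a \emph{Dirac mass} at the current seed $U^{\delta,\ell}W_t^{\delta,\ell}$; the quantum boundary measure $\exp(\alpha\Fh_{\delta\ell}^\delta)$ enters only as the \emph{law from which the atom's location is sampled}. So there is no exact identity to ``preserve'' under $\delta\to0$, and your items (i)--(iii) do not address how a sequence of atomic measures becomes the diffuse exponentiated measure in the limit. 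The missing mechanism is a law-of-large-numbers / time-averaging argument: one must show that over a macroscopic capacity interval the empirical average of the Dirac masses (resampled every $\delta$ units of time, i.e.\ at a rate much faster than the modulus of continuity of the finite-dimensional truncations $\Fh_t^{\delta,n}$) concentrates on $\CZ_{n,t}^{-1}e^{\alpha\Fh_t^n(u)}\,du$, and then interleave this with the $n\to\infty$ convergence of the truncated exponentials. This is the content of the paper's Lemmas~\ref{lem::mu_delta_mu_delta_n} and~\ref{lem::mu_delta_n_nu_delta_n} and the four-term triangle inequality in Proposition~\ref{prop::qle_solution}; without some version of it the proof of $\varsigma=\nu$ does not go through.
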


The solutions described in Theorem~\ref{thm::existence} will be constructed as subsequential limits of certain approximations involving $\SLE$.  Although we cannot prove that the limits are unique, we can prove that {\em every} subsequential limit of these approximations has the properties described in Theorem~\ref{thm::existence} (and in particular has the same stationary distribution, described in terms in of the GFF).  We will write $\QLE(\gamma^2, \eta)$ to refer to one of these solutions.  That these solutions satisfy the $\eta$-DBM property will turn out to follow easily from the fact that $\Fh_t$, for each $t \geq 0$, is given by the harmonic extension of the boundary values of a form of the GFF and $\nu_t$ is simply a type of LQG quantum measure corresponding to that GFF instance; these points will be explained in Section~\ref{sec::continuum} and Theorem~\ref{thm::existence}.

In Section~\ref{sec::dynamics}, we derive an infinite dimensional SDE which describes the dynamics in time of the harmonic component $(\Fh_t)$ of the $\QLE(\gamma^2,\eta)$ solutions we construct in the proof of Theorem~\ref{thm::existence}.  We will not restate the result here but direct the reader to Section~\ref{sec::dynamics} for the precise form of the equation.

Our next result is the H\"older continuity of the complementary component of a $\QLE(\gamma^2,\eta)$ which contains the origin.

\begin{theorem}
\label{thm::qle_continuity}
Fix $\gamma \in (0,2)$, let $Q = 2/\gamma + \gamma/2$, and let
\begin{equation}
\label{eqn::continuity_holder_exponent}
\ol{\Delta} = \frac{Q-2}{Q+2\sqrt{2}}.
\end{equation}
Fix $\Delta \in (0,\ol{\Delta})$.  Suppose that $(\nu_t,g_t,\Fh_t)$ is one of the $\QLE(\gamma^2,\eta)$ processes described in Theorem~\ref{thm::existence}.  For each $t \geq 0$, let $\D_t = \D \setminus K_t$.  Then $\D_t$ is almost surely a H\"older domain with exponent $\Delta$.  That is, for each $t \geq 0$, $g_t^{-1} \colon \D \to D_t$ is almost surely H\"older continuous with exponent $\Delta$.
\end{theorem}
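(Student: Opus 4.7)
The plan is to obtain, for each fixed $t \geq 0$, a pointwise bound of the form $|(g_t^{-1})'(z)| \leq C(1-|z|)^{\Delta-1}$ almost surely on $\D$, from which Hölder continuity of $g_t^{-1}$ with exponent $\Delta$ follows by a classical conformal-mapping argument (integrate the derivative bound along hyperbolic geodesics, or equivalently invoke a Makarov-type equivalence between growth of $|f'|$ and boundary Hölder regularity of a conformal $f \colon \D \to D$).

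The starting point is the identity from the $\QLE$ triangle (Figure~\ref{fig::QLEtriangle}),
\[
Q\log|(g_t^{-1})'(z)| \;=\; \Fh_t(z) \;-\; \pHarm^t(h)(g_t^{-1}(z)),
\]
which holds almost surely in the construction of Theorem~\ref{thm::existence} (using that $K_t$ is a local set so that $\pHarm^t(h)$ makes sense). Controlling $|(g_t^{-1})'|$ therefore reduces to bounding the two harmonic functions on the right-hand side. Both are harmonic extensions of (locally) GFF-type boundary data, up to the explicit logarithmic singularities in $h$ specified in Theorem~\ref{thm::existence}; these singularities at the origin and at the uniformly chosen boundary point contribute at most an $O(|\log(1-|z|)|)$ correction that can be absorbed into the constant away from the singularity points, so the key input is GFF thick-point / circle-average estimates.

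Using standard Kahane/thick-point bounds for the circle averages of the free boundary GFF, one obtains for any $\epsilon > 0$ uniform almost sure estimates
\[
|\Fh_t(z)| \leq (\sqrt{2}+\epsilon)\,|\log(1-|z|)| + C,
\qquad
|\pHarm^t(h)(w)| \leq (\sqrt{2}+\epsilon)\,|\log\mathrm{dist}(w,\partial(\D\setminus K_t))| + C.
\]
By the Koebe distortion theorem, $\mathrm{dist}(g_t^{-1}(z),\partial(\D\setminus K_t)) \asymp (1-|z|)\,|(g_t^{-1})'(z)|$, so substituting into the identity and writing $L(z) := \log|(g_t^{-1})'(z)|$ produces a self-referential inequality of the form
\[
Q\,L(z) \;\leq\; 2\,|\log(1-|z|)| \;+\; 2\sqrt{2}\bigl(|\log(1-|z|)| - L(z)\bigr) + C,
\]
(with the specific constants $2$ and $2\sqrt{2}$ emerging from the sharp GFF circle-average constants applied on both sides, together with the Koebe identity). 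Rearranging gives $L(z) \leq \bigl((2+2\sqrt{2})/(Q+2\sqrt{2})\bigr)|\log(1-|z|)| + C = (1-\ol\Delta)|\log(1-|z|)| + C$, which is exactly the derivative growth rate needed to conclude Hölder continuity of $g_t^{-1}$ with any exponent $\Delta < \ol\Delta$.

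The main obstacle is the rigorous justification of the GFF-type pointwise bounds in this setting and the propagation of the Koebe estimate without loss: one must handle simultaneously the harmonic extension $\pHarm^t(h)$ on the random domain $\D\setminus K_t$ (where locality of $K_t$ is essential) and the harmonic function $\Fh_t$ on $\D$ (whose boundary behavior is governed by the pull-back of $h$ through $g_t^{-1}$), and one must treat the prescribed log singularities of $h$ at the origin (which carries infinite quantum mass) and at the random boundary point. Tracking the optimal constants through the self-referential Koebe inequality to recover exactly the exponent $(Q-2)/(Q+2\sqrt{2})$ is the technical heart of the argument; everything else is either standard harmonic-extension analysis or classical conformal-map distortion theory.
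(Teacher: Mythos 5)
Your proposal follows essentially the same route as the paper's proof (this is Theorem~\ref{thm::continuity} and Section~\ref{subsec::continuity}): the identity $Q\log|\varphi'| = \Fh_2 - \Fh_{A,1}\circ\varphi + C$ between harmonic extensions of GFF boundary data, the free-boundary GFF circle-average growth constants $2$ (near $\partial \D$) and $2\sqrt{2}$ (in the interior), the Koebe distortion estimate yielding the self-referential inequality in $\log|\varphi'|$, and integration of $\varphi'$ to conclude. The only blemishes are that the constants $(\sqrt{2}+\epsilon)$ in your intermediate display should read $2+\epsilon$ and $2\sqrt{2}+\epsilon$ respectively (as you in fact use in the next line), and that the logarithmic singularities are not merely ``absorbed into the constant'' but are handled in the paper by defining $\beta^\star$ and $\beta_\star$ as maxima over the singularity coefficients --- for the QLE fields of Theorem~\ref{thm::existence} these maxima happen to still equal $2$ and $2\sqrt{2}$, so your final exponent is correct.
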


In fact, the proof of Theorem~\ref{thm::qle_continuity} will only use the fact the stationary law of $\Fh_t$ is given by the harmonic extension of the boundary values of a form of the GFF; if we could somehow construct other solutions to the $\QLE$ dynamics with this property, then this theorem would apply to those solutions as well.

Theorem~\ref{thm::qle_continuity} is a special case of a more general result which holds for any random closed set $A$ which is coupled with $h$ in a certain manner.  This is stated as Theorem~\ref{thm::continuity} in Section~\ref{sec::sample_path}.  Another special case of this result is the fact that the complementary components of $\SLE_\kappa$ for $\kappa \neq 4$ are H\"older domains.  This fact was first proved by Rohde and Schramm in \cite[Theorem~5.2]{rs2005sle} in a very different way.  We will state this result formally and give additional examples in Section~\ref{sec::sample_path}.

Suppose that $K \subseteq \D$ is a closed set.  Then $K$ is said to be {\bf conformally removable} if the only maps $\varphi \colon \D \to \C$ which are homeomorphisms of $\D$ and conformal on $\D \setminus K$ are the maps which are conformal transformations of $\D$.  The removability of the curves coupled with the GFF which arise in this theory is important because it is closely related to the question of whether the curve is almost surely determined by the GFF \cite{sheffield2010weld}.  One important consequence of Theorem~\ref{thm::qle_continuity} and \cite[Corollary~2]{js2000remove} is the removability of component boundaries of a $\QLE(\gamma^2,\eta)$ when $(\gamma^2,\eta)$ lies on one of the upper two curves of Figure~\ref{fig::etavsgamma} and $\gamma \in (0,2)$.

\begin{corollary}
\label{cor::qle_removable}
Suppose that we have the same setup as in Theorem~\ref{thm::qle_continuity}.  For each $t \geq 0$ we almost surely have that $\partial \D_t$ is conformally removable.
\end{corollary}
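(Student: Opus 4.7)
The plan is to invoke the two cited results directly. By Theorem~\ref{thm::qle_continuity}, for each fixed $t \geq 0$ the complementary component $\D_t = \D \setminus K_t$ is almost surely a H\"older domain, in the sense that the uniformizing map $g_t^{-1} \colon \D \to \D_t$ extends continuously to $\overline{\D}$ and is H\"older continuous of some exponent $\Delta \in (0, \overline{\Delta})$. The Jones--Smirnov criterion \cite[Corollary~2]{js2000remove} asserts that the boundary of any such H\"older domain is conformally removable as a subset of $\C$. Applying this result on the event of full probability provided by Theorem~\ref{thm::qle_continuity} immediately yields that $\partial \D_t$ is almost surely conformally removable.

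The only point that deserves a brief remark is the structure of $\partial \D_t$: a priori this set consists of the ``inner'' piece produced by the growth of $K_t$ together with the portion of $\partial \D$ not absorbed into $K_t$. The outer piece is contained in the analytic curve $\partial \D$, and analytic arcs are trivially removable; but in fact no such decomposition is needed, since \cite[Corollary~2]{js2000remove} applies to the full topological boundary of the H\"older domain $\D_t$. Thus the H\"older continuity of $g_t^{-1}$ furnished by Theorem~\ref{thm::qle_continuity} is precisely the hypothesis required to conclude removability of $\partial \D_t$ in $\C$.

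There is no substantive obstacle here beyond what is already contained in Theorem~\ref{thm::qle_continuity}: the hard analytic work is the H\"older regularity of the uniformizing map, which relies on the explicit description of the stationary law of $\Fh_t$ in terms of the free boundary GFF and estimates on the LQG boundary measure. Once that is in hand, the removability is a black-box consequence of Jones--Smirnov, and the corollary follows in one line.
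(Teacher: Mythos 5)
Your proof is correct and is exactly the argument the paper intends: Theorem~\ref{thm::qle_continuity} gives that $\D_t$ is a H\"older domain, and \cite[Corollary~2]{js2000remove} then yields removability of $\partial \D_t$ as a black box. The paper offers no additional content beyond this one-line deduction, so your remark on the structure of $\partial\D_t$ is harmless but not needed.
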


\subsection{Interpretation and conjecture when $\eta$ is large}

In the physics literature, there has been some discussion and debate about what happens to the $\eta$-DBM model (in the Euclidean setting, i.e., $\gamma=0$) when $\eta$ is large.  Generally, it is understood that when $\eta$ is large, there could be a strong enough preference for growth to occur at the ``tip'' that the scaling limit of $\eta$-DBM could in principle be a simple path.  There has been some discussion on the matter of whether one actually obtains a one-dimensional path when $\eta$ is above some critical value.  Some support for this idea with a critical value of about $\eta = 4$ appears in \cite{PhysRevLett.87.175502} and \cite{PhysRevLett.88.235505}.  (The latter contains a figure depicting a simulation of the $\eta=3$ DBM.)  However, a later study estimates the dimension of $\eta$-DBM in more detail and does not find evidence for a phase transition at $\eta = 4$, and concludes that the dimension of $\eta$-DBM is about $1.08$ when $\eta = 4$  \cite{PhysRevE.77.066203}.  Another reasonable guess might be that the scaling limit of $\eta$-DBM is indeed a simple path when $\eta$ is large enough, but that the simple path may be an $\SLE$ with a small value of $\kappa$ (and not necessarily a straight line).

As a reasonable toy model for this scenario, and a model that is also interesting in its own right, one may consider a variant of $\eta$-DBM in which, at each step, one {\em conditions} on having the next edge added begin {\em exactly} at the tip of where the last edge was added (so that a simple path is produced in the end).  That is, instead of choosing a new edge from the set of all cluster adjacent edges (with probability proportional to harmonic measure to the $\eta$ power) one chooses a new edge from the set of edges beginning at the current tip of the path (again with probability proportional to harmonic measure to the $\eta$ power).  This random non-self-intersecting walk is sometimes called the {\em Laplacian random walk} (LRW) with parameter $\eta$.\footnote{The term ``Laplacian-$b$ random walk'', with parameter $b = \eta$, is also used.}  Lawler has proposed (citing early calculations by Hastings) that the $\eta$-LRW should have $\SLE$ as a scaling limit (on an ordinary grid) with
\begin{equation}
\label{eqn::bvskappa} \eta = (6-\kappa)/(2\kappa),
\end{equation}
\cite{MR2247843, lyklema1986laplacian, hastings2002exact} at least when $\eta \geq 1/4$, which corresponds to $\kappa \in (0, 4]$.  Interestingly, if we set $\kappa = \gamma^2$ and $\eta = b$, then \eqref{eqn::bvskappa} states that $\eta = 3/\gamma^2 -1/2$, which corresponds to the upper curve in Figure~\ref{fig::etavsgamma}.  Simulations have shown that $\eta$-LRW for large $\eta$ looks fairly similar to a straight line \cite{0295-5075-92-3-36004}, as one would expect.

At this point, there are two natural guesses that come to mind:
\begin{enumerate}
\item Maybe the conjecture about $\eta$-LRW on a grid scaling to $\SLE_\kappa$ holds in more generality, so that the scaling limit of $\eta$-LRW on a $\gamma$-LQG is also given by $\SLE_\kappa$ with $\kappa$ as in \eqref{eqn::bvskappa}.  (Note that it is often natural to guess that processes that converge to $\SLE$ on fixed lattices also converge to $\SLE$ when drawn on $\gamma$-LQG type random graphs, assuming the latter are embedded in the plane in a conformal way \cite{ds2011kpz,sheffield2010weld}.)
\item Maybe, for each fixed $\gamma$, it is the case that when $\eta$ is large enough, the scaling limit of $\eta$-DBM on a $\gamma$-LQG is the same as the scaling of $\eta$-LRW on a $\gamma$-LQG.  (If in the $\eta$-DBM model, the growth tends to take place near the tip, maybe the behavior does not change so much when one requires the growth to take place {\em exactly} at the tip.)
\end{enumerate}

The authors do not have a good deal of evidence supporting these guesses.  However it is interesting to observe that if these guesses are correct, then for sufficiently large $\eta$, the $\eta$-DBM on a $\gamma$-LQG has a scaling limit given by $\SLE_\kappa$ for the $\kappa$ obtained from \eqref{eqn::bvskappa}, and this scaling limit does not actually depend on the value of $\gamma$.  If this is the case, then (at least for $\eta$ sufficiently large) the dotted line in Figure~\ref{fig::etavsgamma} represents $(\gamma^2, \eta)$ pairs for which the scaling limit of $\eta$-DBM on a $\gamma$-LQG is described by the ordinary radial quantum gravity zipper, which we describe in Section~\ref{sec::couplings}.  We remark that the $\eta$-DBM property as formulated above might be satisfied in a fairly empty way for this process (when $\nu_t$ is a point mass for all $t$), but the property may have some content if one considers an initial configuration in which there are two or more distinct tips (i.e., $\nu_t$ contains atoms but is not entirely concentrated at a single point).

We will not further speculate on the large $\eta$ case or further discuss scenarios in which $\nu_t$ might contain atoms.  Indeed, throughout the remainder of the paper, we will mostly limit our discussion to the solid portions of the upper two curves in Figure~\ref{fig::etavsgamma}, and the $\nu_t$ we construct in these settings will be almost surely non-atomic for all almost all $t$.

\subsection{Outline}

The remainder of this article is structured as follows.  In Section~\ref{sec::discrete}, we will describe several discrete constructions which motivate our definition of $\QLE$ as well as our interpretation of $\QLE(2,1)$ and $\QLE(8/3,0)$.  Next, in Section~\ref{sec::continuum}, we will provide arguments in the continuum that support the same interpretation for $\QLE(2,1)$ and $\QLE(8/3,0)$ as well as relate the other parameter pairs indicated in orange in Figure~\ref{fig::etavsgamma} to $\eta$-DBM.  The purpose of Section~\ref{sec::preliminaries} is to review some preliminaries (radial SLE, GFF with Dirichlet and free boundary conditions, local sets, and LQG boundary measures).  In Section~\ref{sec::couplings} we will establish a coupling between reverse radial $\SLE_\kappa$ and the GFF, closely related to the so-called quantum zipper describe in \cite{sheffield2010weld}.  This will then be used in Section~\ref{sec::existence} to prove the existence of $\QLE(\gamma^2,\eta)$ for $(\gamma^2,\eta)$ which lies on one of the upper two curves from Figure~\ref{fig::etavsgamma}.  We will then derive an equation for the stochastic dynamics of the measure which drives a $\QLE(\gamma^2,\eta)$ in Section~\ref{sec::dynamics}.  In Section~\ref{sec::sample_path} we will establish continuity and removability results about $\QLE(\gamma^2,\eta)$ and discussion the problem of describing certain phase transitions for $\QLE$.  Finally, in Section~\ref{sec::questions} we will state a number of open questions and describe some current works in progress.

\section{Discrete constructions}
\label{sec::discrete}

\subsection{Reshuffled Markov chains}

At the heart of our discrete constructions lies a very simple observation about Markov chains.
Consider a measure space $S$ which is a disjoint union of spaces $S_1, S_2, \ldots, S_N$.  In the examples of this section, $S$ will be a finite set.  Suppose we have a measure $\mu_i$ defined on each $S_i$.  Let $X = (X_k)$ be any Markov chain on $S$ with the property that for any $i$, $j$, and $\mathcal S \subseteq S_i$, we have
\[ \p[ X_j \in \mathcal S | X_j \in S_i ] = \mu_i(\mathcal S).\]
This property in particular implies that the conditional law of $X_0$, given that it belongs to $S_i$, is given by $\mu_i$.

Then there is a {\em reshuffled Markov chain} $Y = (Y_k)$ defined as follows.  First, $Y_0$ has the same law as $X_0$.  Then, to take a step in the reshuffled Markov chain from a point $x$, one first chooses a point $y \in S$ according to the transition rule for the Markov chain $X$ (from the point $x$), and then one chooses a new point $z$ from $\mu_j$, where $j$ is the value for which $y \in S_j$.  The step from $x$ to $z$ is a step in the reshuffled Markov chain (and subsequent steps are taken in the same manner).  Intuitively, one can think of the reshuffled Markov chain as a Markov chain in which one imposes a certain degree of forgetfulness: if we are given the value $Y_i$, then in order to sample $Y_{i+1}$ we can imagine that we first take a transition step from $Y_i$ and then we ``forget'' everything we know about the new location except which of the sets $S_j$ it belongs to.

The reshuffled chain induces a Markov chain on $\{S_1, S_2, \ldots, S_N \}$.  Now suppose that $A$ is a union of some of the $S_i$ and is a sink of this Markov chain (i.e., once the Markov chain enters $A$, it almost surely does not leave).  Then we have the following:

\begin{proposition}
\label{prop::reshuffling}
In the context described above, for each fixed $j \geq 0$, the law of $X_j$ is equivalent to the law of $Y_j$.  Moreover, the law of $\min \{j: X_j \in A \}$ agrees with the law of $\min \{j: Y_j \in A \}$.
\end{proposition}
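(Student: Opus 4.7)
The plan is to prove both assertions by induction on $j$, using as the inductive invariant not only that $X_j$ and $Y_j$ agree in law, but also the structural fact that both distributions, when conditioned on the event of lying in a given $S_i$, equal $\mu_i$. This latter property holds for $X$ at every time by hypothesis and for $Y$ at every time $j \geq 1$ automatically from the construction (the ``reshuffling'' at the end of each step resamples from $\mu_i$); at time $0$ it holds since $Y_0 \stackrel{d}{=} X_0$.

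Granted this invariant, the inductive step reduces to checking that the pushforwards of the laws of $X_j$ and $Y_j$ to the index set $\{1,\ldots,N\}$ agree. Write $P$ for the transition kernel of $X$. For $X$, using the tower property and the hypothesis,
\begin{equation*}
\P[X_{j+1}\in S_k] \;=\; \sum_i \P[X_j\in S_i]\int_{S_i} P(x,S_k)\,d\mu_i(x).
\end{equation*}
For $Y$, the same identity holds: conditional on $Y_j\in S_i$ the law of $Y_j$ is $\mu_i$, and $Y_{j+1}\in S_k$ occurs exactly when the intermediate $X$-step lands in $S_k$ (the final resampling from $\mu_k$ does not change which $S_k$ it lies in). Thus the $\{S_i\}$-marginals of $X_{j+1}$ and $Y_{j+1}$ agree, and combined with the conditional law property this gives $X_{j+1}\stackrel{d}{=}Y_{j+1}$, completing the induction and the first assertion.

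For the hitting time statement, the key observation is that the sink property for $A$ automatically transfers from $Y$ to $X$: saying $A$ is a sink for the induced chain from $Y$ means $\int_{S_i} P(x,S_k)\,d\mu_i(x)=0$ whenever $S_i\subseteq A$ and $S_k\not\subseteq A$, and by the hypothesis that $X_j\mid\{X_j\in S_i\}\sim\mu_i$ this forces $\P[X_{j+1}\not\in A\mid X_j\in A]=0$ as well. So once either process enters $A$ it never leaves, and therefore $\{\tau^X_A\leq j\}=\{X_j\in A\}$ and similarly for $Y$. Applying the first assertion gives $\P[\tau^X_A\leq j]=\P[X_j\in A]=\P[Y_j\in A]=\P[\tau^Y_A\leq j]$ for every $j$, which is the claimed equality in law.

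The only point requiring any care is the conditional-law invariant for $Y$ at time $0$, which is why the hypothesis is stated for all $j$ (not merely the stationary version). Beyond that the argument is essentially bookkeeping: the resampling step is designed precisely so that the one-step transition law from a point $x\in S_i$, viewed as a map to the index set $\{1,\ldots,N\}$, depends on $x$ only through the measure $\int_{S_i} P(\cdot,\cdot)\,d\mu_i$ that also governs $X$'s dynamics under the hypothesized conditional law. I expect no substantial obstacle; the proposition is really a formal consequence of the definition of reshuffling together with the stated conditional-law hypothesis.
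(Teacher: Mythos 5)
Your proof is correct and follows essentially the same route as the paper's (two-sentence) argument: induction on $j$ for the equality of laws, then deducing the hitting-time claim from the fact that $\{\min\{k: X_k\in A\}\le j\}=\{X_j\in A\}$ and likewise for $Y$. You usefully make explicit two details the paper leaves implicit — that the inductive invariant includes the conditional-law property for $Y_j$, and that the sink property transfers from the reshuffled chain to $X$ via the conditional-law hypothesis — but these are elaborations, not a different approach.
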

\begin{proof}
The first statement holds for $j=0$ and follows for all $j>0$ by induction.  The second statement follows from the first, since for any $j$ the probability that $A$ has been reached by step $j$ is the same for both Markov chains.
\end{proof}

Our aim in the next two subsections is to show two things:
 \begin{enumerate}
 \item The Eden model on a random triangulation can be understood as a reshuffled percolation interface exploration on that triangulation.
 \item DLA on a random planar map can be understood as a reshuffled loop-erased random walk on that map.
 \end{enumerate}
In order to establish these results, and to apply Proposition~\ref{prop::reshuffling}, we will need to decide in each setting what information we keep track of (i.e., what information is contained in the state $Y_i$) and what information we forget (i.e., what information we lose when we remember which $S_j$ the state $Y_i$ belongs to and forget everything else).  In both settings, the information we keep track of will be
 \begin{enumerate}
 \item the structure of the ``unexplored'' region of a random planar map,
 \item the location of a ``target'' within that region,
 \item the location of a ``tip'' on the boundary of the unexplored region.
\end{enumerate}
Also in both settings, the information that we ``forget'' is the location of the tip.  Thus we will replace a path that grows continuously with a growth process that grows from multiple locations.  In both cases, a natural sink (to which one could apply Theorem~\ref{prop::reshuffling}) is the ``terminal'' state obtained when the exploration process reaches its target.  The total number of steps in the exploration path agrees in law with the total number of steps in the reshuffled variant.  We will also find additional symmetries (and a ``slot machine'' decomposition) in the percolation/Eden model setting.

\subsection{The Eden model and percolation interface}

\subsubsection{Finite volume Eden/percolation relationship}

The following definitions and basic facts are lifted from the overview of planar triangulations given by Angel and Schramm in \cite{MR2013797} (which cites many of these results from other sources, including \cite{angel2003growth}).  Throughout this section we consider only so-called ``type II triangulations,'' i.e., triangulations whose graphs have no loops but may have multiple edges.  For integers $n, m \geq 0$, \cite{MR2013797} defines $\phi_{n,m}$ to be the number of triangulations of a disc (rooted at a boundary edge) with $m+2$ boundary edges and $n$ internal vertices, giving in \cite[Theorem~2.1]{MR2013797} the explicit formula\footnote{In \cite{MR2013797} a superscript $2$ is added to $\phi_{n,m}$ to emphasize that the statement is for type II triangulations.  We omit this superscript since we only work with triangulations of this type.}:
\begin{equation}
\label{eqn::phi_n_m}
\phi_{n,m} = \frac{2^{n+1} (2m+1)!(2m+3n)!}{(m!)^2n!(2m+2n+2)}.
\end{equation}
By convention $\phi_{0,0} = 1$ because when the external face is a $2$-gon, one possible way to ``fill in'' the inside is simply to glue the external edges together, with no additional vertices, edges, or triangles inside (and this is in fact the only possibility).  As $n \to \infty$,
\begin{equation}
\label{eqn::phi_n_m_asymp}
\phi_{n,m} \sim C_m \alpha^n n^{-5/2},
\end{equation}
where $\alpha = 27/2$ and
\begin{equation}
\label{eqn::c_m}
 C_m = \frac{\sqrt{3}(2m+1)!}{2 \sqrt{\pi}(m!)^2}(9/4)^m \sim C 9^m m^{1/2}.
\end{equation}
(Both \eqref{eqn::phi_n_m_asymp} and \eqref{eqn::c_m} are stated just after \cite[Theorem~2.1]{MR2013797}.)

\begin {figure}[ht!]
\begin {center}
\includegraphics [width=4.5in]{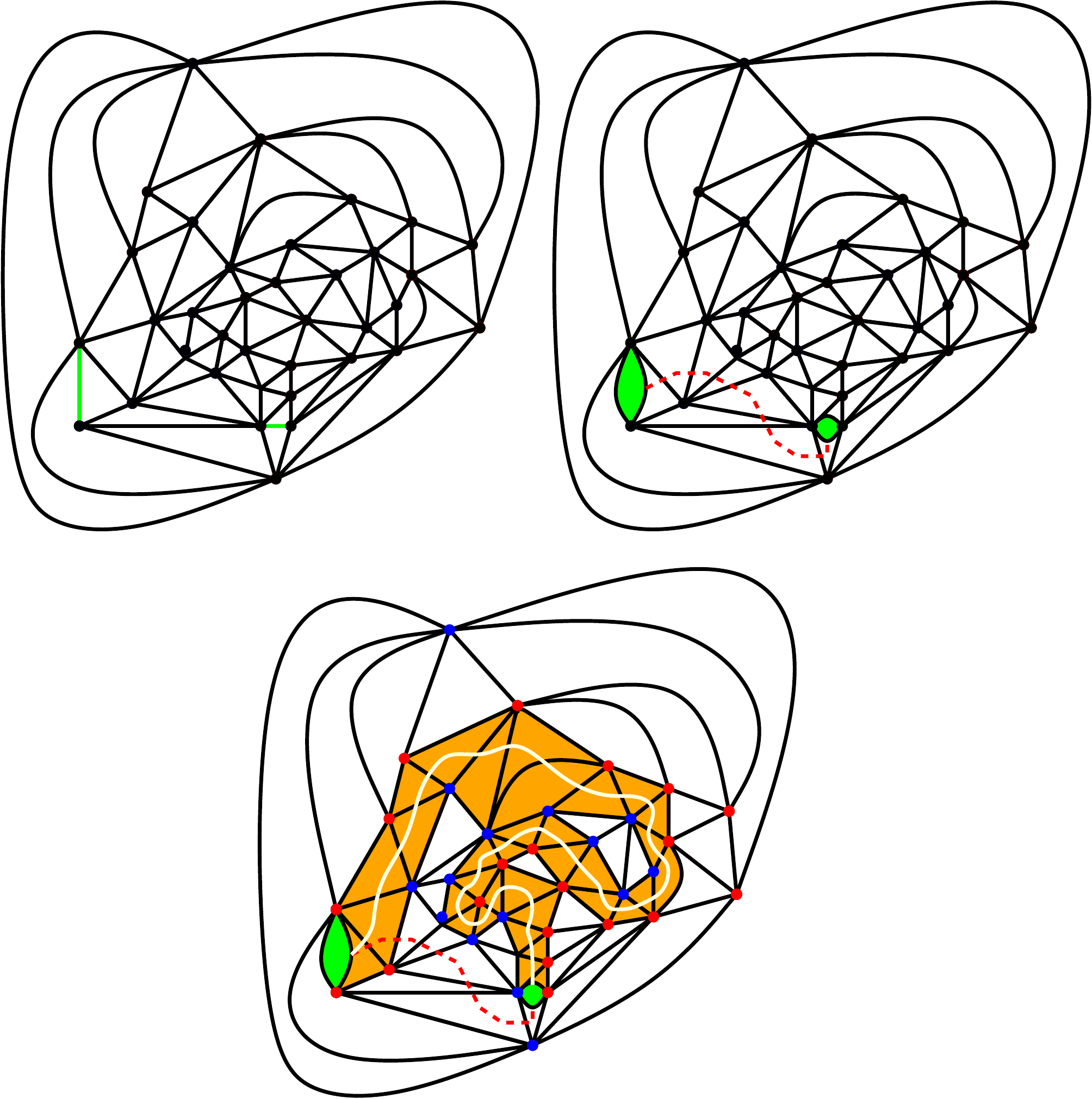}
\caption {\label{triangulation} {\bf Upper left:} a triangulation of the sphere together with two distinguished edges colored green.  {\bf Upper right:} It is conceptually useful to ``fatten'' each green edge into a $2$-gon.  We fix a distinguished non-self-intersecting dual-lattice path $p$ (dotted red line) from one $2$-gon to the other.  {\bf Bottom:} Vertices are colored red or blue with i.i.d.\ fair coins.  There is then a unique dual-lattice path from one $2$-gon to the other (triangles in the path colored orange) such that each edge it crosses either has opposite-colored endpoints and {\em does not} cross $p$, or has same-colored endpoints and {\em does} cross $p$.  The law of the orange path does not depend on the choice of $p$, since shifting $p$ across a vertex has the same effect as flipping the color of that vertex.  (Readers familiar with this terminology will recognize the orange path as a percolation interface of an antisymmetric coloring of the double cover of the complement of the $2$-gons.  Here ``antisymmetric'' means the two liftings of a vertex have opposite colors.)  When the triangulations are embedded in the sphere in a conformal way, the conjectural scaling limit of the path is a whole plane SLE$_6$ between the two endpoints.}
\end {center}
\end {figure}

\begin {figure}[ht!]
\begin {center}
\includegraphics [width=5.5in]{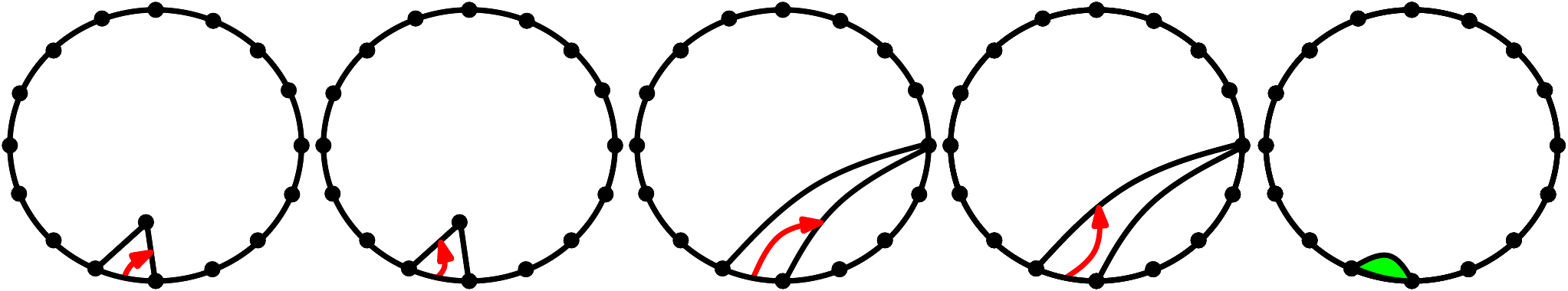}
\caption {\label{peelinstep} Begin with a polygon with $m+2$ edges (for some $m \geq 0$) and a fixed seed edge on the boundary (from which the exploration will take place).  Suppose we wish to construct a triangulation of the polygon with $n \geq 0$ additional vertices in the interior.  Observe by an easy induction argument that $n$ and $m$ together determine the number of triangles in this triangulation: $m+2n$.  They also determine the number of edges (including boundary edges): $m+2+3n$.  The total number of possible triangulations is $\phi_{m,n}$, and for each triangulation there are $(m+2+3n)$ choices for the location of the green edge.  The exploration ends if the face incident to the seed edge is the green $2$-gon, as in the right figure, which has probability $(m+2+3n)^{-1}$.  Conditioned on this not occurring, the probability that we see a triangle with a new vertex (as in the left two figures) is given by $\phi_{m+1,n-1}/\phi_{m,n}$, and given this, the two directions are equally likely (and depend on the coin toss determining the vertex color).  In the third and fourth pictures, the exploration step involves deciding both the location of the new vertex (how many steps it is away from the seed edge, counting clockwise) {\em and} how many of the remaining interior vertices will appear on the right side.  We can work out the number of triangulations consistent with each choice: it is given by the product $\phi_{m_1, n_1} \phi_{m_2,n_2}$ where $(m_i,n_i)$ are the new $(m,n)$ values associated to the two unexplored regions.  (The choices are constrained by $m_1 + m_2 = m-1$ and $n_1 + n_2 = n$.)  The probability of such a choice is therefore given by this value divided by $\phi_{m,n}$.  Once that choice is made, we have to decide whether the step corresponds to the third or fourth figure shown --- i.e., whether the green edge is somewhere in the left unexplored region or the right unexplored region.  The probability of it being in the first region is the number of edges in that region divided by the total number of edges (excluding the seed edge, since we are already conditioning on the seed not being the target): $(m_1+2+3n_1)/(m+1+3n)$.  In each of the first four figures, we end up with a new unexplored polygon-bounded region known to contain the target green edge, and a new $(m,n)$ pair.  We may thus begin a new exploration step starting with this pair and continue until the target is reached.}
\end {center}
\end {figure}

\begin {figure}[!ht]
\begin {center}
\includegraphics [width=3.5in]{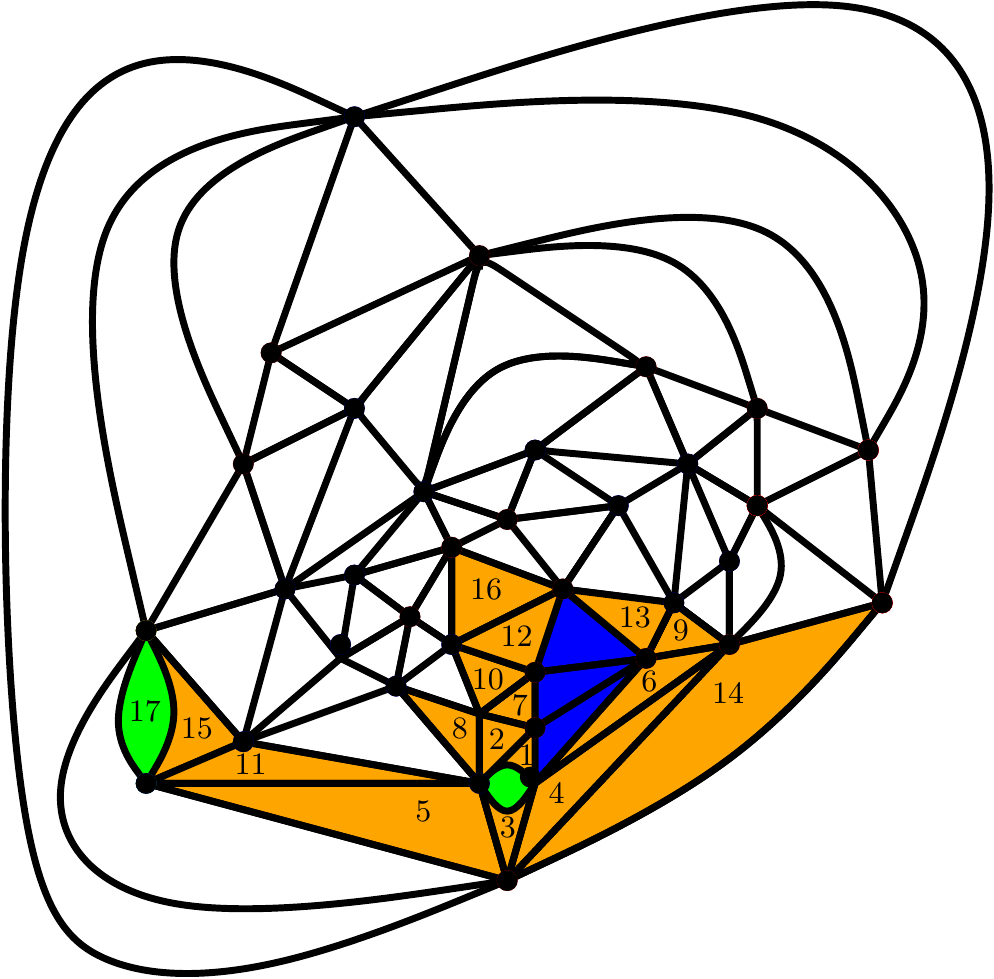}
\caption {\label{edentriangulation} Same as Figure~\ref{triangulation} except that one explores using the Eden model instead of percolation.  At each step, one chooses a uniformly random edge on the boundary of the unexplored region containing the target and explores the face incident to that edge.  The faces are numbered according to the order in which they were explored.  When the unexplored region is divided into two pieces, each with one or more triangles, the piece without the target is called a {\em bubble} and is never subsequently explored by this process.  In this figure there is only one bubble, which is colored blue.}
\end {center}
\end {figure}

Figure~\ref{triangulation} shows a triangulation $T$ of the sphere with two distinguished edges $e_1$ and $e_2$, and the caption describes a mechanism for choosing a random path in the dual graph of the triangulation, consisting of distinct triangles $t_1, t_2, \ldots, t_k$, that goes from $e_1$ to $e_2$.  It will be useful to imagine that we begin with a single $2$-gon and then grow the path dynamically, exploring new territory as we go.  At any given step, we keep track of the total number edges on the boundary of the already-explored region and the number of vertices remaining to be seen in the component of the unexplored region that contains the target edge.  The caption of Figure~\ref{peelinstep} explains one step of the exploration process.  This exploration procedure is closely related to the peeling process described in \cite{angel2003growth}, which is one mechanism for sampling a triangulation of the sphere by ``exploring'' new triangles one at a time.  The exploration process induces a Markov chain on the set of pairs $(m,n)$ with $m \geq 0$ and $n \geq 0$.  In this chain, the $n$ coordinate is almost surely non-increasing, and the $m$ coordinate can only increase by $1$ when the $n$ coordinate decreases by~$1$.

Now consider the version of the Eden model in which new triangles are only added to the unexplored region containing the target edge, as illustrated Figure~\ref{edentriangulation}.  In both Figure~\ref{triangulation} and Figure~\ref{edentriangulation}, each time an exploration step separates the unexplored region into two pieces (each containing at least one triangle) we refer to the one that does not contain the target as a {\em bubble}.  The exploration process described in Figure~\ref{triangulation} created two bubbles (the two small white components), and the exploration process described in Figure~\ref{edentriangulation} created one (colored blue).   We can interpret the bubble as a triangulation of a polygon, rooted at a boundary edge (the edge it shares with the triangle that was observed when the bubble was created).

The specific growth pattern in Figure~\ref{edentriangulation} is very different from the one depicted in Figure~\ref{triangulation}.  However, the analysis used in Figure~\ref{peelinstep} applies equally well to both scenarios.  The only difference between the two is that in Figure~\ref{edentriangulation} one re-randomizes the seed edge (choosing it uniformly from all possible values) after each step.

In either of these models, we can define $C_k$ to be the boundary of the target-containing unexplored region after $k$ steps.  If $(M_k, N_k)$ is the corresponding Markov chain, then the length of $C_k$ is $M_k + 2$ for each $k$.  Let $D_k$ denote the union of the edges and vertices in $C_k$, the edges and vertices in $C_{k-1}$ and the triangle and bubble (if applicable) added at step $k$, as in Figure~\ref{slotmachine}.  We refer to each $D_k$ as a {\em necklace} since it typically contains a cycle of edges together with a cluster of one or more triangles hanging off of it.  The  analysis used in Figure~\ref{peelinstep} (and discussed above) immediately implies the following (parts of which could also be obtained from Proposition~\ref{prop::reshuffling}):

\begin{proposition}
\label{prop::edenandpercolationinterface}
Consider a random rooted triangulation of the sphere with a fixed number $n>2$ of vertices together with two distinguished edges chosen uniformly from the set of possible edges.  (Using the Euler characteristic and the fact that edges and faces are in $2$ to $3$ correspondence, it is clear that this triangulation contains $2(n-2)$ triangles and $3(n-2)$ edges.)  If we start at one edge and explore using the Eden model as in Figure~\ref{edentriangulation}, or if we explore using the percolation interface of Figure~\ref{triangulation}, we will find that the following are the same:
\begin{enumerate}[(i)]
\item The law of the Markov chain $(M_k,N_k)$ (which terminates when the target 2-gon is reached).
\item The law of the total number of triangles observed before the target is reached.
\item The law of the sequence $D_k$ of necklaces.
\end{enumerate}
Indeed, one way to construct an instance of the Eden model process is to start with an instance of the percolation interface exploration process and then randomly rotate the necklaces in the manner illustrated in Figure~\ref{slotmachine}.
\end{proposition}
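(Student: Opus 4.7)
\medbreak \noindent \textbf{Proof plan.}  I will recognize the Eden-model exploration as the reshuffling (in the sense of Proposition~\ref{prop::reshuffling}) of the percolation-interface exploration on a common Markov state space, so that parts (i)--(iii) follow directly from that proposition.  Let $S$ be the set of ``exploration configurations'', each consisting of a triangulated polygon with $m+2$ boundary edges and $n$ interior vertices, a target $2$-gon somewhere in the interior, and a distinguished seed edge on the boundary.  Partition $S = \bigsqcup_i S_i$ according to the underlying (triangulated polygon, target) data, and let $\mu_i$ be the uniform probability measure on $S_i$ (i.e., the uniform law over boundary-edge choices for the seed).  The percolation-interface exploration $X=(X_k)$ is then the Markov chain whose one-step transition is the peeling rule of Figure~\ref{peelinstep}: reveal the triangle adjacent to the seed edge, update the (triangulated polygon, target) data accordingly, and take as new seed the deterministic edge exposed on the far side of the revealed triangle.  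The Eden chain $Y=(Y_k)$ applies the same peeling rule but then re-randomizes the seed uniformly on the boundary of the new target-containing region.  Thus $Y$ is literally the reshuffle of $X$ with respect to $(\{S_i\},\{\mu_i\})$.

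The key hypothesis of Proposition~\ref{prop::reshuffling} is that $\P[X_k \in \cdot \mid X_k \in S_i] = \mu_i$ for every $k$ and $i$.  The base case $k=0$ holds because the starting configuration is obtained from a uniform rooted triangulation of the sphere with a uniformly chosen second distinguished edge, so that conditional on the data in $S_i$ the seed-versus-target labelling places the seed uniformly among the boundary edges of the initial $2$-gon --- a direct consequence of the enumeration formula \eqref{eqn::phi_n_m} for $\phi_{n,m}$.  For the induction step, the peeling transition probabilities computed in the caption of Figure~\ref{peelinstep} (ratios of $\phi_{n',m'}$-values and edge counts that depend only on $(m,n)$ and on the geometric choice made) commute with uniform averaging of the seed edge over the boundary, so that uniformity on $S_i$ is preserved after each step.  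Proposition~\ref{prop::reshuffling} then yields (i)--(iii): both $(M_k,N_k)$ and the necklace $D_k$ are measurable with respect to the partition $\{S_i\}$ (they record the (triangulated polygon, target) data and the newly revealed triangle-plus-bubble, but not the position of the previous seed on $C_{k-1}$), so they have the same joint law under $X$ and $Y$; the target-reached configuration is the sink to which the hitting-time statement of Proposition~\ref{prop::reshuffling} applies, giving equality in law of the total number of triangles observed.  The slot-machine picture of Figure~\ref{slotmachine} is the pathwise version of this reshuffling: sampling an independent uniform rotation of each necklace $D_k$ around its predecessor boundary cycle $C_{k-1}$ turns a sample of the percolation-interface exploration into a sample of the Eden exploration.

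The main technical step will be verifying the uniform-seed invariant: conditional on the (triangulated polygon, target) data observed after $k$ peeling steps, the seed edge under $X$ is still uniform on the boundary.  This amounts to checking that when one counts the ways a given partially-explored configuration can be completed to a full rooted triangulation of the sphere carrying two distinguished edges in the two prescribed roles, the count factors as (number of boundary edges) $\times \, \phi_{n,m} \times$ (completion counts for the bubbles separated off so far), with the first factor accounting for the location of the seed.  This factorization is precisely what is needed for the peeling probabilities in Figure~\ref{peelinstep} to preserve the uniform conditional law of the seed, and is a routine (if slightly fiddly) bookkeeping exercise with \eqref{eqn::phi_n_m}; no new ideas beyond the peeling analysis already sketched in the text are required.
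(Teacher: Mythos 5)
Your overall strategy---recasting the Eden exploration as the reshuffling of the percolation-interface exploration and invoking Proposition~\ref{prop::reshuffling}---is exactly the route the text alludes to, and your choice of state space and partition is the intended one. But there is a genuine gap at the decisive step. Proposition~\ref{prop::reshuffling} asserts only that the \emph{one-dimensional marginals} $X_j$ and $Y_j$ agree for each fixed $j$, together with the law of the hitting time of a sink; it does \emph{not} assert that $\bigl(f(X_0),f(X_1),\ldots\bigr)$ and $\bigl(f(Y_0),f(Y_1),\ldots\bigr)$ have the same law for a partition-measurable $f$, and this stronger conclusion can genuinely fail for a chain satisfying only the stated hypothesis (one can build small counterexamples where the reshuffled index process is i.i.d.\ across returns to a block while the original index process is perfectly correlated). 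Since items (i) and (iii) of the proposition concern the laws of the \emph{entire sequences} $(M_k,N_k)$ and $(D_k)$, your sentence ``so they have the same joint law under $X$ and $Y$'' is an overclaim---which is presumably why the text says only that ``parts'' of the proposition follow from Proposition~\ref{prop::reshuffling}. To close the gap you need a strengthened invariant: the conditional law of the seed given the \emph{entire revealed history} (the full necklace sequence, not merely the current unexplored region) is uniform on the current boundary cycle. Equivalently, and closer to how the paper actually argues, you need the spatial Markov property of peeling: conditional on everything revealed through step $k$, the target-containing unexplored region is a uniform triangulation of the $(M_k+2)$-gon with $N_k$ interior vertices carrying a uniformly placed target edge. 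Since that conditional law is invariant under relabeling which boundary edge one peels, the transition law of $\bigl((M_k,N_k),D_{k+1}\bigr)$ given the past depends only on $(M_k,N_k)$ and is identical for the two explorations; this is what yields (i) and (iii) at the level of processes, with (ii) then following since the termination step is determined by the chain.

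A secondary issue concerns your verification of the uniform-seed hypothesis itself. You propose to count ``the ways a given partially-explored configuration can be completed,'' but in such a count the seed is part of the data being completed, so no factor of (number of boundary edges) accounting for the seed location can arise there; the completions are enumerated by $\phi_{n,m}$, the $m+2+3n$ positions of the target, and the bubble fillings. The correct bookkeeping goes the other way: fix the unexplored region $U$ with its target (and, for the strengthened invariant, the whole explored necklace structure up to isomorphism) and count the sphere configurations consistent with it. These are parameterized by the bubble fillings, the colorings, \emph{and the cyclic offset with which the explored region is glued to $\partial U$}; the $M_k+2$ gluings are equiprobable and carry the seed through every edge of $\partial U$, which is what makes the seed uniform conditionally on the full history. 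With that statement in hand your reshuffling argument does go through, and the slot-machine description is then its pathwise form.
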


\begin {figure}[!ht]
\begin {center}
\includegraphics [width=5.5in]{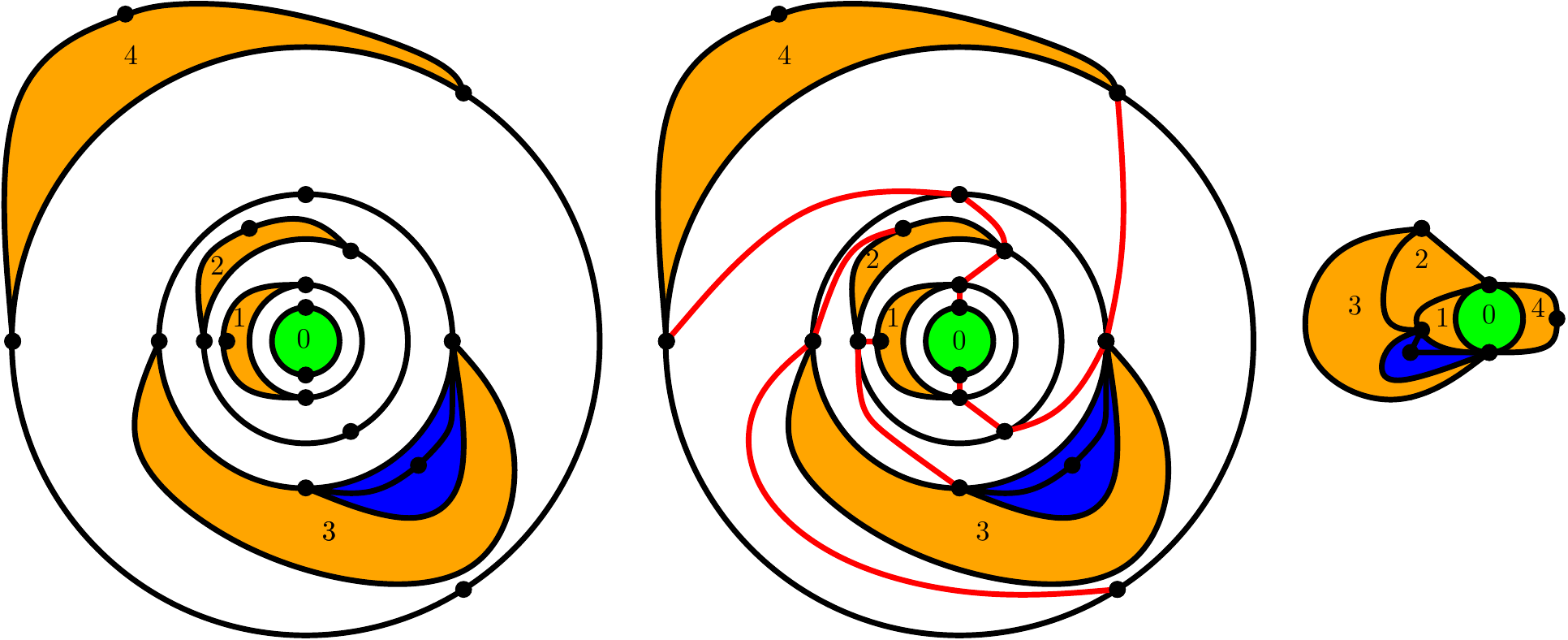}
\caption {\label{slotmachine} {\bf Left:} the first four necklaces (separated by white space) generated by an Eden model exploration.  {\bf Middle:} one possible way of identifying the vertices on the outside of each necklace with those on the inside of the next necklace outward.  {\bf Right:} The map with exploration associated to this identification.  If a necklaces has $n$ vertices on its outer boundary, then there are $n$ ways to glue this outer boundary to the inner boundary of the next necklace outward.  It is natural to choose one of these ways uniformly at random, independently for each consecutive pair of necklaces.  Intuitively, we imagine that before gluing them together, we randomly spin the necklaces like the reels of a slot machine, as in Figure~\ref{actualslotmachine}.  A fanciful interpretation of Proposition~\ref{prop::edenandpercolationinterface} is that if we take a percolation interface exploration as in Figure~\ref{triangulation} (which describes a sequence of necklaces) and we pull the slot machine lever, then we end up with an Eden model exploration of the type shown in Figure~\ref{edentriangulation}.  In later sections, this paper will discuss a continuum analog of ``pulling the slot machine lever'' that involves SLE and LQG.}
\end {center}
\end {figure}

\begin {figure}[!ht]
\begin {center}
\includegraphics [width=3.5in]{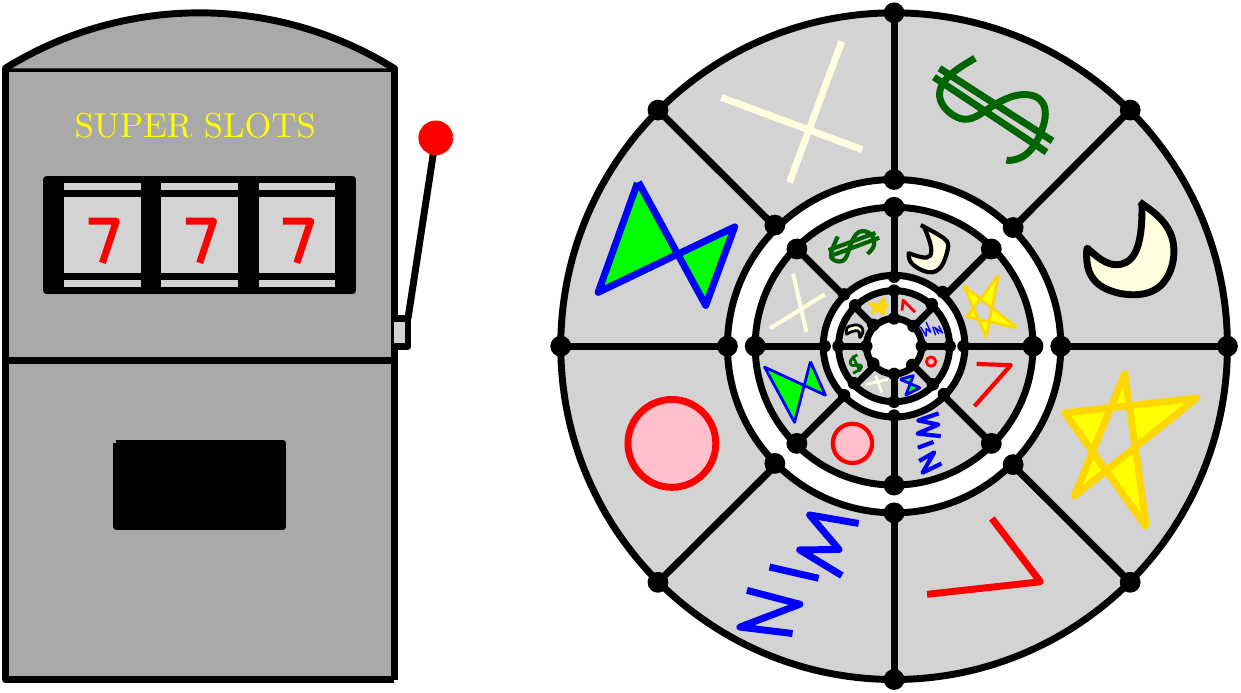}
\caption {\label{actualslotmachine} {\bf Left:} sketch of an actual slot machine.  {\bf Right:} sketch of slot machine reels conformally mapped from cylinder to plane.  When the lever is pulled, each of the reels rotates a random amount.}
\end {center}
\end {figure}

\subsubsection{Infinite volume Eden/percolation relationship}

In \cite{angel2003growth} Angel gives a very explicit construction of the {\em uniform infinite planar triangulation} (UIPT), which is further investigated by Angel and Schramm in \cite{MR2013797}.  The authors in \cite{MR2013797} define $\tau_n$ to be the uniform distribution on rooted type II triangulations of the sphere with $n$ vertices and show that the measures $\tau_n$ converge as $n\to \infty$ (in an appropriate topology) to an infinite volume limit called the {\em uniform infinite planar triangulation} (UIPT).  (Related work on infinite planar quadrangulations appears in \cite{curien2012view}.)  The convenient property that this process possesses is that the number $n$ of remaining vertices is always infinite, and hence, in the analog of the Markov chain described in Figure~\ref{peelinstep}, it is only necessary to keep track of the single number $m$, instead of the pair $(m,n)$.  A very explicit description of this Markov chain and the law of the corresponding necklaces appears in \cite{angel2003growth,MR2013797}.  As in the finite volume case, the sequence of necklaces has the same law in the UIPT Eden model as in the UIPT percolation interface exploration.  One can first choose the necklaces associated to a UIPT percolation interface model and then randomly rotate them (by ``pulling the slot machine lever'') to obtain an instance of the UIPT Eden model, as in Figure~\ref{slotmachine}.

Later in this paper, we will interpret the version of $\QLE(8/3,0)$ that we construct as a continuum analog of the Eden model on the UIPT.  The construction will begin with the continuum analog of the percolation exploration process on the UIPT, which is a radial $\SLE_6$ exploration on a certain type of LQG surface.  We will then ``rerandomize the tip'' at discrete time intervals, and we will then find a limit of these processes when the interval size tends to zero.

Finally, we remark that Gill and Rohde have recently established parabolicity of the Riemann surfaces obtained by gluing triangles together \cite{gill2011riemann}, which implies that the UIPT as a triangulation can be conformally mapped onto the entire complex plane, as one would expect.

\subsection{DLA and the loop-erased random walk}

\subsubsection{Finite volume DLA/LERW relationship} \label{subsubsec::DLALERW}

The uniform spanning-tree-decorated random planar map is one of the simplest and most elegant of the planar map models, due to the relationship with simple random walks described by Mullin in 1967 \cite{MR0205882} (and explained in more detail by Bernardi in \cite{MR2285813}) which we briefly explain in Figure~\ref{planarmapfigs} and Figure~\ref{planarmapfigstree} (which are lifted from a more detailed exposition in \cite{sheffield2011quantum}).  As the caption to Figure~\ref{planarmapfigs} explains, one first observes a correspondence between planar maps and quadrangulations: there is a natural quadrangulation such that each edge of the original map corresponds to a quadrilateral (whose vertices correspond to the two endpoints and the two dual endpoints of that edge).  As the caption to Figure~\ref{planarmapfigstree} explains, one may then draw diagonals in these quadrilaterals corresponding to edges of the tree or the dual tree.

If an adjacent vertex and dual vertex are fixed and designated as the root and dual root (big dots in Figure~\ref{planarmapfigstree}) then one can form a cyclic path starting at that edge that passes through each green edge once, always with blue on the left and red on the right.  To the $k$th green edge that the path encounters (after one spanning root and dual root) we assign a pair of integers $(x_k, y_k)$, where $x_k$ is the distance of the edge's left vertex to the root within the tree, and $y_k$ is the distance from its right vertex to the dual root within the dual tree.  If $n$ is the number of edges in the original map, then the sequence $(x_0,y_0), (x_1,y_1), (x_2, y_2), \ldots, (x_{2n}, y_{2n})$ is a walk in $\Z_+^2$ beginning and ending at the origin, and it is not hard to see that there is a one-to-one correspondence between walks of this type and rooted spanning-tree-decorated maps with $n$ edges, such as the one illustrated in Figures~\ref{planarmapfigs} and~\ref{planarmapfigstree}.  The walks of this type with $2m$ left-right steps and $2n-2m$ up-down steps correspond to the planar maps with $m$ edges in the tree (hence $m+1$ vertices total in the original planar map) and $n-m$ edges in the dual tree (hence $n-m+1$ faces total in the original planar map).
Once $m$ and $n$ are fixed (it is natural to take $m \approx n/2$), it is easy to sample the spanning-tree-decorated rooted planar map by sampling the corresponding random walk.

\begin {figure}[!ht]
\begin {center}
\includegraphics [width=4.5in]{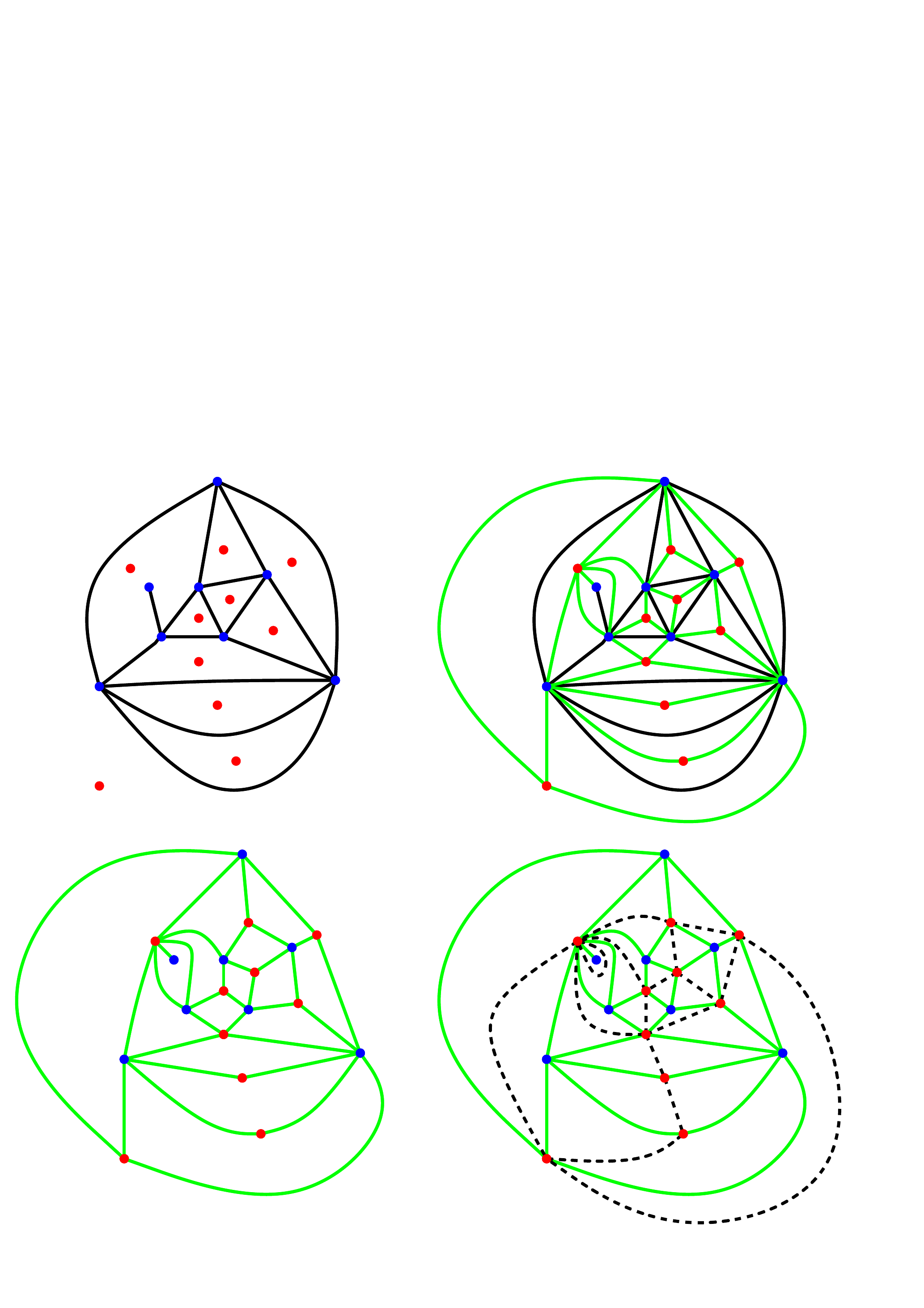}
\caption {\label{planarmapfigs} {\bf Upper left:} a planar map $M$ with vertices in blue and ``dual vertices'' (one for each face) shown in red.  {\bf Upper right:} the quadrangulation $Q=Q(M)$ formed by adding a green edge joining each red vertex to each of the boundary vertices of the corresponding face.  {\bf Lower left:} quadrangulation $Q$ shown without $M$.  {\bf Lower right:} the dual map $M'$ corresponding to the same quadrangulation, obtained by replacing the blue-to-blue edge in each quadrilateral with the opposite (red-to-red) diagonal.}
\end {center}
\end {figure}

\begin {figure}[!ht]
\begin {center}
\includegraphics [width=4.5in]{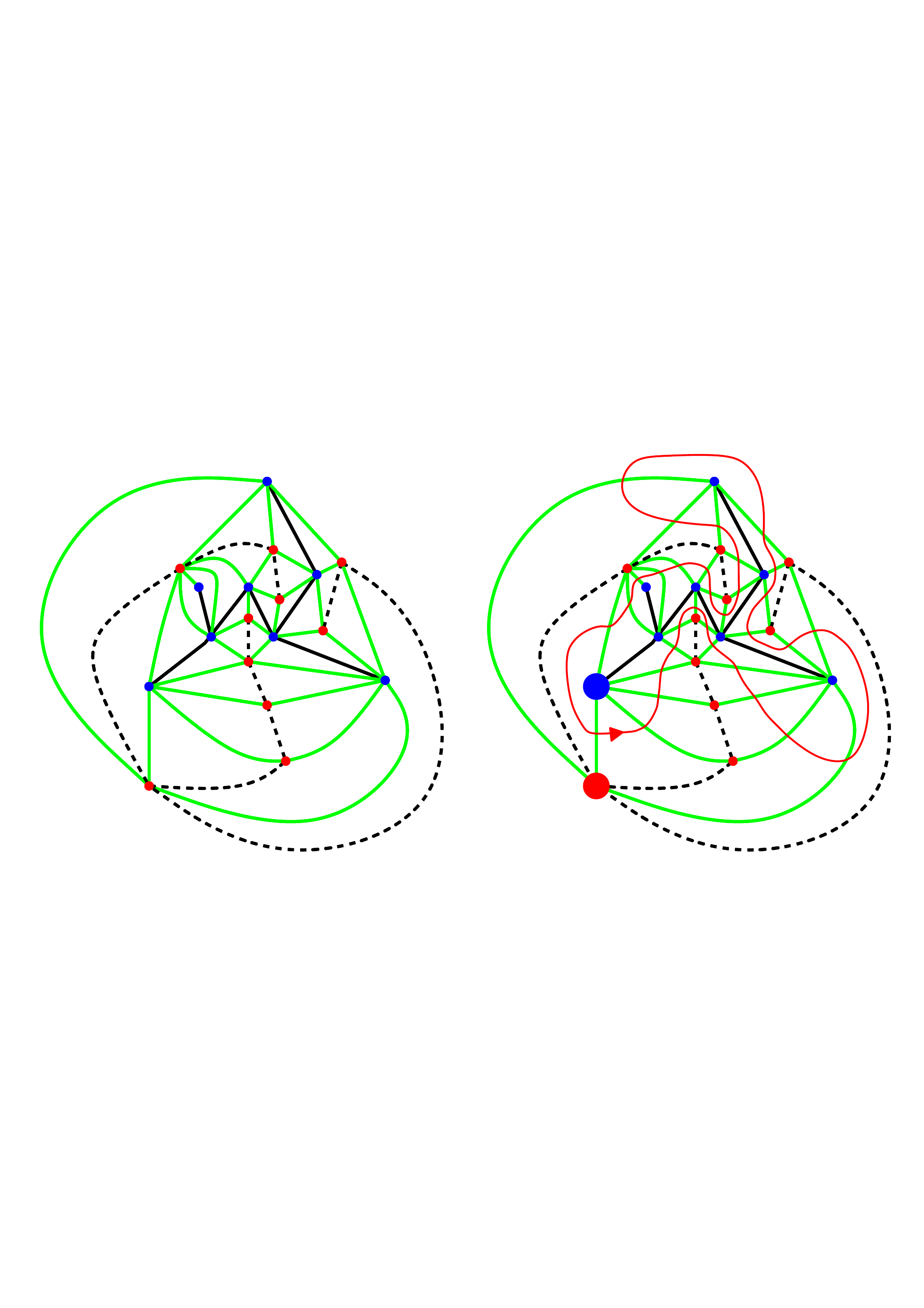}
\caption {\label{planarmapfigstree} {\bf Left:} in each quadrilateral we either draw an edge (connecting blue to blue) or the corresponding dual edge (connecting red to red).  In this example, the edges drawn form a spanning tree of the original (blue-vertex) graph, and hence the dual edges drawn form a spanning tree of the dual (red-vertex) graph.   {\bf Right:} designate a ``root'' (large blue dot) and an adjacent ``dual root'' (large red dot).  The red path starts at the midpoint of the green edge between the root and the dual root and crosses each of the green edges once, keeping the blue endpoint to the left and red endpoint to the right, until it returns to the starting position.  Each endpoint corresponds to a pair of vertices}
\end {center}
\end {figure}

As shown in Figure~\ref{treesplicing}, if we endow the map with two distinguished vertices, a ``seed'' and a ``target'' then there is a path from the seed to the target and a deterministic procedure for ``unzipping'' the edges of the path one at a time, to produce (at each step) a new planar map with a distinguished grey polygon that has a marked tip vertex (``zipper handle'') on its boundary.  This procedure is also reversible --- i.e., if we see one of the later decorated maps in Figure~\ref{treesplicing}, then we have enough information to recover the earlier figures.

It is possible to consider the same procedure but keep track of less information: one can imagine a version of Figure~\ref{treesplicing} in which all of the edges colored black or green (except those on the boundary of the grey polygon) were colored red, like the first two maps shown in Figure~\ref{treesplicingdla}.  To put ourselves in the context of Proposition~\ref{prop::reshuffling}, we can let $X_i$ be the decorated planar map (the planar map endowed with a distinguished grey face with a marked blue tip on its boundary, and a distinguished green target vertex) obtained after unzipping $i$ steps.  By Wilson's algorithm \cite{wilson1996generating}, if one is given the first $k$ steps of the path from the seed to the target, then the conditional law of the remaining edges is the law of the loop erasure of a simple random walk started at the target and conditioned to hit the grey polygon for the first time at the blue tip vertex (whereupon the walk is terminated).  In particular, this tells us how to perform the Markov transition step from $X_i$ to $X_{i+1}$.  Namely, one chooses an edge incident to the tip with probability proportional to its harmonic measure (viewed from the target), colors that edge green, and ``unzips'' it by sliding up the blue tip, as in the first transition step shown in Figure~\ref{treesplicingdla}.

\begin {figure}[!ht]
\begin {center}
\includegraphics [width=5.5in]{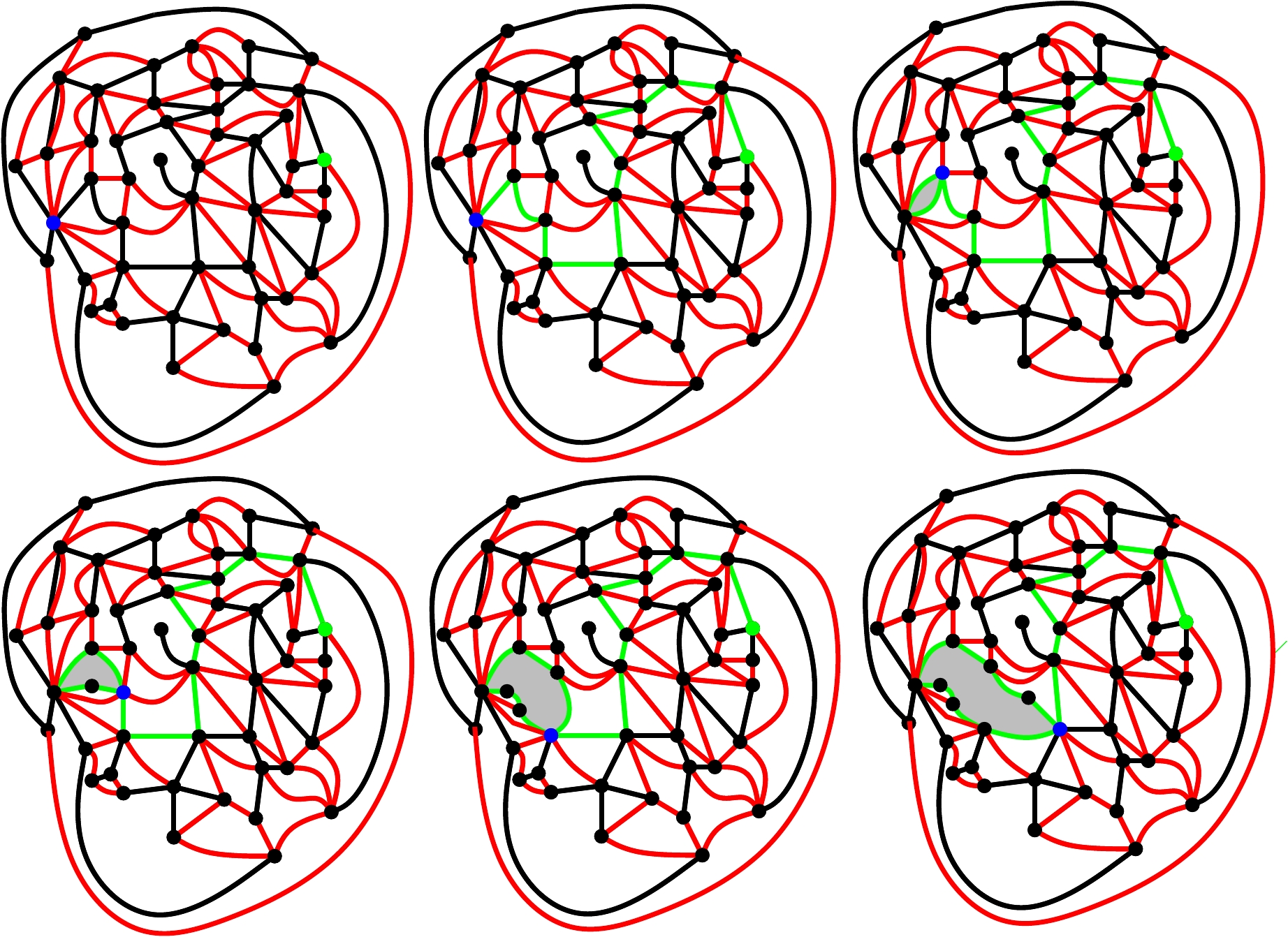}
\caption {\label{treesplicing} {\bf Upper left:} A planar map with distinguished spanning tree (tree edges black, other edges red) along with distinguished ``seed'' and ``target'' vertices (colored green).  Assume the tree-decorated map is chosen uniformly from the set of tree-decorated maps with a given number of vertices and edges, and that the seed and target are then uniformly chosen vertices.  {\bf Upper middle:} tree path from seed to target colored green.  {\bf Upper right:} think of the blue dot as a zipper handle, and the green path as the closed zipper; we slide the blue dot up one step and ``unzip'' the first edge by splitting it in two to form 2-gon (with inside colored grey).  {\bf Lower left to lower right:} second, third, fourth edges along path are similarly unzipped, to produce 4-gon, 6-gon, 8-gon.  Given the initial tree-decorated map and seed/target vertices, the unzipping procedure is deterministic.
}
\end {center}
\end {figure}

\begin {figure}[!ht]
\begin {center}
\includegraphics [width=5.5in]{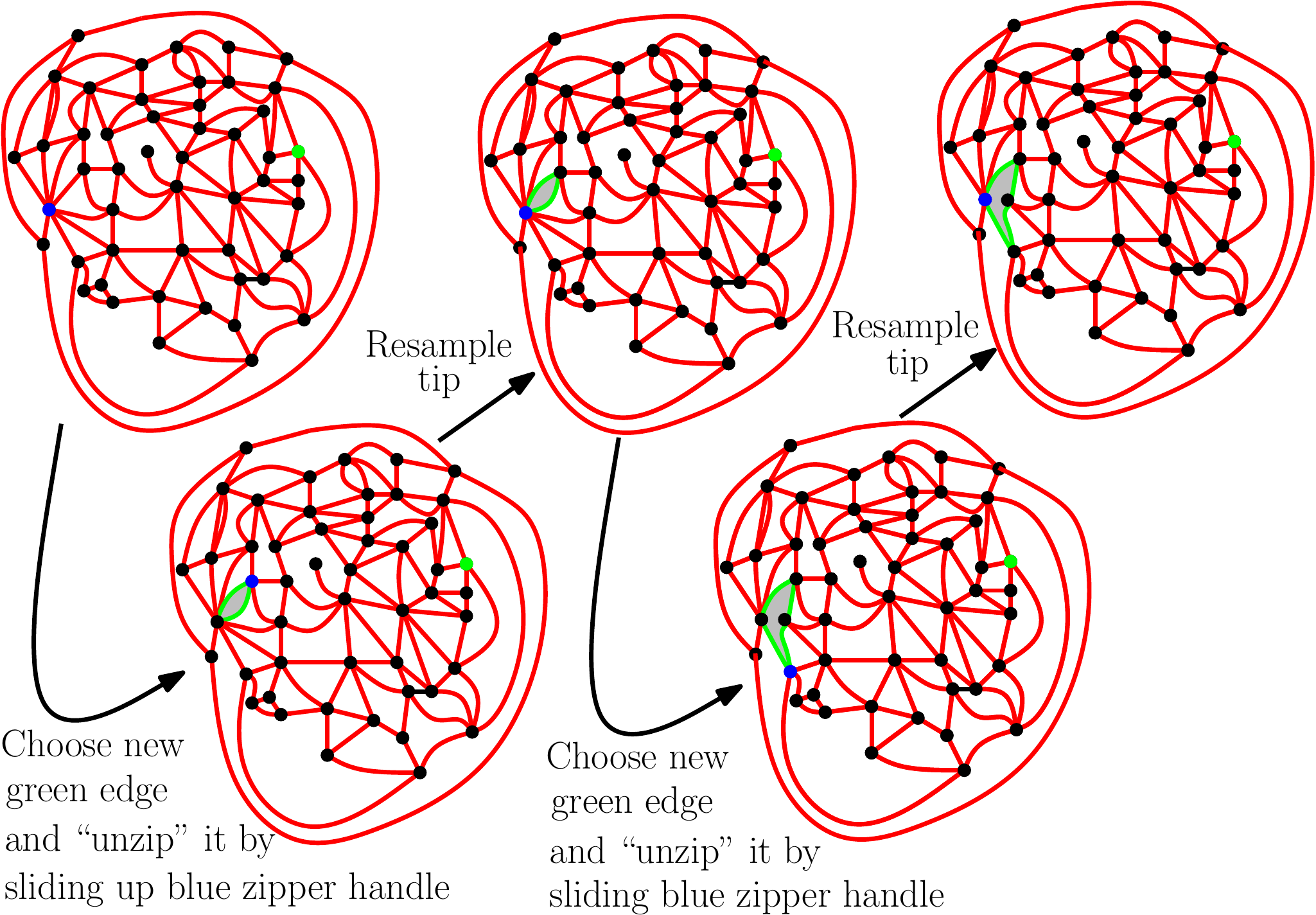}
\caption {\label{treesplicingdla} We could have drawn the images in Figure~\ref{treesplicing} with a different coloring --- showing all edges red except for those around the grey polygon.  With such a coloring, we could imagine that we do not know the tree in advance: we only discover the path from seed to tip one edge at a time.  Conditioned on the first $k$ edges, Wilson's algorithm implies that the probability that a given tip-adjacent edge $e$ is the next edge in the path is proportional to the probability that a random walk from the target first reaches the grey polygon via $e$.  After selecting an edge, we color it green and unzip it by sliding up the blue zipper handle, tracing a path whose overall law is that of a LERW from tip to seed.  The process shown in the figure is a ``reshuffled'' version of the one just described.  After an edge is drawn, we ``resample'' the blue vertex according to harmonic measure viewed from the target and then choose a sample green edge from that vertex.  We can equivalently combine the resample-tip and pick-new-edge steps by performing a random walk from the target and picking the last edge traversed before the grey polygon is hit.  The order in which edges are ``unzipped'' in this reshuffled form of LERW is the same as the order in which edges are discovered in edge-growth DLA.  In a sense, DLA is nothing more than reshuffled LERW.
}
\end {center}
\end {figure}

Using the notation of Proposition~\ref{prop::reshuffling}, we can let the sets $S_j$ be the equivalence classes of decorated maps, where two maps are considered equivalent if they agree except that their blue tips are at different locations (on the boundary of the same grey polygon).  Conditioned on $X_i \in S_j$, it is not hard to see that the conditional law of the tip location is given by harmonic measure viewed from the target. This is because, once we condition on $S_j$, one can treat the grey polygon as a single vertex, and note that all spanning trees of the collapsed graph are equally likely; hence, one can therefore use Wilson's algorithm to sample the path from the target to the grey polygon, and the law of the location at which it exits is indeed given by harmonic measure.  Thus, within each $S_j$ we can define the measure $\mu_j$ on decorated maps such that sampling from $\mu_j$ amounts to re-sampling the seed vertex from this harmonic measure.

One can now define the reshuffled Markov chain $Y_0, Y_1, \ldots$ using precisely the procedure described in Proposition~\ref{prop::reshuffling}.  This chain has the same transition law as the unreshuffled chain except that after each step we resample the blue tip from the harmonic measure viewed from the target, as explained in Figure~\ref{treesplicingdla}.  As explained in Figure~\ref{treesplicingdla}, this reshuffling procedure converts loop-erased random walk (LERW) to diffusion limited aggregation (DLA).  The following is now immediate from Proposition~\ref{prop::reshuffling}.

\begin{proposition}
\label{prop::finitedlaresult}
Consider a random rooted planar map $M$ with $n$ edges, $m+1$ vertices, and $n-m+1$ faces, with two of the vertices designated ``seed'' and ``target'' chosen uniformly among all such decorated maps except that the probability of a given decorated map is proportional to the number of spanning trees the map has.  Conditional on $M$, one may generate a loop-erased random walk $L$ from the seed to target.  Given $M$, one may also generate an edge-based DLA growth process, which yields a random tree $D$ containing the seed and the target.
\begin{enumerate}[(i)]
\item The number of edges in $D$ agrees in law with the number of edges in $L$.
\item The law of the map obtained by unzipping the first $k$ steps of $L$ (to produce the grey polygon with distinguished tip, as in Figure~\ref{treesplicing}) is the same as the law of the map obtained by unzipping the first $k$ steps of $D$, as in Figure~\ref{treesplicing}.
\item The law of the map obtained by unzipping all of the edges of $L$ agrees in law with the map obtained by unzipping all of the edges of $D$.
\end{enumerate}
\end{proposition}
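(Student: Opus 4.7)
The plan is to deduce Proposition~\ref{prop::finitedlaresult} as a direct application of the reshuffled Markov chain framework of Proposition~\ref{prop::reshuffling}, with the unzipping procedure of Figure~\ref{treesplicing} and Figure~\ref{treesplicingdla} providing the precise correspondence between LERW unzipping and edge-based DLA. The state space $S$ will consist of (equivalence classes of) decorated planar maps carrying a distinguished grey polygon, a marked blue tip vertex on the polygon's boundary, a green target vertex, and a red-colored unexplored region containing the target; the decomposition $S = \bigsqcup_j S_j$ is obtained by declaring two decorated maps equivalent if they agree in every respect except possibly the location of the blue tip on the boundary of the grey polygon.

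The first step is to describe two Markov chains on $S$. The chain $X_k$ is the LERW unzipping chain: given the first $k$ unzipped edges, Wilson's algorithm identifies the conditional law of the remainder of the LERW path as that of a loop-erased walk from the target conditioned to hit the grey polygon at the blue tip; from this one reads off that the next edge is chosen from the tip-incident edges with probability proportional to the harmonic measure (from the target) of each edge, after which that edge is unzipped and the tip is updated to its new endpoint. The chain $Y_k$ is the DLA unzipping chain, in which one additionally resamples the tip location from harmonic measure (viewed from the target) after each unzipping step; equivalently, as explained in Figure~\ref{treesplicingdla}, one runs a random walk from the target and appends the last edge it traverses before hitting the grey polygon.

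The second step, which is the heart of the argument and the one I expect to require the most care, is to verify the hypothesis of Proposition~\ref{prop::reshuffling}: for each equivalence class $S_j$ and each step index $k$, the conditional law of $X_k$ given $X_k \in S_j$ must coincide with the common measure $\mu_j$ on $S_j$ under which the tip location is distributed as harmonic measure from the target. This is exactly the assertion that, conditioning on the unordered set of unzipped edges (equivalently, on the grey polygon and the underlying planar map), the blue tip sits at a boundary vertex according to harmonic measure from the target. To prove it one contracts the grey polygon to a single vertex $v$, observes that the resulting tree-decorated quotient map is uniform over its possible values (because the original weight was proportional to the number of spanning trees, and contraction preserves this count up to a factor depending only on $S_j$), and then invokes Wilson's algorithm on the quotient: the law of the first edge used by a LERW from the target to $v$ is proportional to the probability that a simple random walk from the target first hits $v$ via that edge, which is precisely harmonic measure. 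The symmetry between the ``before contraction'' tip and the LERW's exit edge then identifies the tip marginal with harmonic measure.

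Once the hypothesis is verified, Proposition~\ref{prop::reshuffling} immediately gives parts (ii) and (iii): for every fixed $k$, the laws of $X_k$ and $Y_k$ coincide, which says that the decorated map after $k$ unzipping steps of LERW has the same law as the decorated map after $k$ unzipping steps of DLA. Taking $A$ to be the sink consisting of configurations in which the grey polygon has swallowed the target (so that no further unzipping is possible) yields part (i): the total number of edges in $L$ equals in law the hitting time $\min\{k : X_k \in A\} = \min\{k : Y_k \in A\}$, which is the number of edges in $D$. Part (iii) is the $k = \infty$ version of (ii) and follows by passing to this stopping time. The main obstacle, as noted, is the Wilson's-algorithm verification of the tip marginal; everything else is formal bookkeeping from the reshuffling framework already set up.
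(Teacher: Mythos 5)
Your proposal is correct and follows essentially the same route as the paper: the same state space of tip-decorated unzipped maps, the same equivalence classes $S_j$ given by forgetting the tip location, the same Wilson's-algorithm/contraction argument showing the tip marginal within each class is harmonic measure from the target, and the same application of Proposition~\ref{prop::reshuffling} (with the target-swallowed configurations as the sink for part (i)). The paper presents this verification in the discussion preceding the proposition and then declares the result immediate, which matches your structure exactly.
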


\subsubsection{Infinite volume DLA/LERW relationship}

In this section, we will observe that the constructions of the previous section can be extended to the so-called  {\em uniform infinite planar tree-decorated map} (UIPTM).  We present this infinite volume construction partly because of its intrinsic interest, and partly because we believe that the form of $\QLE(2,1)$ that we construct in this paper is the scaling limit of DLA on the UIPTM.

We define the UIPTM to be the infinite volume limit of the models of random rooted planar maps described in the previous section as $n \to \infty$ and $m=n/2$.  More discussion of this model appears in \cite{sheffield2011quantum} and in the work of Gill and Rohde in \cite{gill2011riemann}.  The latter showed that the Riemannian surface defined by gluing together the triangles in the UIPTM is parabolic (like the analogous surface defined using the UIPT).  Gurel-Gurevich and Nachmias also recently proved a very general recurrence statement for random planar maps, which implies that if we forget the spanning tree on the UIPTM and simply run a random walk on the vertices of the underlying graph, then this walk is almost surely recurrent \cite{2012arXiv1206.0707G}.  (Their work also implies that random walk on the UIPT is almost surely recurrent, and extends the earlier recurrence results obtained by Benjamini and Schramm in \cite{MR1873300}.)

Given the walk $(x_k, y_k)$ described in the previous section, we may write $I_k = (x_k,y_k) - (x_{k-1}, y_{k-1})$.  The $I_k$ are random variables taking values in $$\{ (-1,0), (1,0), (0,-1), (0,1) \}.$$  There is a one-to-one correspondence between steps of type $(1,0)$ and vertices $v$ of the planar map (discounting the root vertex) since the red path in Figure~\ref{planarmapfigstree} first encounters a green edge incident to a vertex $v$ at step $k$ if and only if $I_k = (1,0)$.

If $k$ is such that $I_k$ corresponds to a chosen seed vertex $v$, then we may recenter time so that this vertex corresponds to the first increment.  That is, we define a new centered increment process: $\wt I_j = I_{j-k}$.  It is not hard to see that in the limit as $n \to \infty$ and $m = n/2$, the $\wt I_j$ converge to a process indexed by $\Z$ in which $I_1 = (1,0)$ almost surely but the other $I_i$ are i.i.d.\ uniformly chosen elements from $\{ (-1,0), (1,0), (0,-1), (0,1) \}$.  The use of doubly infinite sequences of this form to describe random surfaces is discussed in more detail in \cite{sheffield2011quantum}.  It is easy to see from this construction that there is almost surely a unique infinite simple path in the tree in the UIPTM that extends from the seed vertex to infinity.

\begin{proposition} \label{prop::UIPTMwilson}
If one samples a UIPTM (which is an infinite rooted planar map $M$ endowed with a spanning tree $T$ and a root vertex $v$) and then samples a tree $T'$ on $M$ according to Wilson's algorithm, then the law of $(M,T',v)$ is again that of a UIPTM.
\end{proposition}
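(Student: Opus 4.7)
The plan is to reduce the statement to a straightforward finite-volume fact and then pass to the local limit. In the finite-volume model consisting of rooted spanning-tree-decorated planar maps with $n$ edges and $m+1 = n/2+1$ vertices, the measure on triples $(M,T,v)$ is by definition uniform on the set of such decorated maps. Marginalizing out $T$ gives a measure on $(M,v)$ whose density at a given rooted map is proportional to the number of spanning trees of $M$, and in particular the conditional law of $T$ given $(M,v)$ is uniform on spanning trees of $M$. By the defining property of Wilson's algorithm \cite{wilson1996generating}, a tree $T'$ sampled by running Wilson's algorithm on $M$ has exactly this conditional law. Hence in the finite-volume model the joint laws of $(M,T,v)$ and $(M,T',v)$ coincide, which is the desired identity at the prelimit level.

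To transfer this to the UIPTM, I would argue that the resampling operation is continuous in the local (Benjamini--Schramm) topology in which the UIPTM is defined. Concretely, for any fixed $r$, the restriction of $T'$ to the combinatorial ball $B_r(v)$ in $M$ is determined by running Wilson's algorithm via a sequence of loop-erased random walks started from the vertices of $B_r(v)$; each such walk is absorbed into the growing tree after almost surely finitely many steps, so $T' \cap B_r(v)$ depends only on the restriction of $(M,v)$ to a random but almost surely finite neighborhood of $v$. The key input here is that simple random walk on the UIPTM is almost surely recurrent, which follows from the general recurrence theorem for distributional limits of finite planar graphs due to Gurel-Gurevich and Nachmias \cite{2012arXiv1206.0707G}; without this one cannot even define Wilson's algorithm on the limit in the rooted-at-$v$ version that we want. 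Given recurrence, the portion of $T'$ inside $B_r(v)$ is a continuous functional of the local structure of $(M,v,T)$, so passing to the limit along the sequence of finite-volume measures yields equality in distribution of $(M,T,v)$ and $(M,T',v)$ in the local topology, which is equality in distribution as UIPTMs.

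The main obstacle is the continuity step: one must verify carefully that running Wilson's algorithm on a recurrent random rooted graph is continuous in the local topology, and that the finite-volume measures converge jointly in $(M,T,v)$ (and not only in $(M,v)$) strongly enough for the resampling operation to commute with the limit. This is essentially the content of the theory of wired uniform spanning forests on distributional limits of finite graphs; the argument should go through routinely in our setting once the recurrence of the UIPTM is established, since the almost-sure finiteness of the loop-erased random walks needed to reconstruct $T' \cap B_r(v)$ is exactly the input one needs to truncate Wilson's algorithm to a finite computation depending on finitely many edges of $M$.
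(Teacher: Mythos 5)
Your proposal is correct and follows essentially the same route as the paper's own proof: the finite-volume identity (Wilson's algorithm samples the uniform spanning tree, which is the conditional law of $T$ given $(M,v)$), the recurrence of the UIPTM from Gurel-Gurevich and Nachmias, and a truncation of Wilson's algorithm to a finite neighborhood of $v$ so that the resampling operation passes through the local limit. The one step you flag as "the main obstacle" is handled in the paper by exactly the coupling you sketch (choose $N'$ so that the Wilson walks started in $B(v,N)$ stay inside $B(v,N')$ with high probability, then take $n$ large enough that the finite maps agree with the limit on $B(v,N')$), together with an appeal to the fact that on a recurrent graph Wilson's algorithm yields a tree whose law does not depend on the vertex order.
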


\begin{proof}
It is shown in Theorem 5.6 of \cite{benjamini2001special} that for any recurrent graph the tree generated by Wilson's algorithm (with any choice of vertex order) agrees in law with the so-called wired spanning forest, and also with the so-called free spanning forest.  In particular, this implies that Wilson's algorithm determines a unique random tree on $M$ (independent of the vertex order) and we just have to show that the law of this tree agrees with the conditional law of $T$ given $M$.

Let $M_n$ be the random tree-decorated rooted planar map obtained with $n$ edges and $m=n/2$ vertices, $T_n$ the corresponding spanning tree, and $v_n$ the corresponding seed vertex.  The proposition will follow from the fact that $(M_n, T_n, v)$ converges in law to $(M,T,v)$, that $M$ is almost surely recurrent, and that for any $n$ one can first sample $(M_n, v)$ and then use Wilson's algorithm to sample~$T_n$.

To explain this in more detail, note that it suffices to show that for any $N>0$ the law of $(M,T)$ restricted to the ball of radius $N$ about $v$ agrees with the law of $(M,T')$ restricted to the ball of radius $N$ about $v$.  Now, the recurrence of $M$ implies that for any $\delta > 0$ we can choose $N'$ large enough so that if we run Wilson's algorithm starting at all points within $B(v,N)$, to obtain the shortest tree path from each of these points to $v$, we find that the probability that any of these paths reaches distance $N'$ from $v$ is at most $\delta$.

Now take $n$ large enough so that $(M_n,T_n,v_n)$ and $(M,T,v)$ can be coupled in such a way that their restrictions to the $N'$ ball about their origin vertices agree with probability at least $1-\delta$.  It then follows that we can couple $(M_n, T_n, v_n)$ with $(M,T',v)$ so that they agree within a radius $N$ ball of their origin vertices with probability at least $1-2\delta$.  Since $\delta$ can be made arbitrarily small (by taking $N'$ large enough) this completes the argument.\end{proof}

Now, based on Proposition~\ref{prop::UIPTMwilson}, we find that the law of the branch of the tree from the origin to $\infty$ can be obtained as a limit of the law of the loop erased random walk from $w$ to $v$, as the distance from $w$ to $v$ tends to $\infty$.  In particular, the limit of the harmonic measure of the possible next edges to be added to this LERW (as measured from $w$ as this distance from $v$ to $w$ tends to infinity) exists, and one can grow the branch from $v$ to $\infty$ one step at a time by sampling from the tip according to this measure, using the procedure indicated in Figure~\ref{treesplicing}.

Noting that $(M,T,v)$ is the limit of the $(M_n, T_n, v_n)$, we also find that the conditional law of the location of the tip (given the grey polygon and the map but not tip location) is given by harmonic measure, and hence we can obtain the infinite volume analog of
Proposition~\ref{prop::finitedlaresult} using the same argument used in the proof of Proposition~\ref{prop::finitedlaresult}.

\subsection{``Capacity'' time parameterization} \label{subsec::capacitytime}

In each of the models in this section, for any edge $e$ on the boundary of the cluster, we can let $b(e)$ denote the harmonic measure at edge $e$ as viewed from the target.  When considering possible scaling limits of the discrete models in this section, we should keep in mind that heuristically the ``capacity'' added to the cluster by putting in the new edge should be roughly proportional to $b(e)^2$.  (In the continuum, drawing a slit of length $\epsilon$ from $\partial \D$ towards the origin changes the conformal radius of the remaining domain viewed from the origin by order $\epsilon^2$.)  Thus, the amount of ``capacity'' time corresponding to a given step in a discrete model is random.  One might therefore try to reparameterize time in the discrete models in such a way that one might expect to obtain a scaling limit parameterized by capacity (i.e., negative the log conformal radius).

One way to do this with the $\eta$-DBM model is as follows: suppose that $b(e)$ represents the harmonic measure at an edge $e$ viewed from the target.  Then at each step, we choose a new edge to add with probability proportional $b(e)^{2+\eta}$, but then after choosing the new edge we toss a coin that is heads with probability proportional to $b(e)^{-2}$, and we only add the edge to the cluster if the coin comes up heads.  This construction is equivalent to the usual $\eta$-DBM model (up to a random time change) but now the expected amount of capacity we add to the cluster at each time step is of the same order.  Another approach is to say that after an edge is selected, instead of flipping a coin that is heads with probability proportional to $b(e)^{-2}$, we simply only add a ``$b(e)^{-2}$ sized portion of the edge'' (i.e., we don't consider an edge to have been ``added'' until it has been hit multiple times, and the sum of all of these fractional contributions exceeds some large constant).

We mention these alternatives, because the approximations to the continuum construction of $\QLE$ we present in this paper will involve random increments of constant capacity (i.e., constant change to the log conformal radius), and the scaling limit will be parameterized by capacity.  One could modify the continuum construction (adding increments of constant quantum length instead of constant capacity) but this will not be our first approach.

\section{Scaling exponents for continuum $\QLE$}
\label{sec::continuum}

The $\QLE$ dynamics described in Figure~\ref{fig::QLEtriangle} involves two parameters: $\gamma$ and $\alpha$.  Here $\gamma$ describes the type of LQG surface on which the growth process takes place and $\alpha$ determines the multiple of~$\Fh_t$ used in the exponentiation that generates~$\nu_t$.  As discussed in Section~\ref{subsec::mainresults}, once one has a solution to the dynamics for a given $\alpha$ and $\gamma$ pair, one can seek to verify that the solution satisfies $\eta$-DBM scaling, as defined in Definition~\ref{def::eta_scaling}, for some value of $\eta$.

It is natural to wonder whether, for each $\gamma$ value, there is a one-to-one correspondence between $\alpha$ and $\eta$ values (at least over some range of the parameters).  This is not a question we will settle in this paper, as we will only construct (and determine $\alpha$ and $\eta$ for) certain families of QLE processes, and these correspond to points on the curves in Figure~\ref{fig::etavsgamma}.

However, in Section~\ref{subsec::scaling} we will propose a relationship between $\alpha$, $\beta$, $\gamma$, and $\eta$ where~$\beta$ (introduced below) is an additional parameter that appears in the regularization used to make sense of $e^{\alpha \Fh_t}$, and in some sense encodes how fast $e^{\alpha \Fh_t}$ blows up near $\partial \D$.

In full generality, this calculation should be taken as a heuristic (since we do not know that $\beta$ is defined for general solutions to the QLE dynamics) but it can be made rigorous under some assumptions --- for example, if one assumes that the stationary law of $\Fh_t$ is given by a free boundary Gaussian free field (restricted to $\partial \D$ and harmonically extended to $\D$).  This latter assumption will turn out to imply that $\beta = \alpha^2$ and hence (for each fixed $\gamma$) it determines a relationship between $\alpha$ and $\eta$.  This assumption turns out to hold for the solutions we construct from the quantum zipper (corresponding to the upper two curves in Figure~\ref{fig::etavsgamma}) and this gives us a way to recover $\eta$ from $\alpha$ for these solutions, as we discuss in Section~\ref{subsec::zipperspecialization}.

In Section~\ref{subsec::free_boundary_scaling} we will argue that when $\eta = 0$ the $\beta = \alpha^2$ assumption leads to a prediction of the dimension for the $\gamma$-LQG surfaces when these surfaces are understood as metric spaces.  (We stress that endowing a $\gamma$-LQG surface with a metric space structure has never been done rigorously, but we {\em believe} that such a metric should exist and that a ball in this metric, whose radius increases in time, should be described by a $\QLE(\gamma^2, 0)$ process.)  The dimension prediction we obtain agrees with a prediction made in the physics literature by Watabiki in \cite{watabiki1993analytic}.  (The fact that our formula agrees with Watabiki's derivation was pointed out to us by Duplantier.)  As mentioned above, the $\beta = \alpha^2$ assumption would hold if the stationary law of $\Fh_t$ for a $\QLE(\gamma^2, 0)$ process were given by a free boundary Gaussian free field (harmonically extended from $\partial \D$ to $\D$).  However, we do not currently have a compelling heuristic to suggest why a stationary law for $\QLE(\gamma^2, 0)$ should have this form.

\subsection{Scaling exponents: a relationship between $\alpha$, $\beta$, $\gamma$, $\eta$}
\label{subsec::scaling}

The caption to Figure \ref{fig::QLEtriangle} describes a particular way to make sense of the map from $\Fh_t$ to $\nu_t$.  Precisely, we let $\nu_t$ be the $n \to \infty$ limit of the measures $e^{\alpha \Fh^n_t(u)} du$ on $\partial \D$, normalized to be probability measures; recall that the $\Fh^n_t$ are obtained by throwing out all but the first $n$ terms in the power series expansion of the analytic function with real part $\Fh_t$.  (This can be understood as a projection of the GFF onto a finite dimensional subspace.)

Instead of using the power series approximations or other projections of the GFF onto finite dimensional subspaces, another natural approach would be to use approximations $\Fh^\epsilon_t$ to $\Fh_t$ defined by ``something like'' convolving $\Fh_t$ with a bump function supported (or mostly supported) on an interval with length of order $\epsilon$.  For example, we could write $\Fh^\epsilon_t(u) = \Fh_t\bigl((1-\epsilon)u\bigr)$ for each $u \in \partial \D$.  (This is equivalent to convolving with a bump function related to the Poisson kernel.) Or we could let $\Fh^\epsilon_t(u)$ be the mean of $\Fh_t$ on $\partial B(u,\epsilon) \cap \D$ for each $u \in \partial \D$.  To describe another approach (which involves more of the unexplored field than just the harmonic projection), let us simplify notation for now by writing $h$ for the sum of $\Fh_t$ and an independent zero boundary GFF on $\D$ and let $\Fh_t^\epsilon = h^\epsilon$ be the mean value of $h$ on $\partial B(u,\epsilon) \cap \D$.  (The latter definition of $\Fh_t^\epsilon$ is essentially what is used in  \cite{ds2011kpz} to define boundary measures when $h$ is an instance of the free boundary GFF.)

For now, let us assume that the boundary values of $h$ are such that it is possible to make sense of the average $h^\epsilon(z)$ of $h$ on $\partial B(z,\epsilon)$ for each $z \in \ol{\D}$ and $\epsilon > 0$.  We also assume that $h^\epsilon(u)$ blows up to $\pm \infty$ almost surely for each $u \in \partial \D$ as $\epsilon \to 0$ (as is the case when $h$ is given by the form of the free boundary GFF considered in Theorem~\ref{thm::existence}).  Now, let us assume we have a constant $\beta$ such that the following limit exists and is almost surely a non-zero finite measure:
\begin{equation}
\label{eqn::boundary_measure_limit}
\nu_h = \lim_{\epsilon \to 0} \epsilon^\beta e^{\alpha h^\epsilon(u)} du.
\end{equation}
(This limit turns out not to depend on the zero-boundary GFF used in the definition of $h$ \cite{ds2011kpz}.)
In a sense, $\beta$ encodes the growth rate of $e^{\alpha \Fh_t}$ near $\partial \D$.  Note that when describing the dynamics of Figure \ref{fig::QLEtriangle}, we avoided having to specify a regularizing factor such as $\epsilon^\beta$ (or an analogous factor depending on $n$) because we normalized to make each approximation a probability measure.

In the case that $h$ is the free boundary GFF and $\alpha \in (-1,1)$ so that $\nu_h$ is given by the $2\alpha$-LQG boundary measure, $\beta$ is given by $(2\alpha)^2/4 = \alpha^2$ \cite{ds2011kpz}. \footnote{In \cite[Section~6]{ds2011kpz}, the existence of the limit \eqref{eqn::boundary_measure_limit} is proved when $h$ is given by the free boundary GFF on a domain with piecewise linear boundary while here we are taking our domain to be $\D$.  It is easy to see, however, that the argument of \cite{ds2011kpz} also goes through in the case that the domain is $\D$.}

For $\gamma  > 0$ given, let $Q_\gamma = 2/\gamma + \gamma/2$.  Recall that $Q_\gamma$ is the factor the appears in front of the $\log$-derivative in the $\gamma$-LQG coordinate change described in \eqref{eqn::LQGcoordinatetransformation}.  We are going to derive the following relationship between $\alpha$, $\gamma$, $\eta$, and~$\beta$:
\begin{align}
\label{eqn::alphabetaetagamma}
\alpha Q_\gamma = \beta - \eta -1.
\end{align}
Once three of the variables $\alpha$, $\beta$, $\gamma$, and $\eta$ are fixed we can use \eqref{eqn::alphabetaetagamma} to determine the fourth.  Moreover, once $\gamma$ is fixed, \eqref{eqn::alphabetaetagamma} gives an affine relationship between $\alpha$, $\beta$, and $\eta$.

Let $\psi \colon \D \to \wt{D}$ be a conformal change of coordinates.  Let
\begin{align}
\wt Q :=& \frac{1}{\alpha} + \frac{\beta}{\alpha} = \frac{1+\beta}{\alpha} \notag
\intertext{and let $\wt{h}$ be the distribution on $\wt{D}$ given by}
 \wt{h} =& h \circ \psi^{-1} + \wt{Q} \log |(\psi^{-1})'|. \label{eqn::tilde_h}
\end{align}
Let $\nu_{\wt{h}}$ be the boundary measure as in \eqref{eqn::boundary_measure_limit} defined in terms of $\wt{h}$.  Then it is not hard to see (at least if $\psi$ is linear) from \eqref{eqn::boundary_measure_limit} that $\nu_{\wt{h}}$ is almost surely the image under $\psi$ of $\nu_{h}$.   That is, $\nu_{h}(A) = \nu_{\wt{h}}(\psi(A))$ for $A \subseteq \partial \D$.  To see this, observe that $e^{\alpha \tilde Q \log |(\psi^{-1})'|} = |(\psi^{-1})'|^{1+\beta}$, which is $|(\psi^{-1})'|$ (the ordinary coordinate change term) times $|(\psi^{-1})'|^\beta$.

When $\alpha =\gamma/2$ and $\beta = \alpha^2$, the definition \eqref{eqn::tilde_h} is the same as the usual change of coordinates formula for the LQG boundary measure \cite{ds2011kpz}.

Let $\nu_\gamma$ be the measure on $\partial \wt{D}$ which is constructed by replacing $\wt{Q}$ in the definition~\eqref{eqn::tilde_h} of $\wt{h}$ with $Q_\gamma$.  Replacing $\wt{Q}$ with $Q_\gamma$ makes it so that the change of coordinates by $\psi$ preserves the $\gamma$-LQG boundary measure defined from $h$ (as opposed to the boundary measure with scaling exponent $\beta$ as defined in~\eqref{eqn::boundary_measure_limit}).  Then the Radon-Nikodym derivative between $\nu_\gamma$ and $\nu_{\wt{h}}$ is (formally) given by a constant times
\[ \exp(\alpha(Q_\gamma - \wt Q) \log |(\psi^{-1})'|) = |(\psi^{-1})'|^{\alpha(Q_\gamma - \wt Q)}.\]

The application of the conformal transformation $\psi$ scales the harmonic measure of a small region near $\partial \D$ by the factor $|\psi'|$.  Recalling the discussion in the caption of Figure~\ref{fig::etascaling}, we want $\nu_\gamma$ to be given by scaling $\nu_{\wt{h}}$ by the factor $|\psi'|^{2+\eta}$.  We therefore want
\[ -\alpha(Q_\gamma - \wt Q) = 2+\eta.\]
Plugging in the definition for $\wt{Q}$, we have $-\alpha Q_\gamma + 1 + \beta = 2+\eta$.  Rearranging gives~\eqref{eqn::alphabetaetagamma}.

\subsection{Free boundary GFF and quantum zipper $\alpha$} \label{subsec::zipperspecialization}

Fix $\gamma \in (0,2]$.  Using the quantum zipper machinery, we will find in later sections that it is natural to consider a setting in which $\beta = \alpha^2$ and we have one additional constraint, namely, $\alpha \in \{-\gamma/4, -1/\gamma \}$.  These two facts and \eqref{eqn::alphabetaetagamma} together imply the relationship between $\eta$ and $\gamma$ described by the upper two curves in Figure~\ref{fig::etavsgamma}\footnote{There is also another heuristic way to determine what $\alpha$ must be when $\eta$ and $\gamma$ are given (in the case that $\Fh_0$ is a harmonically projected GFF, so that $\beta = \alpha^2$), which would give an alternate derivation of \eqref{eqn::alphabetaetagamma}.  This heuristic was shown to us by Bertrand Duplantier.  Consider the discrete $\eta$-DBM interpretation described in Section 2 in which one samples a boundary face (or edge) of the planar map from harmonic measure to the $\eta + 2$ power, and then adds a unit of capacity near the chosen face. Recall that the measure that assigns a unit mass to each face is (conjecturally) supposed to have approximately the form $e^{\gamma h(z)}dz$ for a type of free boundary GFF $h$.  Now, what does the field look like near a ``typical'' face chosen from harmonic measure to the $\eta + 2$ power?  According to the KPZ formalism as applied to ``negative dimensional'' sets (see the discussion in \cite{ds2011kpz} on non-intersecting Brownian paths), if the face is centered at a point $u$, then the field near $u$ should look approximately like an ordinary free boundary GFF plus $2\alpha \log|u - \cdot|$, where $\alpha$ and $\eta$ are related in precisely the manner described here.  We hope to explain this point in more detail in a future joint work with Duplantier.}.  Very roughly speaking, the reason is that for these values the $2\alpha$-LQG boundary measure is supported on ``thick points'' $u$ near which the field behaves like $-2\alpha \log|u - \cdot|$ where $2\alpha \in \{-\gamma/2, -2/\gamma\}$ (see \cite[Proposition~3.4]{ds2011kpz} for the bulk version of this statement as well as Proposition~\ref{prop::free_boundary_weighted} below for the version which will be relevant for this article), and these values have the form $-2/\sqrt{\kappa}$ for $\kappa \in \{ 16/\gamma^2, \gamma^2 \}$, which correspond to the singularities that appear in the capacity invariant quantum zipper.

In these settings we will also find a stationary law of $\Fh_t$ given by the harmonic extension of the boundary values of a form of the free boundary GFF on $\D$, and as mentioned earlier, in this setting one has $\beta = \alpha^2$.

If we plug in $\alpha = -1/\gamma$ and $\beta = \alpha^2$ into~\eqref{eqn::alphabetaetagamma} then we obtain:
\[ -\frac{1}{\gamma} \left(\frac{2}{\gamma} + \frac{\gamma}{2}\right) = \frac{1}{\gamma^2} -\eta - 1,\]
or equivalently
\begin{equation}
\label{eqn::gammaetaupper}
\eta = \frac{3}{\gamma^2} - \frac{1}{2}.
\end{equation}
This describes the upper curve in Figure~\ref{fig::etavsgamma}

If we plug $\alpha = -\gamma/4$ and $\beta = \alpha^2$ into~\eqref{eqn::alphabetaetagamma} then we obtain
\[ -\frac{\gamma}{4}\left(\frac{2}{\gamma}+\frac{\gamma}{2}\right) = \frac{\gamma^2}{16 }-\eta- 1,\]
or equivalently
\begin{equation}
\label{eqn::gammaetaupper}
\eta = \frac{3 \gamma^2}{16} - \frac{1}{2}.
\end{equation}
This describes the middle curve in Figure~\ref{fig::etavsgamma}.  Note that the lower curve in Figure~\ref{fig::etavsgamma} corresponds to $\alpha = \beta = 0$ and $\eta = -1$, which is trivially a solution to~\eqref{eqn::alphabetaetagamma} for any $\gamma$.

\subsection{Free boundary scaling $\beta = \alpha^2$ and $\eta = 0$} \label{subsubsec::betaalpha^2}
\label{subsec::free_boundary_scaling}

\begin{figure}[ht!]
\begin{center}
\includegraphics[width=4.0in]{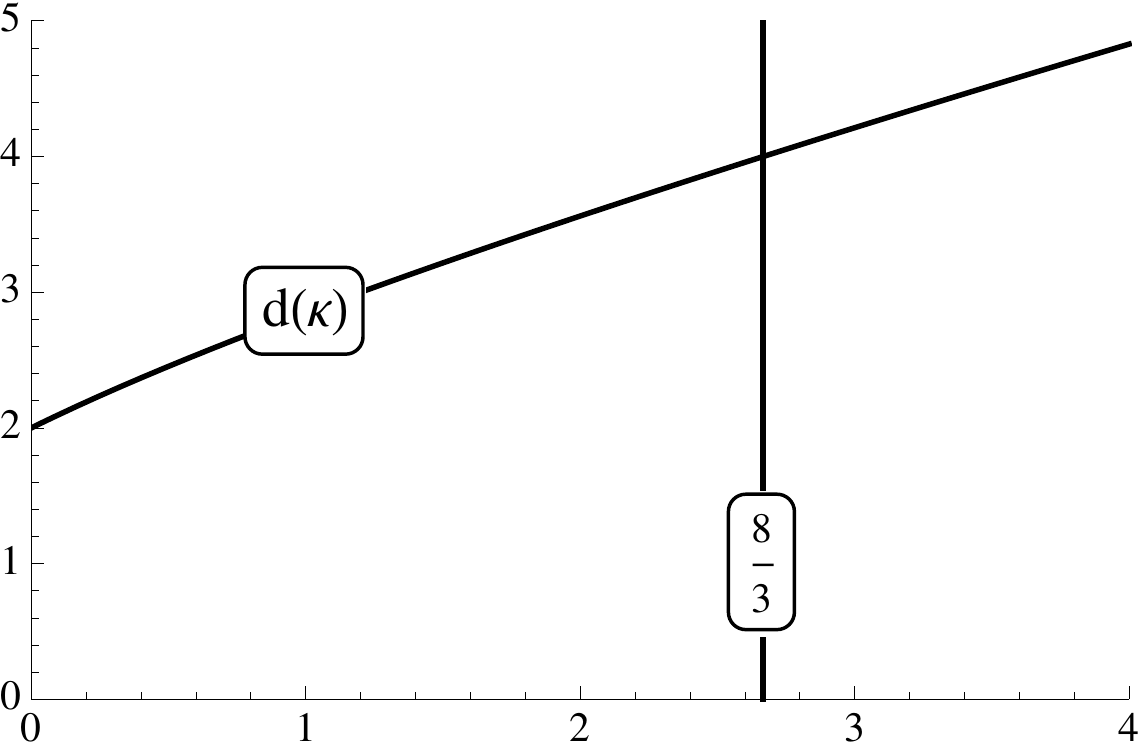}
\caption{ \label{fig::dversuskappaplot}The value $d$ as a function of $\kappa=\gamma^2$, as defined by~\eqref{eqn::heuristicd}.  Although the graph is not a straight line, it appears ``almost straight'' and it takes the value $2$ for $\kappa = 0$ and $4$ for $\kappa = 8/3$.}
\end{center}
\end{figure}

In Theorem~\ref{thm::existence} we prove the existence of stationary $\QLE(\gamma^2,\eta)$ processes for $(\gamma^2,\eta)$ pairs which are on one of the upper two curves in Figure~\ref{fig::etavsgamma} with $\beta = \alpha^2$.  It is natural to wonder whether this is just a coincidence, or whether there are other $(\gamma^2, \eta)$ pairs for which there exist $\QLE$ solutions with $\beta = \alpha^2$.  (This would be the case, for example, if the $\Fh_t$ turned out to have stationary laws described by the harmonic extension of the boundary values from $\partial \D$ to $\D$ of a form of the free boundary GFF.)  We observe that if we simply plug in $\beta = \alpha^2$, then~\eqref{eqn::alphabetaetagamma} becomes
\begin{align*}
\alpha Q_\gamma &= \alpha^2-\eta-1,
\intertext{or equivalently}
\eta &= \alpha^2 - \alpha Q_\gamma -1.
\end{align*}
One can also solve this for $\alpha$ to obtain
\[\alpha = \frac{Q_\gamma \pm \sqrt{Q_\gamma^2 + 4 + 4\eta}}{2}.\]

We now introduce a parameter $d = -\gamma/\alpha$, which can interpreted as a sort of ``dimension'', at least in the $\eta = 0$ case.\footnote{One way to define the dimension of a metric space is as the value $d$ such that the number of radius $\delta$ balls required to cover the space scales as $\delta^{-d}$.  (Hausdorff dimension is a variant of this idea.)  If the metric space comes endowed with a measure (and is homogenous, in some sense) then one might guess that each of these balls would have area of order $\delta^d$.  In fact, if there is a natural notion of ``rescaling'' the metric space so that its diameter changes by a factor of $\delta$ (and the measure is also defined for the rescaled version), then one can define $d$ to be such that the area scales as $\delta^d$.  In the QLE setting with $\eta = 0$, if we consider a small neighborhood $U$ of a point $u \in \partial \D$, and we rescale the quantum surface restricted to $U$ (by modifying $h$ on $U$) then we expect the ``length of time it takes a QLE to traverse $U$'' to scale by approximately the same factor as the diameter of $U$ (assuming a metric space structure on the quantum surface is defined).}
Let $A = e^{\gamma C}$.  This represents the factor by which the $\gamma$-LQG area in a small neighborhood of a boundary point $u \in \partial \D$ changes when we add a function to $h$ that is equal to a constant $C$ in that neighborhood.  Then $e^{\alpha C}$ represents the factor by which the $\QLE$ driving measure changes, which suggests that the time it takes to traverse the neighborhood should scale like $T = e^{-\alpha C}$.  Now $d$ is the value such that $A = T^{-\gamma/\alpha} = T^d$.

Computing this, we have
\begin{align*}
     d
& = -\frac{2\gamma}{Q_\gamma \pm \sqrt{Q^2_\gamma+4+4\eta}}
   = \frac{2\gamma\left( Q_\gamma \pm \sqrt{Q^2_\gamma+4+4\eta}\right)}{4+4\eta} \\
&= \frac{1}{1+\eta}\left(1 + \frac{\gamma^2}{4} \pm \sqrt{\frac{\gamma^4}{16} + \frac{3\gamma^2}{2} +1 + \eta \gamma^2}\right).
\end{align*}

Setting $\kappa = \gamma^2 \in (0,4)$ this is equivalently equal to
\[  \frac{1}{1+\eta}\left( 1 + \frac{\kappa}{4} \pm \sqrt{\frac{\kappa^2}{16} + \frac{3\kappa}{2} + 1 + \eta\kappa} \right).\]
In the case $\eta = 0$, the positive root can be written as
\begin{equation}
\label{eqn::heuristicd}
d =1 + \frac{\kappa}{4} + \frac{1}{4} \sqrt{(4+\kappa)^2 + 16 \kappa}.
\end{equation}
The graph of $d$ as a function of $\kappa = \gamma^2$ is illustrated in Figure \ref{fig::dversuskappaplot}.  The plot matches a physics literature prediction made by Watabiki in 1993 for the fractal dimension of $\gamma$-LQG quantum gravity when understood as a metric space \cite[Equation~(5.13)]{watabiki1993analytic}.\footnote{The quantities $\alpha_1$ and $\alpha_{-1}$ which appear in \cite[Equation~(5.13)]{watabiki1993analytic} are defined in \cite[Equation~(4.15)]{watabiki1993analytic}.  These, in turn, are defined in terms of the central charge $c$.  The central charge $c$ corresponding to an $\SLE_\kappa$ is $(8-3\kappa)(\kappa-6)/(2\kappa)$; see the introduction of \cite{lsw2003chordal}.}  However, we stress again that our calculation was made under the assumption that $\beta = \alpha^2$, and that we do not currently have even a heuristic argument for why there should exist QLE processes satisfying this relationship for $\eta = 0$ and a given $\gamma \in (0,2]$ (though of course the reader may consult the explanation given in \cite{watabiki1993analytic}).  The exception is the case $\gamma = \sqrt{8/3}$, since $(8/3,0)$ is one of the $(\gamma^2,\eta)$ pairs for which we construct solutions to the QLE dynamics.  In this case, our arguments do support the notion the Hausdorff dimension of Liouville quantum gravity should be $4$ for $\gamma = \sqrt{8/3}$, though we will not prove this statement in this paper.  This is consistent with the dimension of the Brownian map \cite{MR2031225,lg2007topological}

\section{Preliminaries}
\label{sec::preliminaries}

\subsection{Forward and reverse radial $\SLE_\kappa$}
\label{subsec::sle}

For $u \in \partial \D$ and $z \in \D$, let
\begin{equation}
\label{eqn::radial_functions}
\Psi(u,z) = \frac{u+z}{u-z} \quad \text{and} \quad\Phi(u,z)  = z \Psi(u,z).
\end{equation}
A radial $\SLE_\kappa$ in $\D$ starting from~$1$ and targeted at~$0$ is described by the random family of conformal maps obtained by solving the radial Loewner ODE
\begin{equation}
\label{eqn::radial_loewner}
\dot{g}_t(z) = \Phi(e^{i W_t}, g_t(z)),\quad g_0(z) = z
\end{equation}
where $W_t = \sqrt{\kappa} B_t$ and $B$ is a standard Brownian motion.  We refer to $e^{iW_t}$ as the {\bf driving function} for $(g_t)$.  For each $z \in \D$ let $\tau(z) = \sup\{t \geq 0: |g_t(z)| < 1\}$ and write $K_t := \{z \in \D : \tau(z) \leq t\}$.  For each $t \geq 0$, $g_t$ is the unique conformal map which takes $\D_t := \D \setminus K_t$ to $\D$ with $g_t(0) = 0$ and $g_t'(0) > 0$.  Time is parameterized by $\log$ conformal radius so that $g_t'(0) = e^t$ for each $t \geq 0$.  Rohde and Schramm showed that there almost surely exists a curve~$\eta$ (the so-called $\SLE$ {\bf trace}) such that for each $t \geq 0$ the domain $\D_t$ of $g_t$ is equal to the connected component of $\D \setminus \eta([0,t])$ which contains $0$.  The (necessarily simply connected and closed) set $K_t$ is called the ``filling'' of $\eta([0,t])$ \cite{rs2005sle}.

In our construction of $\QLE$, it will sometimes be more convenient to work with \emph{reverse} radial $\SLE_\kappa$ rather than \emph{forward} radial $\SLE_\kappa$ (as defined in \eqref{eqn::radial_loewner}).  A reverse $\SLE_\kappa$ in  $\D$ starting from~$1$ and targeted at~$0$ is the random family of conformal maps obtained by solving the reverse radial Loewner ODE
\begin{equation}
\label{eqn::reverse_radial_loewner}
\dot{g}_t(z) = -\Phi(e^{i W_t}, g_t(z)),\quad g_0(z) = z
\end{equation}
where $W_t = \sqrt{\kappa} B_t$ and $B$ is a standard Brownian motion.  As in the forward case, we refer to $e^{i W_t}$ as the {\bf driving function} for $(g_t)$.

\begin{remark}
\label{rem::forward_reverse_radial}
Forward and reversal radial $\SLE_\kappa$ are related in the following manner.  Suppose that $(g_t)$ solves the reverse radial Loewner equation \eqref{eqn::reverse_radial_loewner} with driving function $W_t = \sqrt{\kappa} B_t$ and $B$ a standard Brownian motion.  Fix $T > 0$ and let $f_t = g_{T-t}$ for $t \in [0,T]$. Then $(f_t)$ solves the forward radial Loewner equation with driving function $t \mapsto W_{T-t}$ and with initial condition $f_0(z) = g_T(z)$.  Equivalently, we can let $q_t$ for $t \in [0,T]$ solve \eqref{eqn::radial_loewner} with driving function $t \mapsto W_{T-t}$ and $q_0(z) = z$ and then take $f_t = q_t \circ g_T$.  Indeed, this follows from standard uniqueness results for ODEs.  Then
\[ z = g_0(z) = f_T(z) = q_T(g_T(z)).\]
That is, $q_T$ is the inverse of $g_T$.  This implies that the image of $g_T$ can be expressed as the complementary component containing zero of an $\SLE_\kappa$ curve $\eta_T$ in $\D$ drawn up to $\log$ conformal radius time $T$.  We emphasize here that the path $\eta_T$ changes with~$T$.  (See the end of the proof of \cite[Theorem~4.14]{lawler2005conformally} for a similar discussion.)
\end{remark}

\subsection{Gaussian free fields}
\label{subsec::gff}

We will now describe the construction of the two-dimensional GFF (with either Dirichlet, free, or mixed boundary conditions) as well as some properties that will be important for us later.  We refer the reader to \cite{sheffield2007gff} as well as \cite[Section~3]{sheffield2010weld} for a more detailed introduction.

\subsubsection{Dirichlet inner product}
\label{subsubsec::gff_dirichlet}

Let $D$ be a domain in $\C$ with smooth, {\bf harmonically non-trivial} boundary.  The latter means that the harmonic measure of $\partial D$ is positive as seen from any point in $D$.  Let $C_0^\infty(D)$ denote the set of $C^\infty$ functions compactly supported in~$D$.  The {\bf Dirichlet inner product} is defined by
\begin{equation}
\label{eqn::dirichlet}
(f,g)_\nabla = \frac{1}{2\pi} \int_D \nabla  f(x) \cdot \nabla g(x) dx \quad \text{for}\quad f,g \in C_0^\infty(D).
\end{equation}
More generally, \eqref{eqn::dirichlet} makes sense for $f,g \in C^\infty(D)$ with $L^2$ gradients.

\subsubsection{Distributions}
\label{subsubsec:gff_spaces}

We view $C_0^\infty(D)$ as a space of test functions and equip it with the topology where a sequence $(\phi_k)$ in $C_0^\infty(D)$ satisfies $\phi_k \to 0$ if and only if there exists a compact set $K \subseteq D$ such that the support of $\phi_k$ is contained in $K$ for every $k \in \N$ and $\phi_k$ as well as all of its derivatives converge uniformly to zero as $k \to \infty$.  A {\bf distribution} on $D$ is a continuous linear functional on $C_0^\infty(D)$ with respect to the aforementioned topology.  A {\bf modulo additive constant distribution} on $D$ is a continuous linear functional on the subspace of functions $f$ of $C_0^\infty(D)$ with $\int_D f(x) dx = 0$ with the same topology.

\subsubsection{GFF with Dirichlet and mixed boundary conditions}
\label{subsubsec::gff_dirichlet}

We let $H_0(D)$ be the Hilbert-space closure of $C_0^\infty(D)$ with respect to the Dirichlet inner product \eqref{eqn::dirichlet}.  The GFF $h$ on $D$ with zero Dirichlet boundary conditions can be expressed as a random linear combination of an $(\cdot,\cdot)_\nabla$-orthonormal basis $(f_n)$ of $H_0(D)$:
\begin{equation}
\label{eqn::gff_series}
h = \sum_n \alpha_n f_n,\quad (\alpha_n) \quad \text{i.i.d.}\quad N(0,1).
\end{equation}
Although this expansion of $h$ does not converge in $H_0(D)$, it does converge almost surely in the space of distributions or (when $D$ is bounded) in the fractional Sobolev space $H^{-\epsilon}(D)$ for each $\epsilon > 0$ (see \cite[Proposition 2.7]{sheffield2007gff} and the discussion thereafter).  If $f,g \in C_0^\infty(D)$ then an integration by parts gives $(f,g)_\nabla = -(2\pi)^{-1} ( f,\Delta g)$.  Using this, we define
\[ (h,f)_\nabla = -\frac{1}{2\pi}(h, \Delta f) \quad\text{for}\quad f \in C_0^\infty(D).\]
Observe that $(h,f)_\nabla$ is a Gaussian random variable with mean zero and variance $(f,f)_\nabla$.  Hence $h$ induces a map $C_0^\infty(D) \to \CG$, $\CG$ a Gaussian Hilbert space, that preserves the Dirichlet inner product.  This map extends uniquely to $H_0(D)$ and allows us to make sense of $(h,f)_\nabla$ for all $f \in H_0(D)$ and, moreover,
\[
\cov((h,f)_\nabla,(h,g)_\nabla) = (f,g)_\nabla \quad\text{for all}\quad f,g \in H_0(D).
\]

For fixed $x \in D$ we let $\wt{G}_x(y)$ be the harmonic extension of $y \mapsto -\log|x-y|$ from $\partial D$ to $D$.
The {\bf Dirichlet Green's function} on $D$ is defined by
\[ G^\dirichlet(x,y) = -\log|y-x| - \wt{G}_x(y).\]
When $x \in D$ is fixed, $G^\dirichlet(x,\cdot)$ may be viewed as the distributional solution to $\Delta G^\dirichlet(x,\cdot) = - 2\pi \delta_x(\cdot)$ with zero boundary conditions.  When $D = \D$, we have that
\begin{equation}
\label{eqn::green_dirichlet_disk}
 G^\dirichlet(x,y) = \log\left|\frac{1-x \ol{y}}{y-x}\right|.
\end{equation}
Repeated applications of integration by parts also imply that
\begin{align*}
      \cov((h,f),(h,g))
&= (2\pi)^2 \cov( (h,\Delta^{-1} f)_\nabla, (h,\Delta^{-1} g)_\nabla)\\
&= \iint_{D \times D} f(x) G^\dirichlet(x,y) g(y) dx dy
\end{align*}
where $G^\dirichlet$ is the Dirichlet Green's function on $D$.  If $h$ is a zero-boundary GFF on $D$ and $F \colon D \to \R$ is harmonic, then $h+F$ is the GFF with Dirichlet boundary conditions given by those of $F$.

More generally, suppose that that $D \subseteq \C$ is a domain and $\partial D = \partial^\dirichlet \cup \partial^\free$ where $\partial^\dirichlet \cap \partial^\free = \emptyset$.  We also assume that the harmonic measure of $\partial^\dirichlet$ is positive as seen from any point $z \in D$.  The GFF on $D$ with Dirichlet (resp.\ free) boundary conditions on $\partial^\dirichlet$ (resp.\ $\partial^\free$) is constructed using a series expansion as in \eqref{eqn::gff_series} except the space $H_0(D)$ is replaced with the Hilbert space closure with respect to $(\cdot,\cdot)_\nabla$ of the subspace of functions in $C^\infty(D)$ which have an $L^2$ gradient and vanish on $\partial^\dirichlet$.  The aforementioned facts for the GFF with only Dirichlet boundary conditions also hold verbatim for the GFF with mixed Dirichlet/free boundary conditions.  In the case that $D$ is a smooth Jordan domain and $\partial^\dirichlet$, $\partial^\free$ are each  non-degenerate intervals of $\partial D$, the Green's function $G$ is taken to solve $\Delta G(x,\cdot) = - 2\pi \delta_x(\cdot)$ with $n \cdot \nabla G(x,\cdot) = 0$ on $\partial^\free$ and $G(x,\cdot) = 0$ on $\partial^\dirichlet$ for $x \in D$.  See also the discussion in \cite[Section~6.2]{ds2011kpz} for the GFF with mixed boundary conditions.

\subsubsection{GFF with free boundary conditions}
\label{subsubsec::gff_free}

The GFF with free boundary conditions on $D \subseteq \C$ is constructed using a series expansion as in \eqref{eqn::gff_series} except we replace $H_0(D)$ with the Hilbert space closure $H(D)$ of the subspace of functions $f \in C^\infty(D)$ with $\| f \|_\nabla^2 := (f,f)_\nabla < \infty$ with respect to the Dirichlet inner product \eqref{eqn::dirichlet}.  Since the constant functions are elements of $C^\infty(D)$ but have $\|\cdot \|_\nabla$-norm zero, in order to make sense of this object, we will work in the space of distributions modulo additive constant.  As in the case of the ordinary GFF, it is not difficult to see that the series converges almost surely in this space.  As in Section~\ref{subsubsec::gff_dirichlet}, we can view $(h,f)_\nabla$ for $f \in H(D)$ as a Gaussian Hilbert space where
\[ \cov( (h,f)_\nabla, (h,g)_\nabla) = (f,g)_\nabla \quad\text{for all}\quad f,g \in H(D).\]
Note that we do not need to restrict to mean zero test functions here due to the presence of gradients.

The {\bf Neumann Green's function} on $D$ is defined by
\[ G^\free(x,y) = -\log|y-x| - \wh{G}_x(y)\]
where for $x \in D$ fixed, $y \mapsto \wh{G}_x(y)$ is the function in on $D$ such that the normal derivative of $G^\free(x,y)$ along $\partial D$ is equal to $1$.
(The reason for the superscript ``$\free$'' is that, as explained below, $G^\free$ gives the covariance function for the GFF with free boundary conditions.)  When $x \in D$ is fixed, $G^\free$ may be viewed as the distributional solution to $\Delta G^\free(x,\cdot) = - 2\pi \delta_x(\cdot)$ where the normal derivative of $G^\free(x,\cdot)$ is equal to $1$ at each $y \in \partial D$.  When $D = \D$, we have that
\begin{equation}
\label{eqn::green_neumann_disk}
   G^\free(x,y)
= -\log \left|(x-y)(1-x\ol{y}) \right|.
\end{equation}
Assuming that $f,g$ have mean zero, repeated applications of integration by parts yield that
\begin{align*}
      \cov((h,f),(h,g))
&= (2\pi)^2 \cov( (h,\Delta^{-1} f)_\nabla, (h,\Delta^{-1} g)_\nabla)\\
&= \iint_{D \times D} f(x) G^\free(x,y) g(y) dx dy
\end{align*}
where $G^\free$ is the Neumann Green's function on $D$.

\subsubsection{Markov property}
\label{subsubsec::gff_markov}

We are now going to explain the Markov property enjoyed by the GFF with Dirichlet, free, or mixed boundary conditions.  For simplicity, for the present discussion we are going to assume that $h$ is a GFF with zero boundary conditions (though the proposition stated below is general and so is the following argument).  Suppose that $W \subseteq D$ with $W \neq D$ is open.  There is a natural inclusion $\iota$ of $H_0(W)$ into $H_0(D)$ where
\[ \iota(f)(x) = \begin{cases} f(x) \text{ if } x \in W,\\ 0 \text{ otherwise.} \end{cases}\]
If $f \in C_0^\infty(W)$ and $g \in C_0^\infty(D)$, then as $(f,g)_\nabla = -(2\pi)^{-1}(f,\Delta g)$ it is easy to see that $H_0(D)$ admits the $(\cdot,\cdot)_\nabla$-orthogonal decomposition $H_0(W) \oplus H_0^\perp(W)$ where $H_0^\perp(W)$ is the subspace of functions in $H_0(D)$ which are harmonic in $W$.  Thus we can write
\[ h = h_W + h_{W^c} = \sum_n \alpha_n^W f_n^W + \sum_n \alpha_n^{W^c} f_n^{W^c}\]
where $(\alpha_n^W),(\alpha_n^{W^c})$ are independent i.i.d.\ sequences of standard Gaussians and $(f_n^W)$, $(f_n^{W^c})$ are orthonormal bases of $H_0(W)$ and $H_0^\perp(W)$, respectively.  Observe that $h_W$ is a zero-boundary GFF on $W$, $h_{W^c}$ is the harmonic extension of $h|_{\partial W}$ from $\partial W$ to $W$, and $h_W$ and $h_{W^c}$ are independent.  We arrive at the following proposition:

\begin{proposition}[Markov Property]
\label{prop::gff_markov}
Suppose that $h$ is a GFF with Dirichlet, free, or mixed boundary conditions.  The conditional law of $h|_W$ given $h |_{ D \setminus W}$ is that of the sum of a zero boundary GFF on $W$ plus the harmonic extension of $h|_{\partial W}$ from $\partial W$ to $W$.
\end{proposition}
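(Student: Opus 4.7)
The plan is to exploit a $(\cdot,\cdot)_\nabla$-orthogonal decomposition of the Cameron--Martin space that underlies the series expansion \eqref{eqn::gff_series}. Let $\mathcal{H}$ denote $H_0(D)$ in the Dirichlet case, $H(D)$ in the free case, or its mixed analogue. Consider the natural inclusion $\iota \colon H_0(W) \to \mathcal{H}$ by extension by zero. I would first verify that $\iota$ is isometric for $(\cdot,\cdot)_\nabla$, and that its image is orthogonal to the subspace $\mathcal{H}^\perp(W) \subset \mathcal{H}$ consisting of elements of $\mathcal{H}$ that are harmonic on $W$. Orthogonality follows from the integration-by-parts identity $(f,g)_\nabla = -(2\pi)^{-1}(f,\Delta g)$ applied to $f \in C_0^\infty(W)$ and $g \in \mathcal{H}$ that is smooth and harmonic on $W$; conversely, if $(f,g)_\nabla = 0$ for every $f \in C_0^\infty(W)$, then $\Delta g = 0$ distributionally on $W$, and by Weyl's lemma $g|_W$ is harmonic. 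This yields $\mathcal{H} = H_0(W) \oplus \mathcal{H}^\perp(W)$.

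Next I would pick joint orthonormal bases $(f_n^W)$ of $H_0(W)$ and $(f_n^{W^c})$ of $\mathcal{H}^\perp(W)$ and rearrange \eqref{eqn::gff_series} as
\[
h \;=\; \sum_n \alpha_n^W f_n^W \;+\; \sum_n \alpha_n^{W^c} f_n^{W^c} \;=:\; h_W + h_{W^c},
\]
with $(\alpha_n^W)$ and $(\alpha_n^{W^c})$ two independent i.i.d.\ standard Gaussian families. By construction $h_W$ is a zero-boundary GFF on $W$, $h_{W^c}$ is (distributional pairing against) an almost surely harmonic function on $W$, and the two are independent.

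The last step is to identify $\sigma(h_{W^c})$ with $\sigma(h|_{D \setminus W})$. On the one hand, given $\phi \in C_0^\infty(D \setminus W)$, set $\psi \in \mathcal{H}$ to be the solution of $-\Delta \psi = \phi$ with the appropriate (Dirichlet, free, or mixed) boundary conditions on $\partial D$; since $\phi$ vanishes on $W$, $\psi$ is harmonic on $W$, i.e.\ $\psi \in \mathcal{H}^\perp(W)$, and therefore $(h_W,\phi) = -2\pi (h_W,\psi)_\nabla = 0$. Thus $h|_{D \setminus W} = h_{W^c}|_{D \setminus W}$. On the other hand, $h_{W^c}$ is harmonic on $W$, so it is determined globally by its restriction to $D \setminus W$ (equivalently, by its boundary values on $\partial W$). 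Combining these two facts, $h_{W^c}$ is measurable with respect to $h|_{D \setminus W}$, while $h_W$ is independent of $h|_{D \setminus W}$ and unconditionally a zero-boundary GFF on $W$; this is exactly the claim.

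The main obstacle is making rigorous sense of ``the boundary values of a distribution on $\partial W$'' and of ``the harmonic extension of those values''. I would sidestep this by \emph{defining} the harmonic extension to be the orthogonal projection of $h$ onto $\mathcal{H}^\perp(W)$, and then checking, whenever the boundary data happen to be regular enough, that this coincides with the classical object. The free and mixed boundary cases require no new ideas: the inclusion $H_0(W) \hookrightarrow \mathcal{H}$ and the integration-by-parts argument for orthogonality are insensitive to the boundary behavior on $\partial D$, and in the purely free case one simply remembers that $\mathcal{H}$ is taken modulo additive constants, which does not perturb the decomposition because $H_0(W)$ contains no constants.
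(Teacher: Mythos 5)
Your argument is correct and is essentially the paper's own: both rest on the $(\cdot,\cdot)_\nabla$-orthogonal decomposition $\mathcal{H} = H_0(W) \oplus \mathcal{H}^\perp(W)$ with $\mathcal{H}^\perp(W)$ the functions harmonic in $W$, followed by expanding $h = h_W + h_{W^c}$ in orthonormal bases of the two summands to get independence. Your extra care in identifying $\sigma(h_{W^c})$ with $\sigma(h|_{D\setminus W})$ and in defining the harmonic extension as an orthogonal projection only fills in details the paper leaves implicit (citing \cite[Theorem~2.17]{sheffield2007gff}).
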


The orthogonality of $H_0(W)$ and the set of functions in $H_0(D)$ which are harmonic in $W$ is also proved in \cite[Theorem~2.17]{sheffield2007gff} and it is explained thereafter how this is related to the Markov property of the GFF.

\begin{remark}
\label{rem::free_zero}
Proposition~\ref{prop::gff_markov} implies that if~$h$ is a free boundary GFF on~$D$ then we can write~$h$ as the sum of the harmonic extension of its boundary values from~$\partial D$ to~$D$ and an independent zero boundary GFF in~$D$.
\end{remark}

\begin{remark}
\label{rem::orthogonal_projection}
Proposition~\ref{prop::gff_markov} implies that for each fixed $W \subseteq D$ open we can almost surely define the orthogonal projection of a GFF $h$ onto the subspaces of functions which are harmonic in and supported in $W$.  We will indicate these by $\pHarm(h;W)$ and $\pSupp(h;W)$, respectively.  If $W$ is clear from the context, we will simply write $\pHarm(h)$ and $\pSupp(h)$.
\end{remark}

\subsection{Local sets}
\label{subsec::local_sets}

The theory of local sets, developed in \cite{ss2010contour}, extends the Markovian structure of the field (Proposition~\ref{prop::gff_markov}) to the setting of conditioning on the values it takes on a \emph{random set} $A \subseteq D$.  More precisely, suppose that $(A,h)$ is a coupling of a GFF (with either Dirichlet, free, or mixed boundary conditions)~$h$ on~$D$ and a random variable~$A$ taking values in the space of closed subsets of~$\ol{D}$, equipped with the Hausdorff metric.  Then~$A$ is said to be a {\bf local set} of~$h$ \cite[Lemma 3.9, part (4)]{ss2010contour} if there exists a law on pairs $(A,h_1)$ where $h_1$ takes values in the space of distributions on $D$ with $h_1|_{D \setminus A}$ harmonic is such that a sample with the law $(A,h)$ can be produced by
\begin{enumerate}
\item choosing the pair $(A,h_1)$,
\item then sampling an instance~$h_2$ of the zero boundary GFF on $D \setminus A$ and setting $h=h_1+h_2$.
\end{enumerate}
There are several other characterizations of local sets which are discussed in \cite[Lemma 3.9]{ss2010contour}.  These are stated and proved for the GFF with Dirichlet boundary conditions, however the argument goes through verbatim for the GFF with either free or mixed boundary conditions.

For a given local set~$A$, we will write $\CC_A$ for $h_1$ as above.  We can think of $\CC_A$ as being given by $\pHarm(h;D \setminus A)$.  We can also interpret~$\CC_A$ as the conditional expectation of~$h$ given~$A$ and~$h|_A$.  In the case that $h$ is a GFF with free boundary conditions, $\CC_A$ is defined modulo additive constant.

Throughout this article, we will often work with increasing families of closed subsets $(K_t)_{t \geq 0}$ each of which is local for a GFF $h$.  The following is a restatement of \cite[Proposition~6.5]{ms2010imag1} and describes the manner in which $\CC_{K_t}$ evolves with $t$.  In the following statement, for a domain $U \subseteq \C$ with simply-connected components and $z \in U$, we write $\confrad(z;U)$ for the conformal radius of the component $U_z$ of $U$ containing $z$ as seen from $z$.  That is, $\confrad(z;U) = \phi'(0)$ where $\phi$ is the unique conformal map which takes $\D$ to $U_z$ with $\phi(0) = z$ and $\phi'(0) > 0$.

\begin{proposition}
\label{prop::cond_mean_continuous}
Suppose that $D \subseteq \C$ is a non-trivial simply connected domain.  Let $h$ be a GFF on $D$ with either Dirichlet, free, or mixed boundary conditions.  Suppose that $(K_t)_{t \geq 0}$ is an increasing family of closed sets such that $K_\tau$ is local for $h$ for every $(K_t)$ stopping time $\tau$ and $z \in D$ is such that $\confrad(z;D \setminus K_t)$ is almost surely continuous and monotonic in $t$.  Then $\CC_{K_t}(z) - \CC_{K_0}(z)$ has a modification which is a Brownian motion when parameterized by $\log \confrad(z;D \setminus K_0)-\log \confrad(z;D \setminus K_t)$ up until the first time $\tau(z)$ that $K_t$ accumulates at $z$.  In particular, $\CC_{K_t}(z)$ has a modification which is almost surely continuous in $t \geq 0$.  (In the case that $h$ has free boundary conditions, we use the normalization $\CC_{K_0}(z) = 0$.)
\end{proposition}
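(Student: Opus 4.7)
The plan is to prove the statement in three movements: first establish that $\CC_{K_t}(z)$ is a continuous martingale in $t$ (up to $\tau(z)$), second identify its quadratic variation as $\log \confrad(z;D \setminus K_0) - \log \confrad(z;D \setminus K_t)$, and finally invoke L\'evy's characterization to conclude that the time-changed process is a Brownian motion. The continuity-in-$t$ assertion in the last sentence of the proposition will come out of the construction of the continuous modification used to invoke L\'evy.

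For the martingale property, fix $z$ and work up to $\tau(z)$, so that $z \in D \setminus K_t$ and $\CC_{K_t}$ is harmonic in a neighborhood of $z$ (hence $\CC_{K_t}(z)$ makes sense pointwise). Let $\CF_t = \sigma(K_s, h|_{K_s}: s \leq t)$. The local set characterization recalled before the proposition statement says that, given $\CF_t$, the field $h$ decomposes as $\CC_{K_t}$ plus an independent zero-boundary GFF on $D \setminus K_t$. Applying this decomposition at two times $s \leq t$ and using the tower property for orthogonal projections onto the nested subspaces of harmonic functions (that is, functions harmonic off $K_s$ contain those harmonic off $K_t$), one obtains $\E[\CC_{K_t}(z) \mid \CF_s] = \CC_{K_s}(z)$ for almost every realization, making $\CC_{K_t}(z)$ a martingale on $[0, \tau(z))$. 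Because the coupling is jointly Gaussian (conditional on $(K_t)$, each $\CC_{K_t}(z)$ is a fixed linear functional of $h$), the increments are Gaussian, which we will use for continuity.

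For the quadratic variation, write the orthogonal decomposition $h = \CC_{K_t} + h_t$ where $h_t$ is a zero-boundary GFF on $D \setminus K_t$ independent of $\CF_t$. Then, testing both sides against a smooth mollifier $\phi_\epsilon$ supported in a small neighborhood of $z$ and taking $\epsilon \to 0$, the martingale increment satisfies
\[
\Var\bigl(\CC_{K_t}(z) - \CC_{K_0}(z)\bigr) = \Var(h_0)_{\mathrm{reg}}(z,z) - \Var(h_t)_{\mathrm{reg}}(z,z),
\]
where the regularized variance is the limit $\lim_{w \to z}\bigl(G^{D\setminus K_t}(z,w) + \log|z-w|\bigr)$. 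In a simply connected domain this limit equals $\log \confrad(z; D \setminus K_t)$ (this is the conformal-radius characterization of the Green's function). Hence the variance of $\CC_{K_t}(z) - \CC_{K_0}(z)$ equals exactly $\log \confrad(z; D \setminus K_0) - \log \confrad(z; D \setminus K_t)$, and by Gaussianity and the tower property this is also the predictable quadratic variation $\langle \CC_{K_\cdot}(z) \rangle_t$.

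The final step is to produce a continuous modification and apply L\'evy. Because $\CC_{K_t}(z)$ is a centered Gaussian process with explicit covariance structure and the parameter $L(t) := \log\confrad(z;D\setminus K_0) - \log\confrad(z;D\setminus K_t)$ is almost surely continuous and monotone by hypothesis, the Kolmogorov continuity theorem (applied in the time variable $L$) yields a modification continuous in $t$; this also verifies the last sentence of the proposition. Under the time change $t \mapsto L(t)$, we obtain a continuous martingale whose quadratic variation is the identity, so L\'evy's criterion identifies it as a standard Brownian motion. The principal technical point to handle carefully is the passage from the distributional equality $h = \CC_{K_t} + h_t$ to a pointwise statement at $z$: this uses that $\CC_{K_t}$ is harmonic near $z$ (valid because $t < \tau(z)$) together with the regularization of $G^{D \setminus K_t}$ at the diagonal, and in the free-boundary case requires the normalization $\CC_{K_0}(z) = 0$ to eliminate the additive-constant ambiguity.
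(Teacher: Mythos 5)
A preliminary remark: the paper does not prove this proposition itself --- it is stated as a restatement of \cite[Proposition~6.5]{ms2010imag1} --- so your argument has to be judged on its own merits. Your skeleton (martingale property via the tower property over $\CF_s\subseteq\CF_t$, quadratic variation equal to the drop in $\log\confrad$, then a time change and L\'evy/Dambis--Dubins--Schwarz) is the right one, and the identification of the regularized Green's function on the diagonal with $\log\confrad$ is correct. The genuine gap is the sentence ``Because the coupling is jointly Gaussian (conditional on $(K_t)$, each $\CC_{K_t}(z)$ is a fixed linear functional of $h$), the increments are Gaussian.'' For a field-dependent family of local sets --- which is the whole point of the proposition --- conditioning on the realization of the sets does \emph{not} leave $h$ Gaussian: the definition of a local set only guarantees that, given $(K_t,h|_{K_t})$, the \emph{remainder} $h-\CC_{K_t}$ is a zero-boundary GFF on $D\setminus K_t$; the conditional law of the harmonic part $\CC_{K_t}$ given the sets alone is unconstrained and is generically non-Gaussian. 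This unsupported Gaussianity is exactly what you lean on for the Kolmogorov continuity step (the second-moment identity alone gives no modulus of continuity) and, implicitly, for upgrading ``continuous martingale with the right bracket'' to ``Brownian motion.''

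The repair is to replace the Gaussianity claim by a conditional moment generating function computation, which is legitimate because it only uses the conditional law of the remainder: for $t<\tau(z)$ and $\epsilon$ small, $\E[\exp(\lambda h_\epsilon(z))\mid\CF_t]=\exp\bigl(\lambda\,\CC_{K_t,\epsilon}(z)+\tfrac{\lambda^2}{2}(\log\epsilon^{-1}+\log\confrad(z;D\setminus K_t))\bigr)$. Applying the tower property over $\CF_s\subseteq\CF_t$ and letting $\epsilon\to0$ yields, with $M_t=\CC_{K_t}(z)-\CC_{K_0}(z)$ and $A_t=\log\confrad(z;D\setminus K_0)-\log\confrad(z;D\setminus K_t)$,
\[
\E\Bigl[\exp\Bigl(\lambda(M_t-M_s)-\tfrac{\lambda^2}{2}(A_t-A_s)\Bigr)\Bigm|\CF_s\Bigr]=1\quad\text{for all }\lambda\in\R.
\]
Expanding in powers of $\lambda$ gives $\E[(M_t-M_s)^{2p}\mid\CF_s]\le C_p\,\E[(A_t-A_s)^p\mid\CF_s]$, which after reparameterizing by $A$ (continuous and monotone by hypothesis) feeds Kolmogorov--Centsov and produces the continuous modification; and the displayed identity is precisely the exponential martingale characterization showing that the time-changed process is a standard Brownian motion. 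With that substitution, the rest of your write-up (the conditional variance computation and the normalization in the free-boundary case) goes through.
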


\subsection{Quantum boundary length measures}
\label{subsec::lqg_boundary_measures}

We are now going to summarize a few important facts which are based on the discussion in \cite[Section~6]{ds2011kpz} regarding the Liouville quantum gravity boundary length measure.  Suppose that $h$ is a GFF with mixed Dirichlet/free boundary conditions on a Jordan domain $D \subseteq \C$ where both the Dirichlet and free parts $\partial^\dirichlet$ and $\partial^\free$, respectively, of $\partial D$ are non-degenerate boundary arcs.  In \cite[Theorem~6.1]{ds2011kpz}, it is shown how to construct the measure $\nu_h^\gamma = \exp(\tfrac{\gamma}{2} h(u))du$ on $\partial^\free$ for $\gamma \in (-2,2)$ fixed.  Formally, this means that the Radon-Nikodym derivative of $\nu_h^\gamma$ with respect to Lebesgue measure on $\partial^\free$ is given by $\exp(\tfrac{\gamma}{2}h)$.  This does not make literal sense because $h$ does not take values in the space of functions and, in particular, does not take on a specific value at a given point in $\partial^\free$.  One can make this rigorous as follows.  First, suppose that $\partial^\free$ consists of a single linear segment.  For each $z \in \partial^\free$ and $\epsilon > 0$, let $h_\epsilon(z)$ be the average of $h$ on the semi-circle $\partial B(z,\epsilon) \cap D$ (see \cite[Section~3]{ds2011kpz} for background on the circle average process).  For each $\gamma \in (-2,2)$, the measure $\exp(\tfrac{\gamma}{2}h(u)) du$ is defined as the almost sure limit
\begin{equation}
\label{eqn::boundary_measure}
\nu_h^\gamma = \lim_{\epsilon \to 0} \epsilon^{\gamma^2/4} \exp(\tfrac{\gamma}{2} h_\epsilon(u)) du
\end{equation}
along powers of two as $\epsilon \to 0$ with respect to the weak topology.  Upon showing that the limit in~\eqref{eqn::boundary_measure} exists for linear $\partial^\free$, the boundary measure for other domains is defined via conformal mapping and applying the change of coordinates rule for quantum surfaces.

One can similarly make sense of the limits~\eqref{eqn::boundary_measure} in the case that $h$ has free boundary conditions, i.e.\ $\partial^\dirichlet = \emptyset$.  If we consider $h$ as a distribution defined modulo additive constant, then the measure $\nu_h^\gamma$ will only be defined up to a multiplicative constant.  We can ``fix'' the additive constant in various ways, in which case $\nu_h^\gamma$ is an actual measure.

One also has the following analog of \cite[Proposition~1.2]{ds2011kpz} for the boundary measures associated with the free boundary GFF on $\D$.  Suppose that $(f_n)$ is any orthonormal basis consisting of smooth functions for the Hilbert space used to define $h$.  For each $n \in \N$, let $h^n$ be the orthogonal projection of $h$ onto the subspace spanned by $\{f_1,\ldots,f_n\}$.  One can similarly define $\nu_h^\gamma$ as the almost sure limit
\begin{equation}
\label{eqn::boundary_measure_onb}
 \nu_h^\gamma = \lim_{n \to \infty} \exp\left(\tfrac{\gamma}{2} h^n(u) - \tfrac{\gamma^2}{4} \var(h^n(u))\right) du.
\end{equation}
That these two definitions for $\nu_h^\gamma$ almost surely agree is not explicitly stated in \cite{ds2011kpz} for boundary measures however its proof is exactly the same as in the case of bulk measures which is given in \cite[Proposition~1.2]{ds2011kpz}.

\begin{proposition}
\label{prop::free_boundary_weighted}
Fix $\gamma \in (-2,2)$.  Consider a random pair $(u,h)$ where $u$ is sampled uniformly from $\partial \D$ using Lebesgue measure and, given $u$, the conditional law of $h$ is that of a free boundary GFF on $\D$ plus $-\gamma \log|\cdot-u|$ viewed as a distribution defined modulo additive constant.  Let $\nu_h^\gamma$ denote the $\gamma$ boundary measure associated with $h$.  Then given $h$, the conditional law of $u$ is that of a point uniformly sampled from $\nu_h^\gamma$ ($\nu_h^\gamma$ is only defined up to a multiplicative constant, but can be normalized to be a probability measure).
\end{proposition}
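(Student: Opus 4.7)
The plan is to adapt the strategy used to prove the bulk analog \cite[Proposition~3.4]{ds2011kpz} to the boundary setting. After fixing some convenient normalization of the additive constant so that the free boundary GFF on $\D$ becomes an honest random distribution with law $P$ (the final statement is insensitive to this choice, since it concerns only the normalized conditional law of $u$ given $h$), let $Q_u$ denote the law of $\tilde h - \gamma \log|\cdot - u|$ where $\tilde h \sim P$. It suffices to show that the two measures on pairs $(u, h)$,
\[
\mu_1(du, dh) := du \otimes Q_u(dh) \quad \text{and} \quad \mu_2(du, dh) := \nu_h^\gamma(du) \otimes P(dh),
\]
are proportional; disintegrating the common measure over $h$ then yields the claimed conditional law of $u$ given $h$.

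Test against bounded continuous functionals $F$ on distributions and $g$ on $\partial \D$. Expanding $\nu_h^\gamma$ via the semi-circle regularization~\eqref{eqn::boundary_measure},
\[
\int F(h) g(u)\, d\mu_2 \;=\; \lim_{\epsilon \to 0} \epsilon^{\gamma^2/4} \int_{\partial \D} g(u)\, E_P\!\left[F(h) \exp\!\bigl(\tfrac{\gamma}{2} h_\epsilon(u)\bigr)\right] du.
\]
For fixed $\epsilon$ and $u$, the map $h \mapsto h_\epsilon(u)$ is a bounded linear functional on the free boundary Cameron--Martin space, so the standard Gaussian change of variables gives
\[
E_P\!\left[F(h) \exp\!\bigl(\tfrac{\gamma}{2} h_\epsilon(u)\bigr)\right]
\;=\; \exp\!\bigl(\tfrac{\gamma^2}{8} \var h_\epsilon(u)\bigr)\, E_P\!\left[F\!\bigl(h + \tfrac{\gamma}{2} G^\free_\epsilon(u,\cdot)\bigr)\right],
\]
where $G^\free_\epsilon(u, \cdot)$ denotes the semi-circle average at scale $\epsilon$ of the Neumann Green's function~\eqref{eqn::green_neumann_disk} in its first variable.

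The key simplification on the boundary is the identity $|1 - u\bar v| = |u - v|$ valid for $|u|=1$, which gives $G^\free(u,v) = -2\log|u-v|$, and hence $\tfrac{\gamma}{2} G^\free_\epsilon(u, \cdot) \to -\gamma \log|u - \cdot|$ as $\epsilon \to 0$. By rotational invariance $\var h_\epsilon(u)$ depends only on $\epsilon$, and a direct computation (with leading-order behavior $-2\log\epsilon$, just as for the boundary semi-circle averages used to construct $\nu_h^\gamma$) yields $\epsilon^{\gamma^2/4} \exp(\tfrac{\gamma^2}{8} \var h_\epsilon(u)) \to C$ for some constant $C \in (0,\infty)$. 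Combining these limits, by continuity of $F$ and dominated convergence in $u$,
\[
\int F(h) g(u)\, d\mu_2 \;=\; C \int_{\partial \D} g(u)\, E_{Q_u}[F(h)]\, du \;=\; C \int F(h) g(u)\, d\mu_1,
\]
which is the desired proportionality.

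I expect the main obstacle to be the rigorous justification of the two limit interchanges, namely passing $\lim_{\epsilon \to 0}$ through the $u$-integral and through the $P$-expectation against $F$. This requires uniform-in-$u$ and uniform-in-$\epsilon$ moment bounds on $\epsilon^{\gamma^2/4} \exp(\tfrac{\gamma}{2} h_\epsilon(u))$ in $L^p(P)$ for some $p > 1$ throughout the range $\gamma \in (-2,2)$; such bounds essentially underlie the construction of $\nu_h^\gamma$ itself in \cite[Section~6]{ds2011kpz} and should transfer here with only cosmetic changes. A secondary subtlety is that the limit shift $-\gamma\log|u - \cdot|$ has infinite Dirichlet energy at the boundary point $u$, ruling out a direct Cameron--Martin change of measure at $\epsilon = 0$; working at the regularized level throughout and passing to the limit at the end circumvents this.
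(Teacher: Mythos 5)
Your proposal is correct, and the engine driving it --- the Gaussian change of measure identifying ``weight the law of $h$ by $\epsilon^{\gamma^2/4}e^{\gamma h_\epsilon(u)/2}$'' with ``shift $h$ by $\tfrac{\gamma}{2}G^\free_\epsilon(u,\cdot)$,'' combined with the boundary identity $|1-u\bar v|=|u-v|$ so that $\tfrac{\gamma}{2}G^\free(u,\cdot)=-\gamma\log|u-\cdot|$ --- is exactly the mechanism the paper relies on when it cites \cite[Section~6]{ds2011kpz}. Where the two arguments genuinely diverge is in the scaffolding around the additive constant. The paper avoids working with a modulo-additive-constant field by introducing the annulus $A_r=\D\setminus\overline{B(0,r)}$ with zero boundary conditions on the inner circle (so the field is an honest distribution), asserting the two-way sampling equivalence there, and then sending $r\to 0$; the log singularity $-\gamma\log|u-\cdot|$ emerges only in that limit, as the boundary restriction of the annulus Green's functions $f_{u,r}$. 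You instead fix the additive constant once (mean zero on $\D$), which keeps $\var h_\epsilon(u)$ rotation-invariant and lets you run the Cameron--Martin computation directly on the disk at the regularized level, deferring all limits to $\epsilon\to 0$. Your route is more self-contained and makes explicit the uniform-integrability bookkeeping that the paper leaves implicit in its citation; the paper's route buys a cleaner statement of the weighting identity (no modulo-constant caveats mid-proof) at the cost of an extra limiting procedure. Both correctly reduce to the same two analytic facts: $\var h_\epsilon(u)=-2\log\epsilon+O(1)$ uniformly on $\partial\D$, and $L^1$-convergence of the regularized boundary masses for $|\gamma|<2$, both of which are indeed supplied by \cite[Section~6]{ds2011kpz}.
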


\begin{proof}
Let $A_r$ for $0<r<1$ be the annulus $\D \setminus \overline{B(0,r)}$.  Let $\wt{A}_r$ be the larger annulus $B(0,1/r) \setminus \overline{B(0,r)}$.  Let $dh$ be the law of an instace $h$ be the GFF on $A_r$ with zero boundary conditions on the inner boundary circle $\partial B(0,r)$ and free boundary conditions on $\partial \D$.  Let $\nu_h^\gamma$ denote the corresponding boundary $\gamma$-LQG measure on $\partial \D$.  Then it is not hard to see that the following ways to produce a random pair $u, h$ are equivalent (and very similar statements are proved in \cite[Section 6]{ds2011kpz}):
\begin{enumerate}
\item First sample $u$ uniformly on $\partial \D$ and the let $h$ be a sample from the law described above {\em plus} the deterministic function $f_{u,r}(\cdot) = \gamma G_{\wt{A}_r}(u,\cdot)$ where $\gamma G_{\wt{A}_r}$ is the Green's function on $\wt{A}_r$.  (This function is harmonic on $\wt{A}_r \setminus \{u \}$ except at the point $u$.)
\item First sample $h$ from the measure $\nu_h^\gamma(\partial \D) dh$ and then, conditioned on $h$, sample $u$ from the boundary measure $\nu_h$ (normalized to be a probability measure).
\end{enumerate}
The lemma can be obtained by taking the limit as $r$ goes to zero (with the corresponding $h$ being considered modulo additive constant).  Note that on the set $\partial \D$, the functions $f_{r,u}(\cdot)$ (treated modulo additive constant) converge uniformly to $-\gamma \log|u - \cdot|$ as $r$ tends to zero.
\end{proof}

\section{The reverse radial $\SLE$/GFF coupling}
\label{sec::couplings}

\begin{figure}
\begin{center}
\includegraphics[scale=0.85]{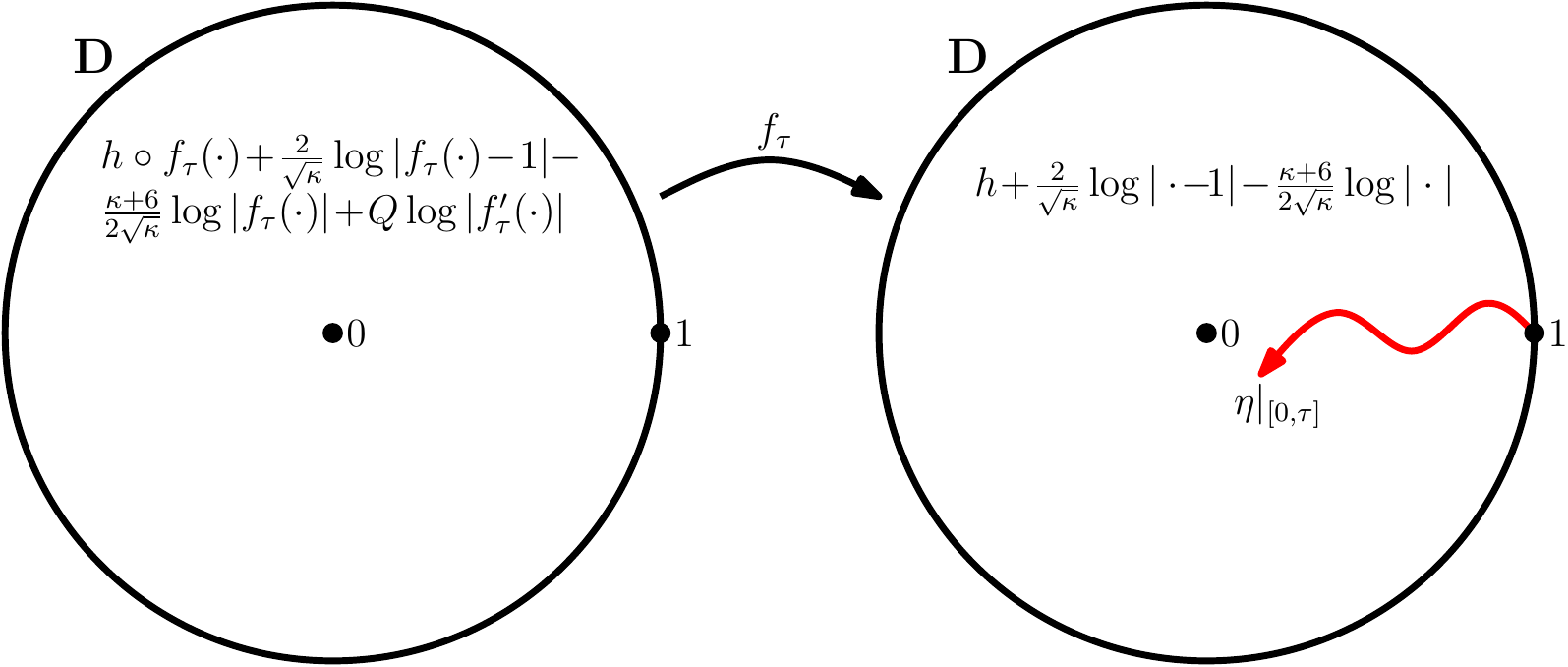}
\end{center}
\caption{\label{fig::reverse_radial_coupling}
Illustration of the coupling of reverse radial $\SLE_\kappa$ in $\D$ starting from~$1$ and targeted at~$0$ with a free boundary GFF $h$ on $\D$.  Here, $Q = 2/\gamma + \gamma/2$ for $\gamma=\min(\sqrt{\kappa},\sqrt{16/\kappa})$ and $f_\tau$ is the centered reverse radial $\SLE_\kappa$ Loewner flow evaluated at a stopping time $\tau$.  Theorem~\ref{thm::radial_coupling_existence} implies that the distributions on the left and right above have the same law.}
\end{figure}

The purpose of this section is to establish the radial version of the reverse coupling of $\SLE_\kappa$ with the free boundary GFF.  It is a generalization of the coupling with reverse chordal $\SLE_\kappa$ with the free boundary GFF established in \cite[Theorem~1.2]{sheffield2010weld}.  Suppose that $B_t$ is a standard Brownian motion, $W_t = \sqrt{\kappa} B_t$, and $U_t = e^{iW_t}$.  Let $(g_t)$ solve the reverse radial Loewner ODE \eqref{eqn::reverse_radial_loewner} driven by $U_t$.  The centered reverse $\SLE_\kappa$ is given by the centered conformal maps $f_t = U_t^{-1} g_t$.  We note that
\begin{align}
\label{eqn::centered_reverse_radial_loewner}
\begin{split}
 df_t(z)
&= U_t^{-1} dg_t(z) - i U_t^{-1} g_t(z) dW_t - \frac{\kappa}{2} U_t^{-1} g_t(z) dt\\
&= -f_t(z) \left( \frac{1+f_t(z)}{1-f_t(z)} + \frac{\kappa}{2}\right) dt - i f_t(z) dW_t\\
&= -\left( \Phi(1,f_t(z)) + \frac{\kappa}{2} f_t(z) \right) dt - i f_t(z) dW_t
\end{split}
\end{align}
(recall~\eqref{eqn::radial_functions}).

\begin{theorem}
\label{thm::radial_coupling_existence}
Fix $\kappa > 0$.  Suppose that $h$ is a free boundary GFF on $\D$, let $B$ be a standard Brownian motion which is independent of $h$, and let $(f_t)$ be the centered reverse radial $\SLE_\kappa$ Loewner flow which is driven by $U_t = e^{i W_t}$ where $W = \sqrt{\kappa} B$ as in \eqref{eqn::centered_reverse_radial_loewner}.  For each $t \geq 0$ and $z \in \D$ we let\footnote{The function $\Fh_t$ in the statement of Theorem~\ref{thm::radial_coupling_existence} is not the same as the harmonic component in the definition of $\QLE$.  We are using this notation in this section to be consistent with the notation used in \cite{sheffield2010weld}.}
\begin{equation}
\label{eqn::fh_definition}
\Fh_t(z) = \frac{2}{\sqrt{\kappa}} \log|f_t(z)-1| - \frac{\kappa+6}{2 \sqrt{\kappa}} \log|f_t(z)| + Q \log|f_t'(z)|
\end{equation}
where $Q = 2/\gamma + \gamma/2$ and $\gamma = \min(\sqrt{\kappa},\sqrt{16/\kappa})$.  Let $\tau$ be an almost surely finite stopping time for the filtration generated by $W$.  Then
\begin{equation}
\label{eqn::coupling}
h + \Fh_0 \stackrel{d}{=} h \circ f_\tau + \Fh_\tau
\end{equation}
where we view the left and right sides as distributions defined modulo additive constant.
\end{theorem}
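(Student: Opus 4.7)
The plan is to adapt the chordal reverse coupling argument of \cite[Theorem~1.2]{sheffield2010weld} to the radial setting. Since both sides of \eqref{eqn::coupling} are distributions modulo additive constant, it suffices to pair against smooth test functions $\phi$ on $\D$ of mean zero and show that the one-parameter family of random variables $M_t(\phi) := (h \circ f_t + \Fh_t, \phi)$ has a $t$-independent joint law, where the pairing is the usual $L^2$ one and the additive constant ambiguity is killed by $\int \phi = 0$. By the continuity of the Loewner flow and optional stopping, it is enough to treat a fixed $t \geq 0$.

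First I would apply It\^o's formula to the centered reverse radial Loewner equation \eqref{eqn::centered_reverse_radial_loewner} to compute, for each fixed $z \in \D$, the semimartingale decomposition of $\Fh_t(z)$ given by \eqref{eqn::fh_definition}. Each summand is an explicit function of $f_t(z)$ or $f_t'(z)$, so the calculation reduces to differentiating $\log|f_t(z)-1|$, $\log|f_t(z)|$, and $\log|f_t'(z)|$ along \eqref{eqn::centered_reverse_radial_loewner} and its $z$-derivative. The result has the form $d\Fh_t(z) = A_t(z)\,dt + c_t(z)\,dW_t$ with $c_t(z)$ and $A_t(z)$ explicit; the specific coefficients $2/\sqrt{\kappa}$, $-(\kappa+6)/(2\sqrt{\kappa})$, and $Q$ in \eqref{eqn::fh_definition} are chosen precisely so that $A_t(z)$ takes the form required below.

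Next, using the change of variables $(h \circ f_t, \phi) = (h, \phi_t)$ with
\begin{equation*}
\phi_t(w) := \phi(f_t^{-1}(w))\,|(f_t^{-1})'(w)|^2 \, \mathbf{1}_{\{w \in f_t(\D)\}},
\end{equation*}
and observing that $\int \phi_t = \int \phi = 0$ makes the pairing against the free boundary GFF well-defined, I would analyze $M_t(\phi)$ by conditioning on the filtration $\CF_t^W$ generated by $W$. By independence of $h$ and $W$, the conditional law of $M_t(\phi)$ given $\CF_t^W$ is Gaussian with mean $(\Fh_t, \phi)$ and variance $\iint \phi_t(x) G^\free(x,y) \phi_t(y)\,dx\,dy$ coming from \eqref{eqn::green_neumann_disk}. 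To establish $M_t(\phi) \stackrel{d}{=} M_0(\phi)$ I would differentiate the total variance
\begin{equation*}
\Var(M_t(\phi)) = \E\!\left[\iint \phi_t(x) G^\free(x,y) \phi_t(y)\,dx\,dy\right] + \Var\bigl((\Fh_t, \phi)\bigr)
\end{equation*}
in $t$; the derivative of the first summand is computable from the Loewner ODE for $f_t^{-1}$ and of the second from the martingale quadratic variation produced by the It\^o calculation of the previous paragraph.

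The main obstacle is verifying that these two $t$-derivatives cancel exactly. The computation involves three singular contributions to the boundary integrals --- from $f_t(z) = 1$ (seed), $f_t(z) = 0$ (target), and the conformal correction $Q\log|f_t'(z)|$ --- and one must show that after integrating against the Neumann Green's function the singular pieces combine to kill the residual drift. The cancellation is what pins down the logarithmic coefficients as $2/\sqrt{\kappa}$ and $-(\kappa+6)/(2\sqrt{\kappa})$ together with $Q = 2/\gamma + \gamma/2$ for $\gamma = \min(\sqrt{\kappa}, \sqrt{16/\kappa})$; it is the radial analogue of the cancellation driving the chordal case of \cite{sheffield2010weld}. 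Once this is checked, the same reasoning applied to the joint variable $(M_t(\phi_1), \ldots, M_t(\phi_k))$ gives finite-dimensional distributional equality with $(M_0(\phi_1), \ldots, M_0(\phi_k))$, and optional stopping promotes this to the identity \eqref{eqn::coupling} at the a.s.\ finite stopping time $\tau$.
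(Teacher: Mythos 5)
Your architecture is the same as the paper's up to the conditioning step: pair against mean-zero test functions $\phi$, compute $d\Fh_t(z)$ by It\^o's formula along \eqref{eqn::centered_reverse_radial_loewner} (the drift is $\kappa^{-1/2}\,dt$, independent of $z$, so it dies against $\phi$ and $(\Fh_t,\phi)$ is a martingale), and note that given the driving motion, $(h\circ f_t,\phi)$ is centered Gaussian with conditional variance $E_t(\phi)=\iint \phi(y)\,G^\free(f_t(y),f_t(z))\,\phi(z)\,dy\,dz$. The gap is in your concluding step. Matching the unconditional mean and the total variance $\Var\bigl((\Fh_t,\phi)\bigr)+\E[E_t(\phi)]$ with their time-zero values does not yield $M_t(\phi)\stackrel{d}{=}M_0(\phi)$: the time-$t$ variable is a Gaussian \emph{mixture} with random conditional mean $(\Fh_t,\phi)$ and random conditional variance $E_t(\phi)$, while $M_0(\phi)$ is an honest Gaussian, and agreement of the first two moments does not force a mixture to collapse to a single Gaussian.

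What is actually needed is the pathwise identity $d\langle(\Fh_t,\phi)\rangle=-dE_t(\phi)$, not merely its expectation. This holds because $G^\free(f_t(y),f_t(z))$ evolves by an ODE with no martingale part (the $dW_t$ contributions to $\log(g_t(y)-g_t(z))$ and $\log(1-\ol{g_t(y)}g_t(z))$ cancel), and its drift, modulo terms depending on $y$ or $z$ alone (which vanish against mean-zero $\phi\otimes\phi$), equals $-\ol{\CP}(1,f_t(y))\,\ol{\CP}(1,f_t(z))\,dt$ --- exactly minus the product of the diffusion coefficients of $\Fh_t(y)$ and $\Fh_t(z)$ coming from your It\^o computation. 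Given this, $\exp\bigl(i\theta(\Fh_t,\phi)-\tfrac{\theta^2}{2}E_t(\phi)\bigr)$ is a bounded martingale, so the characteristic function $\E[\exp(i\theta M_t(\phi))]=\E[\exp(i\theta(\Fh_t,\phi)-\tfrac{\theta^2}{2}E_t(\phi))]$ is constant in $t$, and optional stopping at $\tau$ is immediate from boundedness. This is the step your variance-differentiation cannot replace. A small simplification at the end: since each one-dimensional marginal is then the fixed Gaussian $N((\Fh_0,\phi),E_0(\phi))$ and $\phi\mapsto M_0(\phi)$ is a Gaussian field, no joint analysis of $(M_t(\phi_1),\dots,M_t(\phi_k))$ is required.
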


Theorem~\ref{thm::radial_coupling_existence} states that the law of $h+\Fh_0$ is invariant under the operation of sampling an independent $\SLE_\kappa$ process $\eta$ and then drawing it on top of $h+\Fh_0$ up until some time $t$ and then applying the change of coordinates formula for quantum surfaces using the forward radial Loewner flow for $\eta$ at time $t$.  An illustration of the setup for Theorem~\ref{thm::radial_coupling_existence} is given in Figure~\ref{fig::reverse_radial_coupling}.

We include the following self-contained proof of Theorem~\ref{thm::radial_coupling_existence} for the convenience of the reader which follows the strategy of \cite{sheffield2010weld}.  The first step, carried out in Lemma~\ref{lem::green_change}, is to compute the Ito derivatives of some quantities which are related to the right side of~\eqref{eqn::coupling}.  Next, we show in Lemma~\ref{lem::test_function_distribution} that the random variable on the right hand side of \eqref{eqn::coupling} takes values in the space of distributions and, when integrated against a given smooth mean-zero test function, yields a process which is continuous in time.  We then compute another Ito derivative in Lemma~\ref{lem::fh_quadratic_variation} and afterwards combine the different steps to complete the proof.

Let $G$ denote the Neumann Green's function for $\Delta$ on $\D$ given in \eqref{eqn::green_neumann_disk}.  Suppose that $(g_t)$ is the reverse radial $\SLE_\kappa$ Loewner flow and $(f_t)$ is the corresponding centered flow as in Theorem~\ref{thm::radial_coupling_existence}.  Throughout, we let
\[ G_t(y,z) = G(f_t(y),f_t(z)) = G(g_t(y),g_t(z)) \quad\text{for each}\quad t \geq 0.\]
We also let $\CP$ (resp.\ $\ol{\CP}$) denote $2\pi$ times the Poisson (resp.\ conjugate Poisson) kernel on $\D$.  Explicitly,
\begin{align}
\label{eqn::poisson_kernel}
   \big(\CP + i \ol{\CP}\big) (z,w) = \frac{w+z}{w-z} = \Psi(w,z).
\end{align}
That is, $\CP$ (resp.\ $\ol{\CP}$) is given by the real (resp.\ imaginary) part of the expression in the right side above. 

\begin{lemma}
\label{lem::green_change}
Suppose that we have the same setup as in Theorem~\ref{thm::radial_coupling_existence}.  There exists a smooth function $\phi \colon \D \to \R$ such that the following is true.  For each $y,z \in \D$ we have that
\begin{align}
 dG_t(y,z) &= \bigg(\phi(f_t(y)) + \phi(f_t(z))  -\ol{\CP}(1,f_t(y)) \ol{\CP}(1,f_t(z)) \bigg)dt \quad\text{and} \label{eqn::green_change}\\
 d \Fh_t(z) &= \frac{1}{\sqrt{\kappa}} dt - \ol{\CP}(1,f_t(z)) dB_t. \label{eqn::fh_change}
\end{align}
\end{lemma}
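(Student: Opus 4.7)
The plan is to derive both identities by direct It\^o computation from the centered reverse Loewner SDE~\eqref{eqn::centered_reverse_radial_loewner}. For \eqref{eqn::fh_change}, write $z_t = f_t(z)$ and compute the differential of each of the three terms appearing in \eqref{eqn::fh_definition}. Using $dz_t = -(\Phi(1,z_t) + \tfrac{\kappa}{2} z_t)\,dt - i z_t\,dW_t$ and $(dz_t)^2 = -\kappa z_t^2\,dt$, a short calculation gives $d\log z_t = -\Psi(1,z_t)\,dt - i\,dW_t$, so the martingale part of $d\log|z_t|$ vanishes after taking real parts. Differentiating \eqref{eqn::centered_reverse_radial_loewner} in the spatial variable yields $d\log f_t'(z) = -\partial_w \Phi(1,z_t)\,dt - i\,dW_t$, whose real part likewise has no martingale. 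Only $d\log(z_t - 1)$ contributes: its martingale part is $-\tfrac{iz_t}{z_t - 1}\,dW_t$, and the algebraic identity $\tfrac{z}{z-1} = \tfrac{1 - \Psi(1,z)}{2}$ gives $\Re\bigl(\tfrac{iz}{z-1}\bigr) = \tfrac{1}{2}\ol\CP(1,z)$. The coefficient $\tfrac{2}{\sqrt\kappa}$ in front of $\log|z_t - 1|$ is chosen precisely so that the combined martingale part equals $-\ol\CP(1,f_t(z))\,dB_t$. For the drift, collecting terms and using $Q = \tfrac{\kappa+4}{2\sqrt\kappa}$ (both for $\gamma = \sqrt\kappa$ and $\gamma = 4/\sqrt\kappa$), the $z_t$-dependent pieces must cancel, leaving a constant; I would verify this by showing the $\CP(1,z_t)$ terms and the $z_t/(z_t - 1)^2$ terms separately sum to zero, with the residual constants contributing $1/\sqrt\kappa$. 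As a sanity check, evaluation at $z_t = 0$ immediately gives $\tfrac{\kappa + 6}{2\sqrt\kappa} - Q = \tfrac{1}{\sqrt\kappa}$.

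For \eqref{eqn::green_change}, I plan to exploit the explicit decomposition $G(w, z') = -\log|w - z'| - \log|1 - w\bar{z}'|$ from \eqref{eqn::green_neumann_disk}. With $w_t = f_t(y)$ and $z'_t = f_t(z)$, the key point is that $dG_t(y,z)$ is a pure drift. Indeed, $d(w_t - z'_t)$ has martingale part $-i(w_t - z'_t)\,dW_t$, which produces a purely imaginary contribution to $d\log(w_t - z'_t)$ and thus no martingale in its real part. Moreover, because $w_t$ and $z'_t$ are driven by the \emph{same} Brownian motion, the imaginary martingale parts in $d(w_t \bar z'_t) = \bar z'_t\,dw_t + w_t\,d\bar z'_t + dw_t\,d\bar z'_t$ cancel exactly ($-iw_t \bar z'_t\,dW_t + iw_t\bar z'_t\,dW_t = 0$), and the remaining cross-variation contributes $w_t \bar z'_t \kappa\,dt$ to the drift. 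Hence $d\log|1 - w_t \bar z'_t|$ is also a pure drift. Collecting terms, one obtains
\[
dG_t(y,z) = \Bigl[\Re\Bigl(\tfrac{\Phi(1,w_t) - \Phi(1,z'_t)}{w_t - z'_t}\Bigr) - \Re\Bigl(\tfrac{\bar z'_t \Phi(1,w_t) + w_t \overline{\Phi(1,z'_t)}}{1 - w_t \bar z'_t}\Bigr)\Bigr] dt,
\]
and the task reduces to rewriting this expression in the form $\phi(w_t) + \phi(z'_t) - \ol\CP(1,w_t)\ol\CP(1,z'_t)$.

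The main obstacle is the algebraic bookkeeping for this last step. Using the factorization $\Phi(1,w) = w \bigl(\CP(1,w) + i \ol\CP(1,w)\bigr)$ together with the identity $\tfrac{z}{z-1} = \tfrac{1 - \Psi(1,z)}{2}$, I plan to separate the above drift into terms depending only on $w_t$, terms depending only on $z'_t$, and a genuinely mixed term; the mixed term must then be identified with $-\ol\CP(1,w_t)\ol\CP(1,z'_t)$ up to pieces that split across the two variables and can be absorbed into $\phi$. Since $G$ is symmetric and both contributions to the drift are symmetric under swapping $(w_t, z'_t) \leftrightarrow (z'_t, w_t)$ (after conjugation), the decomposition is compatible with the requirement that the two single-variable pieces use the same function $\phi$. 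Smoothness of $\phi$ on $\D$ is automatic from the smoothness of the constituent expressions away from the diagonal, combined with the observation that in the original drift the diagonal singularity of $G$ is already absorbed into the definition of $G_t$.
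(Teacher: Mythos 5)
Your proposal is correct and follows essentially the same route as the paper: a direct It\^o computation from the centered reverse Loewner SDE~\eqref{eqn::centered_reverse_radial_loewner}, tracking the three logarithmic terms of $\Fh_t$ (with the drift collapsing to the constant $1/\sqrt{\kappa}$ and only $\log|f_t-1|$ contributing to the martingale part) and the two logarithmic factors of the Neumann Green's function (whose martingale parts cancel, leaving a drift to be split as $\phi(f_t(y))+\phi(f_t(z))-\ol{\CP}(1,f_t(y))\ol{\CP}(1,f_t(z))$). The paper likewise leaves the final separation of the Green's-function drift as a ``tedious calculation,'' so your deferral of that algebra matches the level of detail in the original.
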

When we apply Lemma~\ref{lem::green_change} later in this section, we will consider $G_t(y,z)$ and $\Fh_t(z)$ integrated against mean zero test functions.  In particular, the terms involving $\phi$ for $dG_t(y,z)$ and the term $(1/\sqrt{\kappa}) dt$ in $d\Fh_t(z)$ will drop out.
\begin{proof}[Proof of Lemma~\ref{lem::green_change}]
Both \eqref{eqn::green_change} and \eqref{eqn::fh_change} follow from applications of Ito's formula.  In particular,
\begin{align}
 d \log(g_t(y) - g_t(z)) &= \frac{ g_t(y) g_t(z) - U_t(g_t(y) + g_t(z)) - U_t^2}{(U_t - g_t(z))(U_t - g_t(y))} dt \quad\text{and} \label{eqn::green_change1}\\
 d\log( 1- \ol{g_t(y)} g_t(z)) &= \frac{2 g_t(z) \ol{g_t(y)}}{(\ol{U}_t - \ol{g_t(y)})(U_t - g_t(z))} dt. \label{eqn::green_change2}
\end{align}
We note that \eqref{eqn::green_change1} and \eqref{eqn::green_change2} do not depend on the choice of driving function.  A tedious calculation thus shows that $d G_t(y,z) + \ol{\CP}(1,f_t(y)) \ol{\CP}(1,f_t(z)) dt$ can be written as $\phi(f_t(y))+\phi(f_t(z))$ where $\phi$ is a smooth function.  This gives \eqref{eqn::green_change}.

For \eqref{eqn::fh_change}, we fix $z \in \D$ and write $f_t = f_t(z)$.  Then we can express $\Fh_t(z)$ in terms of the real part of
\begin{equation}
\label{eqn::radial_mean_terms}
\log(f_t-1), \quad \log(f_t),\quad\text{and} \quad \log(f_t').
\end{equation}
The Ito derivative of $f_t$ is given in \eqref{eqn::centered_reverse_radial_loewner}.  Differentiating this with respect to~$z$ yields
\begin{equation}
\label{eqn::centered_reverse_radial_loewner_deriv}
 df_t' = - f_t'\left(\frac{1+f_t}{1-f_t} + \frac{2f_t}{(1-f_t)^2} + \frac{\kappa}{2} \right) dt - i f_t' d W_t.
\end{equation}
Applying \eqref{eqn::centered_reverse_radial_loewner} and \eqref{eqn::centered_reverse_radial_loewner_deriv}, we see that the Ito derivatives of the terms in \eqref{eqn::radial_mean_terms} are given by
\begin{align*}
      d \log(f_t-1)
&= \left(\frac{(1+\tfrac{\kappa}{2})f_t + f_t^2}{(1-f_t)^2} \right) dt + \frac{if_t}{1-f_t} dW_t,\\
    d \log(f_t)
&= -\left(\frac{1+f_t}{1-f_t}\right) d t - i dW_t, \quad\text{and}\\
d \log(f_t')
&= \left(1-\frac{2}{(1-f_t)^2} \right) dt - i dW_t.
\end{align*}
This implies that $d \Fh_t(z)$ is given by the real part of
\[ \frac{1}{\sqrt{\kappa}} dt + i \left(\frac{1+f_t}{1-f_t} \right) dB_t,\]
from which \eqref{eqn::fh_change} follows.
\end{proof}

\begin{lemma}
\label{lem::test_function_distribution}
Suppose that we have the same setup as in Theorem~\ref{thm::radial_coupling_existence}.  For each $t \geq 0$, the random variable $h \circ f_t + \Fh_t$ takes values in the space of distributions defined modulo additive constant.  Moreover, for any fixed $\rho \in C_0^\infty(\D)$ with $\int_\D \rho(z) dz = 0$, both $(h \circ f_t + \Fh_t,\rho)$ and $(\Fh_t,\rho)$ are almost surely continuous and the latter is a square-integrable martingale.
\end{lemma}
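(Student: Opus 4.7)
The plan is to handle the two summands separately and then combine them, with the SDE from Lemma~\ref{lem::green_change} driving the analysis of $\Fh_t$ and a conformal change of variables handling $h \circ f_t$.

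For the $(\Fh_t, \rho)$ piece, fix $\rho \in C_0^\infty(\D)$ with $\int_\D \rho = 0$. Lemma~\ref{lem::green_change} gives $d\Fh_t(z) = \tfrac{1}{\sqrt{\kappa}}\, dt - \ol{\CP}(1, f_t(z))\, dB_t$ for each $z \in \D$.  I would apply a stochastic Fubini theorem to interchange the spatial integration against $\rho$ with the Ito integration, obtaining
\[
(\Fh_t, \rho) - (\Fh_0, \rho) = \frac{t}{\sqrt{\kappa}} \int_\D \rho(z)\, dz - \int_0^t \left( \int_\D \ol{\CP}(1, f_s(z)) \rho(z)\, dz \right) dB_s.
\]
The drift term vanishes identically by the mean-zero assumption, leaving a continuous Ito integral. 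To see that this is square-integrable, note that $g_s(0) = 0$ (since $\Phi(U_s,0)=0$) and Schwarz's lemma give $|f_s(z)| = |g_s(z)| \leq |z|$, so for $z$ in the compact set $\mathrm{supp}(\rho)$ the image $f_s(z)$ is bounded away from $\partial \D$ (and in particular from the point $1$) uniformly in $s \in [0,T]$. Since $\ol{\CP}(1, \cdot)$ is continuous and hence bounded on this compact region, the inner integral is uniformly bounded by a constant depending only on $\rho$, yielding both the almost sure continuity and the square-integrable martingale property.

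For $(h \circ f_t, \rho)$, the conformal change of variables $w = f_t(z)$ yields $(h \circ f_t, \rho) = (h, \rho_t)$, where $\rho_t(w) := \rho(f_t^{-1}(w)) |(f_t^{-1})'(w)|^2$ is extended by zero off $f_t(\D)$. This $\rho_t$ is smooth, compactly supported in $\D$ (since $f_t(\mathrm{supp}(\rho))$ is a compact subset of $\D$), and satisfies $\int_\D \rho_t = \int_\D \rho = 0$, so the pairing with $h$ (treated modulo additive constant) makes sense; joint measurability together with the local integrability of the log singularity of $\Fh_t$ at the origin then shows that $h \circ f_t + \Fh_t$ is a distribution modulo additive constant. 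Standard ODE parameter dependence applied to the reverse Loewner ODE gives $f_t \to f_s$ along with all derivatives locally uniformly as $t \to s$, hence $\rho_t \to \rho_s$ in $C_0^\infty(\D)$, and therefore $t \mapsto (h, \rho_t)$ is almost surely continuous. Adding this to the continuity of $(\Fh_t, \rho)$ yields continuity of $(h \circ f_t + \Fh_t, \rho)$. The main obstacle is rigorously justifying the stochastic Fubini step, which requires joint measurability and an integrability bound on $\ol{\CP}(1, f_s(z)) \rho(z)$ over $(s, z, \omega)$; given the Schwarz lemma bound above and the boundedness of $\ol{\CP}(1, \cdot)$ on compacta, this should proceed along the same lines as the analogous chordal computation in \cite{sheffield2010weld}.
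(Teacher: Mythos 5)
Your proof is correct, and it reaches the same conclusions from the same two inputs the paper uses (the SDE of Lemma~\ref{lem::green_change} and the Schwarz-lemma bound $|f_t(z)|\leq|z|$), but the route through the $\Fh_t$ term is genuinely different. The paper works pointwise: it computes $d\langle\Fh_t(z)\rangle=\bigl(\ol{\CP}(1,f_t(z))\bigr)^2dt$, bounds this uniformly over $z\in r\D$, applies the Burkholder--Davis--Gundy inequality to get uniform $p$-th moment estimates on increments $\Fh_s(z)-\Fh_u(z)$, and then invokes the Kolmogorov--Centsov theorem to obtain continuity of $t\mapsto(\Fh_t,\rho)$ (deducing $L^p$ integrability of $\Fh_t|_{r\D}$ via deterministic Fubini along the way). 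You instead apply a stochastic Fubini theorem to represent $(\Fh_t,\rho)-(\Fh_0,\rho)$ directly as a single It\^o integral with a deterministic-in-modulus bounded integrand (the drift dropping out by the mean-zero hypothesis), from which continuity and the square-integrable martingale property are immediate. Your version is shorter and arguably cleaner, and the representation it produces dovetails nicely with the quadratic-variation computation in Lemma~\ref{lem::fh_quadratic_variation}; the paper's version avoids stochastic Fubini entirely and yields stronger byproducts (uniform pointwise moment bounds such as \eqref{eqn::maximal}). The integrability hypothesis for stochastic Fubini is indeed satisfied for the reason you give: on $\mathrm{supp}(\rho)\subseteq\ol{B(0,r)}$ one has $|f_s(z)|\leq r$, so $|\ol{\CP}(1,f_s(z))|\leq(1+r)/(1-r)$ uniformly in $s$ and $\omega$. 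Your treatment of $(h\circ f_t,\rho)$ via the pushforward test function $\rho_t$ is also fine and is more explicit than the paper, which dismisses this term as clear from the definition; the one point worth stating is that the supports $f_t(\mathrm{supp}(\rho))$ remain in the fixed compact set $\ol{B(0,r)}$ by Schwarz, so that $\rho_t\to\rho_s$ genuinely converges in the topology of $C_0^\infty(\D)$ and not merely locally uniformly with all derivatives.
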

\begin{proof}
Fix $\rho \in C_0^\infty(\D)$ with $\int_\D \rho(z) dz =0$.  We first note that it is clear that $h \circ f_t$ takes values in the space of distributions modulo additive constant and that $(h \circ f_t,\rho)$ is almost surely continuous in time from how it is defined.  This leaves us to deal with $\Fh_t$.  It follows from \eqref{eqn::fh_change} of Lemma~\ref{lem::green_change} that
\begin{equation}
\label{eqn::fh_qv}
d \langle \Fh_t(z) \rangle = \big( \ol{\CP}(1,f_t(z)) \big)^2 dt.
\end{equation}
By the Schwarz lemma, we note that $|f_t(z)| \leq |z|$ for all $z \in \D$ and $t \geq 0$.  Consequently, it follows from \eqref{eqn::poisson_kernel} that for each $r \in (0,1)$ there exists $C_r \in (0,\infty)$ such that
\begin{equation}
\label{eqn::fh_qv_bound}
\sup_{z \in r \D} \langle \Fh_t(z) - \Fh_u(z) \rangle \leq C_r (t-u) \quad\text{for all}\quad 0 \leq u \leq t < \infty.
\end{equation}
It therefore follows from the Burkholder-Davis-Gundy inequality that for each $p \geq 1$ and $r \in (0,1)$ there exists $C_p, C_{\kappa,r,p} \in (0,\infty)$ such that for all $0 \leq u \leq t$ we have
\begin{align}
      &\sup_{z \in r \D} \E\left[ \sup_{u \leq s \leq t} |\Fh_s(z) - \Fh_u(z)|^p\right] \notag\\
\leq &C_p \left( \sup_{z \in r \D} \E \left[ \langle \Fh_t(z) - \Fh_u(z) \rangle^{p/2} \right] + \frac{1}{\kappa^{p/2}} (t-u)^p\right) \notag \\
\leq &C_{\kappa,r,p} \bigg( (t-u)^p + (t-u)^{p/2} \bigg). \label{eqn::maximal}
\end{align}
It is easy to see from \eqref{eqn::maximal} with $u=0$ and Fubini's theorem that for each $r \in (0,1)$ we have $\Fh_t|_{r \D}$ is almost surely in $L^p(r \D)$.  By combining \eqref{eqn::maximal} with a large enough value of $p > 1$ and the Kolmogorov-Centsov theorem, it is also easy to see that that $t \mapsto (\Fh_t,\rho)$ is almost surely continuous for any $\rho \in C_0^\infty(\D)$ with $\int_\D \rho(z) dz = 0$.  Lastly, it follows from \eqref{eqn::maximal} and \eqref{eqn::fh_change} of Lemma~\ref{lem::green_change} that $(\Fh_t,\rho)$ is a square-integrable martingale.  This completes the proof of both assertions of the lemma.
\end{proof}

For each $\rho \in C_0^\infty(\D)$ with $\int_\D \rho(z) dz = 0$ and $t \geq 0$ we let
\[ E_t(\rho) = \int_{\D}\int_\D \rho(y) G_t(y,z) \rho(z) dy dz\]
be the conditional variance of $(h \circ f_t, \rho)$ given $f_t$.

\begin{lemma}
\label{lem::fh_quadratic_variation}
For each $\rho \in C_0^\infty(\D)$ with $\int_\D \rho(z) dz = 0$ we have that
\[ d \langle (\Fh_t,\rho) \rangle = -dE_t(\rho).\]
\end{lemma}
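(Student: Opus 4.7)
The plan is to compute the two sides of the identity separately using Lemma~\ref{lem::green_change}, and then observe that the mean-zero condition on $\rho$ eliminates precisely the extra $\phi$-terms in $dG_t$, leaving a matching expression up to sign.

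First I will compute the quadratic variation of $(\Fh_t,\rho)$. From the SDE \eqref{eqn::fh_change}, the drift of $\Fh_t(z)$ is deterministic, so for any two points $y,z \in \D$ we have $d\langle \Fh_t(y),\Fh_t(z)\rangle = \ol{\CP}(1,f_t(y))\ol{\CP}(1,f_t(z))\,dt$. Using the fact (justified by the $L^p$ bounds of Lemma~\ref{lem::test_function_distribution}) that we may interchange the stochastic integral with the $\rho$-integration by a stochastic Fubini, I obtain
\begin{equation*}
d\langle (\Fh_t,\rho)\rangle = \iint_{\D \times \D} \rho(y)\rho(z)\,\ol{\CP}(1,f_t(y))\ol{\CP}(1,f_t(z))\,dy\,dz\,dt.
\end{equation*}

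Next I will compute $dE_t(\rho)$ by differentiating under the integral sign. By \eqref{eqn::green_change},
\begin{equation*}
dE_t(\rho) = \iint_{\D \times \D} \rho(y)\rho(z)\bigl(\phi(f_t(y)) + \phi(f_t(z)) - \ol{\CP}(1,f_t(y))\ol{\CP}(1,f_t(z))\bigr)\,dy\,dz\,dt.
\end{equation*}
The mean-zero assumption $\int_\D \rho = 0$ now does the essential work: the first two summands factor as $\bigl(\int\rho(z)\,dz\bigr)\int \rho(y)\phi(f_t(y))\,dy$ and its symmetric counterpart, both of which vanish. What remains is exactly $-d\langle(\Fh_t,\rho)\rangle$, giving the claim.

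The main step to justify carefully is the interchange of integration and stochastic differentiation (both the stochastic Fubini for $d\langle(\Fh_t,\rho)\rangle$ and the differentiation under the integral for $dE_t(\rho)$). This requires integrability of $\ol{\CP}(1,f_t(\cdot))$ against $\rho$, uniformly on compact time intervals; but $\rho$ is compactly supported in $\D$, and by the Schwarz lemma $|f_t(z)| \le |z|$, so $\ol{\CP}(1,f_t(z))$ stays bounded on $\mathrm{supp}(\rho)$ by a deterministic constant depending only on $\mathrm{dist}(\mathrm{supp}(\rho),\partial\D)$. This uniform boundedness, combined with the $L^p$ estimates already established in the proof of Lemma~\ref{lem::test_function_distribution}, makes the Fubini argument routine. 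Everything else is an algebraic matching of the two expressions, with the cancellation of the $\phi$-terms being the conceptual point that explains why the identity requires $\rho$ to have mean zero.
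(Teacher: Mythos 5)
Your proof is correct and follows essentially the same route as the paper: both arguments rest on Lemma~\ref{lem::green_change} and on the observation that the $\phi(f_t(y))+\phi(f_t(z))$ drift terms integrate to zero against $\rho(y)\rho(z)\,dy\,dz$ because $\rho$ has mean zero. The only (cosmetic) difference is that the paper identifies $\langle(\Fh_t,\rho)\rangle$ via the martingale characterization of quadratic variation, showing $(\Fh_t,\rho)^2+E_t(\rho)$ is a local martingale by examining the drift of $\Fh_t(y)\Fh_t(z)+G_t(y,z)$, whereas you compute both sides explicitly through a stochastic Fubini for the cross-variation; the underlying cancellation is identical.
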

\begin{proof}
Since $(\Fh_t,\rho)$ is a continuous $L^2$ martingale, the process $\langle (\Fh_t,\rho)\rangle$ is characterized by the property that
\[ (\Fh_t,\rho)^2 - \langle (\Fh_t ,\rho) \rangle\]
is a continuous local martingale in $t \geq 0$.  Thus to complete the proof of the lemma, it suffices to show that
\[ (\Fh_t,\rho)^2 + E_t(\rho)\]
is a continuous local martingale.  It follows from \eqref{eqn::green_change} and \eqref{eqn::fh_change} of Lemma~\ref{lem::green_change} that
\[ \Fh_t(y) \Fh_t(z) + G_t(y,z)\]
evolves as the sum of a martingale in $t \geq 0$ plus a drift term which can be expressed as a sum of terms one of which depends only on $y$ and the other only on $z$.  These drift terms cancel upon integrating against $\rho(y)\rho(z) dy dz$ which in turn implies the desired result.
\end{proof}

\begin{proof}[Proof of Theorem~\ref{thm::radial_coupling_existence}]
Fix $\rho \in C_0^\infty(\D)$ with $\int_\D \rho(z) dz = 0$.  Let $\CF_t$ be the filtration generated by $f_t$.  Note that $\Fh_t$ is $\CF_t$-measurable and that, given $\CF_t$, $(h \circ f_t,\rho)$ is a Gaussian random variable with mean zero and variance $E_t(\rho)$.  Let $I_t(\rho) = (h \circ f_t + \Fh_t,\rho)$.  For $\theta \in \R$ we have that:
\begin{align*}
   & \E[ \exp(i \theta I_t(\rho))]
=  \E[ \E[ \exp(i \theta I_t(\rho)) | \CF_t]]\\
=& \E[ \E[ \exp(i \theta  (h \circ f_t,\rho)) | \CF_t] \exp(i \theta (\Fh_t,\rho))]\\
=& \E[ \exp( i \theta (\Fh_t,\rho) -\tfrac{\theta^2}{2} E_t(\rho))]\\
=& \exp( i \theta (\Fh_0,\rho) - \tfrac{\theta^2}{2} E_0(\rho)).
\end{align*}
Therefore $I_t(\rho) \stackrel{d}{=} I_0(\rho)$ for each $\rho \in C_0^\infty(\D)$ with $\int_\D \rho(z) dz = 0$.  The result follows since this holds for all such test functions $\rho$ and $\rho \mapsto I_0(\rho)$ has a Gaussian distribution.
\end{proof}

Reverse radial $\SLE_\kappa(\rho)$ is a variant of reverse radial $\SLE_\kappa$ in which one keeps track of an extra marked point on $\partial \D$.  It is defined in an analogous way to reverse radial $\SLE_\kappa$ except the driving function $U_t$ is taken to be a solution to the SDE:
\begin{align}
\label{eqn::reverse_radial_sle_kappa_rho}
\begin{split}
 dU_t &= -\frac{\kappa}{2} U_t dt + i \sqrt{\kappa} U_t dB_t + \frac{\rho}{2}\Phi(V_t,U_t) dt\\
  dV_t &= -\Phi(U_t,V_t) dt.
  \end{split}
\end{align}
Observe that when $\rho = 0$ this is the same as the driving SDE for ordinary reverse radial $\SLE_\kappa$.  This is analogous to the definition of forward radial $\SLE_\kappa(\rho)$ up to a change of signs (see, for example, \cite[Section~2]{sw2005coordinate}).  In analogy with Theorem~\ref{thm::radial_coupling_existence}, it is also possible to couple reverse radial $\SLE_\kappa(\rho)$ with the GFF (the chordal version of this is \cite[Theorem~4.5]{sheffield2010weld}).

\begin{theorem}
\label{thm::radial_rho_coupling_existence}
Fix $\kappa > 0$.  Suppose that $h$ is a free boundary GFF on $\D$ and let $(f_t)$ be the centered reverse radial $\SLE_\kappa(\rho)$ Loewner flow which is driven by the solution $U$ as in \eqref{eqn::reverse_radial_sle_kappa_rho} with $V_0 = v_0 \in \partial \D$ taken to be independent of $h$.  For each $t \geq 0$ and $z \in \D$ we let
\begin{equation}
\label{eqn::fh_definition}
\begin{split}
\Fh_t(z) =& \frac{2}{\sqrt{\kappa}} \log|f_t(z)-1| - \frac{\kappa+6 - \rho}{2 \sqrt{\kappa}} \log|f_t(z)| -\\
 &\frac{\rho}{\sqrt{\kappa}} \log|f_t(z)-V_t| + Q \log|f_t'(z)|
 \end{split}
\end{equation}
where $Q = 2/\gamma + \gamma/2$ and $\gamma = \min(\sqrt{\kappa},\sqrt{16/\kappa})$.  Let $\tau$ be an almost surely finite stopping time for the filtration generated by $W$ which occurs before the first time $t$ that $f_t(v_0) = 1$.  Then
\begin{equation}
\label{eqn::coupling_rho}
h + \Fh_0 \stackrel{d}{=} h \circ f_\tau + \Fh_\tau
\end{equation}
where we view the left and right sides as distributions defined modulo additive constant.
\end{theorem}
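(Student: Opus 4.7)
The plan is to follow the strategy of Theorem~\ref{thm::radial_coupling_existence}: fix a mean-zero test function $\varphi \in C_0^\infty(\D)$, set $I_t(\varphi) = (h \circ f_t + \Fh_t, \varphi)$, and show that $I_{t \wedge \tau}(\varphi) \stackrel{d}{=} I_0(\varphi)$. Since mean-zero test functions are determining for distributions modulo additive constant and the distribution on both sides of \eqref{eqn::coupling_rho} is Gaussian (by the same argument as in the $\rho=0$ case), this reduces to a characteristic-function identity, which in turn reduces to the local martingality of
\[ \exp\!\bigl( i\theta(\Fh_{t \wedge \tau}, \varphi) - \tfrac{\theta^2}{2} E_{t \wedge \tau}(\varphi) \bigr), \]
where $E_t(\varphi) = \iint \varphi(y) G_t(y, z) \varphi(z) \, dy\, dz$ is the conditional variance of $(h \circ f_t, \varphi)$ given $f_t$. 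Expanding with Ito's formula, this reduces to showing that $(\Fh_t, \varphi)$ is a local martingale and that $d\langle (\Fh_t, \varphi)\rangle = -dE_t(\varphi)$.

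The first observation is that the Ito derivative $dG_t(y,z)$ does not depend on the choice of driving function: formulas \eqref{eqn::green_change1} and \eqref{eqn::green_change2} depend only on the reverse Loewner ODE $dg_t(z) = -\Phi(U_t, g_t(z))dt$, which has the same form in the $\SLE_\kappa(\rho)$ case. Hence the conclusion of Lemma~\ref{lem::green_change} for $G_t(y,z)$ carries over verbatim, with the same function $\phi$. The main algebraic work is to recompute $d\Fh_t(z)$, now accounting for the extra drift $(\rho/2)\Phi(V_t, U_t)dt$ in $dU_t$ (which modifies the drift of $df_t(z)$ but not its Brownian coefficient $-if_t(z)\sqrt{\kappa}$), the dynamics of the centered force point $\wt V_t = U_t^{-1}V_t = f_t(v_0)$ (which has Brownian coefficient $-i\wt V_t \sqrt{\kappa}$, by the same computation as for $f_t(z)$), and the new log term in $\Fh_t$, which in the centered frame takes the form $-(\rho/\sqrt{\kappa})\log|f_t(z) - \wt V_t|$.

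The key claim --- and the main technical obstacle --- is that the coefficients $-(\kappa + 6 - \rho)/(2\sqrt{\kappa})$ and $-\rho/\sqrt{\kappa}$ in \eqref{eqn::fh_definition} are chosen precisely so that the Brownian part of $d\Fh_t(z)$ equals $-\ol{\CP}(1, f_t(z)) dB_t$, exactly as in the $\rho=0$ case. The reason is that the complex Brownian coefficients of $d\log(f_t(z))$, $d\log(f_t'(z))$, and $d\log(f_t(z) - \wt V_t)$ are all constant multiples of $-i\sqrt{\kappa}$ --- hence purely imaginary --- so only $d\log(f_t(z) - 1)$ contributes a nonzero real Brownian part, and this contribution agrees with the computation in Lemma~\ref{lem::green_change}. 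One then verifies, by the same type of algebraic cancellation as in the original, that the drift of $d\Fh_t(z)$ is independent of $z$ (though now with additional $\wt V_t$-dependent pieces), and therefore integrates to zero against any mean-zero $\varphi$, making $(\Fh_t, \varphi)$ a continuous local martingale. Carrying out this cancellation explicitly is the computational heart of the proof.

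Granting the above Ito derivatives, the remaining steps go through essentially unchanged. The maximal-inequality argument of Lemma~\ref{lem::test_function_distribution} applies verbatim (the Schwarz-lemma bound $|f_t(z)| \leq |z|$ is still valid), showing that $h \circ f_t + \Fh_t$ takes values in the space of distributions modulo additive constant and that $(\Fh_t, \varphi)$ is in fact a square-integrable martingale up to $\tau$. The quadratic-variation identity $d\langle (\Fh_t, \varphi)\rangle = -dE_t(\varphi)$ then follows as in Lemma~\ref{lem::fh_quadratic_variation}: the drift of $\Fh_t(y)\Fh_t(z) + G_t(y,z)$ separates into a sum of one-variable functions of $f_t(y)$ and $f_t(z)$ (plus $\wt V_t$-dependent constants), which vanish upon integration against $\varphi(y)\varphi(z)\, dy\, dz$. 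The stopping condition that $\tau$ occur before $f_t(v_0) = 1$ is used here to keep the new $\log|f_t(z) - \wt V_t|$ term and the associated stochastic integrals controlled on $[0,\tau]$. The final characteristic-function computation is formally identical to the one at the end of the proof of Theorem~\ref{thm::radial_coupling_existence}, yielding $I_\tau(\varphi) \stackrel{d}{=} I_0(\varphi)$ and hence \eqref{eqn::coupling_rho}.
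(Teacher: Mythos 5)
Your proposal is correct and follows essentially the same route as the paper's proof: reduce everything to recomputing $d\Fh_t(z)$, observe that only the $\log|f_t-1|$ term contributes to the Brownian part (so the diffusion coefficient remains $-\ol{\CP}(1,f_t(z))\,dB_t$, independent of $\rho$) and that the $z$-dependence of the drift cancels, then rerun the martingale and characteristic-function argument of Theorem~\ref{thm::radial_coupling_existence} verbatim. The paper likewise only indicates the high-level steps of the drift cancellation (listing the four Ito derivatives with $Z_t = U_t^{-1}V_t$ and $A_t = \tfrac{\rho}{2}\Phi(Z_t,1)$ and summing them to get a $z$-independent drift $-\re\bigl(\tfrac{(A_t-1)(2-\rho)}{2\sqrt{\kappa}}\bigr)$), so your level of detail matches the original.
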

\begin{proof}
This result is proved in the same manner as Theorem~\ref{thm::radial_coupling_existence}; the only difference is that the calculations needed to verify that the analogy of the assertion of \eqref{eqn::fh_change} from Lemma~\ref{lem::green_change} also holds in the setting of the present theorem.  As in the proof of Lemma~\ref{lem::green_change}, we will not spell out all of the calculations but only indicate the high level steps.  Fix $z \in \D$ and write $f_t = f_t(z)$.  We also let 
\[ Z_t = U_t^{-1} V_t \quad\text{and}\quad A_t = \frac{\rho}{2} \Phi(Z_t,1).\]
We will now explain how to show that
\begin{equation}
\label{eqn::radial_sle_kappa_rho_fh_drift}
d\Fh_t(z) = -\re\left(\frac{(A_t-1)(2-\rho)}{2 \sqrt{\kappa}} \right) dt - \ol{\CP}(1,f_t) dB_t.
\end{equation}
Note that the diffusion term does not depend on $\rho$.  Moreover, the drift term does not depend on $z$ and so integrates to zero against any mean-zero test function.

First, we note that
\begin{equation}
\label{eqn::centered_radial_loewner_sle_kappa_rho}
   df_t = -f_t \left(\frac{1+f_t}{1-f_t} + A_t + \frac{\kappa}{2}\right) dt - i \sqrt{\kappa} f_t dB_t.
\end{equation}
Applying this for $z = v_0$ also gives $dZ_t$.  Differentiating both sides with respect to $z$ yields
\begin{equation}
\label{eqn::centered_radial_loewner_deriv_sle_kappa_rho}
  d f_t' = -f_t' \left( \frac{1+f_t}{1-f_t} + \frac{2 f_t}{(1-f_t)^2} +  A_t + \frac{\kappa}{2} \right) dt - i \sqrt{\kappa} f_t' dB_t.
\end{equation}
Using \eqref{eqn::centered_radial_loewner_sle_kappa_rho} and \eqref{eqn::centered_radial_loewner_deriv_sle_kappa_rho}, we thus see that
\begin{align*}
      d \log(f_t-1)
&= \left(\frac{(1+\tfrac{\kappa}{2})f_t + f_t^2}{(1-f_t)^2} + \frac{A_t f_t}{1-f_t} \right) dt + \frac{if_t}{1-f_t} \sqrt{\kappa} dB_t,\\
    d \log(f_t)
&= -\left(\frac{1+f_t}{1-f_t} + A_t\right) d t - i \sqrt{\kappa} dB_t,\\
d \log(f_t')
&= \left(1-\frac{2}{(1-f_t)^2} - A_t \right) dt - i \sqrt{\kappa} dB_t,\quad\text{and}\\
  d \log(f_t - Z_t)
&= \left( \frac{Z_t + 1}{Z_t -1} \cdot \frac{1}{1-f_t} - \frac{f_t}{1-f_t} - A_t \right) dt - i \sqrt{\kappa} dB_t.
\end{align*}
Adding these expressions up gives \eqref{eqn::radial_sle_kappa_rho_fh_drift}.
\end{proof}

\section{Existence of QLE}
\label{sec::existence}

The purpose of this section is to prove Theorem~\ref{thm::existence}.  Throughout, we suppose that the pair $(\gamma^2,\eta)$ is on one of the upper two lines from Figure~\ref{fig::etavsgamma}.  We are going to construct a triple $(\nu_t, g_t, \Fh_t)$ which satisfies the dynamics described in Figure~\ref{fig::QLEtriangle} where
\begin{equation}
\label{eqn::alpha_kappa}
\alpha_\kappa = -\frac{1}{\sqrt{\kappa}}
\end{equation}
for $\kappa > 1$.  We will first give a careful definition of the spaces in which our random variables take values in Section~\ref{subsec::spaces}.  
We will then prove Theorem~\ref{thm::measurehullprocesscorrespondence} in Section~\ref{subsec::measure_hull_correspondence}.  We next introduce approximations $(\varsigma_t^\delta,g_t^\delta,\Fh_t^\delta)$ to $\QLE(\gamma^2,\eta)$ in Section~\ref{subsec::approximation}.  Throughout, we reserve using the symbol $\nu$ to denote a measure which is constructed using exponentiation.  This is why the Loewner driving measure for the approximation is referred to as~$\varsigma_t^\delta$.  We will then show that each of the elements of $(\varsigma_t^\delta,g_t^\delta, \Fh_t^\delta)$ is tight on compact time intervals with respect to a suitable topology in Section~\ref{subsec::tightness}.  Finally, we will show that the subsequentially limiting triple $(\nu_t,g_t,\Fh_t)$ satisfies the dynamics from Figure~\ref{fig::QLEtriangle} in Section~\ref{subsec::existence}.  This will complete the proof of Theorem~\ref{thm::existence}.

\subsection{Spaces, topologies, and $\sigma$-algebras}
\label{subsec::spaces}

We are going to recall the spaces $\CN_T$, $\CG_T$, and $\CH_T$ (and their infinite time versions) from the introduction as well as introduce a certain subspace of the space of distributions.  We will then equip each of these spaces with a metric and the corresponding Borel $\sigma$-algebra.  We emphasize that each of the spaces that we consider is separable.  This will be important later since we will make use of the Skorohod representation theorem for weak convergence.

{\bf Measures.} We let $\CN_T$ be the space of measures $\varsigma$ on $[0,T] \times \partial \D$ whose marginal on $[0,T]$ is given by Lebesgue measure.  We equip $\CN_T$ with the topology given by weak convergence.  That is, we say that a sequence $(\varsigma^n)$ in $\CN_T$ converges to $\varsigma \in \CN_T$ if for every continuous function $\phi$ on $[0,T] \times \partial \D$ we have that $\int_{[0,T] \times \partial \D} \phi(s,u) d\varsigma^n(s,u) \to \int_{[0,T] \times \partial \D} \phi(s,u) d\varsigma(s,u)$.  Equivalently, we can equip $\CN_T$ with the Levy-Prokhorov metric $d_{\CN,T}$.  We let $\CN$ be the space of measures $\varsigma$ on $[0,\infty) \times \partial \D$ whose marginal on $[0,\infty)$ is given by Lebesgue measure.  Note that there is a natural projection $P_T \colon \CN \to \CN_T$ given by restriction.  We equip $\CN$ with the following topology.  We say that a sequence $(\varsigma^n)$ in $\CN$ converges to $\varsigma$ if $(P_T(\varsigma^n))$ converges to $P_T(\varsigma)$ as a sequence in $\CN_T$ for each $T \geq 0$.  Equivalently, we can equip $\CN$ with the metric $d_{\CN}$ given by $\sum_{n=1}^\infty 2^{-n} \min(d_{\CN,n}(\cdot,\cdot),1)$.  Then $(\CN,d_\CN)$ is a separable metric space and we equip $\CN$ with the Borel $\sigma$-algebra.

{\bf Families of conformal maps.}  We let $\CG_T$ be the space of families of conformal maps $(g_t)$ where, for each $0 \leq t \leq T$, $g_t \colon \D \setminus K_t \to \D$ is the unique conformal transformation with $g_t(0) = 0$ and $g_t'(0) > 0$.  We assume further that $g_t'(0) = e^t$ so that time is parameterized by $\log$ conformal radius.  We define $\CG$ analogously except time is defined on the interval $[0,\infty)$.  We say that a sequence of families $(g_t^n)$ in $\CG$ converges to $(g_t)$ if $(g_t^n)^{-1} \to g_t^{-1}$ locally uniformly in space and time.  In other words, for each compact set $K \subseteq \D$ and $T \geq 0$ we have that $(g_t^n)^{-1} \to g_t^{-1}$ uniformly on $[0,T] \times K$.  We can construct a metric which is compatible with this notion of convergence by taking $d_{\CG,n}$ to be the uniform distance on functions defined on $B(0,1-1/n) \times [0,n]$ and then taking $d_\CG$ to be $\sum_{n=1}^\infty 2^{-n} \min(d_{\CG,n}(\cdot,\cdot),1)$.  Then $(\CG,d_{\CG})$ is a separable metric space and we equip $\CG$ with the corresponding Borel $\sigma$-algebra.

{\bf Families of harmonic functions.}  We let $\CH_T$ be the space of families of harmonic functions $(\Fh_t)$ where, for each $t \in [0,T]$, $\Fh_t \colon \D \to \R$ is harmonic, $\Fh_t(0) = 0$, and $(t,z) \mapsto \Fh_t(z)$ is continuous.  We define $\CH$ similarly with $T = \infty$.  We equip $\CH$ with the topology of local uniform convergence.  That is, if $(\Fh_t^n)$ is a sequence in $\CH$ then we say that $(\Fh_t^n)$ converges to $(\Fh_t)$ if for each compact set $K \subseteq \D$ and $T \geq 0$ we have that $\Fh_t^n \to \Fh_t$ uniformly on $[0,T] \times K$.  We can construct a metric $d_\CH$ which is compatible with this notion of convergence in a manner which is analogous to $d_\CG$ and we equip $\CH$ with the corresponding Borel $\sigma$-algebra.

{\bf Distributions.}  Suppose that $(f_n)$ are the eigenvectors of $\Delta$ with Dirichlet boundary conditions on $\D$ with negative eigenvalues $(\lambda_n)$.  By the spectral theorem, $(f_n)$ properly normalized gives an orthonormal basis of $L^2(\D)$.  Thus for $f \in C_0^\infty(\D)$ we can write $f = \sum_n \alpha_n f_n$ and, for $a \in \R$, we define $(-\Delta)^a f = \sum_n \alpha_n (-\lambda_n)^a f_n$.  We let $(-\Delta)^a L^2(\D)$ denote the Hilbert space closure of $C_0^\infty(\D)$ with respect to the inner product $(f,g)_a = ((-\Delta)^{-a} f, (-\Delta)^{-a}g)$ where $(\cdot,\cdot)$ is the $L^2(\D)$ inner product; see \cite[Section~2.3]{sheffield2007gff} for additional discussion of this space.  We equip $(-\Delta)^a L^2(\D)$ with the Borel $\sigma$-algebra associated with the norm generated by $(\cdot,\cdot)_a$.

The GFF with zero boundary conditions takes values in $(-\Delta)^a L^2(\D)$ for each $a > 0$ \cite{sheffield2007gff} (see also \cite[Section~4.2]{ss2010contour}).  By Proposition~\ref{prop::gff_markov}, we can write the GFF on $\D$ with either mixed or free boundary conditions as the sum of a harmonic function and an independent zero-boundary GFF on $\D$.  It therefore follows that for each $\epsilon > 0$, each of these fields restricted to $(1-\epsilon)\D$ take values in $(-\Delta)^a L^2((1-\epsilon)\D)$.  (In the case of free boundary conditions, we can either consider the space modulo additive constant or fix the additive constant in a consistent manner by taking, for example, the mean of the field on $\D$ to be zero.)  We let $\CD_a^\epsilon$ be the subspace of distributions on $\D$ which are elements of $(-\Delta)^a L^2((1-\epsilon)\D)$ and let $d_{a,\epsilon}$ be the metric on $\CD_a^\epsilon$ induced by the $(\cdot,\cdot)_a$ inner product.  Let $\CD_a = \cap_{\epsilon > 0} \CD_a^\epsilon$ and equip $\CD_a$ with the metric given by $d_a(\cdot,\cdot) = \sum_n 2^{-n}\min(d_{a,n^{-1}}(\cdot,\cdot),1)$.  Since each each $\CD_a^\epsilon$ is separable, so is $\CD_a$ and we equip it with the Borel $\sigma$-algebra.

\subsection{Proof of Theorem~\ref{thm::measurehullprocesscorrespondence}}
\label{subsec::measure_hull_correspondence}

Recall that Theorem~\ref{thm::measurehullprocesscorrespondence} has three assertions.  For the convenience of the reader, we restate them here and then give the precise location of where each is established below.
\begin{enumerate}[(i)]
\item For any $\varsigma \in \CN$ there exists a unique solution to the radial Loewner equation (in integrated form) driven by $\varsigma$.  This is proved in Proposition~\ref{prop::solve_radial}.
\item If we have any increasing family of compact hulls $(K_t)$ in $\ol{\D}$ parameterized by $\log$ conformal radius as seen from $0$ then there exists a unique measure $\varsigma \in \CN$ such that the complement of the domain in $\ol{\D}$ of the solution to the radial Loewner equation driven by $\varsigma$ at time $t$ is given by $K_t$.  This is proved in Proposition~\ref{prop::hulls_measure}.
\item The convergence of a sequence $(\varsigma^n)$ in $\CN$ to a limiting measure $\varsigma \in \CN$ is equivalent to the Caratheodory convergence of the families of compact hulls in $\ol{\D}$ parameterized by $\log$ conformal radius associated with the corresponding radial Loewner chains.  That the convergence of such measures implies the Caratheodory convergence of the hulls is proved as part of Proposition~\ref{prop::solve_radial}.  The reverse implication is proved in Proposition~\ref{prop::convergence_equivalent}.
\end{enumerate}

We establish the first assertion of Theorem~\ref{thm::measurehullprocesscorrespondence} in the following proposition.

\begin{proposition}
\label{prop::solve_radial}
Suppose that $\varsigma \in \CN$.  Then there exists a unique solution $(g_t)$ to the radial Loewner evolution driven by $\varsigma$.  That is, $(g_t)$ solves
\begin{equation}
\label{eqn::radial_loewner_sigma}
g_t(z) = \int_{[0,t] \times \partial \D} \Phi(u,g_s(u)) d\varsigma(s,u),\quad g_0(z) = z.
\end{equation}
Moreover, suppose that $(\varsigma^n)$ is a sequence in $\CN$ converging to $\varsigma \in \CN$.  For each $n \in \N$, let $(g_t^n)$ solve the radial Loewner equation driven by $\varsigma^n$ and likewise let $(g_t)$ solve the radial Loewner equation driven by $\varsigma$.  Then $(g_t^n) \to (g_t)$ as $n \to \infty$ in $\CG$.
\end{proposition}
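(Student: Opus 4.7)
The plan is to split the proof into three pieces: (i) existence and uniqueness of the solution driven by a single $\varsigma \in \CN$, and (ii)--(iii) stability as $\varsigma^n \to \varsigma$. For (i), I would disintegrate $\varsigma$ as Lebesgue measure on $[0,\infty)$ times a family of probability measures $(\varsigma_t)$ on $\partial \D$, and rewrite~\eqref{eqn::radial_loewner_sigma} in the integrated form
\begin{equation*}
g_t(z) = z + \int_0^t \int_{\partial \D} \Phi(u, g_s(z)) \, d\varsigma_s(u) \, ds.
\end{equation*}
The function $F(s, w) = \int_{\partial \D} \Phi(u, w) \, d\varsigma_s(u)$ is measurable in $s$ for each fixed $w \in \D$, and $\Phi(u, \cdot)$ is Lipschitz in $w$ uniformly in $u \in \partial \D$ on any compact subset $K'$ of $\D$, with constant depending only on $\dist(K', \partial \D)$. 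A standard Picard/Gronwall argument then gives a unique solution $t \mapsto g_t(z)$ up to the first time $\tau(z)$ that $|g_t(z)| \to 1$; setting $K_t = \{z : \tau(z) \leq t\}$ produces the desired conformal family. The normalization $g_t'(0) = e^t$ follows by differentiating at $z = 0$: since $\Phi(u, 0) = 0$ and $\partial_z \Phi(u, 0) = 1$, we get $(d/dt) \log g_t'(0) = \int_{\partial \D} 1 \, d\varsigma_t(u) = 1$ because $\varsigma_t$ is a probability measure.

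For the stability statement, suppose $\varsigma^n \to \varsigma$ in $\CN$ and let $(g_t^n)$ be the corresponding solutions. Since each $\varsigma_t^n$ is a probability measure, $|\dot g_t^n(z)| \leq \sup_{u \in \partial \D} |\Phi(u, g_t^n(z))|$, which is bounded in terms of $1 - |g_t^n(z)|$. Hence for any compact $K \subset \D$ and a slightly larger compact set $K^+$, up to the first time $\tau_n^+(z)$ at which $g_t^n(z)$ exits $K^+$, the family $\{t \mapsto g_t^n(z)\}_n$ is equicontinuous and uniformly bounded. Arzel\`a--Ascoli extracts a subsequential limit $\tilde g_t(z)$ which is continuous in $t$ on the appropriate time interval. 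The continuity of $(s, u) \mapsto \Phi(u, \tilde g_s(z))$ on the region where $\tilde g_s(z)$ stays in $K^+$, together with the weak convergence $\varsigma^n \to \varsigma$ and uniform approximation of $\Phi(u, g_s^n(z))$ by $\Phi(u, \tilde g_s(z))$, allows one to pass to the limit in the integrated Loewner equation and conclude that $\tilde g$ solves the equation driven by $\varsigma$. Uniqueness in (i) then forces $\tilde g = g$, so the entire sequence converges pointwise.

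Finally, to upgrade pointwise convergence of $g_t^n(z) \to g_t(z)$ to convergence in $\CG$: for each fixed $t$, pointwise convergence of the conformal maps $g_t^n$, together with the normalization $g_t^n(0) = 0$ and $(g_t^n)'(0) = e^t$, implies Carath\'eodory convergence of the domains $\D \setminus K_t^n$ to $\D \setminus K_t$, which is equivalent to locally uniform convergence of the inverses $(g_t^n)^{-1}$ to $g_t^{-1}$ on $\D$. The joint uniformity in $(t, z)$ demanded by the topology of $\CG$ then follows from the $t$-equicontinuity estimate together with a diagonal argument on a countable dense set of times and points. The main obstacle lies in the second paragraph: both the driving measures $\varsigma^n$ and the integrands $\Phi(u, g_s^n(z))$ depend on $n$, so one must first promote pointwise/subsequential convergence of $g^n$ to convergence that is uniform in $(s, u)$ over compact subsets (equivalently, uniform convergence of the integrands to a continuous function) before weak convergence of the measures can be applied directly. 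A secondary technical issue is the careful handling of points $z$ whose trajectory approaches $\partial \D$ under the flow, which requires a stopping-time localization to compact subsets of $\D$.
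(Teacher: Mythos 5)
Your proposal is correct, and for the existence--uniqueness part it takes a genuinely different route from the paper. You treat \eqref{eqn::radial_loewner_sigma} as a Carath\'eodory-type ODE, $\dot g_t(z)=F(t,g_t(z))$ with $F(t,w)=\int_{\partial\D}\Phi(u,w)\,d\varsigma_t(u)$ measurable in $t$ and Lipschitz in $w$ on compacts, and run Picard/Gronwall; the paper instead proves uniqueness by a telescoping estimate on the transition maps $g_{s,t}=g_t\circ g_s^{-1}$ over a mesh-$\delta$ partition (showing the discrepancy is $O(\delta)$ and letting $\delta\to0$), and proves existence by first time-averaging $\varsigma$ to get drivers $t\mapsto\varsigma_t^n$ that are continuous in $t$ (Lemma~\ref{lem::continuous_measures_dense}), where classical ODE theory applies, and then invoking the stability statement. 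Your route is more self-contained and avoids the approximation lemma entirely. For the stability part the two arguments share the decisive step --- replace $\Phi(u,g_s^n(z))$ by $\Phi(u,g_s(z))$ inside the integral, bound the error by $O(t\cdot\sup_s|g_s^n(z)-g_s(z)|)$ via the Lipschitz property, and apply weak convergence of $\varsigma^n$ to the now $n$-independent continuous integrand --- but the paper extracts compactness from the \emph{inverse} maps $\psi_t^n=(g_t^n)^{-1}$ via the integrated inverse equation (Lemma~\ref{lem::loewner_tightness}), which are defined on all of $\D$ for all time and hence make your stopping-time localization unnecessary. For what it is worth, the localization you worry about can also be dispatched directly on the forward side: by the Koebe growth theorem applied to $(g_s^n)^{-1}$, for $z\in B(0,\tfrac{1}{16}e^{-t})$ the trajectories $g_s^n(z)$, $s\le t$, stay in a fixed compact subset of $\D$ uniformly in $n$, and agreement of the conformal maps on this ball propagates to the whole domain.
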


Before we prove Proposition~\ref{prop::solve_radial}, we first collect the following two lemmas.

\begin{lemma}
\label{lem::continuous_measures_dense}
Suppose that $\varsigma \in \CN$.  Then there exists a sequence $(\varsigma_t^n)$ where, for each $n \in \N$ and $t \geq 0$, $\varsigma_t^n$ is a probability measure on $\partial \D$ such that the following are true.
\begin{enumerate}[(i)]
\item For each $n \in \N$, $t \mapsto \varsigma_t^n$ is continuous with respect to the weak topology on measures on $\partial \D$.
\item We have that $d \varsigma_t^n dt \to \varsigma$ as $n \to \infty$ in $\CN$.
\end{enumerate}
\end{lemma}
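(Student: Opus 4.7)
My plan is to construct the approximations by disintegration followed by a forward time-average (mollification in the time variable only). Since the first marginal of $\varsigma$ is Lebesgue, disintegration produces a map $t \mapsto \varsigma_t$, defined for Lebesgue almost every $t \geq 0$, into the space of probability measures on $\partial \D$, such that for every bounded Borel function $\phi$ on $[0,\infty) \times \partial \D$ we have $\int \phi\, d\varsigma = \int_0^\infty \int_{\partial \D} \phi(s,u)\, d\varsigma_s(u)\, ds$. Fix a sequence $\epsilon_n \downarrow 0$ and set
\begin{equation*}
\varsigma_t^n \;=\; \frac{1}{\epsilon_n} \int_{t}^{t+\epsilon_n} \varsigma_s\, ds,
\end{equation*}
interpreted weakly: for each $\phi \in C(\partial \D)$, $\int_{\partial \D} \phi\, d\varsigma_t^n = \frac{1}{\epsilon_n} \int_{t}^{t+\epsilon_n} \bigl(\int_{\partial \D} \phi\, d\varsigma_s\bigr) ds$. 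This is well defined for every $t \geq 0$ (not just a.e.) because an integrand defined a.e. suffices, and it produces a probability measure on $\partial \D$ since the inner measures are probability measures.

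Next I would verify the continuity requirement (i). For fixed $\phi \in C(\partial \D)$ and $0 \le t \le t'$,
\begin{equation*}
\Bigl| \int \phi\, d\varsigma_{t'}^n - \int \phi\, d\varsigma_t^n \Bigr| \;\le\; \frac{2 \|\phi\|_\infty}{\epsilon_n}|t' - t|,
\end{equation*}
so $t \mapsto \int \phi\, d\varsigma_t^n$ is Lipschitz, which by a standard separability argument (take $\phi$ from a countable dense subset of $C(\partial \D)$) yields weak continuity of $t \mapsto \varsigma_t^n$.

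For assertion (ii), I would check that $d\varsigma_t^n dt \to \varsigma$ in the weak topology on $\CN_T$ for each $T > 0$; this suffices by definition of the metric $d_\CN$. Pick any $\phi \in C([0,T] \times \partial \D)$ and extend it by zero to larger times. Expanding and swapping the order of integration via Fubini (all integrands are bounded and supported in a bounded time window),
\begin{equation*}
\int \phi\, d(d\varsigma_t^n dt) \;=\; \frac{1}{\epsilon_n}\int_0^\infty \!\int_t^{t+\epsilon_n} \!\int_{\partial \D} \phi(t,u)\, d\varsigma_s(u)\, ds\, dt \;=\; \frac{1}{\epsilon_n} \int_0^{\epsilon_n} \!\int_0^\infty \!\int_{\partial \D} \phi(s-r,u)\, d\varsigma_s(u)\, ds\, dr,
\end{equation*}
after the change of variables $s = t+r$. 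Since $\phi$ is uniformly continuous on the compact set $[0,T+1] \times \partial \D$, for each $r \in [0,\epsilon_n]$ the inner double integral differs from $\int \phi\, d\varsigma$ by at most the uniform modulus of continuity of $\phi$ at scale $\epsilon_n$, which tends to $0$. Averaging over $r$ and letting $n \to \infty$ then gives $\int \phi\, d(d\varsigma_t^n dt) \to \int \phi\, d\varsigma$, proving weak convergence in $\CN_T$.

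\textbf{Main obstacle.} There is essentially no serious obstacle here; the only subtlety is making sure the disintegration is used carefully (the measures $\varsigma_s$ are defined only up to almost-everywhere equality, so every assertion about $\varsigma_t^n$ must be phrased through integrals against test functions rather than pointwise). The forward-average choice avoids any boundary issue at $t=0$, and uniform continuity of $\phi$ on compacts handles the limit cleanly.
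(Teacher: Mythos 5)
Your construction is exactly the one the paper uses: the paper defines $\int_{\partial \D} \phi\, d\varsigma_t^n = n \int_{[t,t+n^{-1}] \times \partial \D} \phi(u)\, d\varsigma(s,u)$, i.e.\ the same forward time-average with $\epsilon_n = 1/n$, and then states the two properties are easy to check. Your proposal is correct and simply supplies the routine verifications (Lipschitz continuity in $t$ for fixed test functions, and Fubini plus uniform continuity for the weak convergence) that the paper omits.
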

\begin{proof}
We define $\varsigma_t^n$ by averaging the first coordinate of $\varsigma$ as follows: for $\phi \colon \partial \D \to \R$ continuous, we take
\[  \int_{\partial \D} \phi(u) d\varsigma_t^n(u) = n \int_{[t,t+n^{-1}] \times \partial \D} \phi(u) d\varsigma(s,u).\]
Then it is easy to see that the sequence $(\varsigma_t^n)$ has the desired properties.
\end{proof}

\begin{lemma}
\label{lem::loewner_tightness}
If $(\varsigma^n)$ is a sequence in $\CN$ and, for each $n \in \N$, $t \mapsto g_t^n$ solves the radial Loewner equation driven by $\varsigma^n$, then the following is true.  There exists a family of conformal transformations $(g_t)$ which are continuous in both space and time and each of which maps $\D$ into itself and a subsequence $(g_t^{n_k})$ of $(g_t^n)$ such that $(g_t^{n_k}) \to (g_t)$ in $\CG$.
\end{lemma}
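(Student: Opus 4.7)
The plan is to pass to the inverse maps $f_t^n \defeq (g_t^n)^{-1} \colon \D \to \D_t^n \subseteq \D$, which are univalent with $f_t^n(0)=0$ and $(f_t^n)'(0)=e^{-t}$, and to apply an Arzel\`a--Ascoli argument jointly in $(t,z)$ on $[0,T]\times\D$ for each $T>0$. Uniform boundedness is automatic because $|f_t^n(z)|<1$, so the task reduces to establishing joint equicontinuity.

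Equicontinuity in $z$ (uniform in $n$ and $t$) is immediate from Cauchy derivative estimates applied to the uniformly bounded holomorphic maps $f_t^n$. For equicontinuity in $t$ I would exploit the Loewner semigroup decomposition
\begin{equation*}
f_t^n = f_s^n \circ \psi_{s,t}^n, \qquad \psi_{s,t}^n \defeq g_s^n \circ f_t^n \colon \D \to \D,
\end{equation*}
valid for $0\le s\le t$ because $K_t^n \supseteq K_s^n$. Each $\psi_{s,t}^n$ is univalent with $\psi_{s,t}^n(0)=0$ and $(\psi_{s,t}^n)'(0)=e^{s-t}$. The key claim is that for every compact $K \subset \D$,
\begin{equation*}
\lim_{\eta \to 0^+}\; \sup_{n}\; \sup_{0\le t-s\le \eta}\; \sup_{z\in K} |\psi_{s,t}^n(z)-z| = 0.
\end{equation*}
I would prove this by contradiction using Montel: extract any offending triple-sequence $(s_k,t_k,n_k)$ with $t_k-s_k\to 0$; a sub-subsequence of $\psi_{s_k,t_k}^{n_k}$ converges locally uniformly to some holomorphic $\psi \colon \D \to \ol{\D}$ with $\psi(0)=0$ and $\psi'(0)=1$, and the Schwarz lemma forces $\psi = \mathrm{id}$, contradicting the standing assumption. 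Combining this with a uniform Lipschitz bound on $f_s^n$ on a compact set that, by Schwarz (since $|\psi_{s,t}^n(z)|\le|z|$), contains every $\psi_{s,t}^n(K)$, gives the desired time-equicontinuity uniformly in $n$.

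With both equicontinuities in hand, Arzel\`a--Ascoli together with a diagonal extraction over an exhaustion of $[0,\infty)\times\D$ by compact sets produces a subsequence $(n_k)$ and a limit $f_t(z)$ with $f_t^{n_k}\to f_t$ locally uniformly in $(t,z)$. The limit satisfies $f_t(0)=0$ and $f_t'(0)=e^{-t}>0$, hence is non-constant, and by Hurwitz it inherits univalence from the $f_t^{n_k}$. Setting $K_t \defeq \ol{\D} \setminus f_t(\D)$ and $g_t \defeq f_t^{-1}$ produces an element $(g_t)\in\CG$; the increasing property of $K_t$ is inherited from the monotonicity of the $K_t^n$ in the limit, and local uniform convergence of the inverse maps is precisely convergence in the topology of $\CG$.

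The main analytic obstacle is the uniformity in $n$ of the equicontinuity in time, since a priori the hulls $K_t^n$ might behave very differently as $n$ varies. The compactness-plus-Schwarz argument above bypasses this, because it makes no reference to the particular sequence: the set of univalent self-maps of $\D$ fixing $0$ with derivative of modulus close to $1$ is uniformly close to $\{\mathrm{id}\}$ on every compact subset of~$\D$.
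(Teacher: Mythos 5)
Your proof is correct, but it reaches the key estimate by a genuinely different route than the paper. Both arguments pass to the inverse maps and finish with Arzel\`a--Ascoli plus Hurwitz, and in both the spatial equicontinuity is the easy part (Cauchy estimates for uniformly bounded univalent maps). The difference is in the time direction. The paper writes down the integrated inverse Loewner equation
\[
\psi_t^n(z) = z - z \int_{[0,t] \times \partial \D} (\psi_s^n)'(z)\, \Psi(u,z)\, d\varsigma^n(s,u),
\]
and reads off a uniform \emph{Lipschitz} bound in $t$ on compact subsets of $\D$, using that the integrand is uniformly bounded there and that the first marginal of every $\varsigma^n \in \CN$ is Lebesgue measure. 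You instead never differentiate in time: you use only the two structural facts that the hulls increase and that the parameterization is by log conformal radius, so that $\psi_{s,t}^n = g_s^n \circ f_t^n$ is a univalent self-map of $\D$ fixing $0$ with derivative $e^{s-t}$, and you squeeze such maps against the identity via Montel and Schwarz. Your compactness-plus-Schwarz step is sound (the limit in your contradiction argument is non-constant because its derivative at $0$ is $1$, hence maps $\D$ into $\D$ by the open mapping theorem, and Schwarz then forces it to be the identity), and the Lipschitz transfer through $f_s^n$ works because $|\psi_{s,t}^n(z)| \le |z|$ keeps the relevant segment inside a fixed closed disk $\ol{B(0,r_K)} \subset \D$ on which $|(f_s^n)'|$ is uniformly bounded. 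What the paper's argument buys is an explicit modulus of continuity (Lipschitz in $t$, with constant depending only on the compact set) and a derivation that directly exploits the normalization of the driving measures; what yours buys is generality and economy of hypotheses --- it applies verbatim to \emph{any} increasing families of hulls parameterized by conformal radius, with no reference to a Loewner equation at all, at the cost of yielding only qualitative equicontinuity. Your closing remarks about the limit lying in $\CG$ (univalence via Hurwitz, monotonicity of the limiting hulls via kernel convergence) go slightly beyond what the lemma asserts and are fine.
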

\begin{proof}
Let $\psi_t^n = (g_t^n)^{-1}$.  Then the chain rule implies that for each $z \in \D$ and $t \geq 0$ we have that
\begin{equation}
\label{eqn::inverse_loewner}
\psi_t^n(z) = - z \int_{[0,t] \times \partial \D} (\psi_s^n)'(z) \Psi(u,z) d\varsigma^n(s,u) + z;
\end{equation} 
see \cite[Remark~4.15]{lawler2005conformally}.  The desired result follows because it is clear from the form of \eqref{eqn::inverse_loewner} that the family $(\psi_t^n)$ is equicontinuous when restricted to a compact subset of $\D$ and compact interval of time in $[0,\infty)$.
\end{proof}

\begin{proof}[Proof of Proposition~\ref{prop::solve_radial}]
We are first going to prove uniqueness of solutions to \eqref{eqn::radial_loewner_sigma}.  Suppose that we have two solutions $(g_t)$ and $(\wt{g}_t)$ to \eqref{eqn::radial_loewner_sigma}.  Fix $T \geq 0$.  Then the domain of $g_T$ (resp.\ $\wt{g}_T$) contains $B(0,\tfrac{1}{4} e^{-T})$ by the Koebe one-quarter theorem since time is parameterized by $\log$ conformal radius.  To show that $g_T = \wt{g}_T$, it suffices to show that $g_T(z) = \wt{g}_T(z)$ for all $z \in B(0,\tfrac{1}{16} e^{-T})$ because two conformal transformations with connected domain and whose values agree on an open set agree everywhere.  For $0 \leq s \leq t \leq T$, we let $g_{s,t} = g_t \circ g_s^{-1}$ and $\wt{g}_{s,t} = \wt{g}_t \circ \wt{g}_s^{-1}$.  From the form of the radial Loewner equation it follows that the maps $g_{s,t}$ are Lipschitz in $0 \leq s \leq t \leq T$ and $z \in B(0,\tfrac{1}{16} e^{-T})$ where the Lispchitz constant only depends on $T$.    By estimating $g_{s,r}$ (resp.\ $\wt{g}_{s,r}$) by $z$ in the integral below, it thus follows that there exists a constant $C > 0$ depending only on $T$ such that
\begin{align*}
     &|g_{s,t}(z)  - \wt{g}_{s,t}(z)|\\
\leq &\int_{[s,t] \times \partial \D} \left|\Phi(u,g_{s,r}(z)) - \Phi(u,\wt{g}_{s,r}(z))\right| d\varsigma(r,u)\\
\leq& C(t-s)^2.
\end{align*}
Fix $\delta > 0$ and let $t_\ell = \delta \ell$ for $\ell \in \N_0$.  Then
\begin{align*}
     g_T(z)
 &=     g_{t_1,T} \circ g_{t_1}(z)\\
 &= g_{t_1,T}  \circ \wt{g}_{t_1}(z) + \big( g_{t_1,T}(g_{t_1}(z)) - g_{t_1,T}(\wt{g}_{t_1}(z)) \big).
\end{align*}
By the previous estimate and the Lipschitz property, the second term is of order $O(\delta^2)$ as $\delta \to 0$ where the implicit constant depends only on $T$.  Iterating this procedure implies that $g_T(z) - \wt{g}_T(z) = O(\delta)$ as $\delta \to 0$ where the implied constant depends only on $T$.  This implies uniqueness.

We are next going to show that if $(\varsigma^n)$ is a sequence in $\CN$ converging to $\varsigma$ and, for each $n$, $(g_t^n)$ is the solution to the radial Loewner equation with driving function $(\varsigma^n)$, then $(g_t^n) \to (g_t)$ in $\CG$ where $(g_t)$ is the radial Loewner equation driven by $\varsigma$.  By possibly passing to a subsequence, Lemma~\ref{lem::loewner_tightness} implies that there exists a family of conformal maps $(g_t)$ such that $(f_t = g_t^{-1})$ is a locally uniform subsequential limit of $(f_t^n)$ in both space and time.  To finish the proof, we just need to show that $(g_t)$ satisfies the radial Loewner equation driven by $\varsigma$.  For each $t \geq 0$ and $z \in \D$ with positive distance from the complement of the domain of $g_t$, we can write:
\begin{align*}
   g_t^n(z)
&= \int_{[0,t] \times \partial \D} \Phi(u,g_s^n(z)) d\varsigma^n(s,u) + z\\
&= O( t \times \sup_{s \in [0,t]}| g_s^n(z) - g_s(z)|) + \int_{[0,t] \times \partial \D}  \Phi(u,g_s(z)) d\varsigma^n(s,u)  + z.
\end{align*}
Taking a limit as $n \to \infty$ of both sides proves the assertion.

It is left to prove existence.  In the case that the radial Loewner evolution is driven by a family of measures $t \mapsto \varsigma_t$ on $\partial \D$ which is piecewise continuous with respect to the weak topology, the existence of a solution to the radial Loewner equation $(g_t)$ driven by $(\varsigma_t)$ follows from standard existence results for ordinary differential equations (see, for example, \cite[Theorem~4.14]{lawler2005conformally}).  The result in the general case follows by combining the previous assertion with Lemma~\ref{lem::continuous_measures_dense}.  In particular, if $\varsigma \in \CN$, then we let $(\varsigma_t^n)$ be a sequence as in Lemma~\ref{lem::continuous_measures_dense}.  For each $n$, let $(g_t^n)$ be the radial Loewner evolution driven by $t \mapsto \varsigma_t^n$.  Then the previous assertion implies that $(g_t^n)$ converges in $\CG$ to the unique solution $(g_t)$ driven by~$\varsigma$.
\end{proof}

To finish the proof of Theorem~\ref{thm::measurehullprocesscorrespondence}, we need to show that we can associate a growing family of hulls $(K_t)$ in $\ol{\D}$ parameterized by $\log$ conformal radius with an element of $\CN$ using the radial Loewner evolution and that the convergence of hulls with respect to the Caratheodory topology is equivalent to the convergence of measures in $\CN$, also using radial Loewner evolution.  This is accomplished in the following two propositions.

\begin{proposition}
\label{prop::hulls_measure}
Suppose that $(K_t)$ is a family of hulls in $\ol{\D}$ parameterized by $\log$ conformal radius as seen from $0$.  That is, the conformal radius of $D_t = \D \setminus K_t$ as seen from $0$ is equal to $e^{-t}$ for each $t \geq 0$.  There exists a unique measure $\varsigma \in \CN$ such that if $(g_t)$ is the solution of the radial Loewner evolution driven by $\varsigma$ then, for each $t \geq 0$, $K_t$ is the complement in $\ol{\D}$ of the domain of $g_t$.
\end{proposition}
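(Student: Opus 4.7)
The plan is to recover the driving measure $\varsigma$ directly from the flow $(g_t)$ via the Herglotz representation theorem, using as input the classical Loewner--Kufarev theory (as discussed in \cite{johansson2012scaling}) together with the uniqueness supplied by Proposition~\ref{prop::solve_radial}.  First I will record regularity: the inverse maps $\psi_t := g_t^{-1} \colon \D \to \D \setminus K_t$ form a continuous family (in the topology of local uniform convergence on $\D$) of univalent maps with $\psi_t(0)=0$ and $\psi_t'(0) = e^{-t}$, satisfying $\psi_s(\D) \supset \psi_t(\D)$ for $s \leq t$.  Continuity in $t$ is equivalent to the Carath\'eodory continuity of the shrinking family $\D \setminus K_t$, which in turn is a consequence of the hypothesis that $t$ equals minus the log conformal radius.

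By the Loewner--Kufarev theorem, there exists a measurable family $(p_t)_{t \in [0,T]}$ of analytic functions $p_t \colon \D \to \C$ with $\Re p_t \geq 0$ and $p_t(0)=1$ such that, for a.e.\ $t$ and every $w \in \D$,
\[ \partial_t \psi_t(w) = -w\, \psi_t'(w)\, p_t(w). \]
The Herglotz representation of analytic functions on $\D$ with non-negative real part then yields, for each $t$ at which $p_t$ is defined, a unique probability measure $\varsigma_t$ on $\partial \D$ with
\[ p_t(w) = \int_{\partial \D} \frac{u+w}{u-w} \, d\varsigma_t(u). \]
Since $t \mapsto p_t(w)$ is measurable for each fixed $w$ and the Herglotz map $\mu \mapsto \int_{\partial \D} \tfrac{u+w}{u-w}\, d\mu(u)$ is a homeomorphism between probability measures on $\partial \D$ in the weak-$*$ topology and the appropriate family of analytic functions in the local uniform topology, the assignment $t \mapsto \varsigma_t$ is weakly measurable, so $d\varsigma(t,u) := d\varsigma_t(u)\, dt$ is a well-defined element of $\CN$.

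Next, using $g_t(\psi_t(w))=w$ and the inverse function theorem (so that $g_t'(\psi_t(w)) = 1/\psi_t'(w)$ and $\dot g_t(\psi_t(w)) = -\psi_t'(w)^{-1} \partial_t \psi_t(w)$), the relation above rewrites as
\[ \dot g_t(z) \;=\; g_t(z)\, p_t(g_t(z)) \;=\; \int_{\partial \D} \Phi(u, g_t(z))\, d\varsigma_t(u) \]
for a.e.\ $t$ and every $z \in \D \setminus K_t$.  Integrating in $t$ gives the integrated Loewner equation driven by $\varsigma$, and Proposition~\ref{prop::solve_radial} ensures that the hulls generated by this flow coincide with $(K_t)$.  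Uniqueness follows at once: any other $\varsigma' \in \CN$ generating $(K_t)$ produces the same flow $(g_t)$ by Proposition~\ref{prop::solve_radial}, so the integrals $\int_{\partial \D} \Phi(u,g_t(z))\,d\varsigma_t(u)$ and $\int_{\partial \D} \Phi(u,g_t(z))\,d\varsigma_t'(u)$ agree for a.e.\ $t$ and every $z \in \D \setminus K_t$; dividing by $g_t(z) \neq 0$ and invoking the injectivity of Herglotz forces $\varsigma_t = \varsigma_t'$ for a.e.\ $t$.

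The key black box in this plan is the Loewner--Kufarev theorem, whose most delicate ingredient is showing that $t \mapsto \psi_t(w)$ is locally absolutely continuous for each fixed $w \in \D$.  This is proved by exploiting the semigroup relation $\psi_t = \psi_s \circ \tilde\psi_{s,t}$, where $\tilde\psi_{s,t} := g_s \circ g_t^{-1}$ is a conformal self-map of $\D$ fixing $0$ with derivative $e^{-(t-s)}$; Koebe- and Schwarz-type distortion estimates then control $|\tilde\psi_{s,t}(w) - w|$ on compacta by a quantity of order $(t-s)$, which composes with $\psi_s$ to yield the required regularity of $\psi_t(w)$.
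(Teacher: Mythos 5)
Your proof is correct, but it reaches the conclusion by a genuinely different route from the paper's. You recover $\varsigma$ analytically: local Lipschitz continuity of $t \mapsto g_t^{-1}(w)$ via the semigroup relation $\psi_t = \psi_s \circ \widetilde\psi_{s,t}$ and distortion estimates, a.e.\ differentiation to obtain the Loewner--Kufarev PDE, and then the Herglotz representation to read off $\varsigma_t$ from $p_t$; uniqueness comes from injectivity of the Herglotz transform. The paper never differentiates in $t$: it first shows (Lemma~\ref{lem::hull}) that a \emph{single} hull is generated by some measure, by approximating it with simple curves, invoking classical Loewner theory for curves, and taking a weak limit in $\CN_T$; it then concatenates such measures over a mesh of size $\delta$ and extracts a subsequential weak limit as $\delta \to 0$, using compactness of $\CN$ and the continuity statement of Proposition~\ref{prop::solve_radial}, while dismissing uniqueness as obvious. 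Your route is more explicit about what $\varsigma_t$ is and gives a cleaner uniqueness argument, at the cost of the a.e.-differentiability step, which is precisely the delicate content of the Loewner--Kufarev theorem you black-box (the Lipschitz bound $|\widetilde\psi_{s,t}(w)-w| \le C_K(t-s)$ on compacta does hold: apply Harnack to the nonnegative-real-part function $-\log(\widetilde\psi_{s,t}(w)/w)$, whose value at $0$ is $t-s$). Two small points to tighten: justify that $t \mapsto g_t(z)$ is differentiable where you apply the chain rule to $g_t(\psi_t(w))=w$ (it inherits local Lipschitz regularity from $\psi_t$), and in the uniqueness step pass from ``for each $z$, for a.e.\ $t$'' to ``for a.e.\ $t$, for all $z$'' via a countable dense set of $z$ before invoking injectivity of the Herglotz transform.
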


The main ingredient in the proof of Proposition~\ref{prop::hulls_measure} is the following lemma.

\begin{lemma}
\label{lem::hull}
Suppose that $K \subseteq \ol{\D}$ is a compact hull and let $T = -\log \confrad(0;\D \setminus K)$.  Then there exists a measure $\varsigma \in \CN_T$ such that if $(g_t)$ is the radial Loewner evolution driven by $\varsigma$ then $\D \setminus K$ is the domain of $g_T$.
\end{lemma}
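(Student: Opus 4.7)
I would approach this by approximating $K$ by simpler hulls whose driving measures can be written down explicitly, then extracting a weak subsequential limit. The natural class of simpler hulls is that of \emph{slit hulls}: finite unions of pairwise disjoint simple arcs in $\ol{\D}$, each with one endpoint on $\partial \D$ and the other in $\D$. By the Carath\'eodory kernel theorem (with base point $0$), the simply connected domain $\D \setminus K$ containing $0$ can be approximated in the Carath\'eodory sense by a sequence of slit domains $\D \setminus K^n$; in particular $T^n := -\log \confrad(0; \D \setminus K^n) \to T$.

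\textbf{Explicit construction for slit hulls.} For each $K^n = \gamma_1^n \cup \cdots \cup \gamma_{N_n}^n$, I would construct a family $(K_t^n)_{t \in [0, T^n]}$ of hulls by growing the arcs one at a time: first grow $\gamma_1^n$ from its endpoint on $\partial \D$ to its interior endpoint, then grow $\gamma_2^n$ (whose image under the normalizing conformal map of $\D \setminus \gamma_1^n$ is again a simple arc attached to $\partial \D$), and so on, parameterizing each stage by log conformal radius. During each stage, classical radial Loewner theory for a single growing slit (see, e.g., \cite[Chapter~4]{lawler2005conformally}) produces a continuous driving function $U_t^n \in \partial \D$ satisfying the radial Loewner ODE \eqref{eqn::radial_loewner}. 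Splicing these stages together produces a driving measure $d\varsigma^n(t,u) = \delta_{U_t^n}(u)\,dt \in \CN_{T^n}$ whose associated hull at time $T^n$ is exactly $K^n$.

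\textbf{Subsequential limit and identification.} To place all the measures on a common time interval, I would extend each $\varsigma^n$ to an element of $\CN_{T+1}$ by continuing the Loewner flow past $T^n$ using some arbitrary fixed driving function (e.g., a constant boundary point). The extended measures are probability measures in the second coordinate on the compact space $[0, T+1] \times \partial \D$, so weak compactness yields a subsequence $\varsigma^{n_k}$ converging to some $\varsigma^* \in \CN_{T+1}$. By the convergence assertion of Proposition~\ref{prop::solve_radial}, the Loewner flows $(g_t^{n_k})$ converge in $\CG$ to the flow $(g_t)$ driven by $\varsigma^*$. Restricting, $\varsigma := \varsigma^*|_{[0,T] \times \partial \D} \in \CN_T$ is the candidate driving measure.

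\textbf{Main obstacle.} The delicate step is verifying that the hull of the limiting flow at time $T$ is exactly $\D \setminus K$. By Carath\'eodory convergence of $\D \setminus K^{n_k}$ to $\D \setminus K$ (with marked point $0$), the inverse normalizing maps $f_{T^{n_k}}^{n_k} := (g_{T^{n_k}}^{n_k})^{-1}$ converge locally uniformly to the inverse normalizing map $f_T \colon \D \to \D \setminus K$. Combined with the Lipschitz-in-time estimate for the integrated Loewner equation (of the kind used in the uniqueness part of the proof of Proposition~\ref{prop::solve_radial}) and the fact that $T^{n_k} \to T$, this gives $f_T = g_T^{-1}$, so the domain of $g_T$ is $\D \setminus K$ as required.
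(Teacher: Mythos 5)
Your proposal is correct and follows essentially the same route as the paper: approximate $K$ by hulls generated by simple curves (the paper uses a single simple curve Hausdorff-close to $K$, you use finite unions of slits grown sequentially), obtain Dirac-mass driving measures from classical radial Loewner theory, extract a weak subsequential limit in $\CN_T$, and identify the limiting hull via the convergence assertion of Proposition~\ref{prop::solve_radial}. Your treatment of the final identification step is somewhat more explicit than the paper's, but the underlying argument is the same.
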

\begin{proof}
Fix $\epsilon > 0$ and let $\gamma^\epsilon \colon [0,T_\epsilon] \to \ol{\D}$ be a simple curve starting from a point in $\partial \D$ such that the Hausdorff distance between $K$ and $\gamma^\epsilon([0,T_\epsilon])$ is at most $\epsilon$.  Then (the radial version of) \cite[Proposition~4.4]{lawler2005conformally} implies that there exists a continuous function $U^\epsilon \colon [0,T_\epsilon] \to \partial \D$ such that if $(g_t^\epsilon)$ is the radial Loewner evolution driven by $U^\epsilon$ then, for each $t \in [0,T_\epsilon]$, $\gamma^\epsilon([0,t])$ is the complement in $\D$ of the domain of $g_t^\epsilon$.  Let $\varsigma_t^\epsilon = \delta_{U^\epsilon(t)}$.  By possibly passing to a subsequence $(\epsilon_k)$ of positive numbers which decrease to $0$ as $k \to \infty$, we have that $d\varsigma_t^\epsilon dt$ converges in $\CN_T$ to $\varsigma \in \CN_T$.  Proposition~\ref{prop::solve_radial} implies that the radial Loewner evolution $(g_t)$ driven by $\varsigma$ has the property that the domain of $g_T$ is $\D \setminus K$.
\end{proof}

\begin{proof}[Proof of Proposition~\ref{prop::hulls_measure}]
The uniqueness component of the proposition is obvious, so we will just give the proof of existence.  Fix $\delta > 0$ and, for each $\ell \in \N_0$, let $K^{\delta,\ell} = g_{\delta \ell}(K_{\delta(\ell+1)})$.  Let $\varsigma^{\delta,\ell}$ be a measure on $[\delta,\delta(\ell+1)] \times \partial \D$ as in Lemma~\ref{lem::hull} with respect to $K^{\delta,\ell}$, let $\varsigma^\delta = \sum_{\ell=0}^\infty \delta_{[\delta \ell, \delta (\ell+1))}(t) \varsigma^{\delta,\ell}$, and let $(g_t^\delta)$ be the radial Loewner evolution driven by $\varsigma^\delta$.  Then the complement of the domain of $g_{\delta \ell}^\delta$ is equal to $K_{\delta \ell}$ for each $\ell \in \N_0$.  The result follows by taking a limit along a sequence $(\delta_k)$ of positive numbers which decrease to $0$ as $k \to \infty$ such that $\varsigma^{\delta_k}$ converges in $\CN$ to $\varsigma \in \CN$.
\end{proof}

\begin{proposition}
\label{prop::convergence_equivalent}
Let $(\varsigma^n)$ be a sequence in $\CN$.  Suppose that, for each $n \in \N$ and $t \geq 0$, $K_t^n$ is the complement in $\ol{\D}$ of the domain of $g_t^n$ where $t \mapsto g_t^n$ is the radial Loewner evolution driven by $\varsigma^n$.  Then $\varsigma^n$ converges to an element $\varsigma$ of $\CN$ if and only if $(K_t^n)$ converges with respect to the Caratheodory topology to the growing sequence of compact hulls $(K_t)$ in $\ol{\D}$ associated with the radial Loewner evolution driven by $\varsigma$.
\end{proposition}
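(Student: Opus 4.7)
The plan is to prove the nontrivial direction (Carath\'eodory convergence of hulls implies weak convergence of the driving measures) via a standard compactness-plus-uniqueness argument; the forward direction has already been established as part of Proposition~\ref{prop::solve_radial}.

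First I would note that $\CN$ is sequentially compact in its topology. Indeed, for each fixed $T \geq 0$, $\CN_T$ is a weakly closed subset of the finite Borel measures on the compact space $[0,T] \times \partial \D$ with total mass $T$, hence compact by Prokhorov's theorem. Sequential compactness of $\CN$ then follows by a standard diagonal extraction: given any sequence in $\CN$, extract successively a subsequence whose restriction to $[0,n] \times \partial \D$ converges weakly for each $n \in \N$, and observe that the limits are consistent and define an element of~$\CN$.

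Next I would suppose that $(K_t^n)$ converges to $(K_t)$ in the Carath\'eodory sense for every $t$, where $(K_t)$ is the family of hulls associated with the given $\varsigma \in \CN$. To show $\varsigma^n \to \varsigma$ in $\CN$, it suffices to show that every subsequence of $(\varsigma^n)$ has a further subsequence converging to~$\varsigma$. Given any subsequence, use the compactness above to extract a further subsequence $(\varsigma^{n_k})$ converging to some $\varsigma' \in \CN$. Let $(g_t')$ be the radial Loewner flow driven by $\varsigma'$ and $(K_t')$ the associated hulls. By Proposition~\ref{prop::solve_radial}, $(g_t^{n_k}) \to (g_t')$ in $\CG$, which is precisely Carath\'eodory convergence of $(K_t^{n_k})$ to $(K_t')$ at every~$t$. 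Combined with the hypothesis, Carath\'eodory limits being unique forces $K_t' = K_t$ for every $t \geq 0$.

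Finally, I would invoke the uniqueness assertion of Proposition~\ref{prop::hulls_measure}: a family of hulls parameterized by $\log$ conformal radius determines at most one $\varsigma \in \CN$ whose radial Loewner flow produces it. Hence $\varsigma' = \varsigma$, so every subsequence of $(\varsigma^n)$ has a further subsequence converging to $\varsigma$, which gives $\varsigma^n \to \varsigma$ in~$\CN$. There is no real obstacle in this argument beyond assembling the pieces already in place; the only point that warrants care is the diagonal extraction showing sequential compactness of $\CN$ itself (as opposed to each $\CN_T$), since $\CN$ is defined as an inverse limit under the restriction maps $P_T$.
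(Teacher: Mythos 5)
Your proposal is correct and follows essentially the same route as the paper: extract a subsequential limit $\wt\varsigma$ of $(\varsigma^n)$ by compactness of $\CN$, use Proposition~\ref{prop::solve_radial} together with uniqueness of Carath\'eodory limits to identify the hulls driven by $\wt\varsigma$ with $(K_t)$, and conclude $\wt\varsigma = \varsigma$ from the uniqueness part of Proposition~\ref{prop::hulls_measure}. The only difference is that you spell out the Prokhorov/diagonal compactness argument that the paper leaves implicit, which is a reasonable addition but not a change of method.
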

\begin{proof}
That the convergence of $\varsigma^n \to \varsigma$ in $\CN$ implies the Caratheodory convergence of the corresponding families of compact hulls is proved in Proposition~\ref{prop::solve_radial}.  Therefore, we just have to prove the reverse implication.  That is, we suppose that for each $n$, $(K_t^n)$ is a family of compact hulls in $\D$ parameterized by $\log$ conformal radius as seen from $0$ which converge in the Caratheodory sense to $(K_t)$.  For each $n \in \N$, let $\varsigma^n$ be the measure which drives the radial Loewner evolution associated with $(K_t^n)$ and let $\varsigma$ be the measure which drives the radial Loewner evolution associated with $(K_t)$.  Let $\wt{\varsigma}$ be a subsequential limit in $\CN$ of $(\varsigma^n)$.  The Caratheodory convergence of $(K_t^n)$ to $(K_t)$ implies that $\wt{\varsigma}$ drives a radial Loewner evolution whose corresponding family of compact hulls is the same as $(K_t)$, therefore $\varsigma = \wt{\varsigma}$.  This implies that the limit of every convergent subsequence of $(\varsigma^n)$ is given by $\varsigma$, hence $\varsigma^n \to \varsigma$ as $n \to \infty$ as desired.
\end{proof}

\begin{proof}[Proof of Theorem~\ref{thm::measurehullprocesscorrespondence}]
Combine Proposition~\ref{prop::solve_radial}, Proposition~\ref{prop::hulls_measure}, and Proposition~\ref{prop::convergence_equivalent} as explained in the beginning of this section.
\end{proof}

\subsection{Approximations}
\label{subsec::approximation}

\begin{figure}[ht!]
\begin{center}
\includegraphics[scale=0.85]{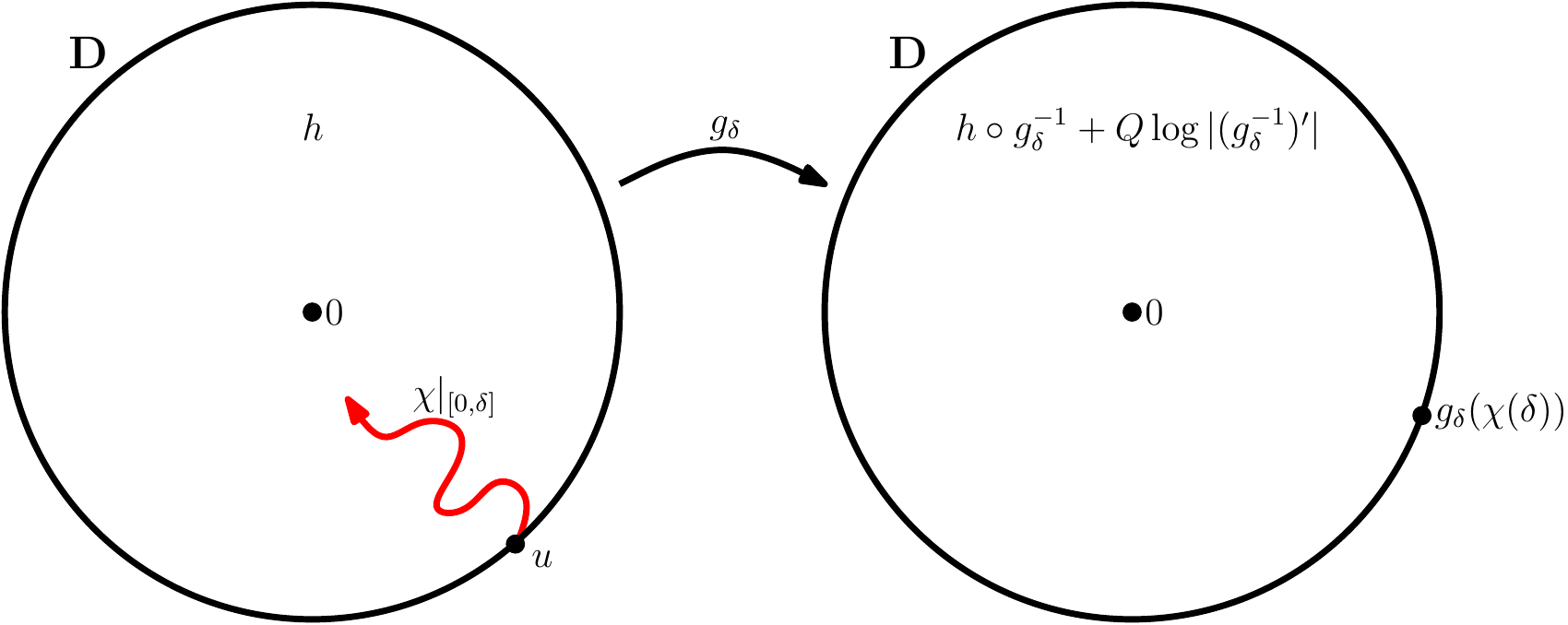}
\end{center}
\caption{\label{fig::zipper_stationary}  Fix $\kappa > 1$ and suppose that $(h,u)$ has the law as described in Proposition~\ref{prop::free_boundary_weighted} (where the role of $\gamma$ in the application of the proposition is played by $2\alpha_\kappa$) plus $-\tfrac{\kappa+6}{2\sqrt{\kappa}} \log|\cdot|$ and let $\nu_{h,\kappa}$ be the $-\tfrac{2}{\sqrt{\kappa}}$-quantum boundary length measure associated with $h$.  Then the conditional law of $h$ given $u$ is that of a free boundary GFF on $\D$ plus $-\tfrac{\kappa+6}{2\sqrt{\kappa}}\log|\cdot| + \tfrac{2}{\sqrt{\kappa}}\log|\cdot-u|$.  By Theorem~\ref{thm::radial_coupling_existence}, the law of the pair $(h,u)$ is invariant under the operation of sampling a radial $\SLE_\kappa$ process in $\D$ starting from $u$ and targeted at $0$ (which given $u$ is conditionally independent of $h$) up to some fixed ($\log$ conformal radius) time $\delta$, mapping back using the (forward) radial Loewner map $g_\delta$ as illustrated above, and applying the change of coordinates formula for quantum surfaces.  Here, $h$ is viewed as a modulo additive constant distribution.  This is the basic operation which is used to construct $\QLE$.}
\end{figure}

We are now going to describe an approximation procedure for generating $\QLE(\gamma^2,\eta)$.  Fix $\kappa > 1$.  Let $(h,u)$ have the law as described in Proposition~\ref{prop::free_boundary_weighted} (where the role of $\gamma$ in the application of the proposition is played by $2\alpha_\kappa$) plus $-\tfrac{\kappa+6}{2\sqrt{\kappa}} \log|\cdot|$ and let $\nu_{h,\kappa}(\partial \D)$ be the $-\tfrac{2}{\sqrt{\kappa}}$-quantum boundary length measure associated with $h$.  Fix $\delta > 0$.  We are now going to describe the dynamics of the triple $(\varsigma_t^\delta,g_t^\delta,\Fh_t^\delta)$ which will be an approximation to $\QLE(\gamma^2,\eta)$.  The random variables $\varsigma_t^\delta dt$, $(g_t^\delta)$, and $(\Fh_t^\delta)$ will take values in $\CN$, $\CG$, and $\CH$ respectively.  The basic operation is illustrated in Figure~\ref{fig::zipper_stationary}.  Consider the Markov chain in which we:

\begin{enumerate}
\item Pick $u \in \partial \D$ according to $\nu_{h,\kappa}$.  By Proposition~\ref{prop::free_boundary_weighted}, the conditional law of $h$ given $u$ is equal to that of the sum of a free boundary GFF on $\D$ plus $-\tfrac{\kappa+6}{2\sqrt{\kappa}} \log |\cdot|+\tfrac{2}{\sqrt{\kappa}}\log|\cdot-u|$.
\item Sample a radial $\SLE_\kappa$ in $\D$ starting from $u$ and targeted at $0$ taken to be conditionally independent of $h$ given $u$.  Let $(g_t)$ be the corresponding family of conformal maps which we assume to be parameterized by $\log$ conformal radius.
\item\label{step::new_h} Replace $h$ with $h \circ (g_\delta^{-1}) + Q \log|(g_\delta^{-1})'|$ where $Q = 2/\gamma+\gamma/2$ with $\gamma=\min(\sqrt{\kappa},\sqrt{16/\kappa})$.
\end{enumerate}
By Proposition~\ref{prop::free_boundary_weighted} and Theorem~\ref{thm::radial_coupling_existence}, we know that this Markov chain preserves the law of $h$.  We use this construction to define the processes $(\varsigma_t^\delta,g_t^\delta,\Fh_t^\delta)$ as follows.  We sample $U^{\delta,0}$ from $\nu_{h,\kappa} = \exp(\alpha_\kappa h)$ and let $W^{\delta,0} = \exp(i\sqrt{\kappa} B^{\delta,0})$ where $B^{\delta,0}$ is a standard Brownian motion independent of $h$, and take $g^\delta|_{[0,\delta)}$ to be the radial Loewner evolution driven by $U^{0,\delta} W^{\delta,0}$.  For each $t \in [0,\delta]$, we let $\Fh^\delta|_{[0,\delta)}$ be given by\footnote{We add the term $\tfrac{\kappa+6}{2\sqrt{\kappa}}\log|\cdot|$ back into the GFF whenever applying $\pHarm$ because $\pHarm$ as defined in Remark~\ref{rem::orthogonal_projection} is defined only for the GFF.} $\pHarm(h \circ (g_t^\delta)^{-1} + Q \log|((g_t^\delta)^{-1})'| + \tfrac{\kappa+6}{2\sqrt{\kappa}} \log|\cdot|)$ normalized so that $\Fh_t^\delta(0) = 0$ for $t \in [0,\delta]$.  Given that $(g_t^\delta)$ and $(\Fh_t^\delta)$ have been defined for $t \in [0,\delta k)$, some $k \in \N$, we sample $U^{\delta,k}$ from $\exp(\alpha_\kappa \Fh_{\delta k}^\delta)$ and let $W^{\delta,k} = \exp(i\sqrt{\kappa} B^{\delta,k})$ where $B^{\delta,k}$ is an independent standard Brownian motion defined in the time interval $[\delta k,\delta(k+1))$ (so that $B_{\delta k}^{\delta,k} = 0$).  We then take $\wt{g}^\delta|_{[\delta k,\delta(k+1))}$ to be the radial Loewner evolution driven by $U^{\delta,k} W^{\delta,k}$ and $g^\delta|_{[\delta k,\delta(k+1))} = \wt{g}^\delta|_{[\delta k,\delta(k+1))} \circ g_{\delta k}^\delta$.  Finally, we take $\Fh^\delta|_{[\delta,\delta(k+1))}$ to be given by $\pHarm(h \circ (g_t^\delta)^{-1}+ Q \log|((g_t^\delta)^{-1})'| + \tfrac{\kappa+6}{2\sqrt{\kappa}} \log|\cdot|)$ normalized so that $\Fh_t^\delta(0) = 0$ for $t \in [\delta k,\delta(k+1))$.

Since $h \circ (g_t^\delta)^{-1} + Q \log|((g_t^\delta)^{-1})'| + \tfrac{\kappa+6}{2\sqrt{\kappa}} \log|\cdot|$  for each $t \geq 0$ is a free boundary GFF on $\D$ plus $2/\sqrt{\kappa}$ times the log function centered at an independent point on $\partial \D$, the orthogonal projections used to define $\Fh_t^\delta$ are almost surely defined for Lebesgue almost all $t \geq 0$ simultaneously; recall Proposition~\ref{prop::gff_markov}.  We can extend the definition of $\Fh_t^\delta$ so that it makes sense almost surely for all $t \geq 0$ simultaneously as follows.  By induction, it is easy to see that the complement $K_t^\delta$ in $\ol{\D}$ of the domain of $g_t^\delta$ is a local set for (the GFF part of) $h$ for each $t \geq 0$.  Hence, Proposition~\ref{prop::cond_mean_continuous} implies that $\Fh_t^\delta$ is almost surely continuous as a function $[0,\infty) \times \partial \D \to \R$.  (This point is explained in more detail in Lemma~\ref{lem::local_tightness} below.)

Let
\begin{equation}
\label{eqn::approx_measure}
\varsigma_t^\delta = \sum_{\ell=0}^\infty \one_{[\delta \ell,\delta(\ell+1))}(t) \delta_{U^{\delta,\ell} W^{\delta,\ell}}.
\end{equation}
That is, $\varsigma_t^\delta$ for $t \in [\delta \ell,\delta(\ell+1))$ and $\ell \in \N$ is given by the Dirac mass located at $U^{\delta,\ell} W^{\delta,\ell} \in \partial \D$.  Then $(g_t^\delta)$ is the radial Loewner evolution driven by $\varsigma_t^\delta$.  That is, $(g_t^\delta)$ solves
\begin{equation}
\label{eqn::approx_radial}
\dot{g}_t^\delta(z) = \int_{\partial \D} \Phi(u,g_t^\delta(z)) d\varsigma_t^\delta(u), \quad\quad g_0^\delta(z) = z.
\end{equation}
(Recall \eqref{eqn::radial_functions}.)  We emphasize that by Theorem~\ref{thm::radial_coupling_existence} we have
\[ h \circ (g_t^\delta)^{-1} + Q\log|((g_t^\delta)^{-1})'| \stackrel{d}{=} h \quad\text{for all}\quad t \geq 0\]
as modulo additive constant distributions.  In particular, the law of $(\Fh_t^\delta)$ is stationary in $t$.

For our later arguments, it will be more convenient to consider the measure $d\varsigma_t^\delta dt$ on $[0,\infty) \times \partial \D$ in place of $\varsigma_t^\delta$, which for each $t \geq 0$ is a measure on $\partial \D$.  Note that this is random variable which takes values in $\CN$.  We also note that $(g_t^\delta)$ takes values in $\CG$ and $(\Fh_t^\delta)$ takes values in $\CH$.

\begin{definition}
\label{def::delta_qle}
We call the triple $(\varsigma_t^\delta,g_t^\delta,\Fh_t^\delta)$ constructed above the {\bf $\delta$-approximation} to $\QLE(\gamma^2,\eta)$.
\end{definition}

Note that the dynamics $(\varsigma_t^\delta, g_t^\delta, \Fh_t^\delta)$ satisfy two of the arrows from Figure~\ref{fig::QLEtriangle}.  Namely, $g_t^\delta$ is obtained from $\varsigma_t^\delta$ by solving the radial Loewner equation and $\Fh_t^\delta$ is obtained from $g_t^\delta$ using $\pHarm(h \circ (g_t^{\delta})^{-1} + Q \log|((g_t^\delta)^{-1})'| + \tfrac{\kappa+6}{2\sqrt{\kappa}}\log|\cdot|)$ (and then normalized to vanish at the origin).  However, $\varsigma_t^\delta$ is not obtained from $\Fh_t^\delta$ via exponentiation.  (Rather, $\varsigma_t^\delta$ is given by a Dirac mass at a point in $\partial \D$ which \emph{is sampled from} the measure given by exponeniating $\Fh_t^\delta$.)  In Section~\ref{subsec::tightness}, we will show that each of the elements of $(\varsigma_t^\delta,g_t^\delta,\Fh_t^\delta)$ is tight (on compact time intervals) with respect to a suitable topology as $\delta \to 0$.  In Section~\ref{subsec::existence}, we will show that both of the aforementioned arrows for the $\QLE(\gamma^2,\eta)$ dynamics still hold for the subsequentially limiting objects $(\varsigma_t, g_t, \Fh_t)$.  We will complete the proof by showing that $\varsigma_t$ is equal to the measure $\nu_t$ which is given by exponentiating $\Fh_t$, hence the triple $(\nu_t,g_t,\Fh_t)$ satisfies all three arrows of the $\QLE(\gamma^2,\eta)$ dynamics.

\subsection{Tightness}
\label{subsec::tightness}

\begin{figure}[ht!]
\begin{center}
\includegraphics[scale=0.85,page=1,trim = 0.15cm 3.cm 0.25cm 0]{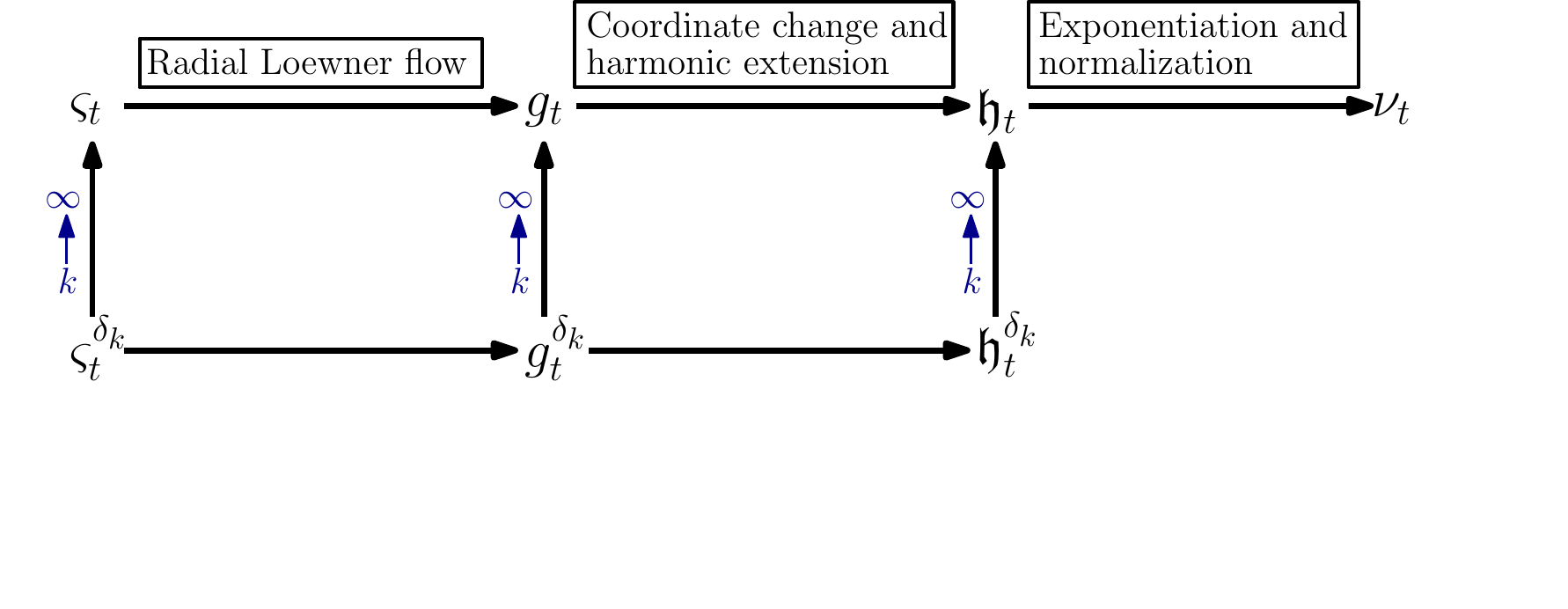}
\end{center}
\caption{\label{fig::tightness_scheme} In Proposition~\ref{prop::all_tight} of Section~\ref{subsec::tightness}, we prove the tightness of each of the elements of the triple $(\varsigma_t^\delta,g_t^\delta,\Fh_t^\delta)$ (on compact time intervals) in the $\delta$-approximation to $\QLE(\gamma^2,\eta)$ as $\delta \to 0$ with respect to the topologies introduced in Section~\ref{subsec::spaces}.  The subsequentially limiting objects $(\varsigma_t,g_t,\Fh_t)$ are related to each other in the same way as $(\varsigma_t^\delta,g_t^\delta,\Fh_t^\delta)$ and as indicated above.  Namely, $g_t$ is generated from $\varsigma_t$ by solving the radial Loewner equation and $\Fh_t$ is related to $g_t$ in that it is given by $\pHarm(h \circ g_t^{-1} + Q\log|(g_t^{-1})'|+\tfrac{\kappa+6}{2\sqrt{\kappa}}\log|\cdot|)$ (normalized to vanish at the origin).  The measure $\nu_t$ is obtained from $\Fh_t$ by exponentiation and is constructed in Proposition~\ref{prop::measure_limit_exists}.  Upon proving tightness, the existence of $\QLE(\gamma^2,\eta)$ is established by showing that $\nu_t = \varsigma_t$.  This is completed in Section~\ref{subsec::existence}.
}
\end{figure}

The purpose of this section is to establish Proposition~\ref{prop::all_tight}, which gives the existence of subsequential limits of the triple $(\varsigma_t^\delta, g_t^\delta, \Fh_t^\delta)$ viewed as a random variable taking values in $\CN \times \CG \times \CH$ as $\delta \to 0$.  We begin with the following two lemmas which are general results about local sets for the GFF.

\begin{lemma}
\label{lem::limits_are_local}
Suppose that $(h_n,K_n)$ is a sequence such that, for each $n$, $h_n$ is a GFF on $\D$ (with Dirichlet, free, or mixed boundary conditions and the same boundary conditions for each $n$) and $K_n \subseteq \ol{\D}$ is a local set for $h_n$.  Fix $a > 0$.  Assume that $(h_n,K_n)$ are coupled together so that $h_n \to h$ (resp.\ $K_n \to K$) almost surely as $n \to \infty$ in $\CD_a$ (resp.\ the Hausdorff topology) where $h$ is a GFF on $\D$ and $K \subseteq \ol{\D}$ is closed.  Then $K$ is local for $h$.  If $\CC_{K_n}$ (resp.\ $\CC_K$) denotes the conditional expectation of $h_n$ (resp.\ $h$) given $K_n$ and $h|_{K_n}$ (resp.\ $K$ and $h|_K$) and $\CC_{K_n} \to F$ locally uniformly almost surely for some function $F \colon \D \setminus K \to \R$, then $F = \CC_K$.
\end{lemma}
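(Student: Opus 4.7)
I will work on the given almost-sure coupling where $h_n \to h$ in $\CD_a$ and $K_n \to K$ in the Hausdorff topology. To show that $K$ is local for $h$, my plan is to verify directly the decomposition characterization of locality (\cite[Lemma~3.9, part (4)]{ss2010contour}, as quoted in the text): the existence of a distribution $h_1$ harmonic off $K$ and, conditional on $(K, h_1)$, an independent zero-boundary GFF $h_2$ on $\D \setminus K$ with $h = h_1 + h_2$. For each $n$, locality of $K_n$ gives such a decomposition $h_n = \CC_{K_n} + h_n^*$, and the strategy is to extract and identify a limit.

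\textbf{Second assertion.} Assume $\CC_{K_n} \to F$ locally uniformly on $\D \setminus K$. Since each $\CC_{K_n}$ is harmonic on $\D \setminus K_n$ and Hausdorff convergence $K_n \to K$ ensures that every compact $V \subseteq \D \setminus K$ lies in $\D \setminus K_n$ for all large $n$, the limit $F$ is harmonic on $\D \setminus K$. The centered residuals $h_n^* := h_n - \CC_{K_n}$ are, conditional on $K_n$, zero-boundary GFFs on $\D \setminus K_n$, and the a.s.\ convergence $h_n \to h$ in $\CD_a$ together with the uniform convergence of $\CC_{K_n}$ to $F$ on compacts of $\D \setminus K$ implies $h_n^* \to h - F$ when tested against any $f \in C_0^\infty(\D)$ whose support is eventually contained in $\D \setminus K_n$. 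The Gaussian covariance $\iint f(x)\, G^{\D \setminus K_n}(x,y)\, f(y)\, dx\, dy$ converges to the analogous expression with $G^{\D \setminus K}$ by stability of the Dirichlet Green's function under the Hausdorff convergence at play. Hence $h - F$ is (conditionally on $K$) a centered Gaussian with covariance given by the Dirichlet Green's function of $\D \setminus K$, i.e.\ a zero-boundary GFF on $\D \setminus K$. This produces a decomposition $h = F + (h-F)$ of the required form, and uniqueness of the harmonic piece (determined by $K$ together with the boundary values of $h$ on $\partial K$) forces $F = \CC_K$.

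\textbf{First assertion and main obstacle.} Without the extra assumption, one must still produce \emph{some} such decomposition for $(h, K)$. The family $\{\CC_{K_n}\}$ is uniformly bounded on each compact $V \subseteq \D \setminus K$ (with bound depending only on $\dist(V, K)$, via standard estimates for the harmonic extension of boundary values of a GFF viewed from points well inside the domain), so Montel's theorem lets us pass to a further subsequence along which $\CC_{K_n} \to F$ locally uniformly on $\D \setminus K$ for some harmonic $F$. The argument of the previous paragraph then applies to that subsequence to produce the desired decomposition for $(h, K)$, proving locality of $K$. The main obstacle is the stability of the zero-boundary GFF (equivalently, of the Dirichlet Green's function) under mere Hausdorff convergence of complements, since Hausdorff convergence could in principle tolerate filaments of $K_n$ pinching off components of $\D \setminus K_n$ that disappear in the limit; controlling this potential pathology, either by strengthening the mode of convergence of the complements or by ruling out such degenerations in the settings of interest (where the $K_n$ arise as conformal images of disks under radial Loewner flows), is where the technical content of a full proof must be concentrated.
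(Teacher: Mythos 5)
You have correctly identified the main obstacle in your own argument, but it is not a technicality that can be ``concentrated'' somewhere and dispatched: the convergence $G^{\D\setminus K_n}\to G^{\D\setminus K}$ of Dirichlet Green's functions is simply false under mere Hausdorff convergence of the $K_n$. For instance, let $K_n$ consist of $n$ points becoming dense in a closed sub-disk $D_0$ and take $h_n = h$; then $K_n\to\ol{D_0}$ in the Hausdorff sense, each $K_n$ is polar so $G^{\D\setminus K_n}=G^{\D}$ and $\CC_{K_n}\equiv 0$, while $G^{\D\setminus\ol{D_0}}\neq G^{\D}$. Since both of your assertions are routed through this covariance computation, the argument as written does not close. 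A secondary issue is the Montel step: $\CC_{K_n}(z)$ is a Gaussian-type random variable, so there is no deterministic bound on $\sup_n\sup_{z\in V}|\CC_{K_n}(z)|$ ``depending only on $\dist(V,K)$''; an almost sure locally uniform bound requires quantitative moment/continuity estimates uniform in $n$ (this is exactly what Lemma~\ref{lem::local_tightness} supplies, and it is why the lemma takes the locally uniform convergence of $\CC_{K_n}$ as a \emph{hypothesis} rather than deriving it).

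The paper's proof never touches the Green's function. For locality of $K$ it uses the conditional-independence characterization of local sets from \cite[Lemma~3.9]{ss2010contour}: for a fixed open $B\subseteq\D$, locality of $K_n$ is equivalent to the statement that $\pSupp(h_n;B)$ is independent of the triple $(S_n,\wt{K}_n,\pHarm(h_n;B))$, where $S_n=\{K_n\cap B\neq\emptyset\}$ and $\wt{K}_n=K_n$ on $S_n^c$, $\emptyset$ otherwise. Independence of random variables survives almost sure convergence, so the same independence holds for $(h,K)$, which is locality of $K$. For the second assertion the paper does not attempt to identify $\wt{h}=\lim_n(h_n-\CC_{K_n})$ as a zero-boundary GFF on $\D\setminus K$ by computing its covariance; it instead invokes the just-proved locality of $K$ to write $h=\CC_K+\wh{h}$, notes that $\CC_K-F=\wt{h}-\wh{h}$ is harmonic in $\D\setminus K$, and shows it is $(\cdot,\cdot)_\nabla$-orthogonal to every function harmonic in $\D\setminus K$ (since each $\wt{h}_n$ and $\wh{h}$ annihilate such test functions), hence vanishes. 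If you wish to salvage your route, you would need to strengthen the hypothesis on the convergence of the complements $\D\setminus K_n$ to something implying Green's function stability, which would make the lemma too weak for its intended application; the better fix is to switch to the independence characterization.
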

\begin{proof}
The proof that $K$ is a local set for $h$ is similar to that of \cite[Lemma~4.6]{ss2010contour}.  In particular, we will make use of the second characterization of local sets from \cite[Lemma~3.9]{ss2010contour}.  Fix a deterministic open set $B \subseteq \D$.  For each $n \in \N$, we let $S_n$ be the event that $K_n \cap B \neq \emptyset$ and let $\wt{K}_n = K_n$ on $S_n^c$ and $\wt{K}_n = \emptyset$ otherwise.  We also let $S$ be the event that $K \cap B \neq \emptyset$ and let $\wt{K} = K$ on $S^c$ and $\wt{K} = \emptyset$ otherwise.  For each $n \in \N$, let $h_n^1 = \pHarm(h_n;B)$ and $h_n^2 = \pSupp(h_n;B)$ and define $h^1,h^2$ analogously for $h$.  Since $h^1$ is independent of $h^2$ (recall Proposition~\ref{prop::gff_markov}), it suffices to show that $h^2$ is independent of the triple $(S,\wt{K},h^1)$.  Since $K_n$ is local for $h_n$, the second characterization of local sets from \cite[Lemma~3.9]{ss2010contour} implies that $h_n^2$ is independent of the triple $(S_n,\wt{K}_n,h_n^1)$ for each $n \in \N$.  The result therefore follows because this implies that the independence holds in the $n \to \infty$ limit.

Suppose that $\CC_{K_n} \to F$ locally uniformly almost surely for some $F \colon \D \setminus K \to \R$.  Then $F$ is almost surely harmonic since each $\CC_{K_n}$ is harmonic.  Since $K_n$ is local for $h_n$ we can write $h_n = \wt{h}_n + \CC_{K_n}$ where $\wt{h}_n$ is a zero-boundary GFF on $\D \setminus K_n$.  Fix $\epsilon > 0$.  Since $h_n \to h$ in $\CD_a$ it follows that $h_n \to h$ in $(-\Delta)^a L^2((1-\epsilon)\D \setminus K)$ as $n \to \infty$.  The local uniform convergence of $\CC_{K_n}$ to $F$ in $\D \setminus K$ as $n \to \infty$ implies that $\CC_{K_n} \to F$ in $(-\Delta)^a L^2(V)$ for all $V \subseteq \ol{\D} \setminus K$ with $\dist(V, K \cup \partial \D) >0$ as $n \to \infty$.  Combining, we have that $\wt{h}_n$ converges to some $\wt{h}$ in $(-\Delta)^a L^2(V)$ as $n \to \infty$ for such $V$.  Since this holds for all such $V$, we have that $h = \wt{h} + F$ and $\wt{h}$ is a zero-boundary GFF in $\D \setminus K$.  Since $K$ is local for $h$, $\wh{h} = h - \CC_K = \wt{h} + F - \CC_K$ is a zero-boundary GFF on $\D \setminus K$.  Rearranging, we have that $\wt{h} - \wh{h} =  \CC_K - F$.  If $\phi$ is harmonic in $\D \setminus K$, then we have that $(\CC_K - F,\phi)_\nabla = (\wt{h}-\wh{h},\phi)_\nabla = 0$.  Since this holds for all such $\phi$, we have that $\CC_K - F = 0$, desired.
\end{proof}

Proposition~\ref{prop::cond_mean_continuous} gives that if $(K_t)$ is an increasing family of local sets for a GFF $h$ on $\D$ parameterized by $\log$ conformal radius as seen from a given point $z \in \D$, then $\CC_{K_t}(z)$ evolves as a Brownian motion as $t$ varies but $z$ is fixed.  This in particular implies that $\CC_{K_t}(z)$ is continuous in $t$.  We are now going to extend this to show that $\CC_{K_t}(z)$ is continuous in both $t$ and $z$.

\begin{lemma}
\label{lem::local_tightness}
Suppose that $h$ is a GFF on $\D$ (with Dirichlet, free, or mixed boundary conditions) and that $(K_t)$ is an increasing family of local sets for $h$ parameterized so that $-\log \confrad(0;\D \setminus K_t) = t$ for all $t \in [0,T]$ where $T > 0$ is fixed.  Then the function $[0,T]  \times B(0,\tfrac{1}{16} e^{-T}) \to \R$ given by $(t,z) \mapsto \CC_{K_t}(z)$ has a modification which is H\"older continuous with any exponent strictly smaller than $1/2$.  The H\"older norm of the modification depends only on $T$ and and the boundary data for $h$.  (In the case that $h$ has free boundary conditions, we fix the additive constant for $h$ so that $\CC_{K_0}(0) = 0$.)
\end{lemma}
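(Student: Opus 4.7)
The plan is to establish Gaussian-type moment bounds on the increments of $(t,z) \mapsto \CC_{K_t}(z)$ in both coordinates and then apply the Kolmogorov-Centsov continuity theorem on the three-dimensional parameter space $[0,T] \times B(0, \tfrac{1}{16}e^{-T})$. Using Proposition~\ref{prop::gff_markov}, one first decomposes $h = h_1 + h_2$, where $h_1 = \CC_{K_0}$ is deterministic given $K_0$ and the boundary data, and $h_2$ is an independent zero-boundary GFF on $\D \setminus K_0$. Since each $K_t$ is local for $h_2$ as well, $\CC_{K_t}$ equals $h_1$ plus the analogous conditional mean defined from $h_2$, and the contribution of $h_1$ is a deterministic harmonic function on $\D \setminus K_0$ whose H\"older norm on $B(0, \tfrac{1}{16}e^{-T})$ depends only on $T$ and the boundary data. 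This reduces the problem to the zero-boundary case, in which $\CC_{K_0} \equiv 0$.

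For the temporal regularity at a fixed $z \in B(0, \tfrac{1}{16}e^{-T})$, Proposition~\ref{prop::cond_mean_continuous} identifies $t \mapsto \CC_{K_t}(z)$ with a Brownian motion in the clock $T_t(z) := \log \confrad(z;\D\setminus K_0) - \log \confrad(z;\D\setminus K_t)$. The Koebe 1/4 theorem gives $B(0, \tfrac{1}{4}e^{-T}) \subset \D \setminus K_t$ for every $t \in [0,T]$, so $z$ remains at uniformly positive distance from the hull. Writing $\confrad(z; \D \setminus K_t) = (1-|g_t(z)|^2)/|g_t'(z)|$ with $g_t \colon \D \setminus K_t \to \D$ the Loewner map, and differentiating using the radial Loewner equation together with Koebe distortion bounds on $g_t$, one obtains a constant $C_T$ depending only on $T$ with $|T_t(z) - T_s(z)| \leq C_T |t - s|$ uniformly in $z \in B(0, \tfrac{1}{16}e^{-T})$ and $s,t \in [0,T]$. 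Gaussian moments then give $\E[|\CC_{K_t}(z) - \CC_{K_s}(z)|^p] \leq C_{T,p} |t-s|^{p/2}$ for every $p \geq 1$.

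For the spatial regularity at fixed $t$, harmonicity of $\CC_{K_t}$ on $\D \setminus K_t \supset B(0, \tfrac{1}{4}e^{-T})$ combined with the standard gradient estimate for harmonic functions yields $|\CC_{K_t}(z) - \CC_{K_t}(w)| \leq C_T |z-w| \cdot M_t$ for $z, w \in B(0, \tfrac{1}{16}e^{-T})$, where $M_t := \sup_{|\zeta| \leq \tfrac{1}{8}e^{-T}} |\CC_{K_t}(\zeta)|$. The restriction of $\CC_{K_t}$ to this smaller disk is a centered Gaussian process whose covariance pseudometric is bounded uniformly in $t \in [0,T]$ by a multiple of $|z-w|$ (via the Poisson representation of $\CC_{K_t}$ on $\D \setminus K_t$ together with Koebe distortion estimates), so the Borell-TIS inequality gives $\E[M_t^p] \leq C_{T,p}$ uniformly in $t$. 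Hence $\E[|\CC_{K_t}(z) - \CC_{K_t}(w)|^p] \leq C_{T,p} |z-w|^p$.

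Combining the two estimates via the triangle inequality and using $|t-s|^{1/2} + |z-w| \leq 2\,|(t-s, z-w)|^{1/2}$ on bounded regions yields
\[\E[|\CC_{K_t}(z) - \CC_{K_s}(w)|^p] \leq C_{T,p}\, |(t-s, z-w)|^{p/2},\]
where $|\cdot|$ denotes Euclidean distance on $\R^3$. Taking $p > 6$ and invoking the multi-parameter Kolmogorov-Centsov continuity theorem produces a modification that is H\"older continuous with any exponent strictly less than $\tfrac{1}{2} - 3/p$; letting $p \to \infty$ gives any exponent strictly less than $1/2$. The main technical point is the uniform-in-$t$ Loewner control: both the Lipschitz bound on the clock $T_t(z)$ and the uniform bound on the covariance pseudometric of $\CC_{K_t}|_{B(0, \tfrac{1}{8}e^{-T})}$ require careful use of Koebe distortion to exploit the fact that $z$ remains deep inside $\D \setminus K_t$.
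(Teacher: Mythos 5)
Your overall architecture matches the paper's: moment bounds on the increments in $t$ (via Proposition~\ref{prop::cond_mean_continuous}) and in $z$ (via harmonicity), followed by Kolmogorov--Centsov. The temporal half is fine --- the Lipschitz control of the clock $T_t(z)$ via Koebe distortion is a correct (and more detailed) justification of the step the paper attributes directly to Proposition~\ref{prop::cond_mean_continuous}. The problem is in the spatial half.

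You assert that the restriction of $\CC_{K_t}$ to $B(0,\tfrac{1}{8}e^{-T})$ is a centered Gaussian process and apply Borell--TIS to it. But $\CC_{K_t}$ is not a Gaussian process: $K_t$ is a random set coupled with $h$, and for a general local set the definition only guarantees a joint law on pairs $(K_t,\CC_{K_t})$ together with a conditionally independent zero-boundary GFF on $\D\setminus K_t$; neither the unconditional law of $\CC_{K_t}$ nor its conditional law given $K_t$ need be Gaussian. (Your claim that the covariance pseudometric is $O(|z-w|)$ is also off --- for circle averages at fixed radius the variance of a difference scales like $|z-w|$, so the pseudometric is $O(|z-w|^{1/2})$ --- though that particular point would not by itself sink the argument, since you only need a uniform bound on $\E[M_t^p]$.) The way around this, which is what the paper does, is to use locality to write $\CC_{K_t}=h-h_t$ with $h_t$ a zero-boundary GFF on $\D\setminus K_t$ \emph{conditionally on} $(K_t,\CC_{K_t})$, and then, using the mean-value property of the harmonic function $\CC_{K_t}$ at the fixed radius $\epsilon=\tfrac{1}{16}e^{-T}$, express $\CC_{K_t}(z)-\CC_{K_t}(w)$ as $\bigl(h_\epsilon(z)-h_\epsilon(w)\bigr)-\bigl(h_{\epsilon,t}(z)-h_{\epsilon,t}(w)\bigr)$. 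Each piece is Gaussian (the second one conditionally on $K_t$, with conditional variance controlled uniformly since the Green's function of $\D\setminus K_t$ is dominated by that of $\D$), and the argument of \cite[Proposition~3.1]{ds2011kpz} gives $\E[(\CC_{K_t}(z)-\CC_{K_t}(w))^p]\le C|z-w|^{p/2}$, which is all Kolmogorov--Centsov needs since the temporal exponent $|t-s|^{p/2}$ is the bottleneck anyway. Your gradient-estimate route could also be repaired by bounding $\E[M_t^p]$ through this same decomposition (applying Gaussian concentration to $h_\epsilon$ and, conditionally, to $h_{\epsilon,t}$), but as written the Gaussianity of $\CC_{K_t}$ is assumed rather than available. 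A smaller issue of the same flavor: your opening reduction asserts that each $K_t$ is local for the zero-boundary part $h_2$ of $h$, which again is a statement about the coupling that you have not justified; the paper avoids this by working with $h$ and its general boundary data directly.
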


The reason that Lemma~\ref{lem::local_tightness} is stated for $z \in B(0,\tfrac{1}{16} e^{-T})$ is that the Koebe one-quarter theorem implies that $B(0,\tfrac{1}{4} e^{-T}) \subseteq \D \setminus K_t$ for all $t \in [0,T]$.  In particular, $B(0,\tfrac{1}{16} e^{-T})$ has positive distance from $K_t$ for all $t \in [0,T]$.  By applying Lemma~\ref{lem::local_tightness} iteratively, we see that $\CC_{K_t}(z)$ is in fact continuous for all $z,t$ pairs such that $z$ is contained in the component of $\D \setminus K_t$ containing the origin.

\begin{proof}[Proof of Lemma~\ref{lem::local_tightness}]
We are going to prove the result using the Kolmogorov-Centsov theorem.  Fix $0 \leq s \leq t \leq T$ and $z,w \in B(0,\tfrac{1}{16} e^{-T})$.  Since $K_t$ is local for $h$, we can write $h = h_t + \CC_{K_t}$ where $h_t$ is a zero-boundary GFF on $\D \setminus K_t$.  Re-arranging, we have that $\CC_{K_t} = h - h_t$.  Let $h_\epsilon$ (resp.\ $h_{\epsilon,t}$) be the circle average process for $h$ (resp.\ $h_t$).  Taking $\epsilon = \tfrac{1}{16} e^{-T}$ so that $z \in B(0,\tfrac{1}{16} e^{-T})$ implies $B(z,\tfrac{1}{16} e^{-T}) \subseteq B(0,\tfrac{1}{8} e^{-T})$ in what follows, we have that
\[ \CC_{K_t}(z) - \CC_{K_t}(w) = \big( h_\epsilon(z) - h_\epsilon(w) \big) - \big( h_{\epsilon,t}(z) - h_{\epsilon,t}(w) \big).\]
The same argument as in the proof of \cite[Proposition~3.1]{ds2011kpz} applied to both $h_\epsilon$ and $h_{\epsilon,t}$ implies that for each $p \geq 2$ there exists a constant $C > 0$ such that
\begin{equation}
\label{eqn::tightness_change_point}
\E\left[ \big( \CC_{K_t}(z) - \CC_{K_t}(w) \big)^p\right] \leq C|z-w|^{p/2}.
\end{equation}
Proposition~\ref{prop::cond_mean_continuous} also implies that for each $p \geq 2$ there exists a constant $C > 0$ such that
\begin{equation}
\label{eqn::tightness_change_time}
\E\left[ \big( \CC_{K_t}(z) - \CC_{K_s}(z) \big)^p \right] \leq C|t-s|^{p/2}.
\end{equation}
Combining~\eqref{eqn::tightness_change_point}, \eqref{eqn::tightness_change_time} with the inequality $(a+b)^p \leq 2^p (a^p + b^p)$ implies that for each $p \geq 2$ there exists a constant $C > 0$ such that
\begin{equation}
\label{eqn::tightness_change_both}
\E\left[ \big( \CC_{K_t}(z) - \CC_{K_s}(w) \big)^p \right] \leq C(|z-w|^{p/2} + |t-s|^{p/2}).
\end{equation}
The desired result thus follows from the Kolmogorov-Centsov theorem \cite{ry1999martingales,ks1991stoccalc}.
\end{proof}

\begin{proposition}
\label{prop::all_tight}
There exists a sequence $(\delta_k)$ in $(0,\infty)$ decreasing to $0$ such that the following is true.  There exists a coupling of the laws of $h_k$, $(\varsigma_t^{\delta_k})$, $(g_t^{\delta_k})$, and $(\Fh_t^{\delta_k})$ as $k \in \N$ varies --- where $h_k$ denotes the GFF used to generate $(\varsigma_t^{\delta_k})$, $(\Fh_t^{\delta_k})$, and $(g_t^{\delta_k})$ --- and limiting processes $h \in \CD_a$ (some $a > 0$), $\varsigma \in \CN$, $(g_t) \in \CG$, and $(\Fh_t) \in \CH$ such that $h_k$, $\varsigma_t^{\delta_k} dt$, $(g_t^{\delta_k})$, and $(\Fh_t^{\delta_k})$ almost surely converge to $h$, $\varsigma$, $(g_t)$, and $(\Fh_t)$ respectively, in $\CD_a$, $\CN$, $\CG$, and $\CH$.  Moreover, $(g_t)$ is the radial Loewner evolution generated by $\varsigma$ and $\Fh_t$ for each $t \geq 0$ is almost surely given by $\pHarm(h \circ g_t^{-1} + Q\log|(g_t^{-1})'|+\tfrac{\kappa+6}{2\sqrt{\kappa}}\log|\cdot|)$ (normalized to vanish at the origin).  Finally,
\begin{equation}
\label{eqn::little_g_stationary}
h \circ g_t^{-1} + Q\log|(g_t^{-1})'| \stackrel{d}{=} h \quad\text{for each}\quad t \geq 0
\end{equation}
as modulo additive constant distributions.
\end{proposition}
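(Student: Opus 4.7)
The plan is to establish tightness of each of the four random variables $h_k$, $\varsigma^{\delta_k}dt$, $(g_t^{\delta_k})$, $(\Fh_t^{\delta_k})$ separately (on compact time intervals), pass to a joint subsequential weak limit, invoke the Skorohod representation theorem on the product of the separable spaces $\CD_a \times \CN \times \CG \times \CH$ to upgrade this to almost sure convergence under a suitable coupling, and finally identify the limit.

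First I would verify tightness of the four factors. For $h_k$, the law is completely fixed (a free boundary GFF plus deterministic log singularities, with the marked point $u$ independent and uniform), so tightness in $\CD_a$ is immediate for any $a>0$. For $\varsigma^\delta dt$, observe that for each fixed $T>0$ the space $\CN_T$ of probability measures on $[0,T]\times\partial \D$ with first marginal Lebesgue is weakly closed inside the compact space of probability measures on the compact set $[0,T]\times\partial\D$, hence $\CN_T$ is compact, and consequently $\CN$ (a countable product structure, metrized by $d_\CN$) is itself compact. In particular, tightness of $\varsigma^\delta dt$ is automatic. For $(g_t^\delta)$, Proposition~\ref{prop::solve_radial} gives continuity of the map $\CN\to\CG$ sending a driving measure to the solution of the radial Loewner equation; hence tightness of $\varsigma^\delta dt$ in $\CN$ transfers directly to tightness of $(g_t^\delta)$ in $\CG$. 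For $(\Fh_t^\delta)$, the key input is Lemma~\ref{lem::local_tightness}: at each step of the construction in Section~\ref{subsec::approximation} the complementary hull $K_t^\delta$ is a local set for the GFF part of $h$, and Theorem~\ref{thm::radial_coupling_existence} together with Proposition~\ref{prop::free_boundary_weighted} ensures that $h\circ(g_t^\delta)^{-1}+Q\log|((g_t^\delta)^{-1})'|$ has, for every $t$, the same law as $h$. Consequently the boundary data of the field driving $\Fh_t^\delta$ is stationary in law, and Lemma~\ref{lem::local_tightness} yields uniform (in $\delta$) moment bounds of Kolmogorov--Centsov type on $(t,z)\mapsto \Fh_t^\delta(z)$ on any set of the form $[0,T]\times\overline{B(0,1-\epsilon)}$, which by Arzel\`a--Ascoli gives tightness in $\CH$.

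Next I would extract $\delta_k\downarrow 0$ along which the quadruple $(h_{\delta_k},\varsigma^{\delta_k}dt,g^{\delta_k},\Fh^{\delta_k})$ converges jointly in law in $\CD_a\times\CN\times\CG\times\CH$, and apply Skorohod's representation theorem (valid because all four spaces are separable metric spaces) to realize this as almost sure convergence under a new coupling, producing a limit quadruple $(h,\varsigma,(g_t),(\Fh_t))$. The Loewner identity $\dot g_t=\int\Phi(u,g_t)\,d\varsigma_t$ passes to the limit by Proposition~\ref{prop::solve_radial}: the measure-to-map correspondence is continuous, so $(g_t)$ is precisely the radial Loewner flow driven by $\varsigma$. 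To identify $\Fh_t$ as the normalized $\pHarm$ projection, I would apply Lemma~\ref{lem::limits_are_local}: each $K_t^{\delta_k}$ is local for $h_{\delta_k}$; under the Skorohod coupling both $h_{\delta_k}\to h$ in $\CD_a$ and $K_t^{\delta_k}\to K_t$ in the Hausdorff sense (the latter following from Carath\'eodory convergence of $g^{\delta_k}\to g$); moreover $\CC_{K_t^{\delta_k}}=\Fh_t^{\delta_k}\circ g_t^{\delta_k}$ (plus the $\log$ corrections from the coordinate change), and these converge locally uniformly by construction. Lemma~\ref{lem::limits_are_local} then forces the limit to equal $\CC_{K_t}$, which is exactly $\pHarm(h\circ g_t^{-1}+Q\log|(g_t^{-1})'|+\tfrac{\kappa+6}{2\sqrt{\kappa}}\log|\cdot|)\circ g_t$ after accounting for the additive normalization $\Fh_t(0)=0$. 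Finally, stationarity \eqref{eqn::little_g_stationary} holds exactly for each $\delta$-approximation by Theorem~\ref{thm::radial_coupling_existence}, and this distributional identity survives taking the $k\to\infty$ limit in the appropriate distributional topology.

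The main obstacle I anticipate is the identification step in the limit, specifically verifying that $\Fh_t$ agrees with the harmonic projection of the rotated--rescaled field at the limit, since $\pHarm$ is not continuous on all of $\CD_a$ and the hulls $K_t$ may be genuinely fractal; this is where Lemma~\ref{lem::limits_are_local} is essential, and one must check its hypotheses (joint Hausdorff convergence of $K_t^{\delta_k}$, locality at every approximation level, and local uniform convergence of the $\CC_{K_t^{\delta_k}}$) carefully, uniformly in $t$ over compact intervals. A secondary technical point is ensuring that the coordinate-change log corrections $Q\log|((g_t^{\delta_k})^{-1})'|$ and $\tfrac{\kappa+6}{2\sqrt{\kappa}}\log|\cdot|$ converge appropriately; this is handled by the Carath\'eodory convergence of the inverse maps together with the Koebe one-quarter estimates already used in Lemma~\ref{lem::local_tightness}.
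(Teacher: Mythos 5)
Your proposal follows essentially the same route as the paper: separate tightness of the four factors (with the law of $h$ fixed, $\CN$ compact, the Loewner maps equicontinuous, and Lemma~\ref{lem::local_tightness} plus stationarity handling $(\Fh_t^\delta)$), a joint subsequential limit upgraded to almost sure convergence via Skorohod, identification of $(g_t)$ through the measure--hull correspondence, identification of $\Fh_t$ through Lemma~\ref{lem::limits_are_local}, and passage of the distributional identity \eqref{eqn::little_g_stationary} to the limit. All of that matches the paper's argument.

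There is, however, one step that does not work as written: you assert that the Hausdorff convergence $K_t^{\delta_k}\to K_t$ needed to apply Lemma~\ref{lem::limits_are_local} ``follows from Carath\'eodory convergence of $g^{\delta_k}\to g$.'' It does not. Carath\'eodory convergence of the complementary domains controls only the component of the complement containing the origin and says nothing about pieces of the hulls that pinch off or thin filaments whose harmonic influence at the origin vanishes; the Hausdorff limit of the $K_t^{\delta_k}$ can be strictly larger than the hull $K_t$ associated with the limiting Loewner flow, and a priori need not even exist along your chosen subsequence. The paper circumvents this by passing to a \emph{further} subsequence, using compactness of the Hausdorff topology on closed subsets of $\ol{\D}$, to obtain limits $\wt{K}_t$ for all $t\in\Q_+$; Lemma~\ref{lem::limits_are_local} then shows each $\wt{K}_t$ is local for $h$, and the characterization of local sets from \cite[Lemma~3.9]{ss2010contour} is invoked to transfer locality from the (possibly larger) sets $\wt{K}_t$ to the hulls $K_t$ for all $t\geq 0$, after which the convergence $\CC_{K_t^{\delta_k}}\to\CC_{K_t}$ on rational times is upgraded to local uniform convergence in space and time via Lemma~\ref{lem::local_tightness}. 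You should insert this extra compactness/locality argument; the rest of your identification step, including the handling of the $\log$ coordinate-change corrections and the additive normalization, is sound.
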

\begin{proof}
As explained in Section~\ref{subsec::spaces}, the law of the free boundary GFF has separable support; see also \cite[Lemma~4.2 and Lemma~4.3]{ss2010contour}.  It is also explained in Section~\ref{subsec::spaces} that the same holds for the laws of $\varsigma_t^\delta dt$, $(g_t^\delta)$, and $(\Fh_t^\delta)$ viewed as random variables taking values in $\CN$, $\CG$, and $\CH$, respectively.  The tightness of the law of $h$ is obvious as is the tightness of the law of $\varsigma_t^\delta dt$.  The tightness of the law of $(g_t^\delta)$ follows from Lemma~\ref{lem::loewner_tightness} and the tightness of the law of $(\Fh_t^\delta)$ follows from Lemma~\ref{lem::local_tightness}.  This implies the existence of a sequence $(\delta_k)$ of positive real numbers along which the law $\CL_\delta$ of $(h,\zeta_t^\delta dt,g_t^\delta,\Fh_t^\delta)$ has a weak limit.  The Skorohod representation theorem implies that we find a coupling $(h_k,\varsigma_t^{\delta_k} dt,g_t^{\delta_k},\Fh_t^{\delta_k})$ of the laws $\CL_{\delta_k}$ such that $h_k \to h$, $\varsigma_t^{\delta_k} dt \to \varsigma$, $(g_t^{\delta_k}) \to (g_t)$, and $(\Fh_t^{\delta_k}) \to (\Fh_t)$ almost surely as $k \to \infty$ in the senses described in the statement of the proposition.

It is left to show that $(h,\varsigma,g_t,\Fh_t)$ are related in the way described in the proposition statement.  Theorem~\ref{thm::measurehullprocesscorrespondence} implies that $(g_t)$ is obtained from $\varsigma$ by solving the radial Loewner equation.  Therefore we just need to show that
\begin{enumerate}[(i)]
\item\label{it::coordinate_orthogonal} $\Fh_t$ can be obtained from $g_t$ via coordinate change and applying $\pHarm$ and then
\item\label{it::stationarity} establish \eqref{eqn::little_g_stationary}.
\end{enumerate}

We will start with \eqref{it::coordinate_orthogonal}.  For each $t \geq 0$, let $K_t$ be the hull given by the complement in $\ol{\D}$ of the domain of $g_t$.  The first step is to show that $K_t$ is local for $h$.  Let $(K_t^{\delta_k})$ denote the corresponding family of hulls associated with $(g_t^{\delta_k})$.  By possibly passing to a subsequence of $(\delta_k)$ and using that the Hausdorff topology is compact hence separable, we can recouple so that, in addition to the above, we have that $K_t^{\delta_k} \to \wt{K}_t$ almost surely in the Hausdorff topology for all $t \in \Q_+$.  Lemma~\ref{lem::limits_are_local} then implies that $\wt{K}_t$ is local for $h$ for all $t \in \Q_+$.  Combining this with the first characterization of local sets given in \cite[Lemma~3.9]{ss2010contour} implies that $K_t$ is local for $h$ for all $t \geq 0$.  Lemma~\ref{lem::limits_are_local} implies that $\CC_{K_t^{\delta_k}} \to \CC_{K_t}$ locally uniformly almost surely for all $t \in \Q_+$.  Combining this with Lemma~\ref{lem::local_tightness} implies that $\CC_{K_t^{\delta_k}} \to \CC_{K_t}$ locally uniformly in both space and time.  Since $\Fh_t^{\delta_k}$ is given by $\CC_{K_t^{\delta_k}} \circ (g_t^{\delta_k})^{-1} + Q\log|((g_t^{\delta_k})^{-1})'| + \tfrac{\kappa+6}{2\sqrt{\kappa}}\log|\cdot|$ (normalized to vanish at the origin), we therefore have that $\Fh_t$ is given by $\CC_{K_t} \circ g_t^{-1} + Q\log| (g_t^{-1})'|+ \tfrac{\kappa+6}{2\sqrt{\kappa}}\log|\cdot|$ (normalized to vanish at the origin).

The construction of the $\delta$-approximation implies that
\[ h_k \circ (g_t^{\delta_k})^{-1} + Q \log| ((g_t^{\delta_k})^{-1})'| \stackrel{d}{=}  h_k  \quad\text{for each}\quad k \in \N \quad\text{and}\quad t \geq 0\]
as modulo additive constant distributions, hence the same holds in the limit as $k \to \infty$ due to the nature of the convergence.  This gives \eqref{it::stationarity}.
\end{proof}

\subsection{Subsequential limits solve the $\QLE$ dynamics}
\label{subsec::existence}

Throughout this section, we suppose that $(\delta_k)$ is a sequence in $(0,\infty)$ decreasing to~$0$ as in the statement of Proposition~\ref{prop::all_tight} and $(h_k,\varsigma_t^{\delta_k},g_t^{\delta_k},\Fh_t^{\delta_k})$ are coupled together on a common probability space such that $h_k \to h$ in $\CD_a$ for $a > 0$, $\varsigma_t^{\delta_k} dt \to \varsigma$ in $\CN$, $(g_t^{\delta_k}) \to (g_t)$ in $\CG$, and $(\Fh_t^{\delta_k}) \to (\Fh_t)$ in $\CH$ as in the statement of Proposition~\ref{prop::all_tight}.  The purpose of this section is to construct a family of probability measures $(\nu_t)$ on $\partial \D$ from $(\Fh_t)$ and then show that the triple $(\nu_t, g_t,\Fh_t)$ satisfies the $\QLE$ dynamics illustrated in Figure~\ref{fig::QLEtriangle}.  The measures $\nu_t$ will only be defined for almost all $t \geq 0$, so we will in fact think of $(\nu_t)$ as being given by a single measure $\nu \in \CN$.

We will accomplish the above in two steps.  We will first construct a measure $\nu \in \CN$ which, for a given time $t \geq 0$, should be thought of as the $-\tfrac{2}{\sqrt{\kappa}}$-quantum boundary length measure (Proposition~\ref{prop::measure_limit_exists}) generated from the boundary values of $\Fh_t$ (normalized to be a probability).  That is, $\nu \in \CN$ is formally given by $\CZ_t^{-1} \exp(\alpha_\kappa \Fh_t(u)) du dt$ where $\CZ_t$ is a normalization constant.  This step is carried out in Section~\ref{subsubsec::qle_rate}.  The second step (Proposition~\ref{prop::qle_solution}) is to show that $\varsigma = \nu$.  This is carried out in Section~\ref{subsubsec::loewner_rate_is_qle}.  As we explained earlier, this will complete the proof because it gives that $(\nu_t,g_t,\Fh_t)$ satisfies all three arrows of the $\QLE(\gamma^2,\eta)$ dynamics described in Figure~\ref{fig::QLEtriangle}.

\subsubsection{Construction of the $\QLE$ driving measure}
\label{subsubsec::qle_rate}

We begin by defining the approximations we will use to construct $\nu$.  We first approximate $\Fh_t$ by orthogonally projecting it to the subspace of $H(\D)$ (recall the definition of $H(\D)$ from Section~\ref{subsubsec::gff_free}) spanned by $\{f_1,\ldots,f_n\}$ where $(f_n)$ is an orthonormal basis of the subspace of functions of $H(\D)$ which are harmonic in $\D$.  In what follows in this section, the precise choice of basis is not important (i.e., the resulting measure $\nu$ does not depend on the choice of basis).  However, for our later arguments, it will be convenient to make a particular choice so that it is obvious that our approximations are continuous in $t$.  Thus for each $n \in \N$ which is even (resp.\ odd) we take $f_{n}(z) = \beta_n^{-1} \re(z^{n/2})$ (resp.\ $f_n = \beta_n^{-1} \im(z^{(n+1)/2})$) where $\beta_n = \| \re(z^{n/2}) \|_\nabla$ (resp.\ $\beta_n = \| \im(z^{(n+1)/2})\|_\nabla$) so that $\| f_n \|_\nabla = 1$.  Indeed, an elementary calculation implies that $(f_n)$ is orthonormal and that $(f_n)$ spans follows because every harmonic function in $\D$ is the real part of an analytic function on $\D$.  Note that $(f_n)$ is part of an orthonormal basis of all of $H(\D)$; we will use this in conjunction with \eqref{eqn::boundary_measure_onb} in what follows.  For each $n \in \N$ and $t \geq 0$, we let $\Fh_t^n$ be the orthogonal projection of $\Fh_t$ onto the subspace of $H(\D)$ spanned by $\{f_1,\ldots,f_n\}$, i.e.\ the real parts of polynomials in $z$ of degree at most~$n/2$.  We let
\begin{equation}
\label{eqn::nu_r_def}
 d\nu_t^n(u) = \frac{1}{\CZ_{n,t}} \exp(\alpha_\kappa \Fh_t^n(u)) d u \quad\text{for}\quad u \in \partial \D \quad\text{and}\quad t \geq 0
\end{equation}
where $\CZ_{n,t}$ is a normalizing constant so that $\nu_t^n$ has unit mass.  Note that $\Fh_t^n$ varies continuously in $t$ with respect to the uniform topology on continuous functions defined on $\ol{\D}$.  One way to see this is to note that since $\Fh_t$ is harmonic in $\D$ for each fixed $t$, it is equal to the real part of an analytic function $F_t$ on $\D$.  Then $\Fh_t^n$ is given by the real part of the terms up to degree $n/2$ in the power series expansion for $F_t$.  The claimed continuity follows because these coefficients for $F_t$ are a continuous function of $\Fh_t$ restricted to $\tfrac{1}{2}\D$ with respect to the uniform topology on continuous functions on $\tfrac{1}{2} \ol{\D} \to \R$.  We also let
\begin{equation}
\label{eqn::nu_r_time_def}
d\nu^n(t,u) = d\nu_t^n(u) dt \quad\text{for}\quad u \in \partial \D \quad\text{and}\quad t \geq 0.
\end{equation}
Then $\nu^n \in \CN$ for all $n \in \N$.

\begin{proposition}
\label{prop::measure_limit_exists}
There exists a sequence $(n_j)$ in $\N$ with $n_j \to \infty$ as $j \to \infty$ and a measure $\nu \in \CN$ such that $\nu^{n_j} \to \nu$ in $\CN$ almost surely.  That is, we almost surely have for each $T \geq 0$ and continuous function $\phi \colon [0,T] \times \partial \D \to \R$ that
\[ \lim_{j \to \infty} \int_{[0,T] \times \partial \D} \phi(s,u) d\nu^{n_j}(s,u) = \int_{[0,T] \times \partial \D} \phi(s,u) d\nu(s,u).\]
\end{proposition}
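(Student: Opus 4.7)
My plan is to identify the limit $\nu$ as the measure whose disintegration $\nu_t$ is the (normalized) $-2/\sqrt{\kappa}$-quantum boundary length measure associated with $\Fh_t$, and to deduce convergence in $\CN$ from a fixed-time Gaussian multiplicative chaos (GMC) convergence statement combined with Fubini.

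First, I would identify the law of $\Fh_t$ at each fixed $t \geq 0$. By the relation~\eqref{eqn::little_g_stationary} from Proposition~\ref{prop::all_tight}, combined with Theorem~\ref{thm::radial_coupling_existence} and Proposition~\ref{prop::free_boundary_weighted}, the field $h \circ g_t^{-1} + Q \log|(g_t^{-1})'| + \tfrac{\kappa+6}{2\sqrt{\kappa}} \log|\cdot|$ has the same modulo-additive-constant law as the sum of a free boundary GFF on $\D$ plus $\tfrac{2}{\sqrt\kappa} \log|\cdot - u_0|$ for a uniformly chosen $u_0 \in \partial \D$ independent of the GFF. Applying $\pHarm$ and fixing the additive constant so that the value at $0$ vanishes, $\Fh_t$ is the harmonic function on $\D$ obtained from this log-perturbed free boundary GFF, and $\Fh_t^n$ is the truncation to degree $\leq n/2$ of its power series.

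Second, I would apply the GMC convergence statement of~\eqref{eqn::boundary_measure_onb} (essentially the analog of \cite[Proposition~1.2]{ds2011kpz} for boundary measures on $\partial \D$) with $\gamma/2 = \alpha_\kappa = -1/\sqrt\kappa$, a value that lies in $(-1,0)$ thanks to the assumption $\kappa > 1$, so that the resulting chaos measure falls in the subcritical ($L^2$) regime. The martingale structure of the unnormalized measures $\exp\!\bigl(\alpha_\kappa \Fh_t^n(u) - \tfrac{\alpha_\kappa^2}{2}\var(\Fh_t^n(u))\bigr)du$ in $n$ yields an almost sure weak limit which is a.s.\ a nonzero finite measure on $\partial \D$; the log singularity at $u_0$ is handled by a Girsanov/Cameron--Martin shift of exactly the type used in Proposition~\ref{prop::free_boundary_weighted}. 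Dividing by the total mass, which converges a.s.\ to a positive limit, gives almost sure weak convergence of the probability measures $\nu_t^n \to \nu_t$ on $\partial \D$ for each fixed $t \geq 0$, along a subsequence $(n_j)$ that can be taken deterministic (e.g.\ powers of $2$) by standard GMC arguments.

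Third, to upgrade from fixed-time convergence to convergence in $\CN$, fix $T > 0$ and a continuous test function $\phi \in C([0,T] \times \partial \D)$. By Fubini's theorem applied to the pointwise-in-$t$ almost sure convergence, on a single probability-one event we have
\[
\int_{\partial \D} \phi(t,u)\, d\nu_t^{n_j}(u) \;\longrightarrow\; \int_{\partial \D} \phi(t,u)\, d\nu_t(u)
\]
for Lebesgue-almost every $t \in [0,T]$. Since every $\nu_t^{n_j}$ is a probability measure, these inner integrals are uniformly bounded by $\|\phi\|_\infty$, so the bounded convergence theorem gives
\[
\int_{[0,T]\times\partial \D} \phi\, d\nu^{n_j} \;\longrightarrow\; \int_{[0,T]\times\partial \D} \phi\, d\nu \quad \text{a.s.}
\]
where $\nu \in \CN$ is the measure with disintegrations $\nu_t$ and Lebesgue first marginal. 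Taking a countable dense family of test functions $\phi$ (and a countable dense set of $T$), and using a diagonal argument together with the separability of $\CN$, we obtain $\nu^{n_j} \to \nu$ almost surely in $\CN$.

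The main subtle point will be verifying that the GMC convergence in Step 2 applies verbatim despite the fact that our harmonic projection basis $(f_n)$ is an orthonormal basis only of the \emph{harmonic} subspace of $H(\D)$, not of the full Hilbert space used in~\eqref{eqn::boundary_measure_onb}. Since $\pHarm$ of the zero-boundary part of the GFF vanishes, however, one can extend $(f_n)$ to an orthonormal basis of all of $H(\D)$ by adjoining a basis of the orthogonal complement; the corresponding enriched approximation yields the same boundary measure via~\eqref{eqn::boundary_measure_onb}, and the boundary trace of each added basis element is zero, so it does not contribute to $\nu_t^n$. A careful bookkeeping of this reduction, plus the Girsanov shift absorbing the log singularity into the reference measure on $u_0$, completes the verification.
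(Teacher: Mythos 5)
Your proposal is correct and rests on the same key input as the paper's proof --- the fixed-time almost sure weak convergence $\nu_t^n \to \nu_t$ guaranteed by \eqref{eqn::boundary_measure_onb} (with the basis $(f_n)$ extended to an orthonormal basis of all of $H(\D)$ and the $\log$ singularity handled as in Proposition~\ref{prop::free_boundary_weighted}, both points the paper also relies on) --- but the mechanism by which you pass from fixed $t$ to the space-time statement is genuinely different. The paper bounds $\E\left|\int \phi \, d(\nu^n - \nu^{n'})\right|$ by Fubini, uses stationarity and the uniform bound $2\|\phi\|_{L^\infty}$ together with dominated convergence to show this tends to $0$, and then extracts an almost surely convergent subsequence via Markov's inequality and Borel--Cantelli; it therefore only needs convergence in probability at each fixed time and never has to address joint measurability in $(\omega,t)$, but it pays by having to pass to a rapidly growing subsequence $(n_j)$. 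Your route applies the Fubini-for-null-sets argument on the product of the probability space with $[0,T]$ and then invokes bounded convergence in $t$; since the martingale structure behind \eqref{eqn::boundary_measure_onb} gives convergence along the full sequence at each fixed $t$, your argument in fact yields $\nu^n \to \nu$ almost surely without passing to a subsequence, which is slightly stronger than what the proposition asserts (and is robust even if one only has a deterministic subsequence at fixed times, by stationarity). The only points to tidy up are routine: the joint measurability in $(\omega,t)$ of the set where convergence fails (needed for Fubini), the measurability of $t \mapsto \nu_t$ so that $\nu$ is a well-defined element of $\CN$ with Lebesgue first marginal, and the parenthetical ``$L^2$ regime'' claim, which is literally false for $\kappa \in (1,2]$ (where $|2\alpha_\kappa| = 2/\sqrt{\kappa} \geq \sqrt{2}$) but harmless, since \eqref{eqn::boundary_measure_onb} holds throughout the subcritical range $|2\alpha_\kappa| < 2$, i.e.\ for all $\kappa > 1$.
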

In the proof that follows and throughout the rest of this section, for measures $\varsigma_1,\varsigma_2$, we will use the notation $d(\varsigma_1-\varsigma_2)$ to denote integration against the signed measure $\varsigma_1-\varsigma_2$.
\begin{proof}[Proof of Proposition~\ref{prop::measure_limit_exists}]
Fix $n,n' \in \N$, $T \geq 0$, and a continuous function $\phi \colon [0,T] \times \partial \D \to \R$.  By Fubini's theorem, we have that
\begin{align}
   &\E\left[ \left|\int_{[0,T] \times \partial \D}  \phi(s,u) d\big(\nu^n(s,u) - \nu^{n'}(s,u)\big)\right| \right] \notag\\
\leq &\int_0^T \E\left[ \left|\int_{\partial \D}  \phi(s,u) d\big(\nu_s^n(u) - \nu_s^{n'}(u)\big)\right| \right] ds. \label{eqn::boundary_convergence}
\end{align}
By stationarity, the inside expectation does not depend on $s$.  Moreover, the integral inside of the expectation converges to zero as $n, n' \to \infty$ for any fixed $s \geq 0$ because $\nu_s^n$ converges weakly almost surely as $n \to \infty$ to the $-\tfrac{2}{\sqrt{\kappa}}$-quantum boundary measure on $\partial \D$ associated with $\Fh_s$ normalized to be a probability measure (recall \eqref{eqn::boundary_measure_onb}) and the quantity inside of the expectation is bounded by $2\| \phi\|_{L^\infty}$.  Therefore it follows from the dominated convergence theorem that the expression in \eqref{eqn::boundary_convergence} converges to zero as $n,n' \to \infty$.  Applying Markov's inequality and the Borel-Cantelli lemma gives the almost sure convergence of $\int_{[0,T] \times \partial \D} \phi(s,u) d\nu^n(s,u)$ provided we take a limit along a sequence $(n_j)$ in $\N$ which tends to $\infty$ sufficiently quickly.  By possibly passing to a further (diagonal) subsequence, this, in turn, gives us the almost sure convergence of $\int_{[0,T] \times \partial \D} \phi(s,u) d\nu^{n_j}(s,u)$ for any countable collection of continuous functions $\phi \colon [0,T] \times \partial \D \to \R$.  This proves the result because we can pick a countable dense subset of continuous functions $\phi \colon [0,T] \times \partial \D \to \R$ with respect to the uniform topology on $[0,T] \times \partial \D$ and then use the continuity of the aforementioned integral with respect to the uniform topology on continuous functions.  Passing to a final (diagonal) subsequence gives the convergence for all $T \geq 0$ simultaneously.
\end{proof}

\subsubsection{Loewner evolution driven by the $\QLE$ driving measure solves the $\QLE$ dynamics}
\label{subsubsec::loewner_rate_is_qle}

\begin{figure}[ht!]
\begin{center}
\includegraphics[scale=0.85,page=2]{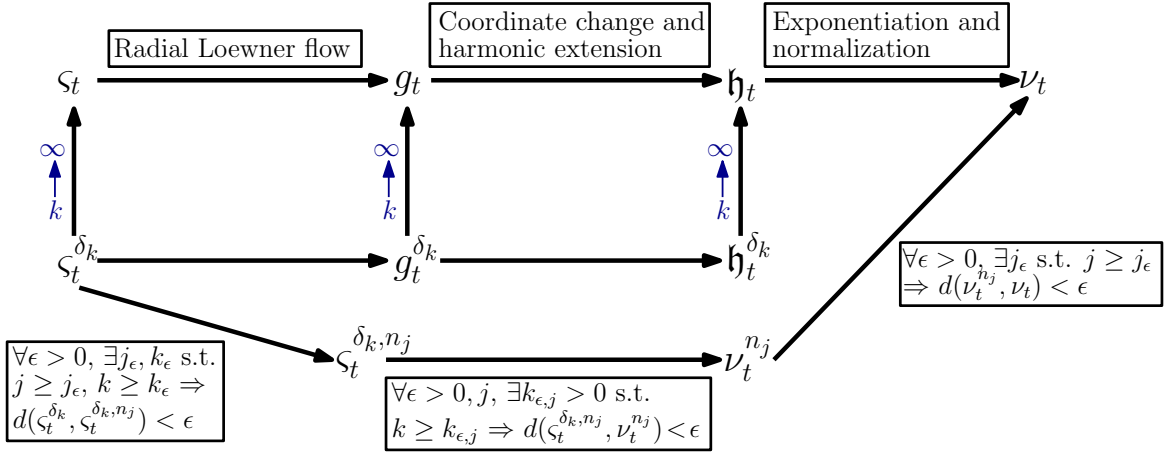}
\end{center}
\caption{\label{fig::qle_existence_scheme} (Continuation of Figure~\ref{fig::tightness_scheme}.)  Shown is the approximation scheme used to show that $\varsigma_t = \nu_t$ (Proposition~\ref{prop::qle_solution}) to complete the proof of the existence of $\QLE(\gamma^2,\eta)$ for $(\gamma^2,\eta)$ on one of the upper two curves from Figure~\ref{fig::etavsgamma}.  The statements in each of the three boxes along the bottom of the figure from left to right are proved in Lemma~\ref{lem::mu_delta_mu_delta_n}, Lemma~\ref{lem::mu_delta_n_nu_delta_n}, and Proposition~\ref{prop::measure_limit_exists}, respectively.  The symbol $d$ represents a notion of ``closeness'' which is related to the topology of $\CN$.  To show that $\varsigma_t = \nu_t$, we first pick $j$ very large so that $d(\nu_t^{n_j},\nu_t) < \epsilon$ and $d(\varsigma_t^{\delta_k},\varsigma_t^{\delta_k,n_j}) < \epsilon$.  We then pick $k$ very large so that $d(\varsigma_t^{\delta_k},\varsigma_t) < \epsilon$ and $d(\varsigma_t^{\delta_k,n_j},\nu_t^{n_j}) <~\epsilon$.}
\end{figure}

Throughout, we let $\nu$ be the (random) element of $\CN$ constructed in Proposition~\ref{prop::measure_limit_exists} and we let $\varsigma$ be the (random) element of $\CN$ which drives $(g_t)$.  As explained in Proposition~\ref{prop::all_tight}, we know that we can obtain $\Fh_t$ from $g_t$ by $\pHarm(h \circ g_t^{-1} + Q \log| (g_t^{-1})'| + \tfrac{\kappa+6}{2\sqrt{\kappa}}\log|\cdot|)$ (normalized to vanish at the origin) and that we can obtain $\nu$ by exponentiating $\Fh_t$.  Therefore the proof of Theorem~\ref{thm::existence} will be complete upon establishing the following.

\begin{proposition}
\label{prop::qle_solution}
We almost surely have that $\varsigma = \nu$.
\end{proposition}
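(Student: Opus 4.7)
The plan is to execute the triangle inequality scheme sketched in Figure~\ref{fig::qle_existence_scheme}. Fix $T>0$ and a continuous test function $\phi \colon [0,T]\times \partial \D \to \R$; the goal is to show $\int \phi \, d\varsigma = \int \phi \, d\nu$ almost surely, since this (for $\phi$ ranging over a countable dense family, together with Prop.~\ref{prop::measure_limit_exists}'s choice of subsequence) pins down $\varsigma = \nu$ in $\CN$.

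First, I would introduce the bridge measure $\varsigma^{\delta,n} \in \CN$: on each slab $[\delta \ell,\delta(\ell+1))$, replace the atomic measure $\delta_{U^{\delta,\ell}W^{\delta,\ell}}$ appearing in \eqref{eqn::approx_measure} by the smooth probability density $\CZ^{-1}\exp(\alpha_\kappa \Fh^{\delta,n}_{\delta\ell})$, where $\Fh^{\delta,n}_{\delta\ell}$ is the degree-$n/2$ polynomial projection of $\Fh^\delta_{\delta\ell}$ in the basis used to construct $\nu^n$. This measure is the natural ``demixing'' of the discrete $\delta$-approximation.

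Now I would write, for any $\epsilon>0$:
\begin{align*}
\left| \int \phi\, d(\varsigma - \nu)\right| \leq &\left|\int\phi\, d(\varsigma - \varsigma^{\delta_k})\right| + \left|\int \phi\, d(\varsigma^{\delta_k} - \varsigma^{\delta_k,n_j})\right| \\
& + \left|\int\phi\, d(\varsigma^{\delta_k,n_j}-\nu^{n_j})\right| + \left|\int\phi\, d(\nu^{n_j}-\nu)\right|.
\end{align*}
The outer two terms vanish as $k\to \infty$ (first term) and along $j\to\infty$ (last term) by Prop.~\ref{prop::all_tight} and Prop.~\ref{prop::measure_limit_exists} respectively. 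The middle two terms correspond exactly to Lemma~\ref{lem::mu_delta_mu_delta_n} and Lemma~\ref{lem::mu_delta_n_nu_delta_n} in Figure~\ref{fig::qle_existence_scheme}. For Lemma~\ref{lem::mu_delta_n_nu_delta_n} (the ``vertical'' arrow $\varsigma^{\delta_k,n_j}\to\nu^{n_j}$ as $k\to\infty$ with $n_j$ fixed), the idea is that $\Fh^{\delta_k} \to \Fh$ locally uniformly by Prop.~\ref{prop::all_tight}, so the $n_j$-term polynomial truncations converge uniformly on $\ol{\D}$; since the density $\exp(\alpha_\kappa \Fh^{\cdot,n_j}_{\cdot})$ depends continuously on this finite-dimensional datum, we get weak convergence of the corresponding probability densities on $\partial\D$, and the slab-wise discretization error is $O(\delta_k)$ by the continuity of $t\mapsto \Fh^{n_j}_t$ in the uniform topology. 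The easier Brownian-motion drift $W^{\delta_k,\ell}$ contributes $O(\sqrt{\delta_k})$ and is absorbed using uniform continuity of $\phi$.

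The main obstacle will be Lemma~\ref{lem::mu_delta_mu_delta_n} — showing $\varsigma^{\delta_k} - \varsigma^{\delta_k,n_j}$ is small. Conditionally on the past at time $\delta_k\ell$, the atom location $U^{\delta_k,\ell}$ is drawn from the honest boundary density $\CZ^{-1}\exp(\alpha_\kappa \Fh^{\delta_k}_{\delta_k\ell})$, whereas $\varsigma^{\delta_k,n_j}$ uses the truncated density $\CZ^{-1}\exp(\alpha_\kappa \Fh^{\delta_k,n_j}_{\delta_k\ell})$. Thus one needs two ingredients: (a) a law-of-large-numbers/martingale argument over $\ell \in \{0,1,\ldots,\lfloor T/\delta_k\rfloor\}$ showing the empirical atomic measure concentrates around the conditional expectation density (the number of slabs tends to $\infty$ as $\delta_k\to 0$, so this is standard); and (b) a uniform-in-$\delta_k$, uniform-in-$\ell$ total variation bound between the two densities which tends to $0$ as $n_j\to\infty$. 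Point (b) is the delicate step. By stationarity (which is built into the $\delta$-approximation via the quantum zipper / Theorem~\ref{thm::radial_coupling_existence}), the joint law of $\Fh^{\delta_k}_{\delta_k\ell}$ does not depend on $\ell$ or $\delta_k$, so it suffices to show that for the stationary harmonic function $\Fh$ (coming from a free-boundary GFF with the specified log singularities), the normalized densities $\exp(\alpha_\kappa\Fh^n)/\CZ_n \to \exp(\alpha_\kappa \Fh)/\CZ$ in $L^1(\partial\D)$ in probability. This in turn follows from the almost-sure weak convergence of the $-\tfrac{2}{\sqrt{\kappa}}$-quantum boundary length measure approximation \eqref{eqn::boundary_measure_onb}, together with uniform integrability estimates for $\exp(\alpha_\kappa\Fh^n)$ on $\partial \D$. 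Once we have these two lemmas, assembling the four-term bound with $n_j$ chosen first and $k$ chosen second yields $|\int\phi\,d(\varsigma-\nu)|<4\epsilon$, completing the proof.
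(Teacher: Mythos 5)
Your overall architecture --- the four-term triangle inequality with the quantifier order ``choose $j$ first, then $k$'' --- is exactly the paper's scheme, and your smooth slab-frozen bridge measure is only a cosmetic reorganization of the paper's intermediate objects: the paper's bridge $\varsigma_t^{\delta,n}$ is atomic, with atoms resampled from the truncated density, and the smooth slab-frozen measure $\ol{\varsigma}_t^{\delta_k,n}$ that you take as your bridge appears inside the paper's proof of Lemma~\ref{lem::mu_delta_n_nu_delta_n}. Your law-of-large-numbers step (a) and the $O(\sqrt{\delta_k})$ Brownian-wiggle estimate also match what the paper does.

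The genuine problem is your ingredient (b). You claim that the normalized truncated densities $\CZ_n^{-1}\exp(\alpha_\kappa \Fh^n)$ converge to $\CZ^{-1}\exp(\alpha_\kappa\Fh)$ in $L^1(\partial\D)$ --- i.e.\ in total variation --- and propose to prove this from \eqref{eqn::boundary_measure_onb} plus uniform integrability. This is false. The limit $\nu_t^\delta$ is a Gaussian multiplicative chaos boundary measure (a quantum boundary length measure with $2\alpha_\kappa\ne 0$), which is almost surely singular with respect to Lebesgue measure on $\partial\D$; the truncations are absolutely continuous, so the total variation distance between $\nu_t^{\delta,n}$ and $\nu_t^\delta$ is maximal for every $n$ and does not tend to zero. (Pointwise, $\CZ_n^{-1}\exp(\alpha_\kappa\Fh^n(u)) \to 0$ for Lebesgue-a.e.\ $u$, since a.e.\ point is not a thick point; the mass escapes to a Lebesgue-null set.) Only weak convergence holds. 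The repair is exactly what the paper does in \eqref{eqn::points_close}: weak convergence of probability measures on the compact space $\partial\D$ lets you couple a sample $U^{\delta,\ell}$ from the honest measure with a sample $U^{\delta,\ell,n}$ from the truncated one so that $|U^{\delta,\ell}-U^{\delta,\ell,n}|<\zeta$ off an event of probability at most $\zeta$ (a Prokhorov/Wasserstein-type coupling, not a TV coupling), with $\zeta$ uniform in $\ell$ and $\delta$ by stationarity; closeness of $\int\phi\,d\varsigma^{\delta_k}$ and $\int\phi\,d\varsigma^{\delta_k,n_j}$ then follows from the uniform continuity of $\phi$. Equivalently, since you only ever test against continuous $\phi$, the statement you actually need is that $\E\bigl|\int\phi(s,\cdot)\,d(\nu_s^{\delta,n}-\nu_s^\delta)\bigr|\to 0$ uniformly in $s$ and $\delta$, which follows from weak convergence, stationarity and dominated convergence --- but not from any $L^1$ statement about densities. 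With (b) restated in this way, the rest of your argument goes through.
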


A schematic illustration of the main steps in the proof of Proposition~\ref{prop::qle_solution} is given in Figure~\ref{fig::qle_existence_scheme}.  The strategy is to relate $\varsigma$ and $\nu$ using three approximating measures: $\varsigma_t^{\delta_k} dt$, $\varsigma_t^{\delta_k,n_j} dt$, and $\nu_t^{\delta_k,n_j} dt$.  We introduced $\varsigma_t^{\delta_k}$ in \eqref{eqn::approx_measure} and we introduced $\nu_t^{n_j}$ in \eqref{eqn::nu_r_def}.  We know that $\varsigma_t^{\delta_k} dt \to \varsigma$ as $k \to \infty$ and $\nu_t^{n_j} dt \to \nu$ as $j \to \infty$ in $\CN$.  In the rest of this section, we will introduce $\varsigma_t^{\delta_k,n_j} dt$ and then show that $\varsigma_t^{\delta_k,n_j} dt$ is close to both $\varsigma_t^{\delta_k} dt$ and $\nu_t^{n_j} dt$ for large $j$ and $k$.

We now give the definition of $\varsigma_t^{\delta,n}$.  Fix $n \in \N$ and let
\begin{equation}
\label{eqn::nu_t_delta_n}
\nu_t^{\delta,n} = \CZ_{n,t,\delta}^{-1} \exp(\alpha_\kappa \Fh_t^{\delta,n}(u))du
\end{equation}
where $\Fh_t^{\delta,n}$ is the orthogonal projection of $\Fh_t^\delta$ onto the subspace spanned by $\{f_1,\ldots,f_n\}$ as defined above and $\CZ_{n,t,\delta}$ is a normalization constant so that $\nu_t^{\delta,n}$ is a probability measure.  For each $\ell \in \N_0$, let $U^{\delta,\ell,n}$ be a point picked from $\nu_t^{\delta,n}$.  Fix $\zeta > 0$.  For each $t \geq 0$, it follows from \eqref{eqn::boundary_measure_onb} that 
\[ \nu_t^{\delta,n} \to \nu_t^\delta \quad\text{as}\quad n \to \infty\]
weakly almost surely.  By the stationarity of $\Fh_t^\delta$, the rate of convergence does not depend $t$ or $\delta$.  It thus follows that there exists non-random $n_0 \in \N$ depending only on $\zeta$ such that for all $n \geq n_0$ we can couple the sequences $(U^{\delta,\ell})$ and $(U^{\delta,\ell,n})$ together so that
\begin{equation}
\label{eqn::points_close}
\p[E_\ell^{\delta,n}] \leq \zeta \quad\text{where}\quad E_\ell^{\delta,n} = \{|U^{\delta,\ell} - U^{\delta,\ell,n}| \geq \zeta\}.
\end{equation}
We assume throughout that $U^{\delta,\ell,n}$ and $U^{\delta,n}$ are coupled as such.  Let
\[ \varsigma_t^{\delta,n} = \sum_{\ell =0}^\infty \one_{[\delta \ell,\delta(\ell+1))}(t) \delta_{U^{\delta,\ell,n}}.\]
That is, $\varsigma_t^{\delta,n}$ for $t \in [\delta \ell,\delta(\ell+1))$ with $\ell \in \N_0$ is given by the Dirac mass located at $U^{\delta,\ell,n}$.  Note that $\varsigma_t^{\delta,n}$ is defined analogously to $\varsigma_t^\delta$ except the $U^{\delta,\ell,n}$ are picked from $\nu_t^{\delta,n}$ in place of $\nu_t^\delta = \CZ_{t,\delta}^{-1} \exp(\alpha_\kappa \Fh_t^\delta(u))du$ and the Brownian motions have been omitted.

The proof of Proposition~\ref{prop::qle_solution} has two steps.

The first step (Lemma~\ref{lem::mu_delta_mu_delta_n}) is to show that for each $\epsilon > 0$ there exists $j_\epsilon, k_\epsilon \in \N$ such that $\varsigma_t^{\delta_k} dt$ and $\varsigma_t^{\delta_k,n_j} dt$ are $\epsilon$-close for all $j \geq j_\epsilon$ and $k \geq k_\epsilon$ (the result is stated for more general values of $\delta$ and $n$ because it is not necessary in the proof to work along the sequences $(\delta_k)$ and $(n_j)$).  We note that the choice of $k$ determines the speed at which the location of the Dirac mass is resampled while the choice of $j$ determines the expected fraction of the $(U^{\delta_k,\ell,n_j})$ which are close to the $(U^{\delta_k,\ell})$ (recall \eqref{eqn::points_close}).

The second step (Lemma~\ref{lem::mu_delta_n_nu_delta_n}) is to show that for each $\epsilon > 0$ and $j \in \N$ there exists $k_{\epsilon,j} > 0$ such that $\varsigma_t^{\delta_k,n_j} dt$ and $\nu_t^{n_j} dt$ are $\epsilon$-close for all $k \geq k_{\epsilon,j}$ (the result is stated for more general values of $n$ because in the proof it is not necessary to work along the sequence $(n_j)$).  The proof is by a law of large numbers argument.  By construction, we know that $t \mapsto \nu_t^{\delta_k,n_j}$ is continuous for a \emph{fixed} value of $j$ and the choice of $j$ controls our estimate its modulus of continuity.  When $\delta_k > 0$ is sufficiently small depending on $j$ so that the rate at which the points $(U^{\delta_k,\ell,n_j})$ are being sampled is much faster than the rate at which $t \mapsto \nu_t^{\delta_k,n_j}$ is changing, we can think of organizing the points $(U^{\delta_k,\ell,n_j})$ into groups each of which is close to being i.i.d.\ This is what leads to the law of large numbers effect.

Once these estimates have been established, we will pick $j$ very large so that both $\nu_t^{n_j} dt$ is close to $\nu$ and $\varsigma_t^{\delta_k,n_j} dt$ is close to $\varsigma_t^{\delta_k} dt$.  We will then choose $k$ to be very large so that $\varsigma_t^{\delta_k} dt$ is close to $\varsigma$ and $\varsigma_t^{\delta_k,n_j} dt$ is close to $\nu_t^{n_j} dt$.

\begin{lemma}
\label{lem::mu_delta_mu_delta_n}
Fix $T > 0$ and suppose that $\phi \colon [0,T] \times \partial \D \to \R$ is continuous.  For every $\epsilon > 0$ there exists $n_\epsilon \in \N$ and $\delta_{\epsilon} > 0$ such that $n \geq n_\epsilon$ and $\delta \in (0,\delta_\epsilon)$ implies that 
\[ \E\left| \int_{[0,T] \times \partial \D} \phi(s,u) d\big(\varsigma_s^\delta(u) - \varsigma_s^{\delta,n}(u) \big) ds\right| < \epsilon.\]
\end{lemma}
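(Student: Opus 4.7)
The plan is to unwind the definitions of $\varsigma_s^\delta$ and $\varsigma_s^{\delta,n}$ as sums of Dirac masses on intervals of length $\delta$ and show that, with high probability, the locations of the corresponding atoms are uniformly close on $\partial \D$. The two sources of discrepancy between $\varsigma_s^\delta$ and $\varsigma_s^{\delta,n}$ are: (i) the atom for $\varsigma^\delta$ sits at $U^{\delta,\ell} W_s^{\delta,\ell}$ while the atom for $\varsigma^{\delta,n}$ sits at $U^{\delta,\ell,n}$, and (ii) the points $U^{\delta,\ell}$ and $U^{\delta,\ell,n}$ are sampled from $\nu_{\delta\ell}^\delta$ and its finite-dimensional projection $\nu_{\delta\ell}^{\delta,n}$, respectively. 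Source (i) is controlled by the fact that the Brownian factor $W^{\delta,\ell}_s = \exp(i\sqrt{\kappa} B_s^{\delta,\ell})$ starts at $1$ at time $\delta \ell$ and its total displacement over $[\delta \ell, \delta(\ell+1))$ is $O(\sqrt{\delta})$ in probability; source (ii) is controlled by the coupling~\eqref{eqn::points_close}.

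More precisely, I would first use the uniform continuity of $\phi$ on the compact set $[0,T]\times\partial\D$ to choose, given $\epsilon>0$, a $\eta>0$ such that $|u-u'|<\eta$ implies $|\phi(s,u)-\phi(s,u')|<\epsilon/(3T)$ for all $s\in[0,T]$. Next, by a standard Brownian estimate (the reflection principle, or Doob's inequality applied to $B^{\delta,\ell}$), there exists $\delta_\epsilon>0$ so that for all $\delta\in(0,\delta_\epsilon)$ and every $\ell$,
\[
\p\Big[\sup_{s \in [\delta\ell,\delta(\ell+1))}|W^{\delta,\ell}_s - 1| \geq \eta/2\Big] \leq \frac{\epsilon}{6T\|\phi\|_\infty + 1}.
\]
Finally, choose $\zeta = \eta/2$ and apply \eqref{eqn::points_close}: there exists $n_\epsilon \in \N$ such that for every $n\geq n_\epsilon$ and every $\ell$, the coupling gives $\p[|U^{\delta,\ell}-U^{\delta,\ell,n}|\geq \eta/2] \leq \epsilon/(6T\|\phi\|_\infty + 1)$.

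Assembling these pieces, on the intersection of the good events we have $|U^{\delta,\ell} W^{\delta,\ell}_s - U^{\delta,\ell,n}| < \eta$ uniformly in $s \in [\delta\ell,\delta(\ell+1))$ (using that $|U^{\delta,\ell}|=1$), and hence
\[
\big|\phi(s,U^{\delta,\ell}W^{\delta,\ell}_s) - \phi(s,U^{\delta,\ell,n})\big| < \epsilon/(3T).
\]
On the complementary event we use the trivial bound $2\|\phi\|_\infty$. Writing out the integral as a sum over the intervals $[\delta\ell,\delta(\ell+1)) \cap [0,T]$, applying Fubini and the triangle inequality, and using that the total time length is $T$, we obtain the bound
\[
\E\left| \int_{[0,T] \times \partial \D} \phi(s,u)\,d(\varsigma_s^\delta - \varsigma_s^{\delta,n})(u)\,ds\right| \leq \frac{\epsilon}{3} + 2\|\phi\|_\infty \cdot T \cdot \frac{2\epsilon}{6T\|\phi\|_\infty + 1} < \epsilon,
\]
as desired. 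No serious obstacle is expected: the proof is essentially a union bound over the two sources of discrepancy, with the coupling~\eqref{eqn::points_close} handling the spatial projection error and standard Brownian estimates handling the Loewner driving noise on a short interval. The only mildly delicate point is ensuring the Brownian bound is uniform over $\ell$, but the increments over each $[\delta\ell,\delta(\ell+1))$ are i.i.d.\ in $\ell$, so this is immediate.
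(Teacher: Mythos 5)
Your proof is correct and follows essentially the same route as the paper's: the coupling \eqref{eqn::points_close} together with uniform continuity of $\phi$ (on the good event) and boundedness (on the bad event) supplies $n_\epsilon$, and a Brownian estimate over the length-$\delta$ intervals controls the motion of the atom of $\varsigma_s^\delta$ away from $U^{\delta,\ell}$, supplying $\delta_\epsilon$. Your version is merely a bit more quantitative where the paper simply invokes ``the continuity of Brownian motion.''
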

\begin{proof}
We are first going to explain how to bound the difference when $s = \ell \delta$ for some $\ell \in \N_0$.  Fix $\epsilon > 0$.  By the continuity of $\phi$, it follows from \eqref{eqn::points_close} that there exists $n_{1,\epsilon}$ such that $n \geq n_{1,\epsilon}$ implies that
\begin{equation}
\label{eqn::ej_error1}
 \E\left|\int_{\partial \D} \one_{(E_\ell^{\delta,n})^c} \phi(\delta \ell,u) d\big(\varsigma_{\ell \delta}^{\delta}(u) - \varsigma_{\ell \delta}^{\delta,n}(u) \big) \right| < \frac{\epsilon}{4}
\end{equation}
provided we choose $\zeta > 0$ small enough.  Since the integrand is bounded, it also follows from \eqref{eqn::points_close} that there exists $n_{2,\epsilon}$ such that $n \geq n_{2,\epsilon}$ implies that
\begin{equation}
\label{eqn::ej_error2}
\E\left|\int_{\partial \D} \one_{E_\ell^{\delta,n}} \phi(\delta \ell,u) d\big(\varsigma_{\ell \delta}^{\delta}(u) - \varsigma_{\ell \delta}^{\delta,n}(u) \big) \right| < \frac{\epsilon}{4}.
\end{equation}
Combining \eqref{eqn::ej_error1} and \eqref{eqn::ej_error2} gives that $n \geq n_\epsilon = \max(n_{1,\epsilon},n_{2,\epsilon})$ implies that 
\[
\E\left|\int_{\partial \D} \phi(\delta \ell,u) d\big(\varsigma_{\ell \delta}^{\delta}(u) - \varsigma_{\ell \delta}^{\delta,n}(u) \big) \right| < \frac{\epsilon}{2}.
\]
Using the continuity of Brownian motion, it follows that there exists $\delta_\epsilon > 0$ such that for all $n \geq n_\epsilon$ and $\delta \in (0,\delta_\epsilon)$ we have that
\[
\sup_{s \in [\delta \ell, \delta(\ell+1))} \E\left|\int_{\partial \D} \phi(s,u) d\big(\varsigma_s^{\delta}(u) - \varsigma_s^{\delta,n}(u) \big) \right| < \epsilon.
\]
This implies the desired result.
\end{proof}

\begin{lemma}
\label{lem::mu_delta_n_nu_delta_n}
Fix $T > 0$ and suppose that $\phi \colon [0,T] \times \partial \D \to \R$ is continuous.  For each $n \in \N$ there exists $k_{\epsilon,n} \in \N$ such that $k \geq k_{\epsilon,n}$ implies that
\begin{equation}
\label{eqn::measures_converge}
\E\left|\int_{[0,T] \times \partial \D} \phi(s,u) d\big(\varsigma_s^{\delta_k,n}(u) - \nu_s^n(u) \big) ds\right| < \epsilon.
\end{equation}
\end{lemma}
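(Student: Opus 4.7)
The plan is to write $\varsigma_s^{\delta_k,n} - \nu_s^n = (\varsigma_s^{\delta_k,n} - \nu_s^{\delta_k,n}) + (\nu_s^{\delta_k,n} - \nu_s^n)$ and bound each piece separately, keeping $n$ fixed throughout and sending only $k \to \infty$. The first difference is a law-of-large-numbers error between the sequence of Dirac masses $\varsigma_s^{\delta_k,n}$ and the smooth measures $\nu_s^{\delta_k,n}$ from which they are sampled; the second is an approximation error coming from the convergence $\Fh_s^{\delta_k} \to \Fh_s$ supplied by Proposition~\ref{prop::all_tight}.

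For the second piece, recall that $\Fh_s^n$ is the orthogonal projection onto the real parts of polynomials of degree at most $n/2$, and that this projection can be read off as the leading coefficients of the power-series expansion of the analytic function whose real part is the harmonic function in question. Consequently, the map from $\Fh_s$ to $\Fh_s^n\big|_{\partial \D}$ is a continuous linear functional of the restriction of $\Fh_s$ to any small fixed circle about the origin. The convergence $\Fh_s^{\delta_k} \to \Fh_s$ in $\CH$ is locally uniform in $(s,z)$, so it upgrades to uniform convergence $\Fh_s^{\delta_k,n} \to \Fh_s^n$ on $\partial \D$, uniformly in $s \in [0,T]$. Exponentiating and renormalizing, $\nu_s^{\delta_k,n} \to \nu_s^n$ in total variation uniformly in $s$, and then dominated convergence shows $\E\bigl|\int_{[0,T]\times\partial\D} \phi\, d(\nu_s^{\delta_k,n} - \nu_s^n)\,ds\bigr| \to 0$.

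For the first piece, let $\CF_\ell^k$ denote the $\sigma$-algebra generated by $h$ together with $W^{\delta_k,j}$ and the sampled points $U^{\delta_k,j}$, $U^{\delta_k,j,n}$ for $j < \ell$; this determines $\Fh_{\delta_k \ell}^{\delta_k,n}$ and hence the measure $\nu_{\delta_k \ell}^{\delta_k,n}$, while the conditional law of $U^{\delta_k,\ell,n}$ given $\CF_\ell^k$ is $\nu_{\delta_k \ell}^{\delta_k,n}$. Thus
\[
M_\ell^k := \phi(\delta_k \ell, U^{\delta_k,\ell,n}) - \int_{\partial \D} \phi(\delta_k \ell, u)\, d\nu_{\delta_k \ell}^{\delta_k,n}(u)
\]
is a bounded martingale-difference sequence. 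Using uniform continuity of $\phi$, I can replace $\phi(s,U^{\delta_k,\ell,n})$ by $\phi(\delta_k\ell, U^{\delta_k,\ell,n})$ for $s\in [\delta_k\ell,\delta_k(\ell+1))$ up to an error bounded by $T\omega_\phi(\delta_k)$, and similarly replace $\int\phi(s,u)\,d\nu_s^{\delta_k,n}(u)$ by its value at $s=\delta_k\ell$ using a modulus of continuity for $s\mapsto \Fh_s^{\delta_k,n}$ on $\partial \D$. After these replacements, the remaining quantity equals $\sum_\ell \delta_k M_\ell^k$, whose $L^2$-norm is $O(\|\phi\|_\infty\sqrt{T\delta_k})$ by orthogonality of martingale differences; Jensen gives the same bound in $L^1$, which vanishes as $k\to\infty$.

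The one delicate step is justifying that the modulus of continuity in $s$ of $\nu_s^{\delta_k,n}$ does not degenerate as $k\to\infty$ (it may depend on $n$, but not on $k$). This follows from Lemma~\ref{lem::local_tightness} applied to $h$ and the local sets $K_s^{\delta_k}$, which yields a H\"older modulus for $(s,z)\mapsto \CC_{K_s^{\delta_k}}(z)$ on $[0,T]\times B(0,e^{-T}/16)$ whose constants depend only on $T$ and the boundary data of $h$. Transferring via $(g_s^{\delta_k})^{-1}$ (which is uniformly well-controlled on this ball for $s\leq T$) yields a corresponding modulus for $\Fh_s^{\delta_k}$ on a small disc about $0$, hence for each coefficient of the truncated power series, and so for $\Fh_s^{\delta_k,n}$ on $\partial \D$ in an $n$-dependent but $k$-uniform way. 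This is the step that requires the most care; the remaining estimates are then routine.
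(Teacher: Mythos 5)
Your proposal is correct and follows essentially the same route as the paper: the paper also splits the error into a law-of-large-numbers piece (introducing the time-frozen measures $\ol{\varsigma}_t^{\delta_k,n}$ and noting that the increments of the resulting integral are uncorrelated given $\Fh_t^{\delta_k,n}$, yielding an $O(\delta_k)$ second-moment bound) and a piece controlled by the continuity of $\Fh_t$ together with the locally uniform convergence $\Fh_t^{\delta_k} \to \Fh_t$. Your write-up simply makes explicit the filtration, the martingale-difference orthogonality, and the $k$-uniform modulus of continuity via Lemma~\ref{lem::local_tightness} that the paper's proof leaves implicit.
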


It is important that the limit in the statement of Lemma~\ref{lem::mu_delta_n_nu_delta_n} is along the sequence $(\delta_k)$ because then we have that $\Fh_t^{\delta_k} \to \Fh_t$ as $k \to \infty$ and $\nu_t^n$ is defined in terms of~$\Fh_t$.

\begin{proof}[Proof of Lemma~\ref{lem::mu_delta_n_nu_delta_n}]
Let (recall~\eqref{eqn::nu_t_delta_n})
\[ \ol{\varsigma}_t^{\delta_k,n}(u) du = \sum_{\ell=0}^\infty \one_{[\ell \delta_k,(\ell+1)\delta_k)}(t) \nu_{\ell \delta_k}^{\delta_k,n}(u)du.\]
Note that the increments of
\[ \int_0^{\delta_k \ell} \int_{\partial \D} \phi(s,u) d(\varsigma_s^{\delta_k,n}(u) - \ol{\varsigma}_s^{\delta_k,n}(u)) ds\]
as $\ell$ varies are uncorrelated given $\Fh_t^{\delta_k,n}$.  Consequently, we have that
\begin{align*}
 \E\left[ \left(\int_{[0,T] \times \partial \D} \phi(s,u) d(\varsigma_s^{\delta_k,n}(u) - \ol{\varsigma}_s^{\delta_k,n}(u)) ds \right)^2 \right] = O(\delta_k)
\end{align*}
where the implicit constant in $O(\delta_k)$ depends on $T$.  It thus suffices to prove \eqref{eqn::measures_converge} with $\ol{\varsigma}_t^{\delta_k,n}$ in place of $\varsigma_t^{\delta_k,n}$.  By the continuity of $\Fh_t$ and the local uniform convergence of $\Fh_t^{\delta_k}$ to $\Fh_t$ as $k \to \infty$, it is easy to see that
\[ \lim_{k \to \infty} \E\left|\int_{[0,T] \times \partial \D} \phi(s,u) d(\ol{\varsigma}_s^{\delta_k,n}(u) - \nu_s^{\delta_k,n}(u)) ds\right| = 0.\]
Combining gives \eqref{eqn::measures_converge}.
\end{proof}

\begin{proof}[Proof of Proposition~\ref{prop::qle_solution}]
Fix $T > 0$ and $\phi \colon [0,T] \times \partial \D \to \R$ continuous.  It suffices to show that
\begin{equation}
\label{eqn::nu_equals_sigma} \int_{[0,T] \times \partial \D} \phi(s,u) d(\nu(s,u) - \varsigma(s,u)) = 0
\end{equation}
almost surely.  Fix $\epsilon > 0$.  Then Proposition~\ref{prop::all_tight} implies that there exists $k_\epsilon \in \N$ such that $k \geq k_\epsilon$ implies that
\begin{equation}
\label{eqn::final_error1}
   \E \left| \int_{[0,T] \times \partial \D} \phi(s,u) \big(d \varsigma_s^{\delta_k}(u) ds - d \varsigma(s,u) \big) \right| < \frac{\epsilon}{4}.
\end{equation}
Lemma~\ref{lem::mu_delta_mu_delta_n} implies that there exists $j_\epsilon \in \N$ such that, by possibly increasing the value of $k_\epsilon$, we have that $j \geq j_\epsilon$ and $k \geq k_\epsilon$ implies that
\begin{equation}
\label{eqn::final_error2}
   \E \left| \int_{[0,T] \times \partial \D} \phi(s,u) d\big(\varsigma_s^{\delta_k}(u) - \varsigma_s^{\delta_k,n_j}(u) \big) ds \right| < \frac{\epsilon}{4}.
\end{equation}
Proposition~\ref{prop::measure_limit_exists} implies that, by possibly increasing the value of $j_\epsilon$, we have that $j \geq j_\epsilon$ implies that
\begin{equation}
\label{eqn::final_error3}
   \E \left| \int_{[0,T] \times \partial \D} \phi(s,u) d\big(\nu^{n_j}(s,u) - \nu(s,u)\big) \right| < \frac{\epsilon}{4}.
\end{equation}
Let $j = j_\epsilon$.  Lemma~\ref{lem::mu_delta_n_nu_delta_n} implies that there exists $k_{\epsilon,j} \in \N$ such that $k \geq k_{\epsilon,j}$ implies that
\begin{equation}
\label{eqn::final_error4}
   \E \left| \int_{[0,T] \times \partial \D} \phi(s,u) d\big(\varsigma_s^{\delta_k,n_j}(u) - \nu_s^{n_j}(u) \big) ds \right| < \frac{\epsilon}{4}.
\end{equation}
Using the triangle inequality, \eqref{eqn::final_error1}---\eqref{eqn::final_error4}, and that $\epsilon > 0$ was arbitrary implies \eqref{eqn::nu_equals_sigma}, as desired.
\end{proof}

We can now complete the proof of Theorem~\ref{thm::existence}.

\begin{proof}[Proof of Theorem~\ref{thm::existence}]
Proposition~\ref{prop::qle_solution} gives us that the limiting triple $(\nu_t,g_t,\Fh_t)$ satisfies all three arrows of the $\QLE$ dynamics as described in Figure~\ref{fig::QLEtriangle}.  That the limiting triple $(\nu_t,g_t,\Fh_t)$ satisfies $\eta$-DBM scaling as defined in Definition~\ref{def::eta_scaling} follows from the argument explained in Section~\ref{subsec::zipperspecialization}.  Combining gives the desired result.
\end{proof}

\section{Limiting dynamics}
\label{sec::dynamics}

We are now going to identify the limiting dynamics which govern the time evolution of the harmonic component $(\Fh_t)$ of the $\QLE(\gamma^2,\eta)$ processes $(\nu_t,g_t,\Fh_t)$ constructed in Section~\ref{sec::existence} to prove Theorem~\ref{thm::existence}.

As in Section~\ref{sec::couplings}, we will use~$\CP$ (resp.\ $\ol{\CP}$) to denote $2\pi$ times the Poisson (resp.\ conjugate Poisson) kernel on~$\D$ (recall~\eqref{eqn::poisson_kernel}).  We also let $\CP^\star = \CP - 1$ so that $\CP^*(0,u) = 0$ for all $u \in \partial \D$.  We use $a \cdot b$ to denote the standard dot product.  If one of $a$ or $b$ is a complex number, we will identify it with a vector in $\R^2$ when writing $a \cdot b$.  In particular, if $a = x + yi$ and $b = u + vi$ for $x,y,u,v \in \R$ then $a \cdot b = xu + yv$.

Let
\begin{equation}
\label{eqn::dtzu}
D_t(z,u) = -\nabla \Fh_t(z) \cdot \Phi(u,z) + \frac{1}{\sqrt{\kappa}}\CP^\star(z,u) + Q (\partial_\theta \ol{\CP})(z,u).
\end{equation}
In~\eqref{eqn::dtzu}, $\partial_\theta \ol{\CP}(z,u)$ for $z \in \D$ and $u \in \partial \D$ means the partial derivative of the map $\D \times \R \to \R$ given by $(z,\theta) \mapsto \ol{\CP}(z,e^{i\theta})$ with respect to $\theta$ evaluated at $(z,u)$; see~\eqref{eqn::deriv_p_theta} for an explicit formula.  We will show that the time-evolution of $\Fh_t$ is governed by the equation 
\begin{equation}
\label{eqn::qle_evolution}
 \dot{\Fh}_t(z) = \int_{\partial \D} \bigg(D_t(z,u) + \CP^\star(z,u) W(t,u) \bigg) d\nu_t(u)
\end{equation}
where $W(t,u)$ denotes a space-time white noise on $\partial \D \times [0,\infty)$.  The reason that $\CP^\star$ appears in~\eqref{eqn::qle_evolution} rather $\CP$ is that we have normalized $\Fh_t$ so that $\Fh_t(0) = 0$ for all $t \geq 0$.  If we had instead only normalized at time $0$ by setting $\Fh_0(0) = 0$, then the resulting equation would be the same except with $\CP$ in place of $\CP^\star$.

The evolution equation~\eqref{eqn::qle_evolution} is significant for two reasons.  First, if one were able to show that~\eqref{eqn::qle_evolution} has a unique solution then it would imply that the subsequential limits used to construct $\QLE(\gamma^2,\eta)$ for $(\gamma^2,\eta)$ on one of the top two curves from Figure~\ref{fig::etavsgamma} to prove Theorem~\ref{thm::existence} are actually true limits.  In other words, it is not necessary to pass along a subsequence $(\delta_k)$ as $\delta \to 0$.  Second, although its derivation takes as input the existence of $\QLE(\gamma^2,\eta)$ for $(\gamma^2,\eta)$ on one of the upper two curves from Figure~\ref{fig::etavsgamma} as proved in Theorem~\ref{thm::existence}, one can make the ansatz that~\eqref{eqn::qle_evolution} describes the dynamics for other $(\gamma^2,\eta)$ values.  It is then possible that a careful analysis of~\eqref{eqn::qle_evolution} could be used to determine the stationary distribution for the dynamics in these cases which in turn might suggest a way to construct these processes.

We will make~\eqref{eqn::qle_evolution} rigorous (Proposition~\ref{prop::sde}) by putting $\Fh_t$ into coordinates and then showing that the coordinate processes satisfy a certain infinite dimensional SDE in integrated form.  The particular choice of coordinates is not important for the proof; we choose $L^2(\tfrac{1}{2} \D)$ to be concrete.  We will start in Section~\ref{subsec::approximate_dynamics} by studying the dynamics of the $\delta$-approximations introduced in Section~\ref{subsec::approximation}.  We will then use this in Section~\ref{subsec::martingale_problem} to show that the $\delta \to 0$ subsequential limits must satisfy a certain martingale problem.  This, in turn, allows us to derive the SDE satisfied by the $\delta \to 0$ subsequential limits.

\subsection{SDE for the approximate dynamics}
\label{subsec::approximate_dynamics}

Let $(g_t^\delta)$ be the radial Loewner flow associated with the $\delta$-approximation to $\QLE(\gamma^2,\eta)$ using forward $\SLE$s as described in Section~\ref{subsec::approximation}.  In other words,
\begin{align*}
\dot{g}_t^\delta(z) = \int_{\partial \D} \Phi(u,g_t^\delta(z)) d\varsigma_t^\delta(u),\quad g_0^\delta(z) = z,\quad \varsigma_t^\delta = \sum_{\ell=0}^\infty \one_{[\delta \ell,\delta(\ell+1))}(t) \delta_{U^{\delta,\ell} W_t^{\delta,\ell}}.
\end{align*}
Recall that the points $U^{\delta,\ell}$ and Brownian motions $W^{\delta,\ell}$ in $\partial \D$ were defined in Section~\ref{subsec::approximation}.  In this section, we are going to describe the dynamics of the harmonic component $(\Fh_t^\delta)$ of the $\delta$-approximation $(\varsigma_t^\delta,g_t^\delta,\Fh_t^\delta)$.

For each $0 \leq s \leq t$ we let $g_{s,t}^\delta = g_t^\delta \circ (g_s^\delta)^{-1}$.  For each $\ell \in \N_0$, note that $g_{\delta \ell,\delta(\ell+1)}^\delta$ is the conformal transformation which maps away the $(\ell+1)$st radial $\SLE_\kappa$ process in the $\delta$-approximation to $\QLE(\gamma^2,\eta)$.  It will be convenient in what follows to describe the maps $g_{\delta \ell,\delta (\ell+1)}^\delta$ in terms of a reverse rather than forward Loewner flow.  We can accomplish this as follows.  For each $\ell  \in \N_0$, we let $(f_t^{\delta,\ell})$ be the family of conformal maps which solve the reverse radial Loewner equation
\begin{align*}
\dot{f}_t^{\delta,\ell}(z) = -\int_{\partial \D} \Phi(u,f_t^{\delta,\ell}(z)) d\varsigma_{\delta(2\ell+1)-t}^\delta(u), \quad f_{\delta \ell}^{\delta,\ell}(z) = z, \quad t \in [\delta \ell,\delta(\ell+1)].
\end{align*}
(Note that these reverse radial Loewner evolutions are not centered.)  Then $f_{\delta(\ell+1)}^{\delta,\ell} = (g_{\delta \ell,\delta(\ell+1)}^\delta)^{-1}$.  To see this, note that $q_t = f_{\delta(\ell+1)-t}^{\delta,\ell}$ satisfies the forward radial Loewner equation with $q_0(z) = f_{\delta(\ell+1)}^{\delta,\ell}(z)$, $q_\delta(z) = z$, and driving measure $\varsigma_{\delta \ell + t}$.  The claim follows because $g_{\delta \ell,\delta \ell+t}^\delta \circ f_{\delta(\ell+1)}^{\delta,\ell}$ satisfies the same equation, has the same initial condition, and solutions to this equation are unique. It is not in general true, however, that $f_t^{\delta,\ell} = (g_{\delta \ell,t}^\delta)^{-1}$ for intermediate values of $t$ in $(\delta \ell,\delta(\ell+1))$.

We also let
\begin{equation}
\label{eqn::v_delta_ell}
 V_t^{\delta,\ell} = U^{\delta,\ell} W_{\delta(2\ell+1)-t}^{\delta,\ell}.
\end{equation}

\begin{proposition}
\label{prop::approximate_evolution}
Fix $t \geq 0$, let $\ell \in \N_0$ be such that $t \in [\delta \ell,\delta(\ell+1))$, and let
\begin{align*}
   \wt{D}_t^\delta(z) &= (\nabla \Fh_{\delta \ell}^\delta)(f_t^{\delta,\ell}(z)) \cdot \dot{f}_t^{\delta,\ell}(z) +  \frac{1}{\sqrt{\kappa}} \CP^\star(f_t^{\delta,\ell}(z),V_t^{\delta,\ell}) + Q(\partial_\theta \ol{\CP})(f_t^{\delta,\ell}(z),V_t^{\delta,\ell}),\\
   \wt{\sigma}_t^\delta(z) &=  \CP^\star(f_t^{\delta,\ell}(z),V_t^{\delta,\ell}).
\end{align*}
There exists a standard Brownian motion $B$ and a process $\wt{\Fh}_t^\delta$ taking values in $\CH$ (as defined in Section~\ref{subsec::spaces}) which solves the SDE
\begin{equation}
\label{eqn::approximate_evolution}
d\wt{\Fh}_t^\delta(z) = \wt{D}_t^\delta(z) dt + \wt{\sigma}_t^\delta(z) d B_t \quad\text{for each} \quad z \in \D,
\end{equation}
and $\wt{\Fh}_t^\delta = \Fh_t^\delta$ for each $t$ of the form $\delta \ell$ for $\ell \in \N_0$.  Moreover, for each $\epsilon > 0$, $K \subseteq \D$ compact, and $T > 0$ we have
\[ \p\left[\sup_{\substack{t \in [0,T] \\ z \in K}}  |\Fh_t^\delta(z) - \wt{\Fh}_t^\delta(z)| \geq \epsilon \right] \to 0 \quad\text{as}\quad \delta \to 0.\]
Finally, $\wt{\Fh}_t^\delta \stackrel{d}{=} \Fh_t^\delta$ for each $t \geq 0$.
\end{proposition}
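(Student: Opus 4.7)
The plan is to construct $B$ and $\wt{\Fh}_t^\delta$ one interval $[\delta\ell, \delta(\ell+1)]$ at a time and concatenate. The SDE is derived by applying Ito's formula, in the spirit of Lemma~\ref{lem::green_change}, to the auxiliary process
$\Fh_{\delta\ell}^\delta(f_t^{\delta,\ell}(z)) + Q \log|(f_t^{\delta,\ell})'(z)| + \tfrac{\kappa+6}{2\sqrt{\kappa}}\log(|z|/|f_t^{\delta,\ell}(z)|) - C_t$,
where $\Fh_{\delta\ell}^\delta$ is treated as frozen (measurable at time $\delta\ell$) and $C_t$ enforces $\wt{\Fh}_t^\delta(0) = 0$. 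The first term contributes the drift $(\nabla \Fh_{\delta\ell}^\delta)(f_t^{\delta,\ell}(z)) \cdot \dot f_t^{\delta,\ell}(z)$, and the log terms contribute the $\tfrac{1}{\sqrt{\kappa}}\CP^\star$ and $Q\partial_\theta \ol{\CP}$ drifts together with the $\CP^\star\, dB$ martingale piece by the Ito calculation underlying~\eqref{eqn::fh_change}, adapted to the uncentered reverse-flow driving function $V_t^{\delta,\ell} = U^{\delta,\ell} W_{\delta(2\ell+1)-t}^{\delta,\ell}$; the appearance of $\CP^\star$ in place of $\ol{\CP}$ reflects the origin normalization.

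For the Brownian motion $B$: within each interval the evolution of $\Fh_t^\delta$ is a deterministic functional of $(h, B^{\delta,\ell})$, so by martingale representation the martingale part of $\Fh_t^\delta(z) - \int \wt D_s^\delta(z)\, ds$ is an Ito integral against a one-dimensional Brownian motion that can be built from $B^{\delta,\ell}$ via time-reversal and centering. I would take $B$ to be this process on each interval and concatenate across intervals; classical Brownian time-reversal on a finite interval together with the independence of the $(B^{\delta,\ell})$ ensures that the concatenation is a standard Brownian motion on $[0,\infty)$. With this choice, the equality $\wt{\Fh}_{\delta(\ell+1)}^\delta = \Fh_{\delta(\ell+1)}^\delta$ at an interval endpoint is automatic: the SDE's integrated form is the Doob--Meyer-type decomposition of $\Fh_t^\delta$ on the interval, with $\wt D_s^\delta\, ds$ and $\wt\sigma_s^\delta\, dB_s$ being precisely the finite-variation and martingale pieces. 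Induction in $\ell$ then gives equality at every integer multiple of $\delta$, and the distributional identity $\wt{\Fh}_t^\delta \stackrel{d}{=} \Fh_t^\delta$ follows from Theorem~\ref{thm::radial_coupling_existence} combined with the stationarity of the $\delta$-approximation noted in Section~\ref{subsec::approximation}.

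The hard part and main obstacle is the uniform-in-compacts closeness of $\wt{\Fh}_t^\delta$ and $\Fh_t^\delta$ at intermediate times. These processes disagree inside an interval because the SDE drift uses the frozen gradient $\nabla \Fh_{\delta\ell}^\delta$ rather than the time-varying $\nabla \Fh_t^\delta$, and because $f_t^{\delta,\ell}$ differs from the inverse forward flow $\phi_t := (g_{\delta\ell,t}^\delta)^{-1}$ strictly inside the interval. To control this, I would use the H\"older-$(1/2-\epsilon)$ regularity of $(t,z) \mapsto \Fh_t^\delta(z)$ on any compact $K \subseteq \D$ uniform in $\delta$, obtained by transferring the estimate of Lemma~\ref{lem::local_tightness} from the conditional mean $\CC_{K_t^\delta}$ to $\Fh_t^\delta$ via standard derivative bounds on the Loewner flow, and then upgrading to regularity of $\nabla \Fh_t^\delta|_K$ via interior elliptic estimates for harmonic functions. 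A Gronwall-type iteration over the $\lceil T/\delta \rceil$ intervals should then yield $\sup_{[0,T] \times K}|\wt{\Fh}_t^\delta - \Fh_t^\delta| = O(\delta^{1/2-\epsilon})$ in probability, completing the proof.
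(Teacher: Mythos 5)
Your overall skeleton (decompose the field along the reverse flow on each interval, apply It\^o's formula term by term, concatenate across intervals) matches the paper's, but your derivation of the martingale part has a genuine gap. The auxiliary process $\Fh_{\delta\ell}^\delta(f_t^{\delta,\ell}(z)) + Q\log|(f_t^{\delta,\ell})'(z)| + \tfrac{\kappa+6}{2\sqrt\kappa}\log(|z|/|f_t^{\delta,\ell}(z)|) - C_t$ is of bounded variation in $t$: since $f_t^{\delta,\ell}$ is the \emph{uncentered} reverse flow, $t\mapsto f_t^{\delta,\ell}(z)$ solves an ODE and is $C^1$, so none of these terms can produce a $dB_t$ contribution. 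The $-\ol{\CP}\,dB_t$ term in \eqref{eqn::fh_change} arises only because the flow there is centered ($f_t = U_t^{-1}g_t$, so $dU_t^{-1}$ carries $dW_t$); it does not survive ``adaptation to the uncentered flow.'' More importantly, your auxiliary process does not equal $\Fh_{\delta(\ell+1)}^\delta$ even at the endpoint: the harmonic extension at time $t>\delta\ell$ also picks up the values of the zero-boundary part of the field on the newly revealed boundary, i.e.\ the term $\tfrac{1}{2\pi}\int_{\partial\D}\CP^\star(z,u)(h\circ f_t^{\delta,\ell})(u)\,du$ with $h$ a zero-boundary GFF independent of the flow (this is the third term of \eqref{eqn::dht}). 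That omitted term is precisely the source of the martingale part: by a Dirichlet Green's-function computation (Lemma~\ref{lem::green_change_dirichlet}), its law is that of $\int \CP(f_s^{\delta,\ell}(z),V_s^{\delta,\ell})\,dB_s$ for a standard Brownian motion $B$ built from the field $h$ and independent of the SLE driving motion (Lemma~\ref{lem::gff_harmonic_change}).

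Consequently your construction of $B$ from $B^{\delta,\ell}$ ``via time-reversal and centering'' cannot work: the martingale part of $\Fh_t^\delta(z)-\int\wt D_s^\delta(z)\,ds$ is not adapted to the filtration generated by $B^{\delta,\ell}$ (it depends on the field near the growing hull), so martingale representation with respect to $B^{\delta,\ell}$ does not apply. Relatedly, your claim that the integrated SDE is ``the Doob--Meyer decomposition of $\Fh_t^\delta$ on the interval'' contradicts your (correct) later observation that $\wt{\Fh}_t^\delta\neq\Fh_t^\delta$ at intermediate times; the endpoint equality instead comes from the identity $f_{\delta(\ell+1)}^{\delta,\ell}=(g_{\delta\ell,\delta(\ell+1)}^\delta)^{-1}$, which fails for intermediate $t$. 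Your Gr\"onwall/H\"older argument for the uniform closeness as $\delta\to 0$ is reasonable in spirit (the paper is terse on this point), but it rests on the SDE being correctly derived first.
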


Recall that $\Fh_t^\delta$ is harmonic in $\D$ hence $C^\infty$ in $\D$ for each $t \geq 0$.  Consequently, the definition of $\wt{D}_t^\delta$ makes sense pointwise for $z \in \D$.  We will perform the calculation in several steps.  It suffices to prove the result for $\ell = 0$.  In order to avoid carrying around too much notation, we will write $f_t^\delta = f_t^{\delta,0}$ and $V_t^\delta = V_t^{\delta,0}$.  Let $du$ denote the standard length measure on $\partial \D$.  Since $\Fh_t^\delta$ is harmonic in $\D$ with $\Fh_t^\delta(0) = 0$, for $t = \delta$ we can write
\begin{equation}
\label{eqn::dht}
 \begin{split}
 \Fh_t^\delta(z) =& \frac{1}{2\pi} \int_{\partial \D} \CP(f_t^\delta(z),u) \Fh_0^\delta(u)  d u - \frac{\kappa+6}{2\sqrt{\kappa}} \left( \log|f_t^\delta(z)| - \log|z| \right) +\\
 & \frac{1}{2\pi}\int_{\partial \D} \CP^\star(z,u) (h \circ f_t^\delta)(u) du + Q \log|(f_t^\delta)'(z)| - \frac{1}{\sqrt{\kappa}} t
 \end{split}
\end{equation}
where $h$ is a zero-boundary GFF on $\D$ independent of $f_t^\delta$ and $\Fh_0^\delta$.  (The term $-(1/\sqrt{\kappa}) t$ arises because the two integrals above vanish at $0$, $(f_t^\delta)'(0) = e^{-t}$, and we have that $\Fh_t^\delta(0) = 0$.)  We can compute the time derivative of $2\pi$ times the first integral from~\eqref{eqn::dht} as follows.
\begin{align}
  d \left( \int_{\partial \D} \CP(f_t^\delta(z),u) \Fh_0^\delta(u)  d u \right)
 &=\int_{\partial \D} \bigg(  (\nabla \CP)(f_t^\delta(z),u) \cdot \dot{f}_t^\delta(z)  \bigg)  \Fh_0^\delta(u) du dt \notag\\
&= 2\pi (\nabla \Fh_0^\delta)(f_t^\delta(z)) \cdot \dot{f}_t^\delta(z) dt. \label{eqn::harmonic_change}
\end{align}
By first computing $d \log(f_t^\delta(z))$ and then taking real parts, it is also easy to see that 
\begin{equation}
\label{eqn::log_change}
d\left( \log|f_t^\delta(z)| - \log|z|\right) = -\CP(f_t^\delta(z),V_t^\delta) dt.
\end{equation}

\noindent We will now compute the time-derivative of the second to last term from~\eqref{eqn::dht}.

\begin{lemma}
\label{lem::mass_change}
For $t \in [0,\delta]$, we have that
\[ \frac{d}{dt} \log |(f_t^\delta)'(z)| =  
(\partial_\theta \ol{\CP})(f_t^\delta(z),V_t^\delta)  - \CP(f_t^\delta(z),V_t^\delta).\]
\end{lemma}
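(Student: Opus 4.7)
The plan is to recognize that on the interval $[0,\delta]$ the reverse Loewner flow $f_t^\delta := f_t^{\delta,0}$ is driven by the single time-dependent point $V_t^\delta$, so that $\log|(f_t^\delta)'(z)|$ satisfies an ordinary ODE whose right-hand side can be matched directly, by an algebraic computation, with the Poisson-kernel quantities on the right side of the claim.

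First, I would specialize the definitions. Since $\ell=0$ and $\varsigma_s^\delta$ for $s \in [0,\delta)$ is the point mass $\delta_{U^{\delta,0}W_s^{\delta,0}}$, the definition of $f_t^{\delta,0}$ together with \eqref{eqn::v_delta_ell} gives
\[ \dot f_t^\delta(z) = -\Phi(V_t^\delta, f_t^\delta(z)), \qquad V_t^\delta = U^{\delta,0} W_{\delta-t}^{\delta,0}, \qquad t \in [0,\delta]. \]
Differentiating in $z$ and dividing through by $(f_t^\delta)'(z)$ (which is nonzero since $f_t^\delta$ is conformal) yields
\[ \frac{d}{dt} \log (f_t^\delta)'(z) = -(\partial_z\Phi)(V_t^\delta, f_t^\delta(z)). \]
Taking real parts, the lemma therefore reduces to the pointwise identity
\[ -\re\bigl[(\partial_z\Phi)(u,z)\bigr] = -\CP(z,u) + (\partial_\theta \ol{\CP})(z,u), \qquad u \in \partial \D,\ z \in \D. \]

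The remaining step is purely algebraic. Writing $\Phi(u,z) = z\,\Psi(u,z)$ with $\Psi(u,z) = (u+z)/(u-z)$, a one-line computation gives
\[ (\partial_z \Phi)(u,z) = \Psi(u,z) + \frac{2uz}{(u-z)^2}. \]
The first summand contributes $\re \Psi(u,z) = \CP(z,u)$ by \eqref{eqn::poisson_kernel}. For the second summand, starting from $(\partial_u\Psi)(u,z) = -2z/(u-z)^2$ and using $u = e^{i\theta}$, the chain rule gives
\[ (\partial_\theta \ol{\CP})(z,u) = \im\bigl[iu\,(\partial_u\Psi)(u,z)\bigr] = \im\!\left[\frac{-2iuz}{(u-z)^2}\right] = -\re\!\left[\frac{2uz}{(u-z)^2}\right], \]
which is exactly the negative of the real part of the second summand. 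Combining these two computations yields the required identity.

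I do not anticipate any serious obstacle: once the point-mass simplification of the driving measure is made, the lemma is a straightforward ODE calculation. The only care needed is in bookkeeping the sign and angular-derivative conventions for $\ol{\CP}$ and in remembering the argument-swap $\CP(z,u) = \re \Psi(u,z)$ dictated by \eqref{eqn::poisson_kernel}.
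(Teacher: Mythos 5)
Your proof is correct and follows essentially the same route as the paper's: differentiate the reverse Loewner ODE in $z$ to get $\tfrac{d}{dt}\log(f_t^\delta)'(z)$, then identify the real parts of the two resulting terms with $-\CP$ and $\partial_\theta\ol{\CP}$ via the identity $\partial_\theta\Psi(e^{i\theta},z)=-2ize^{i\theta}/(e^{i\theta}-z)^2$. The only cosmetic difference is that you specialize the driving measure to a point mass at the outset, whereas the paper carries the integral against $d\varsigma_{\delta-t}^\delta$ until the final step.
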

\begin{proof}
We have that,
\begin{align}
   &\frac{d}{dt} \log (f_t^\delta)'(z)
= \frac{1}{(f_t^\delta)'(z)} (\dot{f}_t^\delta)'(z) \notag\\
=& -\int_{\partial \D} \frac{u + f_t^\delta(z)}{u - f_t^\delta(z)} d\varsigma_{\delta-t}^\delta(u) - f_t^\delta(z) \int_{\partial \D} \frac{2u}{(u - f_t^\delta(z))^2} d\varsigma_{\delta-t}^\delta(u). \label{eqn::deriv_dyn}
\end{align}
For $\theta \in \R$, we note that
\[ \partial_\theta \left(\frac{e^{i \theta} + z}{e^{i \theta} - z}\right)
   =  \frac{-2iz e^{i \theta}}{(e^{i \theta} - z)^2}.\]
In particular,
\[  \frac{-2z e^{i \theta}}{(e^{i \theta} - z)^2} = -i \partial_\theta \left(\frac{e^{i \theta} + z}{e^{i \theta} - z}\right)\]
so that
\begin{equation}
\label{eqn::deriv_p_theta} 
 \re \left( \frac{-2z e^{i \theta}}{(e^{i \theta} - z)^2} \right) = \im\left(\partial_\theta \left(\frac{e^{i \theta} + z}{e^{i \theta} - z}\right)\right) = \partial_\theta \ol{\CP}(z,e^{i \theta}).
\end{equation}
This allows us to write the real part of~\eqref{eqn::deriv_dyn} more concisely as
\begin{align*}
& - \int_{\partial \D} \CP(f_t^\delta(z),u) d\varsigma_{\delta-t}^\delta(u) + \int_{\partial \D} (\partial_\theta\ol{\CP})(f_t^\delta(z),u)  d\varsigma_{\delta-t}^\delta(u) \notag\\
=& - \CP(f_t^\delta(z),V_t^\delta) + (\partial_\theta \ol{\CP})(f_t^\delta(z),V_t^\delta),
\end{align*}
which proves the lemma.
\end{proof}

The following lemma gives the law of the third term from~\eqref{eqn::dht}.  We emphasize that it does not describe the dynamics of this term in time.

\begin{lemma}
\label{lem::gff_harmonic_change}
Suppose that $h$ is a zero-boundary GFF on $\D$, let $(\psi_t)$ be the reverse radial Loewner flow driven by $e^{iW_t}$ where $W \colon [0,\infty) \to \R$ is a continuous function, and let $H_t$ be the harmonic extension of $(h \circ \psi_t)(z)$ from $\partial \D$ to $\D$.  For each $t \geq 0$ there exists a standard Brownian motion $B$ such that
\begin{equation}
\label{eqn::brownian_rep}
H_t(z) = \int_0^t \CP(\psi_s(z),e^{i W_s}) dB_s.
\end{equation}
\end{lemma}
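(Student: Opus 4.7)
The plan is to show that $H_t(z)$ is a centered Gaussian random variable with variance $\int_0^t \CP(\psi_s(z), e^{iW_s})^2\, ds$, after which the Brownian representation will follow from a standard Gaussian-martingale argument.

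First I would identify $H_t(z) = \CC_{K_t}(\psi_t(z))$, where $K_t = \D\setminus\psi_t(\D)$ is a (deterministic) local set for $h$. Proposition~\ref{prop::gff_markov} gives $h|_{\D\setminus K_t} = h_0 + \CC_{K_t}$ with $h_0$ an independent zero-boundary GFF on $\D\setminus K_t$; pulling back via the conformal map $\psi_t\colon \D \to \D\setminus K_t$ and using conformal invariance of the zero-boundary GFF, one obtains $h\circ\psi_t = \widetilde{h}_0 + \CC_{K_t}\circ\psi_t$, where $\widetilde{h}_0$ is a zero-boundary GFF on $\D$ (and so has zero boundary values) while $\CC_{K_t}\circ\psi_t$ is already harmonic on $\D$. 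Taking the harmonic extension from $\partial\D$ kills the $\widetilde{h}_0$ contribution and leaves $H_t = \CC_{K_t}\circ\psi_t$; in particular $H_t(z)$ is centered Gaussian.

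Next I would compute $\E[H_t(z)^2]$ using the standard local-set variance identity $\E[\CC_{K_t}(w)^2] = \log[\confrad(w;\D)/\confrad(w;\D\setminus K_t)]$ at $w = \psi_t(z)$. Since $\confrad(\psi_t(z);\D) = 1 - |\psi_t(z)|^2$ and $\confrad(\psi_t(z);\D\setminus K_t) = |\psi_t'(z)|(1 - |z|^2)$, this becomes
\[ \E[H_t(z)^2] = -\log|\psi_t'(z)| + \log(1-|\psi_t(z)|^2) - \log(1 - |z|^2). \]
Differentiating in $t$ via $\dot\psi_t(z) = -\Phi(e^{iW_t},\psi_t(z))$ and its $z$-derivative, and using $\re\Psi(u,w) = \CP(w,u)$ together with $\re[2uw/(u-w)^2] = -\partial_\theta\ol{\CP}(w,u)$ (exactly as in the proof of Lemma~\ref{lem::mass_change}), I obtain
\[ \frac{d}{dt}\log|\psi_t'(z)| = -\CP + \partial_\theta\ol{\CP}, \qquad \frac{d}{dt}\log(1-|\psi_t(z)|^2) = \frac{2|\psi_t(z)|^2\,\CP}{1-|\psi_t(z)|^2}, \]
with $\CP$ and $\partial_\theta\ol{\CP}$ evaluated at $(\psi_t(z), e^{iW_t})$. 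The elementary identity $(1+r^2)(1-2r\cos u + r^2) + 2r\cos u(1+r^2) - 4r^2 = (1-r^2)^2$ then reduces the combination to $\CP^2$, giving $\frac{d}{dt}\E[H_t(z)^2] = \CP(\psi_t(z), e^{iW_t})^2$; integrating from the initial condition $H_0(z) = 0$ gives the variance identity.

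Finally I would promote this marginal variance identity to the process-level Brownian representation. The process $(H_s(z))_{s \geq 0}$ is a continuous centered Gaussian process (continuity of a modification follows from a Kolmogorov--Centsov argument based on smoothness of the variance in $s$). To make it a Gaussian martingale it suffices, by Gaussianity, to verify orthogonality of increments $\E[H_s(z)(H_t(z) - H_s(z))] = 0$ for $s \leq t$; this is done by expressing the cross-covariance as $(2\pi)^{-2}\iint \CP(z,u)\CP(z,v) G^{\dirichlet}_\D(\psi_s(u), \psi_t(v))\,du\,dv$ and differentiating in $t$ via a Green's-function computation parallel to the one above. Once both the martingale property and the quadratic variation $\langle H(z) \rangle_t = \int_0^t \CP^2\,ds$ are in hand, Dambis--Dubins--Schwarz produces the required standard Brownian motion $B$ with $H_t(z) = \int_0^t \CP(\psi_s(z), e^{iW_s})\,dB_s$. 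The main obstacle is precisely this last step: because the hulls $(K_s)$ do not form a monotone family under a general reverse radial Loewner flow, Proposition~\ref{prop::cond_mean_continuous} does not apply directly, and the martingale property must be extracted via the direct Green's-function calculation sketched above rather than from the increasing-local-set theory already developed in the paper.
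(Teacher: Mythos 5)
Your reduction $H_t=\CC_{K_t}\circ\psi_t$ and the diagonal variance computation are correct, and the conformal-radius route to $\E[H_t(z)^2]=\int_0^t\CP(\psi_s(z),e^{iW_s})^2\,ds$ is a legitimate alternative to the paper's Green's-function argument \emph{for the diagonal}. But the lemma is not a statement about a single spatial point: the Brownian motion $B$ must be the same for every $z$ (this is exactly how the lemma is invoked in the proof of Proposition~\ref{prop::approximate_evolution}, where one $B$ drives $\wt{\Fh}_t^\delta(z)$ for all $z$ simultaneously). Equivalently, the real content is the \emph{off-diagonal} spatial covariance $\E[H_t(z)H_t(w)]=\int_0^t\CP(\psi_s(z),e^{iW_s})\CP(\psi_s(w),e^{iW_s})\,ds$. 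Your argument never produces this: the conformal-radius identity has no analogue for $\var(H_t(z)+H_t(w))$, so polarization is unavailable, and a per-$z$ application of Dambis--Dubins--Schwarz yields a Brownian motion $B^{(z)}$ for each $z$ with no reason for these to coincide. Showing they coincide requires precisely the cross-covariance/cross-variation, i.e.\ the equal-time Green's function evolution of Lemma~\ref{lem::green_change_dirichlet} --- which is the paper's entire proof. Note also that the lemma is a fixed-$t$ distributional identity for the Gaussian field $z\mapsto H_t(z)$; the paper settles it by comparing covariance functions at fixed $t$ and never needs any temporal structure.

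The second, more serious problem is that the temporal structure you invoke is almost certainly false. For a general continuous $W$ the reverse-flow hulls $K_s=\D\setminus\psi_s(\D)$ are not nested ($K_t$ is the forward hull driven by $r\mapsto W_{t-r}$, which is rebuilt from scratch as $t$ changes), so $s\mapsto H_s(z)=\CC_{K_s}(\psi_s(z))$ is a conditional expectation with respect to a non-monotone family of $\sigma$-algebras evaluated at a moving point; there is no reason for it to be a martingale, and one can check it is not. Your proposed verification of orthogonality of increments requires differentiating $G^\dirichlet(\psi_s(u),\psi_t(v))$ in $t$ with $s<t$ \emph{fixed}; this is not ``parallel'' to Lemma~\ref{lem::green_change_dirichlet}, where the crucial cancellation producing $\CP(\psi_t(z),e^{iW_t})\CP(\psi_t(w),e^{iW_t})$ comes from \emph{both} arguments flowing simultaneously. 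With only one argument flowing, the $u$-integral of $\partial_t G^\dirichlet(\psi_s(u),\psi_t(v))$ against harmonic measure produces $\CP(\psi_t(v),e^{iW_t})$ plus a correction term coming from the pole of $x\mapsto(\psi_t(v)-x)^{-1}$ inside $\D\setminus K_s$; that correction cancels the first term only when $K_s\subseteq K_t$ (the nested case, where one is back to Proposition~\ref{prop::cond_mean_continuous}). So the final step of your plan rests on a claim that fails in exactly the generality the lemma requires, and even where it holds it would not deliver the single $B$ valid for all $z$.
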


\noindent We emphasize that in the statement of Lemma~\ref{lem::gff_harmonic_change}, the function $W$ should be thought of as deterministic. In particular, $B$ and $W$ are independent and so are $h$ and $W$.  The main ingredient in the proof of Lemma~\ref{lem::gff_harmonic_change} is the following analog of~\eqref{eqn::green_change} from Lemma~\ref{lem::green_change} with the Dirichlet Green's function in place of the Neumann Green's function on $\D$.

\begin{lemma}
\label{lem::green_change_dirichlet}  Suppose that $G$ is the Dirichlet Green's function on $\D$, let $(\psi_t)$ be the reverse radial Loewner flow driven by $e^{iW_t}$ where $W \colon [0,\infty) \to \R$ is a continuous function, and let
\[ G_t(z,w) = G(\psi_t(z),\psi_t(w)) \quad\text{for}\quad z,w \in \D \quad\text{and}\quad t \geq 0.\]
Then
\[ dG_t(z,w) = \CP(\psi_t(z),e^{iW_t}) \CP(\psi_t(w),e^{iW_t}) dt.\]
\end{lemma}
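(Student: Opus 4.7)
The plan is to prove the lemma by a direct ODE computation, starting from the explicit formula for the Dirichlet Green's function on $\D$. Setting $a = \psi_t(z)$, $b = \psi_t(w)$, and $U = e^{iW_t}$, I would write
\begin{align*}
G_t(z,w) = \log|1 - a\bar b| - \log|b - a|
\end{align*}
and compute the time-derivatives of $\log(b-a)$ and $\log(1 - \bar b\, a)$ using the reverse radial Loewner ODE $\dot a = -\Phi(U,a)$, $\dot b = -\Phi(U,b)$. These two derivatives were already computed in identical form (as rational functions of $a$, $b$, $U$) in formulas~\eqref{eqn::green_change1} and~\eqref{eqn::green_change2} of the proof of Lemma~\ref{lem::green_change}, where it was explicitly noted that the expressions do not depend on the choice of driving function but only on the reverse radial Loewner structure, so I may quote them directly. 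Taking real parts yields
\begin{align*}
\frac{d}{dt}G_t(z,w) = \re\left[\frac{2a\bar b}{(\bar U - \bar b)(U-a)}\right] + \re\left[\frac{U^2 + U(a+b) - ab}{(U-a)(U-b)}\right].
\end{align*}

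The heart of the proof is the algebraic identity that the right-hand side equals $\CP(a,U)\CP(b,U)$. The cleanest route is to use the alternative representation $\CP(\zeta,U) = (1-|\zeta|^2)/|U-\zeta|^2$, valid because $|U|=1$, so that the target reads $(1-|a|^2)(1-|b|^2)/(|U-a|^2|U-b|^2)$. Each of the two terms above can then be brought over the common denominator $|U-a|^2|U-b|^2$ by multiplying numerator and denominator by the appropriate conjugate factors; the resulting numerators are polynomials in $a,b,\bar a,\bar b,U,\bar U$ subject only to $U\bar U = 1$, and a direct (if tedious) expansion should collapse the sum to $(1-|a|^2)(1-|b|^2)$. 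I expect this algebraic simplification to be the only real nuisance in the argument, rather than any genuine conceptual obstacle.

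As a consistency check one can use the fact that $a=0$ is a fixed point of the flow, so $\psi_t(0)=0$ for all $t$; then $G_t(0,w) = -\log|\psi_t(w)|$, and $d\log \psi_t(w)/dt = -(U+\psi_t(w))/(U-\psi_t(w))$ immediately gives $dG_t(0,w)/dt = \CP(\psi_t(w),U) = \CP(0,U)\CP(\psi_t(w),U)$, matching the claim. A conceptually cleaner (though formally equivalent) motivation is the following: $\psi_t \colon \D \to \D_t := \psi_t(\D)$ is conformal, and by conformal invariance of the Dirichlet Green's function $G_{\D_t}(\psi_t(z),\psi_t(w)) = G_\D(z,w)$, so $G_t(z,w) - G_\D(z,w)$ equals the harmonic difference $G_\D - G_{\D_t}$ evaluated at $(\psi_t(z),\psi_t(w))$; since $G_\D(z,w)$ is $t$-independent, only this difference contributes to $dG_t$, and its evolution under infinitesimal growth of the hull $\D \setminus \D_t$ near $U_t$ produces the symmetric product of Poisson kernels. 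I would present the direct calculation as the proof and regard the conformal-invariance picture as explanatory motivation.
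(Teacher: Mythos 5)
Your proposal is correct and follows essentially the same route as the paper's own proof, which likewise reduces the claim to taking the \emph{difference} of the expressions in~\eqref{eqn::green_change1} and~\eqref{eqn::green_change2} (rather than their sum, as in Lemma~\ref{lem::green_change}) and verifying the resulting algebraic identity for the product of Poisson kernels. The additional consistency check at $z=0$ and the conformal-invariance motivation are sound but not part of the paper's argument.
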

\begin{proof}
This follows from a calculation which is similar to that of Lemma~\ref{lem::green_change}.  In particular, one computes the difference between the expressions in~\eqref{eqn::green_change1} and~\eqref{eqn::green_change2} (rather than the sum, as in the case of Lemma~\ref{lem::green_change}).  See also the chordal version of this calculation carried out in \cite[Section~4.1]{sheffield2010weld}.
\end{proof}

\begin{proof}[Proof of Lemma~\ref{lem::gff_harmonic_change}]
Fix a Brownian motion $B$ and let $\wt{H}_t(z)$ be equal to the expression on the right side of~\eqref{eqn::brownian_rep} with respect to $B$.  Then $H_t(z)$ and $\wt{H}_t(z)$ are both mean zero Gaussian processes (with $t$ fixed and thought of as functions of $z$), so to complete the proof we just have to show that they have the same covariance function.  For $z,w \in \D$, we have that
\begin{align}
\label{eqn::brownian_cov}
      \cov( \wt{H}_t(z), \wt{H}_t(w))
&= \int_0^t \CP(\psi_s(z),e^{i W_s}) \CP(\psi_s(w),e^{i W_s}) ds
\end{align}
For each $r \in (0,1)$, let $H_t^r$ be the function which is harmonic in $\D$ whose boundary values are given by $(h \circ \psi_t)(r z)$.  Then $H_t^r \to H_t$ locally uniformly as $r \to 1$.  By Lemma~\ref{lem::green_change_dirichlet}, we have that
\begin{align*}
  &    \cov(H_t^r(z), H_t^r(w))\\
=& \frac{1}{(2\pi)^2} \int_{\partial \D} \int_{\partial \D} \CP(z,u) \CP(w,v) G(\psi_t(r u),\psi_t(r v)) d u d v\\
=& \frac{1}{(2\pi)^{2}} \int_0^t \int_{\partial \D} \int_{\partial \D}  \CP(z,u) \CP(w,v) \CP(\psi_s(r u),e^{i W_s})\CP(\psi_s(r v), e^{i W_s}) d u d v ds\\
&\quad\quad+ \frac{1}{(2\pi)^2} \int_{\partial \D} \int_{\partial \D} \CP(z,u) \CP(w,v) G(ru,rv) dudv.
\end{align*}
Since $z \mapsto \CP(\psi_s(rz),e^{iW_s})$ is harmonic in $\D$, we have that
\begin{align*}
 &\frac{1}{2\pi} \int_{\partial \D} \CP(z,u) \CP(\psi_s(ru), e^{i W_s}) du\\
=&  \CP(\psi_s(rz), e^{iW_s}) \to \CP(\psi_s(z),e^{i W_s}) \quad\text{as}\quad r \to 1.
\end{align*}
The limit above is locally uniform in $z \in \D$.  Note also that $(u,v) \mapsto G(ru,rv)$ converges to $0$ in $L^1( (\partial \D)^2)$.  The assertion of the lemma therefore follows because as $r \to 1$, the left side above converges to $\cov(H_t(z),H_t(w))$ and the right side converges to the same expression as in the right side of~\eqref{eqn::brownian_cov}.
\end{proof}

\begin{proof}[Proof of Proposition~\ref{prop::approximate_evolution}]
Combining the calculations from~\eqref{eqn::harmonic_change} and~\eqref{eqn::log_change} with Lemma~\ref{lem::mass_change} and Lemma~\ref{lem::gff_harmonic_change} implies that the following is true.  There exists a Brownian motion $B$ such that for all $z \in \D$ we have
\begin{align*}
 \Fh_\delta^\delta(z) = \Fh_0^\delta(z) + \int_0^\delta \wt{D}_s^\delta(z) ds + \int_0^\delta \wt{\sigma}_s^\delta(z) dB_s.
\end{align*}
For each $z \in \D$ and $t \in [0,\delta]$ we let
\begin{align*}
 \wt{\Fh}_t^\delta(z) = \Fh_0^\delta(z) + \int_0^t \wt{D}_s^\delta(z) ds + \int_0^t \wt{\sigma}_s^\delta(z) dB_s.
\end{align*}
Then $\wt{\Fh}_\delta^\delta = \Fh_\delta^\delta$.  Extending the definition of $\wt{\Fh}_t^\delta$ in the same way to all $t \geq 0$ defines a process which satisfies all of the assertions of the proposition.  The statement regarding the convergence of $\wt{\Fh}_t^\delta-\Fh_t^\delta$ to zero as $\delta \to 0$ in probability uniformly on compact subsets of $\D$ and compact intervals of time can be extracted from the continuity of the coefficients and that $\wt{\Fh}_t^\delta = \Fh_t^\delta$ for all times of the form $t = \delta \ell$ with $\ell \in \N_0$.
\end{proof}

\subsection{SDE for the limiting dynamics}
\label{subsec::martingale_problem}

We are now going to show that a subsequential limit $\Fh_t$ of $\Fh_t^\delta$ as $\delta \to 0$ satisfies~\eqref{eqn::qle_evolution}.  By Proposition~\ref{prop::approximate_evolution}, it suffices to work with $\wt{\Fh}_t^\delta$ in place of $\Fh_t^\delta$.  Throughout, when we refer to a $\delta \to 0$ limit we always mean along a subsequence such that the law of $(\Fh_t^\delta)$ hence $(\wt{\Fh}_t^\delta)$ has a weak limit as in Section~\ref{sec::existence}.  We shall also assume (using the Skorohod representation theorem for weak convergence) that we have coupled the laws of such a sequence together onto a common probability space so that $\wt{\Fh}_t^\delta \to \Fh_t$ as $\delta \to 0$ almost surely.

It will be useful in what follows to work in coordinates.  There are many possible choices which would work equally well.  To be concrete, we will make the particular choice of $L^2(\tfrac{1}{2}\D)$.  Note that $\Fh_t^\delta$ is determined by its values on $\tfrac{1}{2} \D$ since it is harmonic.  We will use $(\cdot,\cdot)$ to denote the $L^2$ inner product on $\tfrac{1}{2}\D$ and let $\| \cdot \|$ denote the corresponding norm.  Fix an orthonormal basis $(\psi^j)$ of this space such that $\psi^j$ is smooth for each $j$.  Let $D_t(z,u)$ be as in~\eqref{eqn::dtzu} and let $\nu$ be the driving measure associated with the limiting $\QLE(\gamma^2,\eta)$.  We will write $d\nu_t(u)$ for integration against the conditional measure $\nu(t,u)$ with $t \geq 0$ fixed.  We will now use integration against $d\nu_t(u)$ to define a number of objects.  These are each defined only for almost all $t \geq 0$ but make sense to integrate against.  Let
\begin{align*}
 D_t(z) = \int_{\partial \D} D_t(z,u) d\nu_t(u).
\end{align*}
For each $t \geq 0$ and $j \in \N$, let
\[ D_t^j = (D_t , \psi^j)\]
be the coordinates of $D_t$ with respect to $(\psi^j)$.  We also let
\[ \sigma_t^{ij} = \int_\D \int_\D \int_{\partial \D}   \psi^i(z)  \psi^j(w)  \CP^\star(z,u)  \CP^\star(w,u)  d\nu_t(u) dz dw.\]
We define the operator
\[ \CL_t = \frac{1}{2} \sum_{i,j} \sigma_t^{ij} \partial_{ij} + \sum_i D_t^i \partial_i.\]
Finally, we let $a_t^j = (\Fh_t,\psi^j)$ and $\wt{a}_t^{\delta,j} = (\wt{\Fh}_t^\delta,\psi^j)$ be the coordinates of $\Fh_t$ and $\wt{\Fh}_t^\delta$, respectively, with respect to $(\psi^j)$.

We begin by showing that $\Fh_t$ solves the martingale problem associated with the operator $\CL_t$.

\begin{proposition}
\label{prop::martingale_problem}
Fix $k \in \N$ and suppose that $F \colon \R^k \to \R$ is a smooth function.  Let $F_t = F(a_t^1,\ldots,a_t^k)$.  Then
\begin{equation}
\label{eqn::martingale_problem}
F_t - F_0 - \int_0^t \CL_s F_s ds
\end{equation}
is a continuous square-integrable martingale.
\end{proposition}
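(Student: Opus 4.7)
The plan is to derive the analogous martingale statement at the level of the $\delta$-approximation $\wt{\Fh}_t^\delta$ (where we already have a genuine It\^o SDE from Proposition~\ref{prop::approximate_evolution}) and then pass to the limit $\delta \to 0$ along the subsequence for which $\wt{\Fh}_t^\delta \to \Fh_t$ almost surely in $\CH$, appealing to Proposition~\ref{prop::qle_solution} to identify the limiting drift and diffusion coefficients in terms of the driving measure $\nu_t$.

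\textbf{Step 1: Coordinate SDE.} Pairing~\eqref{eqn::approximate_evolution} with $\psi^j$ gives, for $\wt{a}_t^{\delta,j} = (\wt{\Fh}_t^\delta,\psi^j)$,
\begin{equation*}
d\wt{a}_t^{\delta,j} = \wt{D}_t^{\delta,j}\,dt + \wt{\sigma}_t^{\delta,j}\,dB_t,
\end{equation*}
where $\wt{D}_t^{\delta,j} = (\wt{D}_t^\delta,\psi^j)$ and $\wt{\sigma}_t^{\delta,j} = (\wt{\sigma}_t^\delta,\psi^j)$, with $\wt{D}_t^\delta,\wt{\sigma}_t^\delta$ as in Proposition~\ref{prop::approximate_evolution}. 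Since each $\psi^j$ is smooth and supported in $\tfrac12\D$, both pairings are uniformly bounded in $t,\delta$ by deterministic constants depending only on the $\psi^j$ and on local bounds for $\nabla \Fh_s^\delta$ on $\tfrac12\D$ (which follow from Lemma~\ref{lem::local_tightness} and the Koebe distortion theorem). Applying It\^o's formula to $F_t^\delta := F(\wt{a}_t^{\delta,1},\ldots,\wt{a}_t^{\delta,k})$ yields
\begin{equation*}
F_t^\delta - F_0^\delta - \int_0^t \CL_s^\delta F_s^\delta\,ds = M_t^\delta,
\end{equation*}
where $M_t^\delta$ is a continuous square-integrable martingale and $\CL_s^\delta$ is the analogue of $\CL_s$ in which $d\nu_s(u)$ is replaced by $d\varsigma_s^\delta(u)$ (i.e.\ a Dirac mass at $U^{\delta,\ell}W_s^{\delta,\ell}$ on each interval $[\delta\ell,\delta(\ell+1))$).

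\textbf{Step 2: Convergence of the drift and diffusion coefficients.} The key technical step is to show
\begin{equation*}
\int_0^t \CL_s^\delta F_s^\delta\,ds \;\longrightarrow\; \int_0^t \CL_s F_s\,ds
\end{equation*}
in probability as $\delta \to 0$. The coefficients of $\CL_s^\delta F_s^\delta$ are obtained by integrating the smooth functions
\[ (z,u)\mapsto \psi^j(z)D_s(z,u), \qquad (z,w,u) \mapsto \psi^i(z)\psi^j(w)\CP^\star(z,u)\CP^\star(w,u) \]
against $dz\,dw\,d\varsigma_s^\delta(u)$, with $D_s$ involving $\nabla\wt\Fh_s^\delta$ and the maps $f_t^{\delta,\ell}$. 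Because $\psi^j$ is supported in $\tfrac12\D$ and $\CP^\star,\partial_\theta\ol\CP$ are smooth in $z$ for $u\in\partial\D$, each of these is continuous and bounded in $(s,u)$ on compact time intervals. The almost sure convergence of $\wt\Fh_s^\delta \to \Fh_s$ in $\CH$ gives the convergence of $\nabla \wt\Fh_s^\delta \to \nabla \Fh_s$ uniformly on $\tfrac12\ol\D$, and standard estimates on reverse radial Loewner flows show that $f_t^{\delta,\ell} \to \mathrm{id}$ uniformly in $t \in [\delta\ell,\delta(\ell+1))$ at rate $O(\delta)$. Thus $\CL_s^\delta F_s^\delta$ can be rewritten, up to error tending to zero, as the integral against $d\varsigma_s^\delta ds$ of a continuous test function on $[0,t]\times\partial\D$ whose limit defines $\CL_s F_s$. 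The weak convergence $\varsigma_s^\delta ds \to \nu$ in $\CN$ from Proposition~\ref{prop::all_tight} combined with $\nu=\varsigma$ from Proposition~\ref{prop::qle_solution} then delivers the desired convergence of the $ds$-integrals.

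\textbf{Step 3: Martingale passage to the limit and integrability.} Setting $M_t := F_t - F_0 - \int_0^t \CL_s F_s\,ds$, Step 2 gives $M_t^\delta \to M_t$ in probability along the subsequence. Continuity of $M_t$ follows from the continuity of $t\mapsto F_t$ (inherited from $\Fh_t\in\CH$) and of the drift integral. To obtain that $M_t$ is a martingale, one establishes uniform integrability of $(M_t^\delta)_\delta$ on each compact time interval via an $L^2$ bound: the quadratic variation $\langle M^\delta\rangle_t$ is dominated by a deterministic constant times $t$ (using the uniform bounds on $\partial_i F$ on the range of the $\wt{a}^{\delta,i}$, which is itself tight by Lemma~\ref{lem::local_tightness}), and Burkholder-Davis-Gundy gives uniform $L^2$ control. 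The martingale property then transfers to $M_t$ by testing against bounded $\CF_s$-measurable random variables, using the almost sure convergence and uniform integrability. Square-integrability of $M_t$ follows from Fatou applied to $\E[(M_t^\delta)^2]$.

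\textbf{Main obstacle.} The subtle point is Step 2: the driving measures $\varsigma_s^\delta$ are, by construction, discrete Dirac masses that do \emph{not} converge pointwise in $s$ to $\nu_s$, but only as measures on $[0,T]\times\partial\D$. Hence one cannot simply take $\delta\to 0$ inside the inner integral over $\partial\D$. The argument must proceed by exhibiting $\CL_s^\delta F_s^\delta\,ds$ as the integral of an \emph{equicontinuous} family of test functions on $[0,T]\times\partial\D$ against $d\varsigma_s^\delta$, so that the weak convergence $\varsigma^\delta\to\nu$ in $\CN$ can be applied to a single limiting test function; controlling the dependence of $\wt{D}_t^\delta$ on the reverse flow $f_t^{\delta,\ell}$ uniformly across the many intervals $[\delta\ell,\delta(\ell+1))$ is where the main work lies.
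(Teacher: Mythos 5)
Your proposal follows essentially the same route as the paper's proof: apply It\^o's formula to the coordinate SDE of Proposition~\ref{prop::approximate_evolution}, identify the limit of the drift integral by combining the locally uniform convergence $\wt{\Fh}_t^\delta \to \Fh_t$ with the weak convergence $\varsigma_t^\delta\,dt \to \nu$ in $\CN$ (via Propositions~\ref{prop::all_tight} and~\ref{prop::qle_solution}), and then transfer the martingale property to the limit; your ``main obstacle'' paragraph correctly isolates the same point the paper handles by viewing $\wt{D}_s^{\delta}$ as converging in the weak topology induced by continuous functions on $\tfrac12\D\times[0,T]$. The one soft spot is in Step 3: tightness of the range of the $\wt{a}^{\delta,i}$ does not give the claimed deterministic domination of $\langle M^\delta\rangle_t$ (the coordinates are unbounded and $F$ is merely smooth); the paper instead obtains uniform $L^2$ control from the time-stationarity of $\wt{\Fh}_t^\delta$ --- each marginal is the harmonic extension of a form of the free boundary GFF, so all moments of the coordinates are bounded uniformly in $t$ and $\delta$ --- which yields $L^2$ convergence of $\wt{M}_t^\delta$ and hence the square-integrable martingale property in the limit.
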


For $F$ as in the statement of Proposition~\ref{prop::martingale_problem}, we let
\begin{align}
\label{eqn::fs}
\begin{split}
F_t &= F(a_t^1,\ldots,a_t^k),\\
\partial_i F_t &= (\partial_i F)(a_t^1,\ldots,a_t^k), \quad\text{and}\\
\partial_{ij} F_t &= (\partial_{ij} F)(a_t^1,\ldots,a_t^k).
\end{split}
\end{align}
We also define $\wt{F}_t^\delta$, $\partial_i \wt{F}_t^\delta$, and $\partial_{ij} \wt{F}_t^\delta$ analogously except with $(\wt{a}_t^{1,\delta},\wt{a}_t^{2,\delta},\ldots)$ in place of $(a_t^1,a_t^2,\ldots)$.  Since $\wt{\Fh}_t^\delta \to \Fh_t$ locally uniformly, we have that $\wt{a}_t^{\delta,j} \to a_t^j$ locally uniformly.  Thus,
\begin{align}
\wt{F}_t^\delta \to F_t, \quad \partial_i \wt{F}_t^\delta \to \partial_i F_t, \quad\text{and}\quad \partial_{ij} \wt{F}_t^\delta \to \partial_{ij} F_t \label{eqn::f_convergence}
\end{align}
locally uniformly.

\begin{proof}[Proof of Proposition~\ref{prop::martingale_problem}]
We let $\wt{D}_t^\delta(z)$ (resp.\ $\wt{\sigma}_t^\delta(z)$) be the drift (resp.\ diffusion coefficient) of $\wt{\Fh}_t^\delta(z)$ as given in the statement of Proposition~\ref{prop::approximate_evolution}.  We also let 
\[ \wt{D}_t^{\delta,j} = (\wt{D}_t^\delta,\psi^j) \quad\text{and}\quad  \wt{\sigma}_t^{\delta,j} = (\wt{\sigma}_t^\delta,\psi^j)\]
be the coordinates of $\wt{D}_t^\delta$ and $\wt{\sigma}_t^{\delta,j}$.  By Proposition~\ref{prop::approximate_evolution} there exists a continuous local martingale $\wt{M}_t^\delta$ such that for each $t \in [\delta \ell,\delta(\ell+1))$ we have that
\begin{align*}
     \wt{F}_t^\delta - \wt{F}_{\delta \ell}^\delta
= \frac{1}{2} \sum_{i,j=1}^k \int_{\delta \ell}^{t} \wt{\sigma}_s^{\delta,i} \wt{\sigma}_s^{\delta,j} \partial_{ij} \wt{F}_s^\delta  ds +  \sum_{i=1}^k \int_{\delta \ell}^{t} \wt{D}_s^{\delta,i} \partial_i \wt{F}_s^\delta  ds+ \bigg( \wt{M}_t^\delta - \wt{M}_{\delta \ell}^\delta \bigg).
\end{align*}
Summing up, we consequently have that
\begin{equation}
\label{eqn::approx_mg}
\wt{F}_t^\delta - \wt{F}_0^\delta - \int_0^t \left(\frac{1}{2}\sum_{i,j=1}^k \wt{\sigma}_s^{\delta,i} \wt{\sigma}_s^{\delta,j} \partial_{ij} \wt{F}_s^\delta +  \sum_{i=1}^k \wt{D}_s^{\delta,i} \partial_i \wt{F}_s^\delta  \right) ds = \wt{M}_t^\delta.
\end{equation}

We are now going to argue that the integral on the left side of~\eqref{eqn::approx_mg} converges locally uniformly as $\delta \to 0$.  We will justify in detail the following:
\begin{align}
\label{eqn::dint_limit}
\int_0^t \sum_{i=1}^k \wt{D}_s^{\delta,i} \partial_i \wt{F}_s^\delta ds
  &\to \int_0^t \sum_{i=1}^k D_s^i \partial_i F_s ds  \quad\text{as}\quad \delta \to 0.
\intertext{A similar argument gives}
\label{eqn::sint_limit}
 \int_0^t \sum_{i,j=1}^k \wt{\sigma}_s^{\delta,i} \wt{\sigma}_s^{\delta,j}  \partial_{ij} \wt{F}_s^\delta ds
  &\to  \int_0^t \sum_{i,j=1}^k  \sigma_s^{ij}\partial_{ij} F_s ds \quad\text{as}\quad \delta \to 0.
\end{align}
Let $\wt{D}_s^\delta(z,u)$ be defined analogously to $D_s(z,u)$ from~\eqref{eqn::dtzu} except with $\wt{\Fh}_s^\delta$ in place of $\Fh_s$.  We have that
\begin{align*}
 \wt{D}_s^\delta(z) = \int_{\partial \D} \wt{D}_s^\delta(z,u) d\varsigma_{(s+\delta \ell)^-}^\delta(u) \quad\text{for}\quad s = \delta \ell \quad\text{and}\quad \ell \in \N_0.
\end{align*}
(Note that for $s = \delta \ell$ we have that $\varsigma_{(s+\delta)^-}^\delta$ is given by the Dirac mass located at $V_{\delta \ell}^{\delta, \ell}$.)  From the definitions of $\wt{D}_s^\delta(z,u)$ and $D_s(z,u)$, the convergence of $\wt{\Fh}_t^\delta \to \Fh_t$ in $\CH$, and the convergence of $\varsigma_t^\delta dt \to \nu$ in $\CN$ (Proposition~\ref{prop::qle_solution}) it follows that for each $T \geq 0$ fixed we have that $\wt{D}_s^\delta(z) \to D_s(z)$ with respect to the weak topology induced by continuous functions on $\tfrac{1}{2} \D \times [0,T]$ as $\delta \to 0$.  Therefore $\wt{D}_s^{\delta,j} \to D_s^j$ with respect to the weak topology on continuous functions on $[0,T]$ as $\delta \to 0$.  By~\eqref{eqn::f_convergence}, we know that $\wt{F}_t^\delta - \wt{F}_0^\delta \to F_t - F_0$ locally uniformly as $\delta \to 0$.  Combining all of these observations gives~\eqref{eqn::dint_limit} (and~\eqref{eqn::sint_limit} by analogy).

Combining~\eqref{eqn::dint_limit} with~\eqref{eqn::sint_limit} implies that the left side of~\eqref{eqn::approx_mg} converges locally uniformly as $\delta \to 0$.  It remains to show that the martingale $\wt{M}_t^\delta$ from~\eqref{eqn::approx_mg} converges locally uniformly to a continuous square-integrable martingale $M_t$.  Since the left side of~\eqref{eqn::approx_mg} converges locally uniformly as $\delta \to 0$, it follows that $\wt{M}_t^\delta \to M_t$ locally uniformly as $\delta \to 0$.  Recall that $\wt{\Fh}_t^\delta$ and $\Fh_t$ for each $t \geq 0$ are both distributed as the harmonic extension of the form of the free boundary GFF on $\D$ as described in Proposition~\ref{prop::free_boundary_weighted} (where the role of $\gamma$ in the application of the proposition is played by $2\alpha_\kappa$) from $\partial \D$ to $\D$ by the time stationarity of the evolution (recall Proposition~\ref{prop::approximate_evolution}).  In particular, for every $p \geq 1$ and $r \in (0,1)$ there exists a constant $C_{p,r} < \infty$ depending only on $p$ and $r$ such that
\[ \sup_{z \in r \D} \E\left[ \big(\wt{\Fh}_t^\delta(z)\big)^p \right] \leq C_{p,r}.\] (The same holds with $\Fh_t$ in place of $\wt{\Fh}_t^\delta$ since $\Fh_t \stackrel{d}{=} \wt{\Fh}_t^\delta$ for each $t \geq 0$.)  Indeed, to see that this is the case one first notes that it suffices to bound the second moment since $\wt{\Fh}_t^\delta$ can be written as the sum of a Gaussian and an independent $\log$ function centered at a point in $\partial \D$ and the second moment can be bounded by first representing $\wt{\Fh}_t^\delta$ as the harmonic extension of its boundary values and then bounding the covariances.  This implies that $\wt{M}_t^\delta$ is a square-integrable martingale and the convergence $\wt{M}_t^\delta \to M_t$ takes place in $L^2$.  Therefore $M_t$ is a square-integrable martingale, as desired.
\end{proof}

\begin{proposition}
\label{prop::sde}
There exists a filtration $(\CF_t)$ to which $\Fh_t$ is adapted and a family of $(\CF_t)$-standard Brownian motions $B^j$ with
\begin{equation}
\label{eqn::cross_variation}
 d\langle B^i, B^j \rangle_t = \frac{\sigma_t^{ij}}{\sqrt{ \sigma_t^{ii} \sigma_t^{jj}}} dt.
\end{equation}
such that
\begin{equation}
\label{eqn::coordinate_sde}
 d a_t^j = D_t^j dt + (\sigma_t^{jj})^{1/2} dB_t^j \quad\text{for each}\quad j \in \N.
\end{equation}
\end{proposition}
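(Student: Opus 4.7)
The plan is to extract~\eqref{eqn::coordinate_sde} directly from the martingale problem established in Proposition~\ref{prop::martingale_problem}. First I would take $(\CF_t)$ to be the (right-continuous, augmented) natural filtration of the process $t \mapsto \Fh_t$. Applying Proposition~\ref{prop::martingale_problem} with the linear test function $F(x_1,\ldots,x_k) = x_j$ (for any $k \geq j$) immediately gives that
\[ M_t^j := a_t^j - a_0^j - \int_0^t D_s^j\, ds \]
is a continuous square-integrable $(\CF_t)$-martingale.

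The next step is to identify the joint quadratic variations $\langle M^i, M^j\rangle$. Applying Proposition~\ref{prop::martingale_problem} to the quadratic test function $F(x) = x_i x_j$ (using $\partial_{ij} F \equiv 1$, $\partial_i F = x_j$, $\partial_j F = x_i$) yields that
\[ a_t^i a_t^j - a_0^i a_0^j - \int_0^t \bigl(\sigma_s^{ij} + D_s^i a_s^j + D_s^j a_s^i\bigr)\, ds \]
is a continuous martingale. On the other hand, writing $a_t^i = M_t^i + a_0^i + \int_0^t D_s^i\, ds$, Ito's product rule applied to the semimartingales $a^i$ and $a^j$ produces the same expression modulo the additional term $\langle M^i, M^j \rangle_t - \int_0^t \sigma_s^{ij}\, ds$. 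Since the difference of two continuous martingales of finite variation is identically zero, this forces $d\langle M^i, M^j\rangle_t = \sigma_t^{ij}\, dt$.

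Finally I would construct the $B^j$ via Levy's characterization. The natural definition is $B_t^j := \int_0^t (\sigma_s^{jj})^{-1/2}\, dM_s^j$, which has $\langle B^j \rangle_t = t$ and the correct cross-variation with $B^i$. To make this rigorous when $\sigma_s^{jj}$ can vanish, I would enlarge the probability space to carry an independent sequence $(\beta^j)$ of standard Brownian motions and set
\[ B_t^j := \int_0^t (\sigma_s^{jj})^{-1/2} \one_{\{\sigma_s^{jj} > 0\}}\, dM_s^j + \int_0^t \one_{\{\sigma_s^{jj}=0\}}\, d\beta_s^j. \]
Cauchy-Schwarz applied to the measure $d\nu_s$ shows that $\sigma_s^{ij} = 0$ whenever $\sigma_s^{ii}\sigma_s^{jj} = 0$, so no spurious contributions enter and one recovers~\eqref{eqn::cross_variation}. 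Rearranging gives $dM_t^j = (\sigma_t^{jj})^{1/2}\, dB_t^j$, which together with the definition of $M^j$ yields~\eqref{eqn::coordinate_sde}.

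The main obstacle is the bookkeeping around possible degeneracy of $\sigma_t^{jj}$, which the enlargement handles in a standard fashion; the heart of the argument is simply combining the martingale problem with Ito's product rule and Levy's characterization.
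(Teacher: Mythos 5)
Your argument is correct and follows essentially the same route as the paper: apply the martingale problem to linear and quadratic test functions, identify $d\langle M^i,M^j\rangle_t = \sigma_t^{ij}\,dt$, and conclude via the stochastic integral $B_t^j = \int_0^t (\sigma_s^{jj})^{-1/2}\,dM_s^j$ and Levy's characterization. The only difference is your enlargement of the probability space to handle possible vanishing of $\sigma_t^{jj}$, which the paper renders unnecessary by observing that $\sigma_t^{jj} > 0$ for all $t$ directly from its definition.
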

Note that it follow from the definition that $\sigma_t^{ii} > 0$ for all $t \geq 0$.  In particular, the expression in the right side of~\eqref{eqn::cross_variation} makes sense.
\begin{proof}[Proof of Proposition~\ref{prop::sde}]
We take $F(x_1,x_2,\ldots) = x_j$ and let $F_t$ be as described in~\eqref{eqn::fs} with this choice of $F$.  Then $F_t = a_t^j$ and $\CL_t F_t = D_t^j$.  By Proposition~\ref{prop::martingale_problem}, we know that
\begin{equation}
\label{eqn::markovian_mg1}
 M_t = a_t^j - \int_0^t D_s^j ds
\end{equation}
is a continuous square-integrable martingale.  Taking $F(x_1,x_2,\ldots) = x_j^2$ so that $F_t = (a_t^j)^2$, by Proposition~\ref{prop::martingale_problem}, we also know that
\begin{equation}
\label{eqn::markovian_mg2} \wt{M}_t = (a_t^j)^2 - \int_0^t \bigg(2 D_s^j a_s^j + \sigma_s^{jj} \bigg) ds
\end{equation}
is a continuous square-integrable martingale.  Combining~\eqref{eqn::markovian_mg1} and~\eqref{eqn::markovian_mg2} implies that
\[ M_t^2 - \int_0^t \sigma_s^{jj} ds\]
is a continuous square-integrable martingale.  By the martingale characterization of the quadratic variation \cite{ks1991stoccalc,ry1999martingales}, it therefore follows that $d \langle M \rangle_t = \sigma_t^{jj} dt$.  Note that $\sigma_t^{jj} > 0$.  We then let
\[ B_t^j = \int_0^t (\sigma_s^{jj})^{-1/2} dM_s.\]
Then $B^j$ is a continuous local martingale and
\begin{align*}
      d\langle B^j \rangle_t
 = \int_0^t (\sigma_s^{jj})^{-1} d\langle M \rangle_s
 = \int_0^t ds = t.
\end{align*}
Therefore it follows from the Levy characterization of Brownian motion that $B^j$ is a standard Brownian motion.  Moreover,
\[  a_t^j = \int_0^t D_s^j ds + M_t = \int_0^t D_s^j ds + \int_0^t (\sigma_s^{jj})^{1/2} dB_s^j.\]
This proves that $a_t^j$ solves~\eqref{eqn::coordinate_sde}.  That the Brownian motions $B^j$ satisfy~\eqref{eqn::coordinate_sde} follows by applying a similar argument with $F(x_1,x_2,\ldots) = x_i x_j$ for $i \neq j$ distinct.
\end{proof}

\section{Sample path properties}
\label{sec::sample_path}

\subsection{Continuity}
\label{subsec::continuity}

The purpose of this section is to establish the following result which, as explained just after the statement, implies Theorem~\ref{thm::qle_continuity}.

\begin{theorem}
\label{thm::continuity}
Fix $\gamma > 0$ and let $Q = 2/\gamma+ \gamma/2$.  Suppose that $k,\ell \in \N_0$, $\gamma_i \in \R$ for $1 \leq i \leq k+\ell$, and let
\begin{equation}
\label{eqn::beta_definition}
\beta_\star = \max(2\sqrt{2},\gamma_1,\ldots,\gamma_{k+\ell}) \quad\text{and}\quad \beta^\star = \max(2, -\gamma_1,\ldots,-\gamma_k).
\end{equation}
Assume that $\beta^\star < Q$ and let
\begin{equation}
\label{eqn::holder_exponent}
 \ol{\Delta} = \frac{Q-\beta^\star}{Q+\beta_\star} \in (0,1).
\end{equation}
Fix $\Delta \in (0,\ol{\Delta})$.  Suppose that $h_1$ and $h_2$ are random modulo additive constant distributions each of which can be expressed as the sum of a free boundary GFF and an independent function of the form $\sum_{i=1}^{k+\ell} \gamma_i \log|\cdot-x_i|$ where $x_1,\ldots,x_k \in \partial \D$ and $x_{k+1},\ldots,x_{k+\ell} \in \D$ are distinct.  Let $A$ be a local set for $h_1$ which almost surely does not contain $0$ and let $D$ be the connected component of $\D \setminus A$ which contains~$0$.  Let $\varphi \colon \D \to D$ be the unique conformal transformation with $\varphi(0) = 0$ and $\varphi'(0) > 0$ and suppose that
\begin{equation}
\label{eqn::gff_law_equality}
h_1 \circ \varphi + Q \log|\varphi'| \stackrel{d}{=} h_2
\end{equation}
as modulo additive constant distributions.  Then $\varphi$ is almost surely H\"older continuous with exponent~$\Delta$.
\end{theorem}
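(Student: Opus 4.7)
The plan is to reduce H\"older continuity of $\varphi$ to a derivative bound of the form $|\varphi'(z)| \leq C(1-|z|)^{\Delta - 1}$ with $C<\infty$ a random finite constant, which by a standard integration-along-hyperbolic-radii argument (using Koebe's distortion theorem) is equivalent to H\"older continuity of $\varphi$ with exponent $\Delta$ up to the boundary.  This is analogous to the strategy of Rohde--Schramm \cite{rs2005sle} for $\SLE_\kappa$, but adapted to the general coordinate-change setup here.  The argument then has three main ingredients.

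\textbf{Step 1 (Comparison at matched scales).}  Using the hypothesis $h_2 \stackrel{d}{=} h_1\circ\varphi + Q\log|\varphi'|$, I would pass to a coupling in which $h_2(z) = h_1(\varphi(z)) + Q\log|\varphi'(z)|$ holds modulo an additive constant.  Passing this identity through circle averages at matched conformal scales --- radius $\delta$ at $z$ and radius $\delta|\varphi'(z)|$ at $\varphi(z)$, matching justified by the approximate linearity of $\varphi$ on $B(z,\delta)$ for $\delta \ll 1-|z|$ (Koebe) --- gives the pointwise comparison
\[ Q\log|\varphi'(z)| = h_{2,\delta}(z) - h_{1,\delta|\varphi'(z)|}(\varphi(z)) + E_\delta(z), \]
where $E_\delta(z)$ is uniformly $O(1)$ on a high-probability event and absorbs both the additive-constant ambiguity and the Koebe distortion error.

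\textbf{Step 2 (Uniform multifractal bounds).}  I would next establish, using standard multifractal estimates for the Gaussian free field (\cite{hmp2010thick} and analogues) combined with direct analysis near each log singularity, the following uniform almost sure bounds: for every $\eta > 0$ there is a random $C<\infty$ with
\[ h_{2,\delta}(z) \leq (\beta^\star + \eta)\log(1/\delta) + C, \qquad -h_{1,\epsilon}(w) \leq (\beta_\star + \eta)\log(1/\epsilon) + C, \]
uniformly in the admissible ranges of $(z,\delta)$ and $(w,\epsilon)$.  The intrinsic constants $2$ and $2\sqrt{2}$ inside $\beta^\star$ and $\beta_\star$ reflect the extremal thickness of circle averages in two distinct regimes of the free boundary GFF: the $2$ corresponds to interior-type averaging (where the variance is $-\log\epsilon + O(1)$), while the $2\sqrt{2}$ corresponds to near-boundary averaging (where the free boundary condition doubles the covariance and inflates the variance to $-2\log\epsilon + O(1)$).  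The $\gamma_i$ contributions enter through the maxima: each log singularity shifts the field locally by $\gamma_i\log(1/\epsilon)$, contributing to either the upper or the lower tail according to the sign of $\gamma_i$, and only the boundary singularities feed into $\beta^\star$ because interior singularities do not affect the boundary-scale extremes that matter in the application.

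\textbf{Step 3 (Conclusion).}  Inserting the Step 2 bounds into the Step 1 identity with $R = |\varphi'(z)|$ and rearranging gives
\[ \log R \leq \frac{\beta_\star + \beta^\star + 2\eta}{Q + \beta_\star + \eta}\log(1/\delta) + O(1). \]
Taking $\delta$ of order $1-|z|$ and letting $\eta \downarrow 0$ yields $|\varphi'(z)| \leq C(1-|z|)^{-(1-\ol\Delta)}$ with $\ol\Delta = (Q-\beta^\star)/(Q+\beta_\star)$, which is the derivative bound needed for any $\Delta < \ol\Delta$; the hypothesis $\beta^\star < Q$ ensures $\ol\Delta > 0$.

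\textbf{Main obstacle.}  The technical heart of the argument is Step 2.  One must (a) prove the multifractal bounds with the \emph{sharp} constants $\beta_\star$ and $\beta^\star$ uniformly over both position and scale, correctly distinguishing the interior and near-boundary regimes for the two fields; (b) incorporate the contributions of the log singularities, whose locations are themselves random (coming, for example, from the application of Proposition~\ref{prop::free_boundary_weighted}), which requires conditioning on the $x_i$ and then integrating; and (c) make the coupling in Step 1 work uniformly over all relevant $(z,\delta)$ with a uniformly $O(1)$ error.  The asymmetric appearance of $\beta_\star$ and $\beta^\star$ in $\ol\Delta$ --- with $\beta^\star$ in the numerator and $\beta_\star$ in the denominator --- reflects the fact that on the $z$-side we have freedom to shrink $\delta$ into the interior (interior-type exponent), while the image point $\varphi(z)$ is constrained and must be bounded under the worst-case (boundary-adjacent) regime.
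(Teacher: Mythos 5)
Your overall architecture coincides with the paper's proof: reduce to the derivative bound $|\varphi'(z)|\leq C(1-|z|)^{\Delta-1}$ and integrate; pass the coordinate-change identity through averages at matched scales $\epsilon$ and $\epsilon|\varphi'(z)|$ (the paper does this via the \emph{exact} identity $Q\log|\varphi'| = \Fh_2 - \Fh_{A,1}\circ\varphi + C$ between harmonic extensions of boundary values, using that $A$ is local, which removes your error term $E_\delta$); insert sharp thick-point bounds for the free boundary GFF; and rearrange to obtain precisely your Step 3 inequality. The constants you use, and the sides of the identity on which you use them, are correct.

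The genuine problem is in your justification of Step 2, which you rightly identify as the technical heart: your account of where $2$ and $2\sqrt{2}$ come from is backwards, and following it would lead you to prove the wrong lemmas. For the free boundary GFF one has $\var(h_\epsilon(z)) = \log\epsilon^{-1} - \log\dist(z,\partial\D) + O(1)$, so it is the \emph{near-boundary} points ($\dist(z,\partial\D)\asymp\epsilon$) whose variance is doubled to $\approx 2\log\epsilon^{-1}$; there is no separate ``interior-type'' regime with variance $\log\epsilon^{-1}$ feeding into $\beta^\star$. The constant $2$ in $\beta^\star$ is the exponent for the supremum over the \emph{one-dimensional} circle $\partial B(0,1-\epsilon)$: about $\epsilon^{-1}$ points with doubled variance, so the union bound requires $a^2/4>1$, i.e.\ $a>2$. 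This is the bound relevant to the $h_2$ side because the harmonic extension $\Fh_2$ attains its maximum over $B(0,1-\epsilon)$ on that circle. The constant $2\sqrt{2}$ in $\beta_\star$ is the exponent for the supremum over the full \emph{two-dimensional} disk $B(0,1-\epsilon)$: about $\epsilon^{-2}$ points with worst-case doubled variance, so one needs $a^2/4>2$, i.e.\ $a>2\sqrt{2}$ (the paper instead derives this from the Dirichlet-boundary result via the odd--even decomposition of the whole-plane GFF). This uniform-over-the-disk bound is what the $h_1$ side requires, precisely because $\varphi(z)$ is unconstrained: it may lie anywhere in $\D$, at any distance from $\partial\D$, with the averaging radius $\asymp\epsilon|\varphi'(z)|$ varying accordingly. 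Your closing heuristic assigns the regimes to the opposite sides: a constant-$2$ bound over the full two-dimensional disk is \emph{false} for the free boundary GFF, and a near-boundary-only bound on the $h_1$ side would not cover the actual range of $\varphi(z)$. Fixing this attribution is necessary before Step 2 can be carried out.
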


We note that the value of $\beta^\star$ corresponding to a field $h$ associated with one of the $\QLE(\gamma^2,\eta)$ processes with $\gamma \in (0,2)$ constructed in Theorem~\ref{thm::existence} is given by $2 < Q$.  Therefore Theorem~\ref{thm::continuity} implies Theorem~\ref{thm::qle_continuity}.

Theorem~\ref{thm::continuity} gives a bound for the H\"older \emph{exponent} of $\varphi$ but does not give a bound for the H\"older \emph{norm} of $\varphi$.  As we will see in the proof of Theorem~\ref{thm::continuity}, bounding the H\"older norm is related to the additive constant which is implicit in~\eqref{eqn::gff_law_equality} as well as the proximity of the $x_i$ (locations of the $\log$ singularities) from each other.

By \cite[Theorem~1.2]{sheffield2010weld} and Theorem~\ref{thm::radial_coupling_existence}, Theorem~\ref{thm::continuity} can also be applied to the chordal and radial $\SLE_\kappa$ processes for $\kappa \neq 4$.  This gives an alternative proof of \cite[Theorem~5.2]{rs2005sle} which states that the complementary components of $\SLE_\kappa$ curves for $\kappa \neq 4$ are H\"older domains.  For completeness, we restate this result as the following corollary.

\begin{corollary}
\label{cor::sle_holder}
Suppose that $\eta$ is a chordal $\SLE_\kappa$ process for $\kappa \neq 4$ in $\D$ from $-i$ to $i$ and fix $T > 0$.  Let $D$ be a non-empty connected component of $\D \setminus \eta([0,T])$ and let $\varphi \colon \D \to D$ be a conformal transformation.  Fix $\Delta \in (0,\ol{\Delta})$ where $\ol{\Delta}$ is as in \eqref{eqn::holder_exponent} with $\gamma = \min(\sqrt{\kappa},\sqrt{16/\kappa})$ and $\gamma_1 = 2/\sqrt{\kappa}$.  Then $\varphi$ is almost surely H\"older continuous with exponent~$\Delta$.  The same likewise holds if $\eta$ is instead a radial $\SLE_\kappa$ process in $\D$ targeted at $0$.
\end{corollary}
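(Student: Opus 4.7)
The plan is to derive this corollary as a direct consequence of Theorem~\ref{thm::continuity} applied with the reverse SLE/GFF couplings: Theorem~\ref{thm::radial_coupling_existence} in the radial case and its chordal analog \cite[Theorem~1.2]{sheffield2010weld} in the chordal case. Each of these couplings expresses a certain free-boundary-GFF-plus-log-singularities distribution $\tilde h$ as invariant in law under pulling back by the centered reverse Loewner flow $f_\tau$ and adding $Q\log|f_\tau'|$. By Remark~\ref{rem::forward_reverse_radial} (and its chordal analog), the image $f_\tau(\D)$ is precisely the complement (containing the target) of the filling of a forward $\SLE_\kappa$ trace run up to the corresponding log-conformal-radius or capacity time, so $f_\tau$ can be identified with the conformal map $\varphi$ appearing in the corollary.

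For the radial case, I would fix $T > 0$, let $\tau$ be the log-conformal-radius time corresponding to drawing the SLE up to time $T$, and set $\phi(z) = \frac{2}{\sqrt{\kappa}}\log|z-1| - \frac{\kappa+6}{2\sqrt{\kappa}}\log|z|$ and $\tilde h = h + \phi$ with $h$ a free boundary GFF independent of the SLE. Theorem~\ref{thm::radial_coupling_existence} then rearranges to $\tilde h \stackrel{d}{=} \tilde h \circ f_\tau + Q\log|f_\tau'|$ as distributions modulo additive constant. The centered map $f_\tau$ sends $0$ to $0$, but $f_\tau'(0)$ need not be positive, so I would precompose with the required random rotation and use the rotational invariance in law of the free boundary GFF to absorb this rotation into the field. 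The identity then takes the form $h_1 \circ \varphi + Q\log|\varphi'| \stackrel{d}{=} h_2$ required by Theorem~\ref{thm::continuity}, where $\varphi \colon \D \to D$ is the canonically normalized map to the component $D$ of $\D \setminus K_\tau$ containing $0$, the fields $h_1, h_2$ are each free boundary GFFs plus $\gamma_1 \log|\cdot - x_b| + \gamma_2 \log|\cdot|$ with $\gamma_1 = 2/\sqrt{\kappa}$ at a (random) boundary point $x_b \in \partial \D$ and $\gamma_2 = -(\kappa+6)/(2\sqrt{\kappa})$ at the interior point $0$, and $A = K_\tau$ is local for $h_1$ and does not contain $0$ by the conformal radius parameterization.

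I would then check the hypotheses of Theorem~\ref{thm::continuity}. The interior coefficient $\gamma_2$ is negative and so contributes to neither $\beta^\star$ nor $\beta_\star$; consequently $\beta^\star = \max(2, -\gamma_1) = 2$ and $\beta_\star = \max(2\sqrt{2}, \gamma_1)$, giving exactly the $\ol{\Delta}$ stated in the corollary. For $\kappa \neq 4$ we have $\gamma = \min(\sqrt{\kappa},\sqrt{16/\kappa}) < 2$, so $Q = 2/\gamma + \gamma/2 > 2 = \beta^\star$ and the condition $\beta^\star < Q$ is satisfied. Theorem~\ref{thm::continuity} then yields almost sure H\"older continuity of $\varphi$ with any exponent $\Delta \in (0,\ol{\Delta})$. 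The chordal case proceeds identically, using \cite[Theorem~1.2]{sheffield2010weld} in place of Theorem~\ref{thm::radial_coupling_existence} to obtain the analogous coupling with $\tilde h$ having boundary log singularities of the appropriate form at the two chordal endpoints and no interior singularity, after which the same computation of $\beta^\star$ and $\beta_\star$ applies.

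I expect the main piece of bookkeeping to be tracking the random rotation (or analogous conformal adjustment in the chordal case) used to normalize the centered reverse flow so that it matches the canonical map $\varphi$ of the corollary, and confirming that the resulting boundary log singularity positions remain distinct from each other and, in the radial case, from the interior singularity at $0$. Since Theorem~\ref{thm::continuity} permits random (rather than deterministic) singularity positions subject only to them being distinct, and since boundary points are automatically distinct from the interior point $0$, these checks should be routine. For $\kappa > 4$, when $\D \setminus \eta([0,T])$ can have multiple components, the corollary's statement about components other than the one containing the target should follow by running the same argument from the stopping time at which the SLE first disconnects the given component from the target, combined with the conformal Markov property.
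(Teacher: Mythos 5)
Your proposal is correct and takes essentially the same route as the paper, which proves the corollary only by the one-line observation that Theorem~\ref{thm::continuity} applies via the reverse couplings of Theorem~\ref{thm::radial_coupling_existence} and \cite[Theorem~1.2]{sheffield2010weld}. Your writeup simply fills in the details the paper omits (identification of $f_\tau$ with $\varphi$, the rotation normalization, the computation showing the negative interior singularity drops out of $\beta_\star$ and $\beta^\star$, and the reduction of the multi-component case for $\kappa>4$), all of which check out.
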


We remark that the H\"older exponent obtained in Corollary~\ref{cor::sle_holder} is not the optimal value for $\SLE_\kappa$ \cite{rs2005sle,kang2008boundary,bs2009harmonicmeasure}.

The idea of the proof of Theorem~\ref{thm::continuity} is to exploit the identity \eqref{eqn::gff_law_equality} to bound the growth rate of $|\varphi'(z)|$ as $z \to \partial \D$.  To show that $\varphi$ is almost surely $\Delta$-H\"older in $\D$ for a given value of $\Delta \in (0,1)$ it suffices to show that there exists a constant $C > 0$ such that $|\varphi'(z)| \leq C(1-|z|)^{\Delta-1}$ for all $z \in \D$ (because we can integrate $\varphi'$).  The main step in carrying this out is to compute the maximal growth rate of the circle average process associated with a free boundary GFF on $\D$ as the radius $\epsilon > 0$ about which we average tends to zero.  This is accomplished in Section~\ref{subsec::gff_extremes}.  In Section~\ref{subsec::continuity_proofs} we combine this with \eqref{eqn::gff_law_equality} to constrain $|\varphi'|$ to complete the proof.

\subsubsection{Extremes of the free boundary GFF}
\label{subsec::gff_extremes}

Suppose that $h$ is a free boundary GFF on $\D$.  We fix the additive constant for $h$ by taking its mean on $\D$ to be equal to $0$.  For each $z \in \D$ and $\epsilon > 0$ such that $B(z,\epsilon) \subseteq \D$, let $h_\epsilon(z)$ denote the average of $h$ on $\partial B(z,\epsilon)$.  The purpose of this section is to prove the following (as well as Proposition~\ref{prop::free_boundary_max_log}, its generalization to the case in which we add $\log$ singularities to the free boundary GFF).

\begin{proposition}
\label{prop::free_boundary_max_growth}
Suppose that $h$ is a free boundary GFF on $\D$.  Then,
\begin{equation}
\label{eqn::free_boundary_max_growth_boundary}
 \p\left[ \limsup_{\epsilon \to 0} \sup_{z \in \partial B(0,1-\epsilon)} \frac{|h_\epsilon(z)|}{\log \epsilon^{-1}} \leq 2 \right] = 1.
\end{equation}
Moreover,
\begin{equation}
\label{eqn::free_boundary_max_growth_interior}
 \p\left[ \limsup_{\epsilon \to 0} \sup_{z \in B(0,1-\epsilon)} \frac{|h_\epsilon(z)|}{\log \epsilon^{-1}} \leq 2\sqrt{2} \right] = 1.
\end{equation}
\end{proposition}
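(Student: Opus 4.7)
The plan is to follow the dyadic Borel--Cantelli strategy used for the zero-boundary GFF in \cite[Section~3]{ds2011kpz}, with the only substantive change being that the reflection term in the Neumann Green's function inflates the circle-average variance near $\partial\D$ by up to a factor of two. Using $G^{\free}(x,y) = -\log|x-y| - \log|1-x\bar y|$ from \eqref{eqn::green_neumann_disk}, a direct integration against the product of uniform measures on $\partial B(z,\epsilon)$ yields
\[
\Var(h_\epsilon(z)) \;=\; \log\tfrac{1}{\epsilon} \,-\, \log(1-|z|^2) \,+\, O(1) \quad\text{whenever } \epsilon < 1-|z|,
\]
where the $O(1)$ is uniform in $(z,\epsilon)$ and depends only on the (arbitrary) normalization of the additive constant of $h$. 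The $\log\tfrac{1}{\epsilon}$ comes from the standard $-\log|x-y|$ piece, while the $-\log(1-|z|^2)$ comes from the reflection piece, which is explicitly finite even at the boundary scale $|z|=1-\epsilon$ because the expansion $|2-e^{i\theta}-e^{-i\phi}|^2\sim(\theta-\phi)^2$ near the origin produces only a logarithmically integrable singularity in the defining double integral. In particular, $\Var(h_\epsilon(z)) \leq 2\log\tfrac{1}{\epsilon} + O(1)$ uniformly over $z \in B(0,1-\epsilon)$, with equality up to $O(1)$ precisely when $|z|=1-\epsilon$.

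Next I would establish a joint H\"older-type modulus-of-continuity bound for $(z,\epsilon)\mapsto h_\epsilon(z)$ of the form found in \cite[Proposition~3.1]{ds2011kpz}, obtained from a direct estimate on $\Var(h_\epsilon(z)-h_{\epsilon'}(z'))$ followed by an application of Kolmogorov--Chentsov. The new ingredient relative to the Dirichlet case is controlling the reflection kernel $-\log|1-x\bar y|$, but this kernel is smooth off the ``image diagonal'' $x = 1/\bar y$, and since $B(z,\epsilon)\subseteq\D$ in the regime of interest we stay bounded away from that set. The union-bound phase then runs as follows. Set $\epsilon_n = 2^{-n}$, fix $\delta>0$, write $C = 2\sqrt 2 + \delta$, and let $S_n\subseteq B(0,1-\epsilon_n)$ be an $\epsilon_n$-net of cardinality $O(\epsilon_n^{-2})$. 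The Gaussian tail bound combined with the variance estimate above gives
\[
\p\bigl[\,|h_{\epsilon_n}(z)| > C\log\epsilon_n^{-1}\,\bigr] \;\leq\; 2\exp\!\left(-\frac{C^2(\log\epsilon_n^{-1})^2}{4\log\epsilon_n^{-1} + O(1)}\right) \;=\; \epsilon_n^{C^2/4 - o(1)}.
\]
A union bound over $S_n$ yields $\epsilon_n^{C^2/4 - 2 - o(1)}$, which is summable in $n$ since $C^2/4 > 2$; Borel--Cantelli then gives $\max_{z\in S_n}|h_{\epsilon_n}(z)| \leq C\log\epsilon_n^{-1}$ eventually almost surely, and the H\"older modulus extends this bound to all pairs $(z,\epsilon)$ with $z\in B(0,1-\epsilon)$ and $\epsilon\in[\epsilon_{n+1},\epsilon_n]$. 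Sending $\delta\to 0$ establishes \eqref{eqn::free_boundary_max_growth_interior}. For \eqref{eqn::free_boundary_max_growth_boundary}, the identical argument applies to a net $T_n\subseteq\partial B(0,1-\epsilon_n)$ of only $O(\epsilon_n^{-1})$ points, so that summability requires merely $C^2/4 > 1$, i.e.\ $C > 2$.

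The main obstacle is ensuring that the modulus-of-continuity estimate from the second paragraph holds uniformly as $(z,\epsilon)$ approaches the locus $|z|=1-\epsilon$, where the reflection contribution to $h_\epsilon(z)$ becomes comparable in size to the ``interior'' Dirichlet contribution and the total variance attains its maximum $2\log\epsilon^{-1}$. One must verify that the reflection piece itself enjoys a Brownian-motion-like structure in $t=\log\epsilon^{-1}$ with the same type of increment regularity as the Dirichlet piece, so that no H\"older exponent is lost upon interpolation near $\partial\D$; once this is in hand, the Borel--Cantelli / interpolation skeleton sketched above goes through essentially verbatim.
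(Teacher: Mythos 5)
Your overall strategy — variance estimate, Gaussian tail, net, Borel--Cantelli, then interpolation via a Kolmogorov--Chentsov modulus — is the same as the paper's for the boundary bound \eqref{eqn::free_boundary_max_growth_boundary}, and your variance formula agrees with the paper's Lemma~\ref{lem::free_boundary_variance} (in fact the reflection term contributes \emph{exactly} $-\log(1-|z|^2)$, since $x \mapsto -\log|1-x\ol{y}|$ is harmonic on $\D$, so the singularity analysis you worry about is unnecessary). Where you genuinely diverge is the interior bound \eqref{eqn::free_boundary_max_growth_interior}: the paper does not rerun the union bound there, but instead imports the constant $2$ for the Dirichlet (and hence whole-plane) GFF from \cite{hmp2010thickpoints} and gets $2\sqrt 2$ from the odd--even decomposition of the whole-plane GFF, whereas you obtain $2\sqrt2$ directly from the uniform bound $\var(h_\epsilon(z)) \le 2\log\epsilon^{-1}+O(1)$ together with a two-dimensional net ($C^2/4>2$). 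Your route is more self-contained; the paper's is shorter given that the Dirichlet result is already available.

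There is, however, a genuine gap in your interpolation step. With dyadic scales $\epsilon_n = 2^{-n}$ and nets of spacing $\epsilon_n$, a generic pair $(z,\epsilon)$ with $\epsilon\in[\epsilon_{n+1},\epsilon_n]$ is at distance of order $\epsilon_n$ (in both the spatial and the scale coordinate) from the nearest net point $(w,\epsilon_n)$, and the modulus of continuity of the form in Proposition~\ref{prop::continuity}, namely $|h_\epsilon(z)-h_\delta(w)|\le M|(z,\epsilon)-(w,\delta)|^\lambda/\epsilon^{\lambda+\zeta}$, then yields an error of order $M\epsilon_n^{-\zeta}$. This is exponentially larger than $\log\epsilon_n^{-1}$ for every fixed $\zeta>0$, so the bound on the net does not transfer to all of $B(0,1-\epsilon)$. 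The paper circumvents exactly this by taking polynomially decaying radii $r_n=n^{-K}$ and grids of spacing $r_n^{1+\xi}$, so that both increments are $O(r_n^{1+\xi})$ and the interpolation error is $M r_n^{\lambda\xi-\zeta}=O(1)$ upon choosing $\zeta=\lambda\xi$; the extra factor $r_n^{-\xi}$ in the net cardinality only perturbs the summability threshold by $\xi$, which is sent to $0$. Your fix would either be to adopt this polynomial scheme, or to supply the separate scale-direction control you allude to in your last paragraph (a maximal bound on $\sup_{\epsilon\in[\epsilon_{n+1},\epsilon_n]}|h_\epsilon(z)-h_{\epsilon_n}(z)|$ over the net, using that $t\mapsto h_{e^{-t}}(z)$ is Brownian-like); as written, that control is flagged as ``the main obstacle'' but not actually provided, and the dyadic skeleton does not close without it.
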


This is the analog of the upper bound established in \cite{hmp2010thickpoints} with the free boundary GFF in place of the GFF with Dirichlet boundary conditions.  Note that the constant $2$ in \eqref{eqn::free_boundary_max_growth_boundary} is the same as the constant which appears in \cite{hmp2010thickpoints} for the GFF with Dirichlet boundary conditions after one adjusts for the difference in the normalization used for the GFF in this article and in \cite{hmp2010thickpoints}.  We will extract \eqref{eqn::free_boundary_max_growth_interior} from the corresponding result for the GFF with Dirichlet boundary conditions and the odd-even decomposition for the whole-plane GFF \cite[Section~3.2]{sheffield2010weld}; this is the reason that the constant $2\sqrt{2}$ rather than $2$ appears.

We begin by recording the following modulus of continuity result for the circle average process established in \cite[Proposition~2.1]{hmp2010thickpoints} except with the free boundary GFF in place of the GFF with Dirichlet boundary conditions.

\begin{proposition}
\label{prop::continuity}
Suppose that $h$ is a free boundary GFF on $\D$ and let $h_\epsilon(z)$ denote the corresponding circle average process.  Then $h_\epsilon(z)$ has a modification $\wt{h}_\epsilon(z)$ such that for every $\lambda \in (0,1/2)$ and $\zeta > 0$ there exists $M = M(\lambda,\zeta)$ (random) such that
\[ |\wt{h}_\epsilon(z) - \wt{h}_\delta(w)| \leq M \frac{|(z,\epsilon) - (w,\delta)|^\lambda}{\epsilon^{\lambda+\zeta}}\]
for $\epsilon,\delta \in (0,1]$ with $\tfrac{1}{2} \leq \epsilon/\delta \leq 2$ and $z,w \in B(0,1- \epsilon \vee \delta)$.
\end{proposition}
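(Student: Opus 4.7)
The plan is to adapt the strategy used to prove the analogous zero-boundary statement \cite[Proposition~2.1]{hmp2010thickpoints}. That argument reduces to establishing the variance bound
\begin{equation*}
\E\bigl[(h_\epsilon(z) - h_\delta(w))^{2}\bigr] \leq C \frac{|(z,\epsilon) - (w,\delta)|}{\epsilon}
\end{equation*}
uniformly in $z,w \in B(0,1-\epsilon\vee\delta)$ with $1/2 \leq \epsilon/\delta \leq 2$. Since the increment is Gaussian, higher moments follow automatically, and the Kolmogorov-Centsov theorem applied on dyadic scales in $\epsilon$ then yields the modification exactly as in \cite{hmp2010thickpoints}. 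Hence the only new work is the variance estimate itself.

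To obtain the variance estimate I will use the Markov decomposition $h = \pHarm(h) + h^0$ from Remark~\ref{rem::free_zero}, where $h^0$ is a zero-boundary GFF on $\D$ independent of the harmonic function $\pHarm(h)$ (with the additive constant pinned, say, by requiring $h$ to have mean zero on $\D$). The averages $h^0_\epsilon(z)$ satisfy the desired bound by the input from \cite{hmp2010thickpoints}, so it suffices to control $\E[(\pHarm(h)(z) - \pHarm(h)(w))^2]$, since the mean-value property gives $\pHarm(h)_\epsilon(z) = \pHarm(h)(z)$. The covariance kernel of $\pHarm(h)$ equals $G^{\free}(z,w) - G^{\dirichlet}(z,w) = -2\log|1 - z\bar{w}|$ plus a smooth correction $R(z,w)$ coming from the additive-constant normalization. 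Using the identity $|1 - z\bar{w}|^2 = (1-|z|^2)(1-|w|^2) + |z-w|^2$, the log part of the variance increment simplifies to
\begin{equation*}
2\log\!\left(1 + \frac{|z-w|^2}{(1-|z|^2)(1-|w|^2)}\right).
\end{equation*}
On $B(0,1-\epsilon)$ one has $(1-|z|^2)(1-|w|^2) \gtrsim \epsilon^2$, and splitting into the cases $|z-w| \leq \epsilon$ (use $\log(1+t) \leq t$) and $|z-w| > \epsilon$ (use $\log y \leq y$ for $y \geq 1$), in each case this quantity is bounded by $C|z-w|/\epsilon \leq C|(z,\epsilon)-(w,\delta)|/\epsilon$.

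The main obstacle is bookkeeping rather than conceptual: one must verify that the smooth correction $R$ contributes at most $O(|z-w|/\epsilon)$ to the increment variance despite the fact that its derivatives may blow up near $\partial\D$, and then combine the estimates for $\pHarm(h)$ and $h^0$ in such a way that the resulting bound preserves the full range $\lambda \in (0,1/2)$ with an arbitrarily small $\zeta > 0$. Once these details are handled, the conclusion follows by applying Kolmogorov-Centsov on dyadic scales in $\epsilon$ exactly as in \cite[Proposition~2.1]{hmp2010thickpoints}.
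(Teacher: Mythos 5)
Your argument is correct, but it takes a genuinely different route from the paper's. The paper's proof is essentially by citation: it quotes the zero-boundary case from \cite[Proposition~2.1]{hmp2010thickpoints}, upgrades it to the whole-plane GFF via the Markov property, and then deduces the free-boundary case from the odd-even decomposition of the whole-plane field under inversion. You instead stay on the disk and use the decomposition $h = \pHarm(h) + h^0$ directly, which obliges you to supply the increment-variance estimate for the harmonic part by hand. That computation is right: the covariance of $\pHarm(h)$ is $G^\free - G^\dirichlet = -2\log|1-z\ol{w}|$ modulo additive constants, the identity $|1-z\ol{w}|^2 = (1-|z|^2)(1-|w|^2)+|z-w|^2$ together with $1-|z|,\,1-|w| \geq \epsilon\vee\delta$ gives the bound $C|z-w|/\epsilon$ via the two cases you describe, and the mean-value property reduces $\pHarm(h)_\epsilon(z)$ to $\pHarm(h)(z)$. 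One simplification: the ``smooth correction $R$'' that you flag as the main remaining obstacle is a non-issue, because the increment $\pHarm(h)(z)-\pHarm(h)(w)$ is invariant under any (even random) choice of additive constant, so the normalization terms cancel identically in $C(z,z)+C(w,w)-2C(z,w)$ and only the $-2\log|1-z\ol{w}|$ kernel contributes. Your route is more self-contained and makes the origin of the $\epsilon^{-(\lambda+\zeta)}$ blow-up transparent --- it is exactly the boundary singularity of the harmonic part's covariance --- whereas the paper's route is shorter but leans on two external facts about the whole-plane GFF.
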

\begin{proof}
This was proved in \cite[Proposition~2.1]{hmp2010thickpoints} for the zero-boundary GFF.  It follows from the Markov property that the same holds for the whole-plane GFF (see \cite[Proposition~2.8]{ms2013imag4}).  Combining this with the odd-even decomposition for the whole-plane GFF into a sum of a zero-boundary GFF and a free-boundary GFF (see, for example, \cite[Section~3.2]{sheffield2010weld}) it follows that the same modulus of continuity estimate also holds for the free boundary GFF.
\end{proof}

Throughout, we shall always assume that we are working with such a modification and indicate it with $h_\epsilon$.  Proposition~\ref{prop::continuity} is proved by generalizing \cite[Proposition~3.1]{ds2011kpz} using a version of the Kolmogorov-Centsov theorem which bounds the growth of the H\"older norm for processes parameterized by $[0,\infty)$ (as opposed to a compact time interval, which is the setting of the usual Kolmogorov-Centsov theorem \cite{ks1991stoccalc,ry1999martingales}).  We will not provide an independent proof here.  We next record the following elementary Gaussian tail estimate.

\begin{lemma}
\label{lem::gaussian_tail}
Suppose that $Z \sim N(0,1)$.  Then
\[ \p[ Z \geq \lambda] \sim \frac{1}{\sqrt{2\pi} \lambda} \exp\left(-\frac{\lambda^2}{2} \right) \quad\text{as}\quad \lambda \to \infty.\]
\end{lemma}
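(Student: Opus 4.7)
The plan is to establish the asymptotic equivalence via the classical two-sided bound obtained by integration by parts (Mills' ratio estimate). I would write the tail probability in integral form as
\[
\p[Z\geq\lambda]=\frac{1}{\sqrt{2\pi}}\int_\lambda^\infty e^{-x^2/2}\,dx,
\]
and sandwich this integral between two quantities each comparable to $\frac{1}{\sqrt{2\pi}\,\lambda}e^{-\lambda^2/2}$.

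For the upper bound, I would use the elementary inequality $1\leq x/\lambda$ for $x\geq\lambda$, so that
\[
\int_\lambda^\infty e^{-x^2/2}\,dx\;\leq\;\int_\lambda^\infty \frac{x}{\lambda} e^{-x^2/2}\,dx\;=\;\frac{e^{-\lambda^2/2}}{\lambda}.
\]
For the matching lower bound, I would integrate by parts, writing $e^{-x^2/2}=\frac{1}{x}\cdot xe^{-x^2/2}=-\frac{1}{x}\frac{d}{dx}e^{-x^2/2}$, which yields
\[
\int_\lambda^\infty e^{-x^2/2}\,dx\;=\;\frac{e^{-\lambda^2/2}}{\lambda}\;-\;\int_\lambda^\infty \frac{1}{x^2}e^{-x^2/2}\,dx.
\]
Applying the same upper-bound trick to the remainder integral gives $\int_\lambda^\infty x^{-2}e^{-x^2/2}\,dx\leq \lambda^{-3}e^{-\lambda^2/2}$, so
\[
\frac{e^{-\lambda^2/2}}{\lambda}\Big(1-\lambda^{-2}\Big)\;\leq\;\int_\lambda^\infty e^{-x^2/2}\,dx\;\leq\;\frac{e^{-\lambda^2/2}}{\lambda}.
\]

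Dividing through by $\sqrt{2\pi}\,\lambda^{-1}e^{-\lambda^2/2}$ and sending $\lambda\to\infty$, both bounds tend to $1$, which gives the claimed asymptotic equivalence. There is no real obstacle here: the only subtle point is that one must iterate integration by parts once (not just use the upper bound) in order to obtain a matching lower bound, and this is what produces the asymptotic $\sim$ rather than merely an $O$-estimate.
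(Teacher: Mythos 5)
Your argument is correct and complete: the upper bound via $1\le x/\lambda$, the integration by parts for the matching lower bound, and the bound $\int_\lambda^\infty x^{-2}e^{-x^2/2}\,dx\le\lambda^{-3}e^{-\lambda^2/2}$ all check out, and together they give the asymptotic $\sim$. The paper simply records this lemma as a standard elementary fact and gives no proof at all, so there is nothing to compare against; what you have written is the classical Mills' ratio argument and is exactly the kind of proof one would supply.
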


We next record the following variance estimate for the circle average process associated with the free boundary GFF.

\begin{lemma}
\label{lem::free_boundary_variance}
Suppose that $h$ is a free boundary GFF on $\D$ with corresponding circle average process $h_\epsilon$.  For each $z \in \D$ and $\epsilon > 0$ such that $B(z,\epsilon) \subseteq \D$, we have that
\[ \var(h_\epsilon(z)) = \log \epsilon^{-1} - \log \dist(z,\partial \D) + O(1)\]
where the constant implicit in the $O(1)$ term is uniform in $z$ and $\epsilon$.
\end{lemma}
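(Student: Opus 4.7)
The plan is to compute the variance directly using the covariance kernel of the free boundary GFF, namely the Neumann Green's function $G^\free(x,y) = -\log|y-x| - \log|1 - x\bar y|$ on $\D$ given in \eqref{eqn::green_neumann_disk}. Since the additive constant has been fixed by requiring $\int_\D h\, dx = 0$, the circle average can be written as $h_\epsilon(z) = (h, \mu^\epsilon_z)$ where $\mu^\epsilon_z = \rho^\epsilon_z - \pi^{-1}\lambda_\D$; here $\rho^\epsilon_z$ is the uniform probability measure on $\partial B(z,\epsilon)$ and $\lambda_\D$ is Lebesgue measure on $\D$. Because $\mu^\epsilon_z$ has total mass zero, the covariance formula from Section~\ref{subsubsec::gff_free} applies and gives
\[
\var(h_\epsilon(z)) = \iint_{\D \times \D} G^\free(x,y)\, d\mu^\epsilon_z(x)\, d\mu^\epsilon_z(y).
\]

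First I would expand the double integral into the main term $A = \iint G^\free\, d\rho^\epsilon_z\, d\rho^\epsilon_z$, a cross term $B = -2\pi^{-1}\iint G^\free\, d\rho^\epsilon_z\, d\lambda_\D$, and a purely deterministic term $C = \pi^{-2}\iint G^\free\, d\lambda_\D\, d\lambda_\D$. The terms $B$ and $C$ are $O(1)$ uniformly in $z$ and $\epsilon$: both pieces $-\log|y-x|$ and $-\log|1-x\bar y|$ of $G^\free$ have bounded potential against Lebesgue measure on $\D$ (the first because the logarithm is locally integrable and $\D$ is bounded, the second because $|1-x\bar y|$ is bounded away from $0$ when $x,y \in \D$ apart from a logarithmic singularity only when $x,y \to \partial \D$, which is integrable).

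The key computation is the main term $A$, which I would evaluate using harmonicity. For the singular part, parameterizing $x = z + \epsilon e^{i\theta_0}$, $y = z + \epsilon e^{i\theta}$ and applying the classical identity $\tfrac{1}{2\pi}\int_0^{2\pi}\log|e^{i\theta} - e^{i\theta_0}|\,d\theta = 0$ yields
\[
\iint (-\log|y-x|)\, d\rho^\epsilon_z\, d\rho^\epsilon_z = -\log \epsilon = \log \epsilon^{-1}.
\]
For the boundary correction $-\log|1 - x\bar y|$: since $B(z,\epsilon) \subseteq \D$, the map $y \mapsto -\log|1-x\bar y|$ is harmonic on $\D$ for each fixed $x \in \partial B(z,\epsilon)$, so the mean value property collapses the inner integral to $-\log|1-x\bar z|$; then harmonicity in $x$ collapses the outer integral to $-\log(1-|z|^2)$. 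Combining, $A = \log \epsilon^{-1} - \log(1-|z|^2)$.

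Putting the three pieces together gives $\var(h_\epsilon(z)) = \log \epsilon^{-1} - \log(1-|z|^2) + O(1)$, and the elementary bound $-\log(1-|z|^2) = -\log\dist(z,\partial\D) + O(1)$ (via $1-|z|^2 = (1-|z|)(1+|z|)$ with $1+|z| \in [1,2]$) delivers the desired formula. The only subtle point, and the one I would present most carefully, is the recentering step: the free boundary GFF is a distribution modulo additive constant, so one must justify why fixing the constant by $\int_\D h\, dx = 0$ makes $(h, \rho^\epsilon_z)$ coincide with $(h, \mu^\epsilon_z)$, which in turn makes the Green's function covariance formula applicable. Everything else is a routine integral computation and an elementary comparison of $1 - |z|^2$ with $\dist(z, \partial \D)$.
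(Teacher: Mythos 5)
Your computation is correct. Note that the paper does not actually prove this lemma: its ``proof'' is a one-line citation to the discussion in Section~6 of \cite{ds2011kpz}, so you are supplying the derivation that the paper outsources, and the route you take (expand $\var(h_\epsilon(z))$ against the Neumann Green's function $G^\free(x,y)=-\log|x-y|-\log|1-x\bar y|$, use the classical circle identity for the singular part, use harmonicity of $y\mapsto -\log|1-x\bar y|$ and then of $x\mapsto-\log|1-x\bar z|$ to collapse the boundary-correction term to $-\log(1-|z|^2)$, and absorb the recentering cross terms into $O(1)$) is the standard one. Two small points you should make explicit when writing it up. First, the uniform bound on $\int_\D -\log|1-x\bar y|\,dy$ is cleanest via the factorization $|1-x\bar y|=|x|\,|\bar y-1/x|$: for $|x|\le 1/2$ the integrand is bounded outright, and for $|x|>1/2$ one gets $-\log|1-x\bar y|\le \log 2-\log|\bar y-1/x|$, whose integral over $\D$ is uniformly bounded by local integrability of the planar logarithm; your phrasing about the singularity ``only when $x,y\to\partial\D$'' is right in spirit but this is the two-line argument that makes it airtight. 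Second, the recentering step you flag as subtle is exactly right and matches the normalization the paper imposes in Section~7.1.1 (mean of $h$ over $\D$ equal to zero): with that choice $(h,\rho_z^\epsilon)=(h,\rho_z^\epsilon-\pi^{-1}\lambda_\D)$, the test measure has total mass zero, and the covariance formula of Section~4.2.4 applies after the routine approximation of the singular measure by smooth mean-zero functions, which is how the circle average is defined in \cite{ds2011kpz} in the first place.
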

\begin{proof}
See, for example, the discussion in \cite[Section~6]{ds2011kpz}.
\end{proof}

We can now give the proof of Proposition~\ref{prop::free_boundary_max_growth}.

\begin{proof}[Proof of Proposition~\ref{prop::free_boundary_max_growth}]
We will start with \eqref{eqn::free_boundary_max_growth_boundary}.  Suppose that $z \in \D$ and $\epsilon > 0$ are such that $B(z,\epsilon) \subseteq \D$.  Fix $\delta > 0$.  By Lemma~\ref{lem::gaussian_tail} and Lemma~\ref{lem::free_boundary_variance} there exists a constant~$C$ (independent of $z$, $\epsilon$, and $\delta$) such that
\begin{align*}
        \p\left[ h_\epsilon(z) \geq (2+\delta) \log \epsilon^{-1} \right]
\leq \exp\left(- \frac{(2+\delta)^2(\log \epsilon^{-1})^2}{2(\log \epsilon^{-1} - \log \dist(z, \partial \D)) + C} \right).
\end{align*}
Fix $\xi > 0$, let $K = \xi^{-1}$, and, for each $n \in \N$, let $r_n = n^{-K}$. Note that $r_n^{1+\xi} = n^{-(1+K)}$.  For each $n \in \N$, let $\CD_{n,\xi}$ consist of those points $z \in (r_n^{1+\xi} \Z)^2 \cap \D$ with $r_n \leq \dist(z,\partial \D) < r_{n-1}$.  Note that $r_{n-1} - r_n$ is proportional to $r_n^{1+\xi}$ so that the number of elements in $\CD_{n,\xi}$ is $O(r_n^{-(1+\xi)})$.  It thus follows from a union bound that there exists a constant $C > 0$ such that
\begin{align*}
   \p\left[\bigcup_{z \in \CD_{n,\xi}} \left\{ h_{r_n}(z) \geq (2+\delta) \log r_n^{-1} \right\} \right]
\leq C \exp\left( \log r_n^{-(1+\xi)} - \frac{(2+\delta)^2 (\log r_n^{-1})^2}{4 \log r_n^{-1} + C}\right).
\end{align*}
It is easy to see from the above that there exists $\xi_0 > 0$ depending only on $\delta > 0$ such that for all $\xi \in (0,\xi_0)$ we have that
\[ \sum_{n=1}^\infty \p\left[ \bigcup_{z \in \CD_{n,\xi}} \left\{ h_{r_n}(z) \geq (2+\delta) \log r_n^{-1} \right\} \right] < \infty.\]
Therefore it follows from the Borel-Cantelli lemma that there almost surely exists $n_0 \in \N$ (random) such that
\begin{equation}
\label{eqn::grid_average_small}
h_{r_n}(z) \leq (2+\delta) \log r_n^{-1} \quad\text{for all}\quad z \in \CD_{n,\xi} \quad\text{and}\quad n \geq n_0.
\end{equation}
Now suppose that $z \in \D$ and that $\epsilon = \dist(z,\partial \D)$.  Fix $n \in \N$ such that $r_n \leq \epsilon < r_{n-1}$ and $z_n \in \CD_{n,\xi}$ such that $|z-z_n| \leq 4 r_n^{1+\xi}$.  Fix $\lambda \in (0,1/2)$ and let $\zeta = \lambda \xi$.  Using again that $r_{n-1} - r_n$ is proportional to $r_n^{1+\xi}$, Proposition~\ref{prop::continuity} implies that there exists a constant $M = M(\lambda,\zeta)$ (random but independent of $z$ and $\epsilon$) such that
\begin{align}
\label{eqn::grid_approximate_average}
       |h_\epsilon(z) - h_{r_n}(z_n)|
\leq M\left(\frac{r_n^{(1+\xi)\lambda}}{r_n^{\lambda+\zeta}} \right)
   = M.
\end{align}
Combining \eqref{eqn::grid_average_small} and \eqref{eqn::grid_approximate_average} implies \eqref{eqn::free_boundary_max_growth_boundary}, as desired.

We now turn to \eqref{eqn::free_boundary_max_growth_interior}.  We first note that the argument of \cite[Lemma~3.1]{hmp2010thickpoints} (which is very similar to the argument given just above) implies that if $h_\epsilon^0$ is the circle-average process corresponding to a zero-boundary GFF on $\D$ then
\[ \p\left[ \limsup_{\epsilon \to 0} \sup_{z \in B(0,1-\epsilon)} \frac{|h_\epsilon^0(z)|}{\log \epsilon^{-1}} \leq 2 \right] = 1.\]
It follows from the Markov property and scale-invariance that the same holds for the circle average process for the whole-plane GFF restricted to any bounded domain in $\C$ (see \cite[Proposition~2.8]{ms2013imag4}).  Combining this with the odd-even decomposition for the whole-plane GFF into a sum of a zero-boundary GFF and a free-boundary GFF (see, for example, \cite[Section~3.2]{sheffield2010weld}) gives~\eqref{eqn::free_boundary_max_growth_interior}.
\end{proof}

We are now going to generalize Proposition~\ref{prop::free_boundary_max_growth} to the setting in which we add $\log$ singularities to the field.

\begin{proposition}
\label{prop::free_boundary_max_log}
Fix $x_1,\ldots,x_k \in \partial \D$ and $x_{k+1},\ldots,x_{k+\ell} \in \D$ distinct and let $\gamma_1,\ldots,\gamma_{k+\ell} \in \R$.  Let
\[ \beta_\star = \max(2\sqrt{2},\gamma_1,\ldots,\gamma_{k+\ell}) \quad\text{and}\quad \beta^\star = \max(2, -\gamma_1,\ldots,-\gamma_k)\]
be as in \eqref{eqn::beta_definition}.  Suppose that $h$ is the sum of a free boundary GFF on $\D$ and $\sum_{i=1}^k \gamma_i \log|\cdot-x_i|$.  We assume that the additive constant for $h$ has been fixed by taking its average on $\D$ to be equal to $0$.  For each $z \in \D$ let $h_\epsilon(z)$ be the average of $h$ on $\partial B(z,\epsilon)$.  Then,
\begin{equation}
\label{eqn::free_boundary_log_max_growth_boundary}
 \p\left[ \limsup_{\epsilon \to 0} \sup_{z \in \partial B(0,1-\epsilon)} \frac{h_\epsilon(z)}{\log \epsilon^{-1}} \leq \beta^\star \right] = 1.
\end{equation}
Moreover,
\begin{equation}
\label{eqn::free_boundary_log_max_growth_interior}
 \p\left[ \limsup_{\epsilon \to 0} \inf_{z \in B(0,1-\epsilon)} \frac{h_\epsilon(z)}{\log \epsilon^{-1}} \geq -\beta_\star \right] = 1.
\end{equation}
\end{proposition}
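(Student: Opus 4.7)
The plan is to decompose $h = h^0 + F$, where $h^0$ is a free boundary GFF on $\D$ and $F(z) = \sum_{i=1}^{k+\ell}\gamma_i \log|z-x_i|$ is the deterministic logarithmic part, with the additive constant of $h^0$ chosen so that $\int_\D h = 0$.  Linearity of the circle average gives $h_\epsilon(z) = h^0_\epsilon(z) + F_\epsilon(z)$, so \eqref{eqn::free_boundary_log_max_growth_boundary} and \eqref{eqn::free_boundary_log_max_growth_interior} reduce to controlling the deterministic term $F_\epsilon$ in the appropriate regime and combining with Proposition~\ref{prop::free_boundary_max_growth} applied to $h^0$.  The key computation is the mean value identity $\tfrac{1}{2\pi}\int_0^{2\pi}\log|z+\epsilon e^{i\theta}-x_i|\,d\theta = \log\max(|z-x_i|,\epsilon)$, which yields the closed form $F_\epsilon(z) = \sum_{i=1}^{k+\ell}\gamma_i \log\max(|z-x_i|,\epsilon)$.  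Thus $F_\epsilon(z)$ stays bounded in $\epsilon$ as long as $z$ stays uniformly away from every $x_i$, so the only way it can grow is through a singularity being approached at rate $\epsilon$; since the $x_i$ are distinct, at most one such singularity can be active at a given $z$ for all sufficiently small $\epsilon$.

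For \eqref{eqn::free_boundary_log_max_growth_boundary}, restrict to $z \in \partial B(0,1-\epsilon)$.  An interior $x_j$ (with $j>k$) satisfies $|z-x_j|\geq \dist(x_j,\partial \D)-\epsilon$, which is bounded below uniformly for $\epsilon$ small, so the corresponding term in $F_\epsilon$ is $O(1)$ and interior logs do not enter.  A boundary $x_i$ (with $i \leq k$) satisfies $|z-x_i|\geq\epsilon$ (because $B(z,\epsilon)\subseteq \D$), so $\gamma_i\log|z-x_i| \leq \max(-\gamma_i,0)\log\epsilon^{-1} + O(1)$.  Summing and using distinctness of the $x_i$, $F_\epsilon(z) \leq \max(0,-\gamma_1,\ldots,-\gamma_k)\log\epsilon^{-1} + O(1)$, which combined with the $2\log \epsilon^{-1}$ upper bound from \eqref{eqn::free_boundary_max_growth_boundary} yields the claimed bound by $\beta^\star \log \epsilon^{-1}$.

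For \eqref{eqn::free_boundary_log_max_growth_interior}, restrict to $z \in B(0,1-\epsilon)$; now both boundary and interior singularities can be approached.  We have $\log\max(|z-x_i|,\epsilon) \geq \log\epsilon = -\log\epsilon^{-1}$ for every $i$, hence $\gamma_i\log\max(|z-x_i|,\epsilon) \geq -\gamma_i^+\log\epsilon^{-1} + O(1)$, where $\gamma_i^+ = \max(\gamma_i,0)$.  Using distinctness of the $x_i$ to ensure that only one such term can be $\epsilon$-small at a time, $F_\epsilon(z) \geq -\max_i \gamma_i^+ \cdot \log\epsilon^{-1} + O(1)$.  Combined with the lower bound $h^0_\epsilon(z) \geq -2\sqrt{2}\log\epsilon^{-1}(1+o(1))$ coming from \eqref{eqn::free_boundary_max_growth_interior}, we obtain $\liminf_{\epsilon\to 0}\inf_{z\in B(0,1-\epsilon)}h_\epsilon(z)/\log\epsilon^{-1} \geq -\beta_\star$, which implies the stated limsup bound.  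The only delicate point in the proof is the bookkeeping for simultaneously-nearby singularities, which is handled by the finite separation of the $x_i$ and the fact that interior singularities also require $z$ to be at distance at least $\epsilon$ from them to contribute, once one works inside $B(0,1-\epsilon)$.
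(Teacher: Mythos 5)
Your decomposition $h = h^0 + F$ and the identity $F_\epsilon(z) = \sum_i \gamma_i \log\max(|z-x_i|,\epsilon)$ are both correct, and the bookkeeping about at most one singularity being ``active'' at a time is fine. The gap is in the final combination step. What your argument actually proves for \eqref{eqn::free_boundary_log_max_growth_boundary} is
\[
\limsup_{\epsilon \to 0} \sup_{z \in \partial B(0,1-\epsilon)} \frac{h_\epsilon(z)}{\log \epsilon^{-1}} \;\leq\; 2 + \max(0,-\gamma_1,\ldots,-\gamma_k),
\]
because you bound $h^0_\epsilon(z)$ by its worst case over the whole circle \emph{and} $F_\epsilon(z)$ by its worst case (attained only for $z$ within distance $\approx\epsilon$ of some $x_i$) and then add. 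But $2+\max(0,-\gamma_1,\ldots,-\gamma_k)$ is strictly larger than the claimed $\beta^\star = \max(2,-\gamma_1,\ldots,-\gamma_k)$ whenever some $\gamma_i<0$ with $i \leq k$; e.g.\ for a single $\gamma_1=-2$ you get $4$ instead of $2$. The ``max versus sum'' discrepancy is the entire content of the proposition: one must show that the GFF cannot be near-maximally thick at the same location where the deterministic log term is large. Concretely, over the sub-arc of $\partial B(0,1-\epsilon)$ at distance $\approx\epsilon^{b}$ from $x_i$ the supremum of $h^0_\epsilon$ is only about $2(1-b)\log\epsilon^{-1}$ (a region of size $\epsilon^b$ supports far fewer near-maximal thick points), which combined with the deterministic contribution $-\gamma_i b\log\epsilon^{-1}$ interpolates linearly between $2$ and $-\gamma_i$ and hence stays below $\max(2,-\gamma_i)$. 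Your proof contains no such coupling between the location of the singularity and the size of the fluctuation. The identical issue affects your proof of \eqref{eqn::free_boundary_log_max_growth_interior}, where you obtain $-(2\sqrt{2}+\max_i\gamma_i^+)$ rather than $-\beta_\star$.

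The paper avoids the multi-scale analysis by an absolute continuity argument that you should compare against. First reduce to $k=1$ by absolute continuity away from the singularities. For $\gamma_1\in(-2,0]$, the field $h^0+\gamma_1\log|\cdot-x_1|$ is, up to a bounded smooth correction, a sample from the law of the free boundary GFF \emph{reweighted} by its $(-\gamma_1)$-LQG boundary measure with the marked point conditioned to be $x_1$ (Proposition~\ref{prop::free_boundary_weighted}); since that reweighted law is mutually absolutely continuous with the unweighted law, the almost-sure bound $2\log\epsilon^{-1}$ of Proposition~\ref{prop::free_boundary_max_growth} transfers with \emph{no additive loss}, so the log singularity up to coefficient $-2$ is absorbed ``for free.'' Only for $\gamma_1\leq -2$ does one then apply a triangle inequality, and only to the residual coefficient $\gamma_1+2-\zeta$, which is what produces $\max(2,-\gamma_1)$ rather than $2+(-\gamma_1)$. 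To repair your proof you would need either to import this weighting argument or to carry out the scale-by-scale estimate on $\sup\{h^0_\epsilon(z):|z-x_i|\leq\epsilon^b\}$ sketched above.
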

\begin{proof}
By the absolute continuity properties of the free boundary GFF (see, for example, \cite[Proposition~3.2]{ms2010imag1}), it suffices to prove the result when $k=1$.  We will now explain the proof of \eqref{eqn::free_boundary_log_max_growth_boundary} when $\gamma_1 \in (-2,0]$ and $x_1 \in \partial \D$ so that $\beta^\star = 2$.  The case when $\gamma_1 \geq 0$ is obvious since adding a non-negative multiple of $\log|\cdot-x_1|$ can only decrease the asymptotic growth of the circle average as $\epsilon \to 0$.  We know that the law of the free boundary GFF normalized to have zero mean weighted by its $\gamma_1$-LQG boundary measure is (see the discussion just after \cite[Equation~(83)]{ds2011kpz} in the proof of \cite[Theorem~6.1]{ds2011kpz}):
\begin{enumerate}
\item absolutely continuous with respect to the law of the (unweighted) free boundary GFF and
\item can be written as the sum of $-\gamma_1 \log|\cdot-x|$ for $x \in \partial \D$ chosen uniformly from Lebesgue measure, a bounded, smooth function, and an independent free boundary GFF.
\end{enumerate}
Therefore the result in this case follows from Proposition~\ref{prop::free_boundary_max_growth}.  The case that $\gamma_1 \leq -2$ follows because we can think of first adding $-(2-\zeta)\log|\cdot-x_1|$ to $h$ for $\zeta > 0$ very small, applying the $(-2,0]$ case, and then adding $(\gamma_1+2-\zeta)\log|\cdot-x_1|$.  The proof of \eqref{eqn::free_boundary_log_max_growth_interior} when $x_1 \in \D$ is similar.
\end{proof}

\subsubsection{Proof of Theorem~\ref{thm::continuity}}
\label{subsec::continuity_proofs}

We can fix the additive constants for $h_1$ and $h_2$ by taking their means on $\D$ to be both equal to zero.  Then \eqref{eqn::gff_law_equality} implies that there exists an almost surely finite random variable $C$ such that
\begin{equation}
\label{eqn::gff_law_equality2}
 h_1 \circ \varphi + Q\log|\varphi'| \stackrel{d}{=} h_2 + C.
\end{equation}
(The reason we have written \eqref{eqn::gff_law_equality2} this way rather than as an equality modulo additive constant is to emphasize one of the sources of the H\"older norm.)  This implies that we may couple $h_1$, $h_2$, $\varphi$, and $C$ onto a common probability space so that the equality in distribution from \eqref{eqn::gff_law_equality2} is an almost sure equality.

Let $\Fh_{A,1}$ (resp.\ $\Fh_2$) denote the harmonic extension of the boundary values of $h_1$ (resp.\ $h_2$) from $\partial D$ to $D$ (resp.\ $\partial \D$ to $\D$).  We can think of defining these objects in two steps.
\begin{enumerate}
\item Apply $\pHarm$ to the GFF component of $h_1$ and $h_2$.  These projections are almost surely defined because $\ol{\D} \setminus D$ and $\partial \D$ are respectively local for the GFF components of $h_1$ and $h_2$ (recall Section~\ref{subsec::local_sets}).
\item Add the harmonic extension of the sum of $\log$ functions component of $h_1$ (resp.\ $h_2$) from $\partial D$ to $D$ (resp.\ $\partial \D$ to $\D$).
\end{enumerate}
We know that the harmonic extension of the values from $\partial \D$ to $\D$ of the left side of \eqref{eqn::gff_law_equality2} is almost surely equal to that of the right side (recall that we have coupled so that \eqref{eqn::gff_law_equality2} is an almost sure equality).  These are respectively given by $\Fh_{A,1} \circ \varphi + Q \log|\varphi'|$ and $\Fh_2 + C$, hence we almost surely have that
\begin{equation}
\label{eqn::gff_ce_law_equality}
\Fh_{A,1} \circ \varphi + Q \log|\varphi'| = \Fh_2 + C.
\end{equation}
Rearranging \eqref{eqn::gff_ce_law_equality}, we see that
\begin{equation}
\label{eqn::gff_ce_law_equality2}
Q \log|\varphi'| = \Fh_2 - \Fh_{A,1} \circ \varphi + C.
\end{equation}

For $j=1,2$ we let $h_{j,\epsilon}$ be the circle average process associated with $h_j$.  For each $\delta > 0$ we also let $B_\delta = B(0,1-\delta)$.  Proposition~\ref{prop::free_boundary_max_log} implies that
\begin{align}
\p\left[ \limsup_{\epsilon \to 0} \sup_{z \in B_\epsilon} \frac{\Fh_2(z)}{\log \epsilon^{-1}} \leq \beta^\star \right] &= 1.  \label{eqn::fb_gff_mean_bound_logs}
\end{align}
Indeed, this follows because we can write $h_2$ as the sum of $\Fh_2$, a zero-boundary GFF which is independent of $\Fh_2$, and a sum of $\log$ singularities located at points with a positive distance from $\partial \D$.  In particular, the contribution of the latter to the boundary behavior of $\Fh_2$ is bounded.  If the maximal value of $\Fh_2(z)$ for $z \in B_\epsilon$ as $\epsilon \to 0$ exceeded $\beta^\star(1+o(1)) \log \epsilon^{-1}$ then there would be at least a $1/2$ chance that the maximal value of $h_{2,\epsilon}(z)$ for $z \in \partial B_\epsilon$ as $\epsilon \to 0$ would exceed $\beta^\star(1+o(1))\log \epsilon^{-1}$, which would contradict~\eqref{eqn::free_boundary_log_max_growth_boundary}.

Fix $z \in \D$, let $\epsilon = \dist(z,\partial \D)$, and let $d = \dist(\varphi(z),\partial D)$.  By distortion estimates for conformal maps, we know that
\[ \frac{1}{4} |\varphi'(z)| \epsilon \leq d  \leq 4 |\varphi'(z)| \epsilon.\]
Since $\Fh_{A,1}$ is harmonic in $D$, we have that $\Fh_{A,1}(\varphi(z))$ is equal to the average of its values on $\partial B(\varphi(z),d)$.  It follows from Proposition~\ref{prop::free_boundary_max_log} using the same argument that we used to justify \eqref{eqn::fb_gff_mean_bound_logs} that
\begin{align}
\p\left[ \liminf_{\epsilon \to 0} \inf_{z \in B_\epsilon} \frac{\Fh_{A,1}(\varphi(z))}{\log (|\varphi'(z)|\epsilon)^{-1}} \geq -\beta_\star \right] = 1.  
\label{eqn::fb_gff_mean_bound_local}
\end{align}
Combining \eqref{eqn::fb_gff_mean_bound_logs} and \eqref{eqn::fb_gff_mean_bound_local} with \eqref{eqn::gff_ce_law_equality2} we have uniformly in $z \in B_\epsilon$ that
\begin{align}
\begin{split}
 Q  \log |\varphi'(z)| &\leq  (\beta_\star+\beta^\star) (1+o(1))\log \epsilon^{-1} - \beta_\star (1+o(1)) \log |\varphi'(z)| + C \label{eqn::deriv_bound}
\end{split}
\end{align}
with probability tending to $1$ and the $o(1)$ terms tending to $0$ as $\epsilon \to 0$.  Let $\ol{\Delta} = (Q-\beta^\star)/(Q+\beta_\star)$ be as in \eqref{eqn::holder_exponent} so that $1-\ol{\Delta} = (\beta_\star+\beta^\star)/(Q+\beta_\star)$.  Rearranging the terms in \eqref{eqn::deriv_bound} to solve for $\log|\varphi'|$ implies that
\begin{equation}
\label{eqn::deriv_rearrange_bound}
 \p\left[ \limsup_{\epsilon \to 0} \sup_{z \in B_\epsilon} \frac{\log |\varphi'(z)|}{\log \epsilon^{-1}} \leq 1-\ol{\Delta} \right] = 1.
\end{equation}
Since $\varphi$ is a conformal transformation, distortion estimates imply that $|\varphi'|$ is bounded when evaluated at points which have a positive distance from $\partial \D$.  Therefore we just need to control $|\varphi'(z)|$ as $z \to \partial \D$.  Fix $\zeta > 0$ such that $\ol{\Delta}-\zeta > 0$.  Then \eqref{eqn::deriv_rearrange_bound} implies that there exists $C_1,C_2 < \infty$ (random) such that
\begin{equation}
\label{eqn::deriv_bound2}
|\varphi'(z)| \leq C_1(1-|z|)^{\ol{\Delta}-1-\zeta} + C_2 \quad\text{for all}\quad z \in \D.
\end{equation}
The H\"older property follows from this by integrating $\varphi'$.

\qed

\subsection{Phases}
\label{subsec::phases}

The purpose of this section is to discuss the problem of establishing the different phases for the sample path of $\QLE(\gamma^2,\eta)$ as described in Figure~\ref{fig::etavsgamma} (though we will not provide a rigorous proof here).  We use the notation of Section~\ref{subsec::mainresults} in that for any $(\gamma^2, \eta)$ pair that lies on one of the two upper lines of Figure~\ref{fig::etavsgamma}, we write $\QLE(\gamma^2,\eta)$ to denote one of the subsequential limits whose existence is established in Theorem~\ref{thm::existence} (and explained in more detail in Section~\ref{sec::sample_path}).

Recall that the solutions on the middle curve were constructed as $\delta \to 0$ subsequential limits of ``reshuffled'' radial $\SLE_\kappa$ with $\kappa = 16/\gamma^2 \in (4,\infty)$.  The solutions on the upper curve were constructed as $\delta \to 0$ subsequential limits of ``reshuffled'' radial $\SLE_\kappa$ where $\kappa = \gamma^2 \in (1,4)$.  We remind the reader of the phases of radial $\SLE_\kappa$ \cite{rs2005sle}:
\begin{enumerate}
\item When $\kappa \in [0,4]$ a radial $\SLE_\kappa$ is almost surely a simple curve.
\item When $\kappa \in (4, 8)$ a radial $\SLE_{\kappa}$ is almost surely a continuous curve that hits itself but fills zero Lebesgue measure.
\item When $\kappa \geq 8$, a radial $\SLE_{\kappa}$ curve is almost surely space-filling.
\end{enumerate}
For a given GFF $h$, let $\mu_h^\gamma$ denote the $\gamma$-LQG measure.  Let $K_t$ denote the hull at time $t$ of a radial $\SLE_\kappa$.  From these results, one may easily deduce the following for the coupling of $\SLE_\kappa$ and the GFF used in the $\QLE(\gamma^2,\eta)$ construction:
\begin{enumerate}
\item When $\kappa \in [0,4]$ the Lebesgue measure of $K_t$ and the quantum measure $\mu_h^\gamma(K_t)$ are both almost surely zero for all $t$.
\item When $\kappa \in (4,8)$ the Lebesgue measure of $K_t$ and the quantum measure $\mu_h^\gamma(K_t)$ are both increasing functions of $t$ that have discontinuities (because bubbles can be swallowed instantaneously).
\item When $\kappa \geq 8$, the Lebesgue measure of $K_t$ and the quantum measure $\mu_h^\gamma(K_t)$ are almost surely continuously increasing functions.
\end{enumerate}

At first glance, it may seem obvious that the three statements just above also apply in the ``reshuffled'' $\delta\to 0$ subsequential limit: that is, they still apply when $(K_t)$ is the $\QLE(\gamma^2,\eta)$ growth process associated to the given $\kappa >  1$ value as constructed in Theorem~\ref{thm::existence}.  In the approximating process $(\varsigma_t^\delta,g_t^\delta,\Fh_t^\delta)$ described in Section~\ref{sec::existence}, the maps $g_t^\delta \colon \D \setminus K_t^\delta \to \D$ exactly describe an $\SLE_\kappa$ evolution except that the seed location is re-randomized according to some rule after each $\delta$ units of capacity time.  In particular, the process $\mu_h^\gamma(K^\delta_t)$ is zero, discontinuously increasing, or continuously increasing precisely when this is true for the corresponding $\SLE_\kappa$.

Going further, one may recall the ``reshuffling'' discussion of Section~\ref{sec::discrete}.  Ordinary $\SLE_\kappa$, stopped at $\delta$ increments of time, induces a Markov chain on quantum surfaces, and ``reshuffling'' this Markov chain yields the $\delta$-approximation to QLE.  Note however, that if $k>0$ is fixed, then by Proposition~\ref{prop::reshuffling} the reshuffling procedure does not change the law of the configuration (the field on $\D$ plus the seed location) that one has after $k$ steps, nor does it change the law of the transition step that takes place between $k$ and $k+1$.  The probability that the $\delta$-approximation absorbs more than $M$ units of Lebesgue (or quantum) area during the $k$th step is precisely the same as the probability that ordinary SLE absorbs more than $M$ units of Lebesgue (or quantum) area between time $k\delta$ and $(k+1)\delta$.  In a certain sense, this suggests that the ``rate'' at which quantum mass is being swallowed should be the same for $\SLE$ as it is for the $\QLE$ approximation --- and one might expect the same to hold in the $\delta \to 0$ limit as well.

However, it is important to recall that the {\em joint} law of the amount of mass absorbed at step $k$ and at another step (say $j$) may be very different for the $\delta$-increment $\QLE$ and $\SLE$ processes.  If $\kappa > 4$, one could worry that even though $\mu_h^\gamma(K_t^\delta)$ is strictly increasing for each $\delta$, it might be that in the $\delta \to 0$ limit, the quantum area increase tends to ``concentrate'' on increasingly rare QLE instances, so that the limiting process is almost surely zero.  When $\kappa \in (1,4)$, there is another source of worry: a $\delta \to 0$ limit of $K^\delta_T$ processes, each of which absorbs zero quantum area, could in principle swallow a positive amount of quantum area, for example if a certain region became closer and closer to getting ``pinched off'' by $K^\delta_T$ as $\delta$ tended to zero.  Thus, in the $\kappa \in (1,4)$ setting, one might worry that $\mu_h^\gamma(K_t)$ could be increasing despite the fact that $\mu_h^\gamma(K_t^\delta)$ is almost surely zero.

\section{Open questions}
\label{sec::questions}

In this section, we present a number of open questions that naturally arise from this work.  They are divided into three categories: existence and uniqueness, sample path properties, and connections to discrete models.  Of course, this collection of problems is far from exhaustive.  Most of the basic questions one would think to ask about QLE remain open.

\subsection*{Existence and uniqueness}

\begin{question}
\label{q::existence}
In Theorem~\ref{thm::existence}, we proved the existence of a solution to the $\QLE(\gamma^2,\eta)$ dynamics when $(\gamma^2,\eta)$ is on one of the two upper curves from Figure~\ref{fig::etavsgamma} by realizing these processes as subsequential $\delta \to 0$ limits of certain approximations.  Was it necessary to pass to subsequences, or does the limit exist non-subsequentially?  Assuming the limit exists and is unique, is it the only solution to the $\QLE$ dynamics for the given $(\gamma^2,\eta)$ pair?  Suppose that in the approximations, instead of flowing by a fixed amount $\delta$ of capacity time in between ``tip reshufflings'' we instead flowed by a fixed amount of quantum length of the exploring path (measured in some reasonable sense) using the procedures described in \cite{sheffield2010weld}.  Would we then still obtain the same process (up to a time change) in the $\delta \to 0$ limit?
\end{question}

To solve the first part of the question, one could fix $h$ and then try to explicitly couple $\delta$ and $\delta'$ approximations so that the triples $(\nu^\delta_t, g^\delta_t, \Fh^\delta_t)$ and $(\nu^{\delta'}_t, g^{\delta'}_t, \Fh^{\delta'}_t)$ (which agree at $t = 0$) remain close for $t > 0$.  To solve the second part, one could try to show that the infinite dimensional SDE that describes the dynamics of the $(\Fh_t)$ process described in Section~\ref{sec::dynamics} has a unique solution.

The latter part of the question might be particularly interesting for points on the dotted curve in Figure~\ref{fig::etavsgamma} for which $\eta$ is not {\em too} large; for these points, it may be that the ``tip rererandomization'' always leaves the tip fixed if it is done at fixed capacity time increments, but that it moves the tip in an interesting way if rerandomizations are done at fixed ``quantum length'' increments.

\begin{question}
\label{q::alphavseta}
What can one say about the relationship between $\alpha$ and $\eta$?  For each $\eta$ and $\gamma$ pair, is there at most one choice of $\alpha$ for which there exists a solution to the QLE dynamics that has the $\eta$-DBM scaling property (as discussed at the beginning of Section \ref{sec::continuum})?
\end{question}

\begin{question}
\label{q::off_the_curves}
Is it possible to generalize our approximation procedure to make sense of the $\QLE(\gamma^2,\eta)$ processes for $(\gamma^2,\eta)$ pairs which are \emph{not} on the top two curves from Figure~\ref{fig::etavsgamma}?  Are there such $(\gamma^2,\eta)$ pairs for which there is a stationary solution with $(\Fh_t)$ given by the harmonic extension of the boundary values of a form of the free boundary GFF from $\partial \D$ to $\D$?
\end{question}

There are various ways one might attempt to explore this problem, including the following:
\begin{enumerate}
\item Carefully analyze the infinite dimensional SDE from Section~\ref{sec::dynamics} and look for hints as to what types of stationary solutions might arise.
\item Look for some clever variant of the reshuffling trick --- perhaps something involving $\SLE_\kappa$ or $\SLE_{\kappa}(\rho)$ processes on $\gamma$-LQG surfaces (with $\kappa$ not necessarily equal to $\gamma^2$ or $16/\gamma^2$).
\item Fix $\gamma$ and then try to combine the known solutions (corresponding to the two or three special $\eta$ values) in some way to obtain solutions for other $\eta$ values.
\item Try to grow a QLE approximation from many points simultaneously and understand some limiting law for the corresponding point process.
\item Consider one of the understood stationary QLE processes but take the first $n$ coordinates in an expansion of the GFF $h$ to have fixed variance that is smaller or larger than usual (and note that since this modified model has a law absolutely continuous to the original, the original QLE growth process can still be defined).  Try to take some sort of $n \to \infty$ limit to get a different multiple of the GFF, and control what happens to the QLE growth along the way.
\end{enumerate}

All of these ideas are hindered by the fact that we do not yet have a clear sense of what the stationary dynamics of $\Fh_t$ and $\nu_t$ {\em should} be when the pair $(\gamma^2, \eta)$ fails to lie on one of the special curves.

\begin{question}
\label{q::determined}
What are the sources of randomness in the $\QLE(\gamma^2,\eta)$ processes?  Does the GFF determine the QLE growth process for some $(\gamma^2,\eta)$ values but not for others? \end{question}

We remark that level lines and imaginary geometry flow lines are examples of local sets that have been proved to be almost surely determined by the GFF instance they are coupled with \cite{ss2010contour,dub2009gff,ms2010imag1,ms2013imag4}.  Our guess is that for any $\gamma \in (0,2)$ the sets are determined by the field when $\eta = 0$, since in this case QLE should describe growing balls in a metric determined by the field.  On the other hand, when $\gamma = 0$, there is no randomness from the field at all, so if the $\eta$-DBM scaling limits corresponding to $\gamma=0$ are given by a (non-deterministic) form of QLE, they will have to have a source of randomness other than the field.  On the other hand, it is still conceivable that QLE is always determined by the field when $\gamma \not = 0$.  We leave it to the reader to decide what intuition (if any) can be drawn from Figures~\ref{fig::large_metric2_seeds} and~\ref{fig::dlaseeds}.

\begin{question}
\label{q::variants}
What variants of $\QLE$ can one rigorously construct by only allowing growth from an interval of the boundary, or by growing at different speeds from different intervals?  Are there interesting variants that involve replacing radial $\SLE_\kappa$ with some sort of $\SLE_{\kappa}(\rho)$ process (or perhaps a process growing from a fixed number of tips at once) before applying a reshuffling procedure?
\end{question}

There are many interesting questions along these lines that one can explore without leaving the special $(\gamma^2, \eta)$ curves described in this paper.

\subsection*{Sample path properties}

\begin{question}
\label{q::chunks}
In the QLE approximations, the set of added ``chunks'' has a natural tree structure.  (Each chunk is a child of the chunk on whose boundary it started growing.)  Can one take a limit of this tree structure, to define geodesics, for general $\gamma$ and $\eta$ and are the geodesics almost surely simple curves?  Are they almost surely removable?
\end{question}

\begin{question}
\label{q::cle_qle}
Is there a natural variant of $\QLE$ in which an underlying conformal loop ensemble $\CLE_\kappa$ is fixed, with $\kappa \in (8/3,4)$, and the growth process absorbs entire loops instantaneously?
\end{question}

To address the above question, one might replace the radial $\SLE_\kappa$ process (as used to construct QLE solutions in this paper) with the $\SLE_\kappa(\rho)$ process corresponding to $\rho = \kappa-6$, which is used to construct conformal loop ensembles in \cite{MR2494457, MR2979861}.  In the latter process, there are special times at which the force point and the tip are at the same place (which correspond to times at which the exploration process is {\em not} partway through creating a loop) and there is a notion of local time for this set.  To produce a loop variant of QLE, instead of running the $\SLE_\kappa(\rho)$ process for $\delta$ units of capacity time in between rerandomizations of the tip, one could run it for $\delta$ units of this local time in between tip rerandomizations.  Taking a $\delta \to 0$ limit may yield an interesting LQG-based variant of the loop exploration process described for the $\kappa=4$ case in \cite{MR3057185}.

\begin{question}
\label{q::dimension_of_the_boundary_and_trace}
What is the dimension of the $\QLE$ trace?  What is the Euclidean dimension of the boundary of the domain of $(g_t)$ at a generic time $t$?  For $(\gamma^2,\eta)$ on one of the upper two curves from Figure~\ref{fig::etavsgamma}, is the former dimension larger than the Euclidean dimension of the $\SLE$ curve used to construct the $\QLE$?  Is the latter dimension smaller or larger than the dimension of the outer boundary of the $\SLE$ used in the construction?
\end{question}

Note that it is natural to expect the quantum scaling dimension of the $\QLE$ trace to be the same as that of the SLE curve used to approximate it, which can be computed from the Euclidean dimension of SLE using the KPZ formula (although actually proving this fact, using some precise notion of quantum dimension, would presumably require some work).  On the other hand, one cannot use the KPZ formula to deduce the Euclidean dimension of the $\QLE$ trace from its quantum dimension, because the $\QLE$ trace is not independent of the field $h$ used to defined the underlying quantum surface.

At least heuristically, the number of colored squares in a figure like Figure \ref{fig::qletrace} should scale like a power of $\delta$ as $\delta \to 0$, and this should be the same exponent as for the number of squares hit by an $\SLE_6$ drawn independently on top of the surface.  We expect the $\QLE(8/3,0)$ trace to look like the $\delta \to 0$ scaling limit of the discrete random set shown in Figure \ref{fig::qletrace}.  Although $\SLE_6$ has dimension $7/4$, intuitively, we would guess that the Euclidean dimension of the object in Figure \ref{fig::qletrace} is larger than $7/4$, because the $\QLE$ may have a greater tendency to hit big squares (and avoid small squares) than the independently drawn $\SLE_6$ does.  (If you put an independent $\SLE_6$ on top of a quantum surface, of course it will generally be more likely to hit a given big square than a given small square; on the other hand, the $\QLE$ trace seems, intuitively, to be {\em actively drawn} towards the bigger squares and away from smaller ones.)

\begin{question}
\label{q::holder_regularity}
Suppose that $(g_t)$ is the family of conformal maps which correspond to one of the $\QLE(\gamma^2,\eta)$ processes constructed in Theorem~\ref{thm::existence} with $\gamma \in (0,2)$.  In Theorem~\ref{thm::qle_continuity}, we showed that for each $t \geq 0$ we almost surely have that $g_t^{-1}$ is H\"older continuous with a given exponent $\Delta$.  The proof of Theorem~\ref{thm::qle_continuity} in Section~\ref{sec::sample_path} is based on a ``worst-case'' thick points analysis and is therefore unlikely to yield the \emph{exact} H\"older exponent.  This exponent has been exactly computed in the case of $\SLE$ and the bound we determine for the corresponding $\QLE$ is smaller than this value.  (This is the case since we extract the result for $\QLE$ from a more general result which includes both $\SLE$ and $\QLE$ as special cases.)  Is it possible to determine the corresponding exponent for $\QLE$?  Is $\QLE$ more or less irregular than the $\SLE$s used to construct it?
\end{question}

\begin{question}
\label{q::continuous_at_all_times}
In Theorem~\ref{thm::qle_continuity}, we proved that for each \emph{fixed} $t \geq 0$ that the domain $(g_t)$ of a $\QLE(\gamma^2,\eta)$ process for $(\gamma^2,\eta)$ on one of the top two curves from Figure~\ref{fig::etavsgamma} with $\gamma \in (0,2)$ is almost surely a H\"older domain.  Does this hold almost surely for all times $t \geq 0$ simultaneously?
\end{question}

\begin{question}
\label{q::gamma_2_continuity}
We proved the existence of the $\QLE(4,1/4)$ processes in Section~\ref{sec::existence} (so $\gamma=2$), however Theorem~\ref{thm::qle_continuity} is restricted to the case that $\gamma \in (0,2)$.  Do the maps $(g_t)$ associated with the $\QLE(4,1/4)$ processes extend continuously to $\partial \D$?
\end{question}

\subsection*{Connections to discrete models}

\begin{question}
\label{q::dla}
Can we understand discrete DLA on tree-weighted planar graphs in a deeper way?  In particular, can we say how much information about the ``shape'' comes from the randomness in the underlying planar map, versus the randomness in the growth process?  Equivalently, if we independently draw two DLA processes on top of the same planar map, how ``correlated'' are they with each other?
\end{question}

\begin{question}
\label{q::laplacian}
Is there a nice discrete story that relates $\eta$-LRW to the corresponding $\eta$-DBM models --- a story that somehow generalizes the relationship between LERW and DLA described in this paper?
\end{question}

\begin{question}
\label{q::randomness}
One can define the Eden model on a uniformly random triangulation by assigning an exponential weight to each edge, using the weights to define a metric, and considering increasing balls in that metric.  Can one show that the randomness that arises from the exponential weights on edges does not significantly change the law of the overall metric (at long distances) on the random planar map (recall Figure \ref{fig::large_metric2_seeds})?  How much does it change it typically?  Is there a KPZ scaling result for random planar maps, maybe with some other power in place of $1/3$?
\end{question}

\subsection*{Work in progress}

We briefly mention three projects that the authors are actively working on:

\begin{enumerate}
\item A joint work with Bertrand Duplantier (announced some time ago) extending the quantum zipper results in \cite{sheffield2010weld} to describe weldings of different types of quantum wedges.  One outcome of this work will be a Poissonian description of the bubbles cut off by a quantum gravity zipper in the case that $\kappa \in (4,8)$.
\item A joint work with Ewain Gwynne and Xin Sun on the phase transitions for QLE, expanding on the ideas sketched in Section \ref{subsec::phases}.
\item A work using the results of the paper with Duplantier to give a Poissonian description of bubbles that appear in the QLE models that correspond to $\kappa \in (4,8)$ (somewhat formalizing and extending the ``slot machine'' story).  This work will also study the $\gamma^2 = 8/3$ case specifically in more detail.  The ultimate aim of this project is to rigorously construct the metric space structure of the corresponding LQG surface and to show that the random metric space obtained this way agrees in law with a form of the Brownian map.
\end{enumerate}

\appendix

\bibliographystyle{hmralphaabbrv}
\bibliography{qle}

\bigskip

\filbreak
\begingroup
\small
\parindent=0pt

\bigskip
\vtop{
\hsize=5.3in
Department of Mathematics\\
Massachusetts Institute of Technology\\
Cambridge, MA, USA } \endgroup \filbreak

\end{document}